\theoremstyle{plain}
\theoremstyle{definition}
\newtheorem{definition}{Definition}[section]
\newtheorem{theorem}{Theorem}[section]
\newtheorem*{thm}{Theorem}
\newtheorem{lemma}{Lemma}[section]
\newtheorem{corollary}{Corollary}[section]
\newtheorem{proposition}{Proposition}[section]
\newtheorem{remark}{Remark}[section]
\newtheorem*{prop*}{Proposition}
\newtheorem*{ack}{Acknowledgements}
\def\tr{\mathrm{Tr}}
\newcommand{\sh}{\mbox{ш}}
\newcommand{\Jh}{\mbox{Ж}}
\newcommand{\Ch}{\mbox{Ч}}
\newcommand{\RN}{\mbox{Н}}
\newcommand{\RU}{\mbox{У}}
\newcommand{\Rd}{\mbox{д}}
\begin{document}

\title[Quantitative weak mixing for random substitution tilings]
      {Quantitative weak mixing for random substitution tilings}
      \author{Rodrigo Trevi\~no}
      \address{University of Maryland, College Park}
      \email{rodrigo@umd.edu}
      \date{\today}
      \begin{abstract}
        For $N$ compatible substitution rules on $M$ prototiles $t_1,\dots, t_M$, consider tilings and tiling spaces constructed by applying the different substitution rules at random. These give (globally) random substitution tilings. In this paper I obtain bounds for the growth on twisted ergodic integrals for the $\mathbb{R}^d$ action on the tiling space which give lower bounds on the lower local dimension of spectral measures. For functions with some extra regularity, uniform bounds on the lower local dimension are obtained. The results here extend results of Bufetov-Solomyak \cite{BS:translation} to tilings of higher dimensions.
      \end{abstract}
      \maketitle


      \tableofcontents


      \newpage

      \begin{center}
      \begin{tabular}{ |p{1.1in}|p{4.75in}|p{.45in}|  }
        \hline
        \multicolumn{3}{|c|}{Index of Notation} \\
        \hline
        Symbol & Meaning & Section \\
        \hline
        $\mathcal{T}$ & A tiling of $\mathbb{R}^d$  &  \S \ref{subsec:tilings}\\
        $\mathcal{O}_{T}^-(A)$ & Largest patch of $\mathcal{T}$ completely contained in $A$.  &  \S \ref{subsec:tilings}\\
        $\Omega$, $\Omega_\mathcal{T}, \varphi_t$&Tiling spaces and their translation action& \S \ref{subsec:tilings}\\
        $\theta, \theta_{ijk}$&Contraction factor for a substitution rule& \S \ref{subsec:tilings}\\
        $\Lambda$, $\Lambda_i$, $\Lambda_\mathcal{T}$&Set of return vectors&\S\ref{subsubsec:retVectors}\\
        $\Gamma_\mathcal{T}$&Group generated by return vectors $\Lambda$& \S \ref{subsubsec:retVectors}\\
        $\alpha:\Gamma_\mathcal{T}\rightarrow \mathbb{Z}^r$&Address map& \S \ref{subsubsec:retVectors}\\
        $\mathcal{B} = (\mathcal{V},\mathcal{E})$&Bi-infinite Bratteli diagram&\S \ref{subsubsec:biinf}\\
        $X_\mathcal{B}$&Path space of $\mathcal{B}$& \S \ref{subsubsec:biinf}\\
        $\mathcal{P}_k(\bar{e})$&$k^{th}$ approximant of a tiling defined by $\bar{e}$&\S \ref{sec:RandSub}\\
        $\Omega_x$& Tiling space constructed from the random substitutions defined by $x$&\S \ref{sec:RandSub}\\
        $\theta_{(n)_x}$& Product $\theta_{(n)_x} = \theta_{x_1}\cdots \theta_{x_n}$ of contraction factors&\S \ref{sec:RandSub}\\
        $\mathcal{T}_{\bar{e}}$&Tiling defined by $\bar{e}\in X_{\mathcal{B}}$& \S \ref{sec:RandSub}\\
        $c$, $\mathcal{C}_\mathcal{T}$&Choice function, control points &\S \ref{subsec:choices}\\
        $\mathcal{P}_{k}(v_i)$&Canonical level-$k$ supertile of type $i$&\S \ref{subsubsec:controlPts}\\
        $\mathcal{M}_k^\pm$&Multimatrix/finite dimensioncal $C^*$-algebra &\S \ref{subsec:algebras}\\
        $i_k^\pm$&Inclusion of $\mathcal{M}_{k-1}^\pm$ into $\mathcal{M}_k^\pm$ &\S \ref{subsec:algebras}\\
        $LF(\mathcal{B}^\pm)$&Locally finite $*$-algebras, inductive limit of $(\mathcal{M}^\pm_k,i^\pm_k)$&\S \ref{subsec:algebras}\\
        $\tr(\mathcal{B}^\pm)$,$\tr^*(\mathcal{B}^\pm)$&Trace and cotrace spaces of $LF(\mathcal{B}^\pm)$&\S \ref{subsec:traces}\\
        $\mathfrak{t}_{k,i}$&Basis (trace) elements of $\tr(\mathcal{M}_k)$&\S \ref{subsec:traces}\\
        $\Phi_x$&Renormalization map from $\Omega_x$ to $\Omega_{\sigma(x)}$&\S \ref{sec:renorm}\\
        $\Lambda_x^{(k)}, \Lambda_{x,i}^{(k)},\Gamma^{(k)}_x$ &Return vectors of level-$k$ supertiles and the group they generate& \S \ref{subsec:RV}\\
        $G_x$, $\bar{G}_x$&Renormalization cocycle on return vectors defined by $G_x\alpha_x(\tau) = \alpha_{\sigma(x)}(\theta_{x_{-1}}^{-1}\tau)$, $\tau\in \Lambda_x$, and its dual&\S \ref{subsec:RV}\\
        $\mathfrak{R}_x$, $\mathfrak{R}_x^*$&Real vector space spanned by the basis of $\Gamma_x$ and its dual&\S \ref{subsec:RV}\\
        $\mathfrak{G}:\mathfrak{R}_N\rightarrow \mathfrak{R}_N$&Return vector cocycle $(x,v)\mapsto (\sigma^{-1}(x),\bar{G}_xv)$&\S \ref{subsec:RV}\\
        $G_+$&Trace cocycle on the trace bundle $\tr^+(\Sigma_N)$&\S \ref{subsec:traceCocycle}\\
        $\Theta^{(k)}_x$&$\Theta^{(k)}_x = \mathcal{A}_k^x\cdots \mathcal{A}_1^x$ is the matrix product given by the trace cocycle&\S \ref{subsec:traceCocycle}\\
        $\mathcal{W}^\ell$&Set of return vectors for the substitution rule $S_\ell$&\S \ref{sec:LFreturn}\\
        $a_{x,i,\lambda}^k$&Twisted elements of the LF algebra $LF(\mathcal{B}^+_x)$&\S \ref{sec:LFreturn}\\
        $w = w^-w^+$&Positively simple word&\S \ref{sec:LFreturn}\\
        $\mathcal{S}_R^\mathcal{T}(f,\lambda)$&Twisted integral of $f$ by $\lambda$ in $[-R,R]^d$&\S \ref{sec:twisted}\\
        $t_j^{(k)}$&Level-$k$ supertile of type $j$&\S \ref{sec:twisted}\\
        $AP(\Omega), AP^k(\Omega)$&Anderson-Putnam (AP) complex of $\Omega$, and the level-$k$ AP complex&\S \ref{subsec:AP}\\
        $[\Rd]\in H^1(\Omega;\mathbb{R}^d)$&Deformation/shape vector represented by the 1-form $\Rd$&\S \ref{subsec:deform}\\
        $\mathcal{T}_{\Rd}$, $\Omega_x^{\Rd}$&Tiling and tiling space deformed by $\Rd$&\S \ref{subsec:deform}\\
        $\mathcal{D}^x_N(w,\rho,[\Rd],\lambda)$&Density of times up to $N$ for which the fiber dynamics $\Phi_x^{(n)*}[\Rd](\lambda)$ stay away from a lattice points in $H^1$&\S \ref{sec:CQVC}\\
        $\mathfrak{B}_x(\varrho,\delta,\vartheta)$&Set of bad deformation parameters $[\Rd]\in H^1(\Omega;\mathbb{R}^d)$ for which $\mathcal{D}_N^x<1-\delta$ cannot be guaranteed for arbitrarily large $N$&\S \ref{sec:dimension} \\
        \hline
      \end{tabular}
      \end{center}
      
      \newpage
      
      \section{Introduction and statement of results}
      Let me start describing what \textbf{quantitative} weak mixing is for a finite measure-preserving flow $\varphi_t:\Omega\rightarrow \Omega$ on a compact metric space $\Omega$. Weak mixing is equivalent to the non-existence of non-constant $L^2$-eigenfunctions for the flow, that is, non-existence of functions $f$ that satisfy $f\circ \varphi_t = e^{2\pi \imath t\lambda}f$ for all $t\in\mathbb{R}$ and some $\lambda\in\mathbb{R}$ (the eigenvalue). One can quantify how far away a system is from having an eigenfunction: given a function $f\in L^2$, its \textbf{spectral measure} $\mu_f$ is obtained, by Bochner's theorem, by taking the Fourier transform of the correlation function $\langle f\circ \varphi_t, f\rangle$. In the case of $f$ being an eigenfunction with eigenvalue $\lambda$, the spectral measure $\mu_f$ is a Dirac measure at $\lambda$. Recall that the \textbf{lower and upper local dimensions} of a locally finite Borel measure $\mu$ are the quantities
      $$d^-_\mu(x) := \liminf_{r\rightarrow 0^+}\frac{\log \mu (B_r(x))}{\log r}\hspace{.4in}\mbox{ and }\hspace{.4in}d^+_\mu(x) := \limsup_{r\rightarrow 0^+}\frac{\log \mu (B_r(x))}{\log r}.$$
      In the case of a spectral measure $\mu = \mu_f$, the local dimensions will be denote by $d_f^\pm(x): = d_{\mu_f}^\pm(x)$.
      
      The upper and lower dimensions of the spectral measure of an eigenfunction are zero. Quantitative weak mixing therefore involves bounding the local dimension of spectral measures from below, that is, away from the case of Dirac measures, which appear for eigenfunctions.
      
      Over the past decade, a series of works by Bufetov and Solomyak \cite{BS:substitution, BS:genus2, BS:spectralCocycle, BS:translation} have studied the issue of quantitative weak mixing for an increasingly large class of minimal and uniquely ergodic flows, culminating with a very large class of non-stationary, one-dimensional tiling spaces, a subset of which models a typical flow on flat surfaces of genus greater than or equal to two. Recently \cite{forni:twist} Forni has developed an extension from his theory of flat surfaces in \cite{forni:deviation} through the use of twisted cohomology in order to obtain results on quantitative weak mixing for translation flows on typical higher-genus surfaces. Forni's contribution \cite{forni:twist} was done after the results of Bufetov-Solomyak \cite{BS:genus2} for translation flows on genus two, but before their results for flows on surfaces of arbitrary genus. However, the results  of \cite{BS:translation} cover a very general class of one-dimensional tiling spaces as well as translation flows on flat surfaces of infinite genus.

      Much less has been done in this direction for higher rank actions, in particular for actions of $\mathbb{R}^d$. Spaces coming from tilings are natural candidates for these questions and some spectral results are known. For example, Solomyak \cite{SolomyakEigenfunctions} has characterized weak mixing for tiling spaces which are constructed from a single substitution rule. Emme \cite{Emme:spectral} has studied the behavior of spectral measures at zero for self-similar tilings of $\mathbb{R}^d$. These results are extended  in the (unpublished) Master's thesis of Marshall \cite{Marshall:master}, with results on quantitative weak mixing for a class of self-similar tilings of $\mathbb{R}^d$. Although it does not directly addresses questions related to eigenfunctions, the work \cite{BGM:correlation} brings forth methods to determine the absence of absolute continuity of spectral measures in a large class of self-similar tilings of any dimension.
      
      This paper is a contribution to the expanding list of systems for which there are results on quantitative weak mixing. In \cite{ST:random, T:TTT}, a method of studying (globally) random substitution tilings was introduced and developed. The idea is as follows: suppose there are $N$ different compatible substitution rules $S_1,\dots, S_N$ on $M$ prototiles $t_1,\dots, t_M$, and that a tiling was created from these by applying the different substitution rules at random over all the tiles. This procedure would create a tiling whose structure would not be self-similar, but the structure would be represented by the order in which the substitutions were applied to construct the tiling. In other words, some of the structure would be recorded by a point $x\in \Sigma_N$ in the $N$-shift. This is very different from the (locally) random constructions where not only are the substitutions chosen at random, but so are the tiles on which they are applied. Systems of globally random substitution similar to the one here sometimes go by the name S-adic systems or mixed substitution systems \cite{GM:mixed}.
      
      The study of (globally) random substitution tilings is made possible through the use of Bratteli diagrams (see \S \ref{subsec:brat} for definitions): they hold the information of both how substitutions are applied and in which order. More specifically, given $x = (x_1,x_2,\dots)\in\Sigma_N^+$, the level $n$ of the Bratteli diagram $B_x$ encodes the substitution $S_{x_n}$. There is a continuous map $\Delta_x:X_{B_x}\rightarrow \Omega_x$ from the path space of $B_x$ to a tiling space $\Omega_x$. Very importantly, with this construction come homeomorphisms of tiling spaces $\Phi_x:\Omega_x\rightarrow \Omega_{\sigma(x)}$. 

      As in the works of Bufetov-Solomyak and Forni, the route to proving estimates on the local dimension for spectral measures are via the study of \textbf{twisted ergodic integrals}. That is, suppose $\mathbb{R}^d$ acts on $\Omega$ minimally and preserving a Borel probability measure $\mu$. For some $f\in L^2(\Omega,\mu)$ and $\lambda\in\mathbb{R}^d$, the twisted integral of $f$ by $\lambda$ over $C_R(0)$ is
      $$\mathcal{S}_R^{x}(f,\lambda):=\int_{C_R(0)}e^{-2\pi \imath \langle \lambda, t\rangle }f\circ \varphi_t(x)\, dt$$
      where $C_R(0)$ is the $d$-cube of sidelength $2R$ centered around the origin. The relation between twisted integrals and quantitative weak mixing is given by the following fact, which goes back to Hof \cite{Hof:scaling} (see \S \ref{sec:proofs} for the proof).
      \begin{lemma}
        \label{lem:dimension}
        Suppose $\mathbb{R}^d$ acts on $\Omega$ preserving a Borel probability measure $\mu$. Suppose that for some $\lambda\in\mathbb{R}^d$, $R_0>0$, $f\in L^2(\Omega,\mu)$ and $\alpha\in(0,d)$
  $$\left\|\mathcal{S}_R(f,\lambda)\right\|_{L^2}\leq C_1R^{d-\alpha}$$
        for all $R>R_0$. Then
        $$\mu_f(B_r(\lambda))\leq Cr^{2 \alpha}$$
        for all $r<1/2R_0$ and some $C>0$. In particular, the lower local dimension of the spectral measure satisfies
        $$d^-_f(\lambda)\geq 2 \alpha.$$
      \end{lemma}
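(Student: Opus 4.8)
The plan is to expand the squared $L^2$-norm of the twisted integral, collapse the resulting triple integral using the invariance of $\mu$, and then recognize it via Bochner's theorem as the integral of a Fejér-type kernel against the spectral measure $\mu_f$. Treating $\mathcal{S}_R(f,\lambda)$ as the function $x\mapsto \mathcal{S}_R^x(f,\lambda)$, I would first write
$$\left\|\mathcal{S}_R(f,\lambda)\right\|_{L^2}^2 = \int_\Omega\int_{C_R(0)}\int_{C_R(0)} e^{-2\pi i\langle\lambda,\,t-s\rangle}\, f(\varphi_t x)\,\overline{f(\varphi_s x)}\, dt\, ds\, d\mu(x).$$
The substitution $y=\varphi_s x$ together with $\varphi$-invariance of $\mu$ turns the inner $\Omega$-integral into the correlation $\langle f\circ\varphi_{t-s},f\rangle$, so the whole expression depends on $t,s$ only through their difference.

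Next I would invoke Bochner's theorem in the form $\langle f\circ\varphi_u,f\rangle=\int_{\mathbb{R}^d}e^{2\pi i\langle u,\xi\rangle}\,d\mu_f(\xi)$ and interchange the order of integration, which is legitimate since $\mu_f$ is finite and the exponential integrand is bounded. Carrying out the $t$- and $s$-integrals over the cube $C_R(0)$ factorizes coordinatewise and yields
$$\left\|\mathcal{S}_R(f,\lambda)\right\|_{L^2}^2 = \int_{\mathbb{R}^d}\left|\int_{C_R(0)}e^{2\pi i\langle t,\,\xi-\lambda\rangle}\,dt\right|^2 d\mu_f(\xi) = \int_{\mathbb{R}^d}\prod_{j=1}^d \frac{\sin^2\!\big(2\pi R(\xi_j-\lambda_j)\big)}{\pi^2(\xi_j-\lambda_j)^2}\, d\mu_f(\xi).$$
The integrand is a nonnegative kernel of Fejér type, concentrated near $\xi=\lambda$, and exploiting this concentration is the heart of the argument.

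The key step is a pointwise lower bound for this kernel near $\lambda$. Using $\sin\theta\geq \tfrac{2}{\pi}\theta$ on $[0,\pi/2]$, each factor is at least $16R^2/\pi^2$ whenever $|\xi_j-\lambda_j|\leq \tfrac{1}{4R}$; hence on the ball $B_{1/(4R)}(\lambda)$ the kernel is bounded below by $(16/\pi^2)^d R^{2d}$. Discarding the rest of the nonnegative integral and invoking the hypothesis gives
$$\left(\tfrac{16}{\pi^2}\right)^{\!d} R^{2d}\,\mu_f\!\big(B_{1/(4R)}(\lambda)\big)\leq \left\|\mathcal{S}_R(f,\lambda)\right\|_{L^2}^2\leq C_1^2\, R^{2d-2\alpha}.$$
Solving for the measure and setting $r=\tfrac{1}{4R}$ produces $\mu_f(B_r(\lambda))\leq C\,r^{2\alpha}$ for all $r$ below a fixed multiple of $1/R_0$, which is the desired polynomial bound; matching the precise constant in the stated range $r<\tfrac{1}{2R_0}$ is a routine adjustment of the kernel estimate. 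Finally, taking logarithms and dividing by $\log r<0$ reverses the inequality, so $\tfrac{\log\mu_f(B_r(\lambda))}{\log r}\geq 2\alpha+\tfrac{\log C}{\log r}\to 2\alpha$ as $r\to 0^+$, giving $d^-_f(\lambda)\geq 2\alpha$.

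I expect the only real obstacle to be bookkeeping: obtaining the Fejér-kernel lower bound with clean dimensional constants and reconciling the admissible radius $\tfrac{1}{4R_0}$ that the naive cube-inside-ball comparison yields with the stated $\tfrac{1}{2R_0}$. The analytic ingredients — the invariance reduction, Bochner's representation, and positivity of the kernel — are all standard, so I anticipate no genuine difficulty beyond tracking these constants.
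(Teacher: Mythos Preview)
Your proposal is correct and follows essentially the same route as the paper: expand $\|\mathcal{S}_R(f,\lambda)\|_{L^2}^2$, use invariance and Bochner's theorem to obtain the Fej\'er-type kernel $\prod_j \frac{\sin^2(2\pi R(\xi_j-\lambda_j))}{\pi^2(\xi_j-\lambda_j)^2}$, bound it below near $\lambda$, and read off the measure estimate. The only difference is cosmetic --- the paper asserts a constant $C_2$ with $C_2R^2\le \tfrac{\sin^2(2\pi wR)}{\pi^2 w^2}$ on $[-\tfrac{1}{2R},\tfrac{1}{2R}]$ and uses $B_r(\lambda)\subset C_r(\lambda)$, while you make the sine bound explicit and work with $r=\tfrac{1}{4R}$; your caution about the radius is in fact well placed, since the paper's kernel bound degenerates at the endpoints $w=\pm\tfrac{1}{2R}$.
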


      A common tool in effective ergodic theory (that is, the study of rates of convergence of averages appearing in ergodic theory) are the so-called renormalization tools such as renormalization cocycles (see \cite{Forni:ICM}). These are linear maps defined on a vector bundle over a ``space of systems'', and the growth properties of vectors under these bundle maps determine the speed of convergence of certain averages on a typical system. In our setting here the ``space of systems'' is the collection of tiling spaces $\{\Omega_x\}$ parametrized by $x\in\Sigma_N$, and there are several bundles of interest, see below. In our case, the dynamics on the base space are the maps $\Phi_x:\Omega_x\rightarrow \Omega_{\sigma(x)}$ defined by our construction.
      
      Given the topological nature of tiling spaces, the methods used here are more in line with those of Bufetov-Solomyak than those of Forni. However, there is a crucial difference between the work here and both of those works: both \cite{BS:translation} and \cite{forni:twist} make use of a renormalization cocycle which describes \emph{both} the hierarchical structure of the foliated space as well as the structure of return vectors to a special transversal because, for $\mathbb{R}$ actions, both of these objects coincide, since all return vectors are colinear. Both of those cocycles have been used to study the rates of deviation of ergodic integrals for functions with some regularity. 

      The situation is different for $\mathbb{R}^d$ actions when $d>1$, the higher rank case. More precisely, there is a cocycle which relates the hierarchical structures of tiling spaces $\{\Omega_{\sigma^k(x)}\}$ along the orbit of $x$. This is called the \textbf{trace cocycle} in \cite{T:TTT}, where it was shown that its behavior describes the behavior of ergodic integrals of the $\mathbb{R}^d$ action on tiling spaces. But there is another cocycle at play. Given a transversal $\mho_x\subset \Omega_x$ in a tiling space, there is a set of vectors $\Lambda_x\subset\mathbb{R}^d$ which describe returns to the transversal. The group $\Gamma_x:=\mathbb{Z}[\Lambda]$ generated by integer combinations of these vectors is a finitely generated Abelian group whenever the tilings in $\Omega_x$ have finite local complexity. The renormalization maps $\Phi_{\sigma(x)}:\Omega_{x}\rightarrow \Omega_{\sigma(x)}$ induce a map from $\Gamma_x$ to $\Gamma_{\sigma(x)}$ which, under a careful choice of generators for $\Gamma_x$ and $\Gamma_{\sigma(x)}$, is represented by an integer matrix. This map is called the \textbf{return vector cocycle}, and its behavior gives the estimates necessary to bound spectral measures from below. Thus, in order to obtain quantitative weak mixing results for higher rank actions, the simultaneous understanding of two renormalization cocycles is needed.

      How can these two different renormalization cocycles be understood? A quick answer is cohomologically. It is shown in \S \ref{subsec:RetVecCoh} that every return vector $v\in\mathbb{R}^d$ gives a cohomology class $[v]\in \check H^1(\Omega;\mathbb{Z})$ (more generally, in $H^1$ of the Lie-algebra cohomology of the $\mathbb{R}^d$ action). Whereas the usual renormalization cocycle describes the action of the renormalization maps on the top level cohomology of the tiling space $H^d(\Omega;\mathbb{R})$ (more generally, $H^d$ of the Lie-algebra cohomology of the $\mathbb{R}^d$ action), the return vector cocycle describes the action of the renormalization maps on $H^1(\Omega;\mathbb{R})$. Thus, when $d>1$, these are different cocycles and both of these are needed for quantitative weak mixing results. The return vector cocycle in fact projects of the cocycle $\Phi^*_x$ induced by the renormalization maps on the first cohomology $H^1(\Omega;\mathbb{R})$ of the tiling space (see \S\ref{subsec:RetVecCoh} for details).

      One way in which one can pick tilings at random is to choose the sequence of substitutions at random, parametrized by a point $x\in\Sigma_N$ according to some shift-invariant probability measure on $\Sigma_N$. But there is another way of randomizing the construction of a tiling, and that is through deformations (see \S\ref{subsec:deform} for definitions related to deformations). For any tiling $\mathcal{T}$ of finite local complexity the set $H^1(\Omega_\mathcal{T};\mathbb{R}^d)$ is a natural space of deformations of the tiling, a subset of which can be thought of as a moduli space for the tiling space $\Omega_\mathcal{T}$. Therefore, in addition to picking the sequences of substitutions at random, one can pick the geometry of the tilings at random from $H^1(\Omega;\mathbb{R}^d)$. In dimension 1 this amounts to picking the length of the tiles in the tiling. However, for dimension greater than 1 the geometry of the tiles can be varied in much more interesting ways. The standard language to use in the deformation of orbits is that of \textbf{time changes}. For higher rank actions, and especially in tiling spaces, the language of \textbf{shape changes} is also used.    The goal of this paper is to give sufficient conditions for when random choices of hierarchical structure (i.e. order of substitutions) and geometry (i.e. deformation parameter or time change) give tilings and tiling spaces which are quantitatively weakly mixing. The results here can be seen as a quantitative, non-stationary and measure-theoretic generalization of the topological results for self-similar tilings in \cite{ClarkSadun:shape} (see (vi) below in Remark \ref{rem:1}).
      \subsection{Statement of results}
      Let me describe everything needed to understand the main result. A substitution rule $S$ is called \textbf{uniformly expanding} if there is a $\theta\in(0,1)$ such that every prototile is expanded by $\theta^{-1}$ under the substitution rule $S$. Let $S_1,\dots, S_N$ be a collection of substitution rules on $M$ prototiles $t_1,\dots, t_M$. See \S \ref{subsec:tilings} for the necessary background on tilings and tiling spaces. Here it will be assumed that the substitution rules are \textbf{compatible} in the sense of \cite{GM:mixed} (see \S \ref{sec:RandSub}): any arbitrary combination of them produces tilings in which tiles meet edge to edge and have finite local complexity. That this is a large and interesting class of tilings at least in dimension two is evident from the experimental results in \cite{GKM:computer}.
      
      As mentioned in the previous section, by picking a element of $x = (x_1,x_2,\dots)\in\Sigma_N^+$ one specifies a sequence of substitutions $S_{x_1},S_{x_2},\dots$ from which one can try to construct a tiling space $\Omega_x$. Naturally, some conditions need to be imposed so that, given a set of substitution rules, a choice of $x$ will in fact produce a tiling space $\Omega_x$. Tiling spaces of aperiodic tilings are metric spaces\footnote{In this paper all tiling spaces considered are compact.} which are foliated by the orbits of a natural action of $\mathbb{R}^d$ by translation, where $d$ is the dimension of the tiling (see \S\ref{sec:background}). The construction involves first creating a Bratteli diagram $B_x$, an infinite directed graph, which records not only the order of the substitutions given by $x$, but also some accompanying combinatorial information about the substitution. This combinatorial information can be expressed in a sequence of integer valued, non-negative matrices $\{A_k\}_k$, where $A_i$ encodes some information from the substitution $S_{x_i}$ (in fact $A_i$ is the substitution matrix $F_{x_i}$ for $S_{x_i}$). The basic necessary condition in order to construct an unambiguous tiling space $\Omega_x$ from $x$ is requiring that the sequence $\{A_k\}$ has the property that for every $m\in\mathbb{N}$ there is an $n>m$ such that the matrix $A_n\cdots A_m$ has all positive entries. Note that this property is $\sigma$-invariant: if the matrices for $B_x$ have this property then the matrices for $B_{\sigma(x)}$ have this property. A $\sigma$-invariant measure is called \textbf{minimal} if for $\mu$-almost every $x$, the matrices for $B_x$ have this property. Typical points of minimal measures give tiling spaces with minimal actions of $\mathbb{R}^d$. See \S\ref{sec:RandSub} for more details.
      
      A further condition is needed to obtain quantitative weak mixing results. A word $w  = w_1w_2\dots w_n$ is \textbf{simple} if $w_i\cdots w_n\neq w_1\cdots w_{n-i+1}$ for all $1<i\leq n$. A word $w$ is \textbf{positively simple} if it is a simple word and in addition:
      \begin{enumerate}
      \item it breaks up into two subwords $w= w^-w^+ = w_1^-\cdots w_{n^-}^-w_1^+\cdots w_{n^+}^+$, and
      \item each entry in both matrices $Q^\pm := F_{w_{n^\pm}^\pm}\cdots F_{w_1^\pm}$ is strictly greater than 1, where $F_{w_j}$ is the substitution matrix for $S_{w_j}$.
      \end{enumerate}
      Let $C([w_1.w_2])\subset \Sigma_N$ be the cylinder set with the word $w_1w_2$ around the origin. A $\sigma$-invariant probability measure $\mu$ on $\Sigma^+_N$ for which $\mu(C([w^-.w^+]))>0$ for a positively simple word $w$ is called a \textbf{positively simple measure}. Any such measure is a minimal measure. The existence of a positively simple word which occurs infinitely often in $x$ guarantees the existence of a finite set $\Lambda_w\subset \mathbb{R}^d$ of vectors which represent useful return vectors of arbitrarily large scale for the $\mathbb{R}^d$ action on $\Omega_x$. This is covered in \S\ref{sec:LFreturn}.
      
      Let $r_x$ be the rank of the group $\Gamma_x=\mathbb{Z}[\Lambda_x]$ generated by integer combinations of return vectors. Chosing a set of generators $\{v_i\}$ of $\Gamma_x$ one can define the homomorphism called the \textbf{address map} $\alpha:\Gamma_x\rightarrow \mathbb{Z}^{r_x}$ by sending the generators $\{v_i\}$ to the standard generators. As such, every element $\tau\in\Lambda_x$ has an address $\alpha(\tau)$. A positively simple measure $\mu$ (corresponding to a positively simple word $w$) is called \textbf{postal} if $\alpha(\Lambda_w)$ generates $\mathbb{Z}^{r_x}$, that is, if there are enough return vectors in $\Lambda_w$ so that any address can be found with these vectors through the address map.

      Let $\mu$ be a positively simple, postal, $\sigma$-invariant, ergodic probability measure on $\Sigma_N:= \{0,\dots, N-1\}^\mathbb{Z}$ such that the return vector cocycle is integrable (the return vector cocycle is defined in \S\ref{subsec:RV} ). For any point $x\in\Sigma_N$ consider the corresponding tiling space $\Omega_x$. As mentioned above, the geometry of the tilings in $\Omega_x$ are given by a representative $\Rd_x$ of a class $[\Rd_x]\in H^1(\Omega_x;\mathbb{R}^d)$, which is a finite dimensional vector space carrying a natural Lebesgue measure. There is an open subset $\mathcal{M}_x\subset H^1(\Omega_x;\mathbb{R}^d)$ which parametrizes classes of deformations (equivalently, time changes) of tilings in $\Omega_x$ (see \S\ref{subsec:deform}) which inherits a natural absolutely continuous measure from $H^1(\Omega_x;\mathbb{R}^d)$. Any tiling space obtained by deforming the tiling space $\Omega_x$ by deforming through a representative $\Rd$ of a class $[\Rd]\in \mathcal{M}_x$ will be denoted by $\Omega_x^{\Rd}$. The set $\mathcal{M}_x$ can be thought of as a moduli space of $\Omega_x$.
      
      The shift $\sigma:\Sigma_N\rightarrow \Sigma_N$ induces a homeomorphism of tiling spaces $\Phi_x:\Omega_x\rightarrow \Omega_{\sigma(x)}$ which generalizes the hyperbolic self-homeomorphisms which appear in self-similar tiling spaces. Such maps induce an action $\Phi_x^*:H^*(\Omega_{\sigma(x)};\mathbb{Z})\rightarrow H^*(\Omega_{x};\mathbb{Z})$ on cohomology. Under the assumption that $\log \|\Phi_x^*\|$ is $\mu$-integrable for some $\sigma$-invariant and ergodic $\mu$, the Oseledets theorem gives a decomposition of the first cohomology space $H^1(\Omega_x;\mathbb{R}) = Z^+_x\oplus Z^-_x$, where $Z^+_x$ is the Oseledets space of strictly positive Lyapunov exponents. Throughout the paper, $C_R(0) := [-R,R]^d\subset \mathbb{R}^d$.

      Let $\mu$ be a positively simple measure (for a positively simple word $w$) and denote by $k_1(x)$ the first return of $x$ to $C([w^-.w^+])$ under $\sigma^{-1}$. Let $\mu(x|x^-)$ be the conditional distribution of $x$'s conditioned on the past $x^- = \dots x_{-2} x_{-1}$.
      \begin{theorem}
        \label{thm:main}
        Let $S_1,\dots, S_N$ be a collection of $N$ uniformly expanding compatible substitution rules on the $M$ prototiles $t_1,\dots, t_M$ of dimension $d$ and $\mu$ a $\sigma$-invariant positively simple, postal ergodic probability measure on $\Sigma_N$ (with corresponding positively simple word $w = w^-.w^+$) such that $\log \|\Phi_x^*\|\in L^1_\mu$, where $\Phi^*_x$ is the map on the first cohomology of $\Omega_x$ induced by the shift. Suppose there is an $\varepsilon>0$ and $C>1$ such that $\int_{C([w^-.w^+])}\|\Phi^*_{\sigma^{-k_1(x)-1}(x)}\cdots \Phi^*_{\sigma^{-1}(x)} \|^\varepsilon \, d\mu(x|x^-) \leq  C$ for all $x^-$ ending with $w^-$. Then if $\mathrm{dim}(Z^+)>d$, there exists a constant $\alpha_\mu\in(0,1]$ such that for $\mu$-almost every $x\in\Sigma_N$, there is a $\vartheta>1$ such that for almost every $[\Rd]\in\mathcal{M}_x$ there exists a representative $\Rd$ such that for any $B\geq 2$, any Lipschitz function $f:\Omega_x^{\Rd}\rightarrow \mathbb{R}$, $\mathcal{T}\in\Omega_x^{\Rd}$  and $\lambda\in\mathbb{R}^d$ with $B^{-1}< \|\lambda\|\leq B$, 
        $$\left|\int_{C_R(0)}e^{-2\pi \imath \langle \lambda, t\rangle }f\circ \varphi_t(\mathcal{T})\, dt\right|\leq C_{f,x} R^{d-\alpha_\mu},$$
        for all $R>R_0(x)\cdot B^\vartheta$. In particular, the lower local dimension of the spectral measures is bounded from below:
        $$2\alpha_\mu\leq d_f^-(\lambda).$$
      \end{theorem}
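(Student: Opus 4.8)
The plan is to prove the stated bound on the twisted integral and then feed it into Lemma \ref{lem:dimension}: since the estimate is pointwise in $\mathcal{T}$, it immediately yields the $L^2$ bound $\|\mathcal{S}_R(f,\lambda)\|_{L^2}\leq C_{f,B}R^{d-(\alpha_\mu-\varepsilon)}$, so Lemma \ref{lem:dimension} applied with $\alpha=\alpha_\mu-\varepsilon$ gives $d_f^-(\lambda)\geq 2(\alpha_\mu-\varepsilon)$, and letting $\varepsilon\to 0$ produces $2\alpha_\mu\leq d_f^-(\lambda)$. Thus all the work is in the integral estimate. I would first use the renormalization homeomorphisms $\Phi_x:\Omega_x\rightarrow\Omega_{\sigma(x)}$ and the Bratteli-diagram hierarchy to unfold the integral over $C_R(0)$ along the tower of supertiles. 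For $\mathcal{T}\in\Omega_x^{\Rd}$ and a scale $R$, I would choose the level $n=n(R)$ at which level-$n$ supertiles have linear size comparable to $R$ (this is possible at arbitrarily large scale precisely because $\mu$ is positively simple), and decompose $C_R(0)$, up to a boundary error controlled by the Lipschitz norm of $f$, into translates of level-$n$ supertiles. This rewrites the twisted integral as a weighted sum in which the multiplicities and sizes of supertiles are governed by the products $F_{x_n}\cdots F_{x_1}$ of substitution matrices, i.e. the \emph{trace cocycle} of \cite{T:TTT}, while each translate by a return vector $\tau\in\Lambda_x$ contributes a phase $e^{-2\pi i\langle\lambda,\tau\rangle}$, and the way these return vectors rescale from level to level is exactly the \emph{return vector cocycle} on $H^1(\Omega_x;\mathbb{R})$.

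The core of the argument is to fuse the two cocycles into a single twisted transfer operator and to estimate its growth by Oseledets' theorem applied to $\Phi_x^*$. The untwisted integrals over individual supertiles are controlled by the dominant behavior of the trace cocycle together with the regularity of $f$, so the normalization of $\mathcal{S}_R$ by $R^d$ is governed by the top exponent; the twist enters only through the return-vector phases, whose effect is measured on the expanding subspace $E^+_x$. Because $\lambda\in\mathbb{R}^d$ is $d$-dimensional and, under the shape change $\Rd$, pairs with return vectors through a $d$-dimensional period map, while the postal hypothesis forces $\alpha(\Lambda_w)$ to generate $\mathbb{Z}^{r_x}$, the assumption $\dim(E^+_x)>d$ leaves a nontrivial subspace of expanding directions along which the phases keep oscillating and cannot be trivialized. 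This surviving expanding direction forces the twisted sum to grow at a rate strictly below the dominant one, and $\alpha_\mu\in(0,1]$ is defined as the resulting normalized gap between the top exponent and this subdominant rate, the $+\varepsilon$ absorbing the subexponential Oseledets fluctuations.

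To turn ``cannot be trivialized'' into genuine cancellation I would run a generic-deformation argument. For each fixed $\lambda$ with $0<\|\lambda\|\leq B$, the set of $\Rd\in\mathcal{M}_x$ for which the phases $e^{-2\pi i\langle\lambda,\tau\rangle}$ fail the required non-resonance condition along the expanding directions has zero Lebesgue measure; a Borel--Cantelli estimate along the renormalization levels, using ergodicity of $\mu$ and integrability of the return vector cocycle, makes this quantitative and upgrades it to the claimed power saving for almost every $\Rd$. Running this over a countable dense set of frequencies and then invoking the Lipschitz regularity of $f$ to bound the modulus of continuity of $\lambda\mapsto\mathcal{S}_R(f,\lambda)$, I would pass from pointwise-in-$\lambda$ estimates to the uniform bound over the whole ball $\|\lambda\|\leq B$, with the $B$-dependence appearing only in the constant $C_{f,B}$; this last step is exactly where the hypothesis that $f$ is Lipschitz is used.

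The step I expect to be the main obstacle is the simultaneous control of the two cocycles. In the one-dimensional setting of \cite{BS:translation} the hierarchical cocycle and the return vector cocycle coincide, since all return vectors are collinear, so a single spectral cocycle suffices; here the trace cocycle lives on $H^d$ and the return vector cocycle on $H^1$, and the twisted transfer operator couples them in a way that must be disentangled before Oseledets applies. Making the non-resonance estimate uniform over the frequency ball while keeping $\alpha_\mu$ independent of $\lambda$, $f$, and $\mathcal{T}$ is the delicate part, and it is precisely the strict inequality $\dim(E^+_x)>d$ that provides the extra expanding direction making this possible.
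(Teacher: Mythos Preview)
Your overall architecture---supertile decomposition, twisted sums controlled by the trace cocycle, phases controlled by the return vector cocycle, and a generic-in-$\Rd$ argument exploiting $\dim E^+_x>d$---matches the paper. But there is a genuine gap in the step where you pass from pointwise-in-$\lambda$ estimates to uniformity over $\mathcal{C}_B$.

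Your plan is: show that for each fixed $\lambda$ the bad set of $\Rd$'s is null, intersect over a countable dense set of $\lambda$'s, and then use the modulus of continuity of $\lambda\mapsto \mathcal{S}_R^{\mathcal{T}}(f,\lambda)$ to extend. This does not work, because that modulus is of order $\|f\|_\infty R^{d+1}|\lambda-\lambda'|$ (just differentiate the exponential), so to transfer a bound $R^{d-\alpha}$ from a dense set to all of $\mathcal{C}_B$ you would need the dense set to be $R^{-(1+\alpha)}$-fine, i.e.\ the net depends on $R$. A countable union of null sets is still null, but the \emph{threshold} $R_0$ beyond which the density condition kicks in depends on $\lambda$ through the Veech criterion, and you have no mechanism to make it uniform over an $R$-dependent net. (Also, that modulus depends only on $\|f\|_\infty$, not on the Lipschitz constant; the Lipschitz hypothesis is used elsewhere, to approximate $f$ by level-$y$ TLC functions with error $\sim e^{-\eta_1 y/2}$, as in Lemma~\ref{lem:LipApprox}.)

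What the paper does instead is reverse the quantifiers at the level of the bad set itself. One defines
\[
\mathcal{E}_N(\rho,\delta,B)=\bigl\{\Rd\in\mathcal{M}_x^B:\ \exists\,\lambda\in\mathcal{C}_B\ \text{with}\ \mathcal{D}_N^x(w,\rho,\Rd,\lambda)\geq 1-\delta\bigr\},
\qquad \mathfrak{B}=\limsup_N \mathcal{E}_N,
\]
and proves directly that $\dim_H(\mathfrak{B})\leq \dim\mathcal{M}_x-\dim Z^+_x+d+\epsilon_*$ (Proposition~\ref{prop:dimension}). The point is that the existential quantifier over $\lambda$ is absorbed into a covering count: using the Oseledets contraction on $Z^+_x$, the pair $(\Rd,\lambda)$ is pinned down up to scale $e^{-(\eta_*-\epsilon)N}$ by the nearest-lattice-point sequence $K_n\in H^1(\Omega_{\sigma^{-n}(x)};\mathbb{Z})$, and an Erd\H{o}s--Kahane argument bounds the number of admissible sequences $(K_n)_{n\leq N}$ by $e^{o(N)}$. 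This is what replaces your Borel--Cantelli-plus-continuity step, and it is exactly where the hypothesis $\dim E^+_x>d$ is cashed in: it makes the covering dimension bound strictly below $\dim\mathcal{M}_x$, so $\mathfrak{B}$ is Lebesgue-null. For $\Rd\notin\mathfrak{B}$ the density condition $\mathcal{D}_N^x<1-\delta$ then holds for \emph{all} $\lambda\in\mathcal{C}_B$ and all large $N$ simultaneously, and the cohomological quantitative Veech criterion (Proposition~\ref{prop:CQVC}) finishes the job.
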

      I should point out that in this setting it is always true that $\mathrm{dim}\, Z^+_x \geq d$ for a tiling of dimension $d$. Thus the theorem requires one extra expanding Oseledets subspace. The consequence is that a generic time change of a self-similar, aperiodic tiling with large-enough unstable space does not have any eigenfunctions (see Remark (vi) below).
      \begin{corollary}
        \label{cor:main}
        Let $S$ be a primitive substitution rule on the $M>1$ prototiles $t_1,\dots, t_M$ of dimension $d$. If $\mathrm{dim}(Z^+)>d$ then there exists a constant $\alpha_\mu\in(0,1]$ and $\vartheta>1$ such that for almost every $[\Rd]\in\mathcal{M}_x$ there is a representative $\Rd$ such that for any $B>1$, any Lipschitz function $f:\Omega_x^{\Rd}\rightarrow \mathbb{R}$ of zero average, $\mathcal{T}\in\Omega_x$ and $\lambda\in\mathbb{R}^d$ with $B^{-1}< \|\lambda\|\leq B$, 
          $$\left|\int_{C_R(0)}e^{-2\pi \imath \langle \lambda, t\rangle }f\circ \varphi_t(\mathcal{T})\, dt\right|\leq C_{f,x} R^{d-\alpha_\mu},$$
          for all $R>N_0 B^\vartheta$. In particular, the lower local dimension of the spectral measures is bounded from below:
        $$2\alpha_\mu\leq d_f^-(\lambda).$$
      \end{corollary}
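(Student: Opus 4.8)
The plan is to deduce Corollary \ref{cor:main} from Theorem \ref{thm:main} by presenting the self-similar tiling space of $S$ as a degenerate random substitution system and verifying the hypotheses of the theorem for a suitable measure. Concretely, I would set $N=2$ and take the two substitution rules to be identical copies $S_1 = S_2 = S$. Then for every $x\in\Sigma_2$ the Bratteli diagram $B_x$ has the same substitution $S$ at every level, so the tiling space $\Omega_x = \Omega_S$ is the self-similar tiling space of $S$, the moduli space $\mathcal{M}_x=\mathcal{M}_S$, and the renormalization map $\Phi_x$ together with its cohomology action $\Phi_x^*$ are all independent of $x$ and equal to the objects induced by $S$ itself. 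For the measure I would take $\mu$ to be the $(\tfrac12,\tfrac12)$-Bernoulli measure on $\Sigma_2$, which is $\sigma$-invariant and ergodic.

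Next I would check each hypothesis of Theorem \ref{thm:main}. Since $\Phi_x^*$ is constant in $x$, the cocycle it generates is constant, so $\log\|\Phi_x^*\|\in L^1_\mu$ and the return vector cocycle is integrable automatically; moreover the Oseledets decomposition $H^1(\Omega_S;\mathbb{R}) = E^+_x \oplus E^-_x$ for a constant cocycle is nothing but the splitting into the generalized eigenspaces of $\Phi^*$ of modulus $>1$ (which is $E^+$) and of modulus $\le 1$, so the standing assumption $\mathrm{dim}(E^+_x) > d$ is exactly the hypothesis of the corollary. For positive simplicity I would exhibit the word $w = 1^p 2^p$ with $p$ large: it is simple (unbordered), since any proper suffix of length $\ell<2p$ either begins with strictly fewer $1$'s or ends in a block of $2$'s and hence cannot equal the prefix of the same length, while the two factors $w^- = 1^p$ and $w^+ = 2^p$ give $Q^- = Q^+ = F^p$, whose entries exceed $1$ once $p$ is large by primitivity of $F$ and the fact that its Perron eigenvalue, the volume expansion of $S$, exceeds $1$. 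As the cylinder $C([1^p.2^p])$ has positive Bernoulli measure, $\mu$ is positively simple.

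The remaining, and most delicate, point is to verify that $\mu$ is postal, i.e.\ that $\alpha(\Lambda_w)$ generates $\mathbb{Z}^{r_x}$. Here I would use that the self-similar tiling is minimal and repetitive with finite local complexity, so that $\Gamma_x=\mathbb{Z}[\Lambda_x]$ is finitely generated and the full family of return vectors is swept out by applying the expansion $F$ to a bounded set of basic return vectors; taking $p$ large enough forces the special return vectors $\Lambda_w$ produced by $w=1^p2^p$ (whose construction is the subject of \S\ref{sec:LFreturn}) to contain a generating set, yielding the postal property. This is the step I expect to require the most care, since it is where the geometry of the self-similar structure, rather than soft ergodic-theoretic input, is used. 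With all hypotheses in place, Theorem \ref{thm:main} applies and yields the stated bound for $\mu$-almost every $x$; since $\Omega_x = \Omega_S$ for every $x\in\Sigma_2$, the bound holds for the self-similar space $\Omega_S$ itself, and the lower bound $2\alpha_\mu \le d_f^-(\lambda)$ follows as in Lemma \ref{lem:dimension}.
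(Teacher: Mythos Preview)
Your argument is correct and follows the same overall route as the paper: realize the self-similar tiling space as a (degenerate) random substitution system, verify the hypotheses of Theorem~\ref{thm:main}, and conclude. The one substantive verification---that a sufficiently high iterate of $S$ produces supertiles whose internal same-type translation vectors already generate the Lagarias group $\Gamma_x$, so that the postal condition holds---is exactly what the paper invokes as well, and is the point you correctly flag as the one requiring care.

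Where you differ is in the choice of base measure and word. The paper passes to a high power $S^k$ and takes $\mu=\delta_x$ at the $\sigma$-fixed point representing $S^k$, asserting that positive simplicity and postality are then immediate. You instead keep $S$, duplicate it as $S_1=S_2=S$ on $\Sigma_2$, take the $(\tfrac12,\tfrac12)$-Bernoulli measure, and exhibit the genuinely unbordered word $w=1^p2^p$. Your setup has the advantage of fitting the paper's own definition of a simple word cleanly: at a fixed point the only words in the support are of the form $i^n$, which are never simple for $n\ge 2$ under the definition given, so the paper's phrasing is slightly informal on this point. The price you pay is a small amount of extra bookkeeping (the Bernoulli measure and return times to $C([1^p.2^p])$), but since $\Omega_x$, $\mathcal{M}_x$, $\Phi_x^*$ and the cocycles are all constant in $x$, this is harmless. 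The geometric content of the two arguments is identical.
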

      \begin{remark}
        \label{rem:1}
        Some remarks:
        \begin{enumerate}
        \item The main theorem does not imply all of the results of \cite{BS:translation}, which cover the case $d=1$. The reason is that the hypotheses here assume that the substitutions are uniformly expanding, which is not an assumption in \cite{BS:translation}. Removing this assumption for $d>1$ may give tilings of arbitrarily many scales, leading to tilings of infinite local complexity, which significantly complicates the analysis of the group of return vectors and cohomology spaces. This is not an issue for $d=1$ because all boundaries of tiles are the same. On the other hand, \cite{BS:translation} assumes the invertibility of the substitution matrices, which is not an assumption made here, although it seems to me like the results in \cite{BS:translation} can do without such assumption. 
        \item The value of $\alpha_\mu$ can be explicitly given in terms of the dynamical behavior of the return vector cocycle (defined in \S\ref{subsec:RV}); see Proposition \ref{prop:Veech} in \S\ref{sec:criterion}.
        \item There are two main steps in obtaining this theorem. The first is to proove a ``quantitative Veech criterion'' which gives bounds on the growth of the twisted ergodic integrals (and thus on the lower local dimension of the associated spectral measures) based on certain behavior of the return vector cocycle (this is proved in \S\ref{sec:criterion}). Applying this criterion to any given tiling space is not straight-forward, and so to apply this criterion, however, one has to introduce deformations, which is why a set of deformations has to play a role in the statement of the theorem. Using deformations, it is shown that the set of deformation parameters which do not satisfy the criterion has dimension strictly less than the set of deformation parameters. This is done through a ``linear exclusion of bad parameters'' argument, first introduced by Avila-Forni \cite{AvilaForni:WM}. The version employed here follows the Erd\H{o}s-Kahane method of Bufetov-Solomyak adapted to higher rank actions (see \S \ref{sec:dimension}).
        \item That $\alpha_\mu$ is bounded above by $1$ is a consequence that boundary effects appear in ergodic integrals for higher rank actions; see \S \ref{sec:twisted}.
        \item The ``simple'' part of a positively simple word implies that this word has no overlaps. These types of words always exist in a subshift as long as there are periodic orbits of arbitrary high period (see e.g. \cite[Lemma 2.6]{Yang:normal}). Therefore, invariant measures supported on mixing subshifts of finite type are positively simple measures. This was what was used in \cite{BS:translation} to apply their quantitative weak mixing results to flat surfaces: Rauzy-Veech induction has as base dynamics a mixing subshift of finite type, and so the Masur-Veech measure has the property that makes it a positively simple measure.
        \item In \cite[Theorem 1.7]{ClarkSadun:shape}, Clark and Sadun give a cohomological condition so that there exist representatives $\Rd$ for almost any choice of deformation class $[\Rd]$ such that the system is topologically weak mixing when the starting system is self-similar. By a result of Solomyak \cite[Theorem 3.3]{SolomyakEigenfunctions}, this implies measure theoretic weak mixing for typical deformation parameters in the self-similar case. Theorem \ref{thm:main} above can be seen as a non-stationary, quantitative, and measure-theoretic generalization of their result. Corollary \ref{cor:main} here requires more conditions than \cite[Theorem 1.7]{ClarkSadun:shape} but has stronger conclusion.
        \item Two dimensional examples for which the theorem applies can be constructed from the results of \cite{GKM:computer}.
        \end{enumerate}
      \end{remark}    

      Weak mixing is defined as the convergence in the average of correlations of functions of zero mean. The quantitative weak mixing results yield uniform rates of weak mixing for functions with sufficiently regularity in the leaf direction. More precisely, let $\Omega$ be a tiling space whose elements are repetitive, aperiodic tilings of dimension $d$ of finite local complexity. Assume that the $\mathbb{R}^d$ action on $\Omega$ is uniquely ergodic and denote by $C^1_L(\Omega)$ the set of Lipschitz functions on $\Omega$ whose leafwise derivative in any direction is Lipschitz.
\begin{theorem}
  \label{thm:main2}
  Let $S_1,\dots, S_N$ be a collection of $N$ uniformly expanding compatible substitution rules on the $M$ prototiles $t_1,\dots, t_M$ of dimension $d$ and $\mu$ a $\sigma$-invariant positively simple, postal ergodic probability measure on $\Sigma_N$ (with corresponding positively simple word $w = w^-.w^+$) such that $\log \|\Phi_x^*\|\in L^1_\mu$, where $\Phi^*_x$ is the map on the first cohomology of $\Omega_x$ induced by the shift. Suppose there is an $\varepsilon>0$ and $C>1$ such that $\int_{C([w^-.w^+])}\|\Phi^*_{\sigma^{-k_1(x)-1}(x)}\cdots \Phi^*_{\sigma^{-1}(x)} \|^\varepsilon \, d\mu(x|x^-) \leq  C$ for all $x^-$ ending with $w^-$. Then if $\mathrm{dim}(Z^+)>d$, there exists a constant $\alpha_\mu'\in(0,1]$ such that for $\mu$-almost every $x\in\Sigma_N$, for almost every $[\Rd]\in\mathcal{M}_x$ there exists a representative $\Rd$ such that for any $\varepsilon>0$, and any $f,g\in C^1_L(\Omega)$ of zero average,
    \begin{equation}
      \label{eqn:avgCorr}
  \int_{C_R(0)}|\langle f\circ \varphi_t,g\rangle|\, dt \leq C_{f,g,\varepsilon} R^{d-\frac{\alpha'_\mu}{2}+\varepsilon}
    \end{equation}
    for all $R>1$. Moreover, for all $\lambda\in \mathbb{R}^d$:
    \begin{equation}
      \label{eqn:uniform}
      \left|\int_{C_R(0)}e^{-2\pi \imath \langle \lambda, t\rangle }f\circ \varphi_t(\mathcal{T})\, dt\right|\leq C_{f,\varepsilon}\max\left\{R^{d-1+\frac{4}{4+\alpha'_\mu}+\varepsilon},R^{\frac{d+\lambda^*}{2}+\varepsilon}\right\},
    \end{equation}
        for $R>1$, where $\lambda^* = \max\{d-1,d\frac{\lambda_2}{\lambda_1}\}$. In particular, the lower local dimension of the spectral measures is bounded from below for any $\lambda\in\mathbb{R}^d$:
        $$d^-_f(\lambda)\geq \min\left\{\frac{2\alpha'_\mu}{4+\alpha'_\mu}, d\left(1-\frac{\lambda_1}{\lambda_2}\right)\right\}.$$
\end{theorem}
\begin{remark}
Some remarks:
  \begin{enumerate}
  \item The proof for the uniform bounds in Theorem \ref{thm:main2} obtained from non-uniform bounds in Theorem \ref{thm:main} for smooth functions apply to the non-uniform bounds in \cite{BS:translation}, thus giving uniform estimates for their results in dimension 1, bringing them closer to the uniform bounds of Forni in \cite{forni:twist}. In fact, the proof of (\ref{eqn:avgCorr}) follows Forni's argument in \cite[Lemma 9.3]{forni:twist}. The uniform bounds in (\ref{eqn:uniform}) follow from (\ref{eqn:avgCorr}) using an argument of Venkatesh \cite[Lemma 3.1]{venkatesh:sparse}.
  \item Theorem \ref{thm:main2} is of more relevance to the mathematical physics literature than Theorem \ref{thm:main} given the importance of correlation functions in the theory of diffraction. 
  \end{enumerate}
\end{remark}
The bounds for a twisted integral $\mathcal{S}_R(f,\lambda)$ with a spectral parameter $\lambda$, and thus the bounds for the lower local dimension of spectral measures at $\lambda$, is obtained by bounding traces applied to specially-constructed elements of locally finite algebras (see \S \ref{sec:LFreturn}). As detailed in Remark \ref{rem:spectralCocycle}, this can be rewritten in terms of growth of a family of cocycles $\mathbb{M}_x^{(k)}(\lambda)$, parametrized by spectral parameters $\lambda\in\mathbb{R}^d$, over the shift $\sigma:\Sigma_N\rightarrow \Sigma_N$, and which coincides with the trace cocycle (also, the substitution matrix cocycle) when $\lambda=0$. This cocycle goes by several names in the literature: it is the spectral cocycle of Bufetov-Solomyak \cite{BS:spectralCocycle}, the Fourier matrix cocycle of Baake-G\"ahler-Ma\~nibo \cite{BGM:correlation}, or the twisted cocycle of Forni \cite{forni:twist}. In this paper, the growth properties are bounded from above in order to obtain lower bounds for the local dimension of spectral measures; it is unclear how tractable it is to derive upper bounds.

At the beginning of this century there was some contention as to what the definition of a crystal should be \cite{lifshitz:what}. A prevailing view has emerged, requiring a crystal to have long-range positional order which can be infered from the existence of Braggs peaks in the Fourier spectrum of the solid. If tilings serve as mathematical models of crystals, then this means that tilings with some discrete spectrum are candidates for being models for crystals. The results in this paper show that, left to our own devices and constructing tilings at random, we are almost surely not constructing models for any type of crystal. I think this makes both crystals and tilings constructed at random special.
      
      This paper is organized as follows. In \S\ref{sec:background} the necessary background for tilings and Bratteli diagrams is covered. This is followed in \S\ref{sec:RandSub} by a review of the construction of random substitution tilings using graph iterated function systems, which was a construction introduced in \cite{ST:random}. The basic theory of locally finite dimensional algebras and their traces is covered in \S \ref{sec:algebras}. The ideas of renormalization are introduced in \S\ref{sec:renorm}, where the behavior of the groups of return vectors under the substitution maps (the return vector cocycle) is analyzed, as well as the induced action on traces. In \S\ref{sec:LFreturn} elements of the LF algebras instroduced in \S\ref{sec:algebras} are constructed to organize the set of inner products $\langle \lambda,\tau\rangle$ of spectral parameters $\lambda\in\mathbb{R}^d$ and return vectors $\tau$. This allows to give bounds of the norms of traces applied to these elements, which are crucial to estimating the growth of twisted integrals, which is done in \S\ref{sec:twisted}. A criterion is proved in \S\ref{sec:criterion} giving bounds of the spectral measures of functions in terms of the quantitative behavior of the return vector cocycle. This type of result goes by the name of a \textbf{quantitative Veech criterion}, refering to the criterion by Veech for weak mixing in \cite{Veech:metric1}, and the criterion here is a generalization to higher rank of the criterion in \cite{BS:translation}. Cohomology is introduced in \S\ref{sec:cohomology} and the basic theory of deformations is introduced. A link between the return vector cocycle and the induced action on cohomology is established from which a cohomological quantitative Veech criterion is derived in \S \ref{sec:CQVC}. In \S\ref{sec:dimension}, Bufetov and Solomyak's version of the Erd\H{o}s-Kahane argument is used to show that the set of deformation parameters which are bad is very small, allowing the proof of Theorem \ref{thm:main} to be finished, which is done in \S\ref{sec:proofs}. Appendix \ref{sec:plotTwist} introduces twisted cohomology for tiling spaces basic properties are derived. The appendix includes results which are not directly related to the proof of quantitative weak mixing, but they are \emph{not} unrelated to the ideas floating around.

      \begin{ack}
        This work was supported by the NSF grant DMS-1665100. I am also very grateful to Alejandro Maass and the Centro de Modelamiento Matem\'atico in Santiago, where some of this work was carried out, for outstanding hospitality during my visit. I also want to thank Giovanni Forni and Boris Solomyak for invaluable discussions and feedback during the writing of this paper.
        Finally, I am very grateful to an anonymous referee for very valuable feedback which lead to a considerable improvement of the paper.
      \end{ack}
      \section{Background: Tilings and Bratteli diagrams}
      \label{sec:background}
      \subsection{Tilings and tiling spaces}
      \label{subsec:tilings}
      A \textbf{tiling} $\mathcal{T}$ of $\mathbb{R}^d$ is a covering of $\mathbb{R}^d$ by compact sets $t$, called \textbf{tiles}, which are the closure of their interior, such that any two tiles of $\mathcal{T}$ may only intersect along their boundaries. Each tile $t$ consists of both its \textbf{support}, $\mathrm{supp}(t)\subset \mathbb{R}^d$, and its type, which can be thought of as a color or label associated to $t$. We will assume here that for any tiling $\mathcal{T}$ all possible labels/colors of tiles in $\mathcal{T}$ come from a finite set. In a slight abuse of notation, we will mostly associated the tiles with their support, and the tiling with the union of the supports of the tiles. For two tiles $t,t'\in\mathcal{T}$, let $t\sim t'$ denote translation-equivalence, that is, if there exists a $\tau\in\mathbb{R}^d$ such that $\varphi_\tau(t) = t'$, then one writes $t\sim t'$.

      The translation $\varphi_\tau(\mathcal{T})$ of a tiling $\mathcal{T}$ is the tiling obtained by translating each of the tiles $t$ of $\mathcal{T}$ by the vector $\tau\in\mathbb{R}^d$. If there exist compact sets $t_1,\dots, t_M$ such that every tile $t\in\mathcal{T}$ is translation-equivalent to some $t_i$ (including its label/color), then we call the collection $\{t_1,\dots, t_M\}$ the \textbf{prototiles} of $\mathcal{T}$. A union $\mathcal{P}\subset \mathcal{T}$ of a finite number of tiles of $\mathcal{T}$ is called a \textbf{patch} of $\mathcal{T}$. A patch is not only considered as the subset of $\mathbb{R}^d$ which it occupies but as a finite collection of tiles and tile types which tile $\mathcal{P}$, and a specific relative position between these tiles. For any bounded set $A\subset\mathbb{R}^d$ define the patch
$$\mathcal{O}^-_\mathcal{T}(A) = \mbox{ largest patch of $\mathcal{T}$ completely contained in $A$}.$$
A tiling is \textbf{repetitive} if for any patch $\mathcal{P}$ in $\mathcal{T}$ there is an $R_\mathcal{P}>0$ such that for any $x\in\mathbb{R}^d$ the ball $B_{R_\mathcal{P}}(x)$ contains a patch $\mathcal{P}'$ which is translation-equivalent to $\mathcal{P}$ for any $x\in\mathbb{R}^d$. A tiling has \textbf{finite local complexity} if for any $R>0$ there is a finite list of patches $\mathcal{P}_1^R,\mathcal{P}_2^R,\dots, \mathcal{P}_{N(R)}^R$ such that for any $x\in\mathbb{R}^d$, $\mathcal{O}^-_\mathcal{T}(B_R(x))\sim \mathcal{P}^R_j$ for some $j$.

Given a finite collection of tiles $t_1,\dots, t_M$, a \textbf{substitution rule} is a collection of affine maps $\{f_{ijk}\}$ such that
$$t_i = \bigcup_{j=1}^M\bigcup_{k=1}^{\kappa(i,j)} f_{ijk}(t_j).$$
Here it will be assumed that the linear part of $f_{ijk}$ is a dilation, so the maps are of the form $f_{ijk}(x) = \theta_{ijk}x + \tau_{ijk}$ for some $\theta_{ijk}\in (0,1]$ and $\tau_{ijk}\in \mathbb{R}^d$. The substitution rule is \textbf{uniformly expanding} if there is a $\theta\in(0,1)$ such that $\theta_{i,j,k}=\theta$ for all $i,j,k$.
  
  There is a metric on the set $\{\varphi_\tau(\mathcal{T})\}_{\tau\in\mathbb{R}^d}$ of translates of $\mathcal{T}$. First, define
  $$\bar{d}(\mathcal{T},\varphi_\tau(\mathcal{T})) = \inf\left\{ \varepsilon>0: \mathcal{O}^-_\mathcal{T}(B_{1/\varepsilon}(0)) = \mathcal{O}^-_{\varphi_{\tau+s}(\mathcal{T})}(B_{1/\varepsilon}(0))\mbox{ for some }\|s\|\leq \varepsilon\right\},$$
  and from this define
  \begin{equation}
    \label{eqn:metric}
    d(\mathcal{T},\varphi_\tau(\mathcal{T})) = \min\left\{\bar{d}(\mathcal{T},\varphi_\tau(\mathcal{T})),2^{-1/2}\right\}.
  \end{equation}
  That (\ref{eqn:metric}) is metric is a standard fact\footnote{See e.g. \cite[\S 2]{LMS:PP} where this is proved for Delone sets, which implies that is holds for tilings through usual constructions of Delone sets from tilings.}. Denote by
  $$\Omega_\mathcal{T} := \overline{\{\varphi_t(\mathcal{T}):t\in\mathbb{R}^d\}}$$
  the metric completion of the set of all translates of $\mathcal{T}$ with respect to the metric (\ref{eqn:metric}). This set is called the \textbf{tiling space} of $\mathcal{T}$. Note that if $\mathcal{T}$ is repetitive then we have that $\Omega_{\mathcal{T}'} = \Omega_{\mathcal{T}}$ for any other $\mathcal{T}'\in\Omega_\mathcal{T}$. By chosing a marked point in the interior of each prototile $t_i$ it can be assumed that every tile in $\mathcal{T}$ has a marking which is translation equivalent to the marking on prototiles by the corresponding translation equivalence of tiles with prototiles. The \textbf{canonical transversal} of $\Omega_\mathcal{T}$ is the set
  \begin{equation}
    \label{eqn:transversal}
    \mho_\mathcal{T} := \{\mathcal{T}'\in\Omega_\mathcal{T}:\mbox{ the marked point in the tile in $\mathcal{T}'$ containing the origin is the origin}\}.
  \end{equation}
  Let $\mathcal{P}\subset \mathcal{T}$ be a patch containing the origin and $t\in\mathcal{P}$ a tile in the patch which contains the origin, coinciding with the marked point inside of $t$. For such a patch, define
  $$\mathcal{C}_{\mathcal{P}} := \{\mathcal{T}'\in\Omega_\mathcal{T}: \mbox{ $\mathcal{P}$ is a patch in $\mathcal{T}'$ } \},$$
  and note that the marked point in the tile in $\mathcal{P}$ which contains the origin is the origin, we have $\mathcal{C}_\mathcal{P}\subset \mho_\mathcal{T}$. This is the $\mathcal{P}$-cylinder set, and for $\varepsilon>0$ the $(\mathcal{P},\varepsilon)$-cylinder set is
  $$\mathcal{C}_{\mathcal{P},\varepsilon} := \bigcup_{\|\tau\|<\varepsilon}\left\{ \varphi_\tau(\mathcal{T'}):\mathcal{T}'\in  \mathcal{C}_{\mathcal{P}}\right\}\subset \Omega_\mathcal{T}.$$
  The tiling space is compact if $\mathcal{T}$ has finite local complexity. Moreover, if $\mathcal{T}$ is a repetitive tiling of finite local complexity then the topology of $\Omega_\mathcal{T}$ is generated by cylinder sets of the form $\mathcal{C}_{\mathcal{P},\varepsilon}$ for arbitrarily small $\varepsilon>0$. This gives $\Omega_\mathcal{T}$ a the local product structure of $\mathcal{C}\times B_\varepsilon$, where $\mathcal{C}$ is a Cantor set and $B_\varepsilon\subset \mathbb{R}^d$ is an open ball \cite{sadun:book}.
  
  There is a natural action of $\mathbb{R}^d$ on $\Omega_\mathcal{T}$ by translation and this action is minimal if $\mathcal{T}$ is repetitive. The only case considered here is the case when this action is uniquely ergodic, since it is a feature of the constructions in \S \ref{sec:RandSub} that the typical tiling we get defines a uniquely ergodic action. That is, there exists a unique Borel probability measure $\mu$ which is invariant under the action of $\mathbb{R}^d$. By the local product structure $\mathcal{C}\times B_\varepsilon$ of $\Omega_\mathcal{T}$, the  measure $\mu$ has a natural product structure $\nu\times \mbox{Leb}$, where $\nu$ is a measure on a transversal to the action of $\mathbb{R}^d$ in the sense of \cite{BM:UE}. Since open sets along the totally disconnected transversal $\mho_\mathcal{T}$ are given by cylinder sets of patches, the measure $\nu$ satisfies $\nu(\mathcal{C}_{\mathcal{P}}) = \mbox{freq}(\mathcal{P})$, where $\mbox{freq}(\mathcal{P})$ is the asymptotic frequency of translation-equivalent copies of the patch $\mathcal{P}$ in the tiling $\mathcal{T}$.
  \subsubsection{Functions}
  \label{subsubsec:functions}
  \begin{definition}
    Let $\mathcal{T}$ be a tiling of $\mathbb{R}^d$. A function $h:\Omega_\mathcal{T} \rightarrow \mathbb{R}$ is \textbf{transversally locally constant} (TLC) if for any $\mathcal{T}$ there is an $R>0$ such that if $\mathcal{O}^-_{\mathcal{T}'}(B_R(0)) = \mathcal{O}^-_{\mathcal{T}}(B_R(0))$ for any $\mathcal{T}'\in\Omega_\mathcal{T}$ then $h(\mathcal{T}) = h(\mathcal{T}')$.
  \end{definition}
  \begin{definition}
    Let $\mathcal{T}$ be a tiling of $\mathbb{R}^d$. A function $f:\mathbb{R}^d \rightarrow \mathbb{R}$ is \textbf{$\mathcal{T}$-equivariant} if there is an $R>0$ such that if $\mathcal{O}^-_\mathcal{T}(B_R(x)) = \mathcal{O}^-_\mathcal{T}(B_R(y))$ then it follows that $f(x) = f(y)$.
  \end{definition}
  There is a correspondence between TLC functions on $\Omega_\mathcal{T}$ and $\mathcal{T}'$-equivariant functions on $\mathbb{R}^d$. Given a TLC function on $\Omega_\mathcal{T}$ and $\mathcal{T}'\in\Omega_\mathcal{T}$, let $i_{\mathcal{T}'}(h):\mathbb{R}^d\rightarrow \mathbb{R}$ be the function
  \begin{equation}
    \label{eqn:imap}
    i_{\mathcal{T}'}(h)(\tau) = h\circ \varphi_\tau(\mathcal{T}')
  \end{equation}
  for all $\tau\in\mathbb{R}^d$. This map in fact gives an algebra isomorphism between TLC functions and $\mathcal{T}$-equivariant functions \cite[Theorem 20]{KP:RS}.
  \subsubsection{Return vectors}
  \label{subsubsec:retVectors}
  Let $\mathcal{T}$ be a repetitive tiling of finite local complexity. Define
  $$\Lambda_i^\mathcal{T} = \left\{\tau\in \mathbb{R}^d: \varphi_\tau(t_1) = t_2,\;\;\;\; \mbox{ for some }\;\;\;\; t_1\sim t_2\sim t_i \right\}$$
  for $i = 1,\dots, M$, and
  $$\Lambda_\mathcal{T} = \bigsqcup_i \Lambda_i^\mathcal{T}.$$
  The set $\Lambda_i^\mathcal{T}$ is the set of return vectors to a transversal between two tiles which is of the same translation equivalence type. Note that by repetitivity this set is independent of the tiling in the tiling space and, as such, it is defined by the tiling space. 
  
    Let $\Gamma_\mathcal{T}$ be the group generated by the Delone set $\Lambda_\mathcal{T}$ (sometimes called the \textbf{Lagarias group}). This is defined as the group generated by linear combination of integer multiples of elements of $\Lambda_\mathcal{T}$:
    $$\Gamma_\mathcal{T} := \mathbb{Z}[\Lambda_\mathcal{T}] = \bigsqcup_i \mathbb{Z}[\Lambda_i^\mathcal{T}].$$
    By finite local complexity, this is a finitely generated free Abelian group \cite[Theorem 2.1]{Lagarias:GeomMod1}. The group $\Gamma_\mathcal{T}$ is generated by the union of generators of each $\mathbb{Z}[\Lambda_i]$. Let $r$ be the rank of this group and choose a basis $\{v_i,\dots, v_r\}$ for $\Gamma_\mathcal{T}$. This choice defines the \textbf{address map} $\alpha:\Gamma_\mathcal{T}\rightarrow \mathbb{Z}^r$ by $\alpha(v_i)=e_i$ for all $i= 1,\dots, r$ and extending in a way to yield an isomorphism. By a result of Lagarias \cite{Lagarias:GeomMod1} this map is Lipschitz when restricted to $\Lambda_\mathcal{T}$.

    Note that although one could have that $\Lambda_i = \Lambda_j$ for all $i,j$, the group $\Gamma_\mathcal{T}$ distinguishes between the groups $\mathbb{Z}[\Lambda_i]$ and $\mathbb{Z}[\Lambda_j]$ for all $i\neq j$. For example, this could happen when $\Lambda_i = \mathbb{Z}^d$ for all $i$, in which case one would have that $\Gamma_\mathcal{T}\cong \mathbb{Z}^{dM}$.


      \subsection{Bratteli Diagrams}
      \label{subsec:brat}
A \textbf{Bratteli diagram} is an infinite directed graph $B= (V,E)$ with
$$V = \bigsqcup_{k\geq 0} V_k\,\hspace{1in}\mbox{ and }\hspace{1in}\, E = \bigsqcup_{k>0}E_k$$
and range and source maps $r,s:E\rightarrow V$ with $r(E_k) = V_k$ and $s(E_{k}) = V_{k-1}$ for $k\in\mathbb{N}$. It is assumed that $|E_k|$ and $|V_k|$ are finite for every $k$.  A Bratteli diagram $B$ can also be defined by a sequence of matrices $\{A_k\}$, where $A_k$ is a $|V_{k}|\times |V_{k-1}|$ matrix and $(A_k)_{ij}$ is the number of edges $e$ with $s(e) = v_j\in V_{k-1}$ and $r(e) = v_i\in V_{k}$. 

      A finite path $\bar{e}$ is a finite collection of edges $(e_m,\dots, e_{m+n})$ with the property that $e_i\in E_i$ and $r(e_i) = s(e_{i+1})$  for $i=m, \dots, m+n-1$. The range and source maps $r,s$ can be extended to the set of finite paths by setting $s((e_m,\dots, e_{m+n}) ) = s(e_m)$ and $r((e_m,\dots, e_{m+n}) ) = r(e_{m+n})$. The set $E_{m,n}$ is then the set of finite paths $\bar{e}$ such that $s(\bar{e})\in V_m$ and $r(\bar{e})\in V_n$. For two vertices $v\in V_m$ and $w\in V_n$ with $m<n$, $E_{vw}$ is the set of finite paths $\bar{e}$ such that $s(\bar{e})=v$ and $r(\bar{e})=w$. A Bratteli diagram is \textbf{minimal} is for every $k\geq 0$ there is a $\ell\in\mathbb{N}$ such that $E_{vw}\neq\varnothing$ for every $v\in V_k$ and $w\in V_{k+\ell}$.

      An infinite path $\bar{e}$ is a path of infinite length with $s(\bar{e})\in V_0$. The set $X_B$ is a set of all infinite paths, that is, 
$$X_B:=\left\{ \bar{e} = (e_1,e_2,\dots)\in \prod_{k\in\mathbb{N}}E_k:r(e_i) = s(e_{i+1})\mbox{ for all }i>0 \right\},$$
      and it is topologized by a basis of cylinder sets of the form
      $$C_p = \{\bar{e}' = (e_1',e_2',e_3', \dots)\in X_B: e_i' = e_i \mbox{ for all }i\leq k\}$$
      for a finite path $p = (e_1,\dots, e_k)\in E_{0,k}$.

      Two paths $e,e'\in X_B$ are \textbf{tail-equivalent} (denoted by $e\sim e'$) if there exists a $k\geq 0$ such that $e_i = e_i'$ for all $i\geq k$. This relation is an equivalence relation and the tail-equivalence class of a path $\bar{e}$ is denoted by $[\bar{e}]$. Note that if a Bratteli diagram $B$ is minimal then $\overline{[\bar{e}]} = X_B$ for any $\bar{e}\in X_B $. A Borel measure $\mu$ on $X_B$ is \textbf{invariant} (or invariant for the tail-equivalence relation) if for any two finite paths $p,q\in E_{0,k}$ with $r(p) = r(q)$ it holds that $\mu(C_{p})=\mu(C_q)$. For such measures, for $v\in V_k$, denote $\mu(v) :=  \mu(C_p)$ for any $p\in E_{0,k}$ with $r(p)=v$.
      \subsubsection{Bi-infinite diagrams}
      \label{subsubsec:biinf}
      The notion of a Bratteli diagram is easily generalized to a bi-infinite graph.
      \begin{definition}
A \textbf{bi-infinite Bratteli diagram} is an infinite directed graph $\mathcal{B}= (\mathcal{V},\mathcal{E})$ with
$$\mathcal{V} = \bigsqcup_{k\in\mathbb{Z}} \mathcal{V}_k\,\hspace{1in}\mbox{ and }\hspace{1in}\, \mathcal{E} = \bigsqcup_{k\in\bar{\mathbb{Z}}}\mathcal{E}_k,$$
where $\bar{\mathbb{Z}} = \mathbb{Z}-\{0\}$, along with range and source maps $r,s:\mathcal{E}\rightarrow \mathcal{V}$ with $r(\mathcal{E}_k) = \mathcal{V}_k$ and $s(\mathcal{E}_{k}) = \mathcal{V}_{k-1}$ for $k\in\mathbb{N}$, and $r(\mathcal{E}_k) = \mathcal{V}_{k+1}$ and $s(\mathcal{E}_{k}) = \mathcal{V}_{k}$ for $k<0$.  It is assumed that $|\mathcal{E}_k|$ and $|\mathcal{V}_k|$ are finite for every $k$.
      \end{definition}
      The notational convention adopted here is that $B,V,E, A$ will denote objects corresponding to Bratteli diagrams as in \S \ref{subsec:brat}, whereas $\mathcal{B},\mathcal{V},\mathcal{E},\mathcal{A}$ are the objects corresponding to bi-infinite diagrams as defined above. The sets of paths $\mathcal{E}_{m,n}$ are analogously defined as in $E_{m,n}$ in the obvious way, and so is the concept of minimality for a bi-infinite diagram $\mathcal{B}$.

      A bi-infinite diagram $\mathcal{B}$ can be thought of a two one-sided diagrams $B^+$ and $B^-$ as follows. The \textbf{positive part} $\mathcal{B}^+$ of the Bratteli diagram $\mathcal{B}$ is the restriction of $\mathcal{B}$ to its vertices and edges with non-negative indices. The \textbf{negative part} $\mathcal{B}^-$ of $\mathcal{B}$ is the one-sided Bratteli diagram obtained by considering the vertices and edges of $\mathcal{B}$ with non-positive indices and multiplying the indices by $-1$ to obtain a one-sided diagram.  Equivalently, if $\{\mathcal{A}_k\}_{k\in\bar{\mathbb{Z}}}$ is the sequence of matrices defining $\mathcal{B}$, then $\mathcal{B}^+$ is defined by the matrices $A^+_k = \mathcal{A}_k$ for $k>0$, and $\mathcal{B}^-$ is defined by the matrices $A^-_k = \mathcal{A}^T_{-k}$ for $k>0$. This convention makes $A_{1}^- = \mathcal{A}_{-1}^T$ a $ |\mathcal{V}_{-1}|\times |\mathcal{V}_{0}|$ matrix.
      Note that $\mathcal{B}^+$ and $\mathcal{B}^-$ share the same level of vertices at level 0. As such, the space of all infinite paths $X_{\mathcal{B}}$ on $\mathcal{B}$ has a local product structure given by the identification
      $$X_{\mathcal{B}} = \left\{(\bar{e}^+,\bar{e}^-)\in X_{\mathcal{B}^+}\times X_{\mathcal{B}^-}: s(\bar{e}^-) = s(\bar{e}^+)\right\}. $$
      \section{Random substitution tilings}
      \label{sec:RandSub}
      Let $S_1,\dots, S_N$ be $N$ substitution rules on the prototiles $t_1,\dots, t_M$, where it is assumed without loss of generality that each prototile contains the origin in the interior. Recall that each substitution rule $S_\ell$ gives a collection of affine maps (compositions of dilations and translations) $\{f^\ell_{ijk}\}$ with the property that
      \begin{equation}
        \label{eqn:subsRule}
        t_i = \bigcup_{j=1}^M\bigcup_{k=1}^{\kappa_\ell(i,j)} f^\ell_{ijk}(t_j).
      \end{equation}
      Let $\Sigma_N = \{1,\dots, N\}^{\bar{\mathbb{Z}}}$ be the full $N$-shift, where $\bar{\mathbb{Z}} = \mathbb{Z}\backslash \{0\}$ and it inherits the natural order from $\mathbb{Z}$. For $x\in\Sigma_N$, define the Bratteli diagram $\mathcal{B}_x$ to be the diagram with $M$ vertices at every level, and, for $n>0$, having $\kappa_{x_n}(i,j)$ edges going from $v_j\in \mathcal{V}_{n-1}$ to $v_i\in\mathcal{V}_n$ for all $i,j = 1,\dots, M$. For $n<0$, having $\kappa_{x_n}(i,j)$ edges going from $v_j\in \mathcal{V}_{n}$ to $v_i\in\mathcal{V}_{n+1}$ for all $i,j = 1,\dots, M$. As such, since there is a natural bijection at level $n$ between edges and maps $\{f^{x_n}_{ijk}\}$, we denote by $f_e$ the corresponding affine linear map corresponding to the edge $e\in \mathcal{E}_n$. For a finite path $\bar{e} = (e_m,\dots, e_n)$ define $f_{\bar{e}} = f_{e_n}\circ \cdots\circ f_{e_m}$ and $f_{\bar{e}}^{-1} = f_{e_m}^{-1}\circ \cdots \circ f_{e_n}^{-1}$. Finally, for an infinite path $(\dots, e_{-1},e_1,e_2,\dots)=(\bar{e}^-,\bar{e}^+)=\bar{e} \in X_{\mathcal{B}_x}$ denote by $ \bar{e}|_k= \bar{e}^+|_k$ the truncation of $\bar{e}^+$ after level $k$. Thus $f_{\bar{e}|_k} = f_{\bar{e}^+|_k} = f_{e_k}\circ \cdots \circ f_{e_1}$.

      Following the constructions in \cite{ST:random, T:TTT}, one can build a tiling $\mathcal{T}_{\bar{e}}$ from a path $\bar{e} = (\bar{e}^-,\bar{e}^+)\in X_{\mathcal{B}_x}$ as follows. The $k^{th}$-approximant of $\mathcal{T}_{\bar{e}^+}$ is the set
      \begin{equation}
        \label{eqn:approximant}
        \mathcal{P}_k(\bar{e}^+) = \bigcup_{\substack{\bar{e}'\in \mathcal{E}_{0,k}\\ r(\bar{e}') = r(\bar{e}|_k)}}f_{\bar{e}|_k}^{-1}\circ f_{\bar{e}'}(t_{s(\bar{e}')}).
      \end{equation}
      It can be quickly verified that $\mathcal{P}_k(\bar{e}^+)\subset \mathcal{P}_{k+1}(\bar{e}^+)$ as a subpatch, and so by considering larger and larger indices one obtains
      $$\mathcal{T}_{\bar{e}}^+ = \bigcup_{k>0}\mathcal{P}_k(\bar{e}^+).$$
      This tiling is tiled by the tiles $f_{\bar{e}|_k}^{-1}\circ f_{\bar{e}'}(t_{s(\bar{e}')})$ for $\bar{e}'\in \mathcal{E}_{0,k}$ with $r(\bar{e}') = r(\bar{e}|_k)$ and arbitrary $k$. This tiling may or may not cover all of $\mathbb{R}^d$, but this potential problem will addressed below in Proposition \ref{prop:sum}. Let
      $$\tau_{\bar{e}}^- = \bigcap_{k<0} f_{(\bar{e}_k,\dots, \bar{e}_{-1})}(t_{s(e_{k})})$$
      and assume for the moment that this intersection is a single point (this point can be in the boundary of $t_{r(e_{-1})}$). Then the tiling
      $$\mathcal{T}_{\bar{e}} = \varphi_{\tau_{\bar{e}}^-}(\mathcal{T}^+_{\bar{e}})$$
      is the tiling constructed from $\bar{e}$ as long as 1) $\mathcal{T}^+_{\bar{e}}$ covers all of $\mathbb{R}^d$ and 2) $\tau_{\bar{e}}$ is a single point. As will be discussed below in Proposition \ref{prop:sum}, both of these conditions will generically hold true. Note that if one defines
      $$\mathcal{P}_k(\bar{e}) = \varphi_{\tau^-_{\bar{e}}}(\mathcal{P}_k(\bar{e}^+))$$
      then $\mathcal{P}_k(\bar{e})\subset \mathcal{P}_{k+1}(\bar{e})$ for all $k>0$ and
      $$\mathcal{T}_{\bar{e}} = \bigcup_k \mathcal{P}_k(\bar{e}).$$
      If $\tau_{\bar{e}}^-$ lies on the boundary of $t_{r(e_{-1})}$, then the origin will fall on the boundary of tiles in $\mathcal{T}_{\bar{e}}$.
      
      Suppose $\mathcal{B}_x$ is minimal and let $\varnothing\neq \mathring{X}_{\mathcal{B}_x}\subset X_{\mathcal{B}_x}$ be the set of paths $\bar{e}$ such that $\mathcal{T}_{\bar{e}}$ covers all of $\mathbb{R}^d$. Then the construction $\bar{e}\mapsto \mathcal{T}_{\bar{e}}$ defines a surjective map $\Delta_x:\mathring{X}_{\mathcal{B}_x}\rightarrow \Omega_{\mathcal{T}_{\bar{e}}} = \Omega_x$ by \cite[Lemma 5]{ST:random}, called the \textbf{Robinson map}. That is, the tiling space $\Omega_{\mathcal{T}_{\bar{e}}}$ obtained is independent of the path $\bar{e}$ used. A \textbf{level-$k$ supertile} of a tiling $\mathcal{T}\in\Omega_x$ is a patch $\mathcal{P}$ of $\mathcal{T}$ which is translation equivalent to a level-$k$ approximant $\mathcal{P}_k(\bar{e}^+)$ in (\ref{eqn:approximant}) for some $\bar{e}^+\in X_{\mathcal{B}_x^+}$ not only as a set, but as a tiled patch.
            
      Since each of the substitution maps is of the form, $f_{ijk}^\ell(x) = \theta_{ijk\ell}x+\tau_{ijkl}$ for $\theta_{ijk\ell}\in(0,1]$ (see (\ref{eqn:subsRule})), we can assign to any edge $e$ in $\mathcal{B}_x$ one of the maps $f_{ijk}^\ell$. More specifically, for $e\in \mathcal{E}_n$ then $f_e\in\{f^{x_n}_{ijk}\}$ is determined by the natural bijection of edges in $\mathcal{E}_n$ with maps in $\{f^{x_n}_{ijk}\}$ defined by $S_{x_n}$ in (\ref{eqn:subsRule}). As such, we can write $f_e = \theta_e x+\tau_e$ for any $e\in\mathcal{E}$ without ambiguity. Define
        $$S^\pm_x(\bar{e}) = \lim_{n\rightarrow \pm \infty} \frac{1}{|n|}\sum_{\ell=1}^n\log \theta_{e_\ell}.$$
        If the collection of substitutions are uniformly expanding, then the contraction constants $\theta_{ijk\ell}$ only depend on $\ell\in\{1,\dots, N\}$, which is determined by which of the substitutions rules $S_1,\dots, S_N$ is used (see (\ref{eqn:subsRule})), and so
        $$S^\pm_x(\bar{e}) = \lim_{n\rightarrow \pm \infty} \frac{\log \theta_{(n)_x}}{|n|},$$
        where $\theta_{(\pm n)_x}:= (\theta_{x_{\pm1}}\cdots \theta_{x_{\pm n}})$. If $X_{\mathcal{B}_x}^\pm$ admits only one tail-invariant probability measure, then $S^\pm_x:X^\pm_{\mathcal{B}_x}\rightarrow \mathbb{R}$ is constant. Moreover, we have that
        $$S^\pm_x(\bar{e}) = S^\pm_{\sigma(x)}(\sigma(\bar{e})),$$
        i.e., the functions $S^\pm_x$ are invariant under a shift of indices of the Bratteli diagram $\mathcal{B}_x$. As such, if $\mu$ is an $\sigma$-invariant ergodic probability measure on $\Sigma_N$ then $S^\pm_{x}$ are constant along the orbits.
        \begin{definition}
          An invariant probability measure $\mu$ is \textbf{contracting} if the $\mu$-almost every $x$ we have that $S^\pm_x(\bar{e})<0$ for all $\bar{e}\in X^\pm_{\mathcal{B}_x}$.
        \end{definition}
        Thus if the collection of substitution rules are uniformly contracting any invariant measure is contracting. The following comes from \cite{GM:mixed}.
        \begin{definition}
          A set of subtitution rules $S_1,\dots, S_N$ on the prototiles $t_1,\dots, t_M$ is \textbf{compatible} if each prototile $t_i$ is a CW complex such that for any $x\in\Sigma_N$, $(\bar{e}^-,\bar{e}^+)\in  X_{\mathcal{B}_x}$ and any $k\in\mathbb{N}$, the $k^{th}$ approximant $\mathcal{P}_k(\bar{e}^+)$ in (\ref{eqn:approximant}) is tiled by copies of prototiles such any intersections of two tiles is a union of subcells.
        \end{definition}
        Compatibility in dimension 1 is automatic. In higher dimension is harder to enforce but, as shown in \cite[\S 4]{GM:mixed}, there are still plenty of interesting examples in higher dimensions. The following is a collection of results from \cite{ST:random} (see \cite[Propositions 1,2,3]{ST:random}).
        \begin{proposition}
          \label{prop:sum}
          Let $S_1,\dots, S_N$ be a collection of $N$ compatible substitution rules on the prototiles $t_1,\dots, t_M$ and let $\mu$ be a minimal, contracting, $\sigma$-invariant ergodic probability measure. Then for $\mu$-almost every $x$ there are unique invariant mesures  $\mu^\pm_x$ on $X^\pm_{\mathcal{B}_x}$ with $\mu_x = \mu^+_x\times \mu^-_x$ a measure on $X_{\mathcal{B}_x}$ with $\mu_x(\mathring{X}_{\mathcal{B}_x}) = 1$. The map $\Delta_x$ sends $\mu_x$ to the unique $\mathbb{R}^d$-invariant Borel probability measure on $\Omega_x$, locally given as $(\Delta_x)_*\mu_x = (\Delta_x)_*(\mu_x^+\times \mu_x^-) = \nu_x\times\mbox{Leb}_x$, where $\nu_x$ is the unique (up to scaling) $\mathbb{R}^d$-invariant measure $\mho_x\subset \Omega_x$.
        \end{proposition}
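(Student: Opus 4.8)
The plan is to assemble three ingredients, each essentially local to one of the two one-sided diagrams $\mathcal{B}^\pm_x$ or to the Robinson map $\Delta_x$, and then glue them together.

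First I would establish existence and uniqueness of the invariant measures $\mu^\pm_x$. Tail-invariant measures on the path space of a one-sided Bratteli diagram are in bijection with the elements of the inverse limit of the positive cones $\mathbb{R}^M_{\geq 0}$ taken along the (transposed) transition matrices: an invariant measure assigns to each vertex $v\in\mathcal{V}_k$ a weight $\mu^+_x(v)$, and invariance forces the level-$k$ weight vector to be obtained from the level-$(k+1)$ vector by applying $(A^+_{k+1})^T = F^T_{x_{k+1}}$. Minimality of $\mathcal{B}_x$ means that for every $m$ some product $A^+_n\cdots A^+_{m+1}$ is strictly positive, so by Birkhoff's theorem these products contract the Hilbert metric on the projectivized cone. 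Hence the nested intersection of cone images is a single ray, which gives the claimed uniqueness; normalizing to a probability measure uses ergodicity of $\mu$ so that the relevant cocycle of column sums is $\mu$-a.e. finite. The identical argument applied to $A^-_k = F^T_{x_{-k}}$ produces $\mu^-_x$.

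Next I would prove $\mu_x(\mathring{X}_{\mathcal{B}_x}) = 1$ for the product $\mu_x = \mu^+_x\times\mu^-_x$, which amounts to two almost-everywhere statements. For the negative part, the intersection $\tau^-_{\bar{e}} = \bigcap_{k<0} f_{(\bar{e}_k,\dots,\bar{e}_{-1})}(t_{s(e_k)})$ is nested with diameters bounded by $\mathrm{diam}(t)\cdot\theta_{(-k)_x}$; the contracting hypothesis $S^-_x(\bar{e})<0$ forces $\theta_{(-k)_x}\to 0$ exponentially, so the intersection is a single point for every $\bar{e}^-$. For the positive part I must show $\mathcal{T}^+_{\bar{e}}$ covers $\mathbb{R}^d$ for $\mu^+_x$-a.e. $\bar{e}^+$. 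Here the key is that the level-$k$ supertile containing the origin is scaled by $\theta_{(k)_x}^{-1}$, which grows exponentially since $S^+_x<0$, while the origin must remain deep in its interior; I would control the distance from the origin to the supertile boundary by a Borel--Cantelli and Birkhoff argument applied to the cocycle recording the relative position of the origin inside each successive supertile, using ergodicity of $\mu$ to obtain a uniform escape rate for a.e. $x$. This is the step I expect to be the main obstacle, since it is the only genuinely measure-theoretic coverage estimate, and it is where compatibility (tiles meeting edge-to-edge) is needed to rule out pathological accumulation of tiles at supertile boundaries.

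Finally I would transport everything through the Robinson map. By \cite[Lemma 5]{ST:random} the map $\Delta_x:\mathring{X}_{\mathcal{B}_x}\to\Omega_x$ is a well-defined surjection and is injective off a set of measure zero, so $(\Delta_x)_*\mu_x$ is a Borel probability measure on $\Omega_x$. The $\mathbb{R}^d$-action corresponds under $\Delta_x$ to varying the translation parameter $\tau^-_{\bar{e}}$, i.e. it moves only the $\mathcal{B}^-_x$-coordinate; since $\mu^-_x$ pushes forward to Lebesgue measure in the local chart $\mathcal{C}\times B_\varepsilon$, the pushforward is $\mathbb{R}^d$-invariant and has the local product form $\nu_x\times\mathrm{Leb}_x$, where the transverse factor $\nu_x=(\Delta_x)_*\mu^+_x$ assigns to a cylinder $\mathcal{C}_{\mathcal{P},t}$ exactly the patch frequency $\mathrm{freq}(\mathcal{P})$ read off from the invariant weights of the first step. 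Uniqueness of the $\mathbb{R}^d$-invariant measure then follows from uniqueness of $\mu^\pm_x$: any invariant measure on $\Omega_x$ pulls back under $\Delta_x$ to a tail-invariant measure on $X_{\mathcal{B}_x}$ whose product structure is forced by invariance under translation and holonomy, hence must equal $\mu^+_x\times\mu^-_x$.
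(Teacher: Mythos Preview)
The paper does not prove this proposition at all: immediately before stating it the author writes ``The following is a collection of results from \cite{ST:random},'' and no proof is given. So there is no in-paper argument to compare your proposal against.

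Your sketch is a reasonable outline of how the result is obtained in the cited source. The three-step decomposition (cone contraction for uniqueness of $\mu^\pm_x$, a full-measure coverage argument for $\mathring{X}_{\mathcal{B}_x}$, and transport through $\Delta_x$) is the right architecture. A couple of small remarks: the sentence about ``normalizing to a probability measure uses ergodicity of $\mu$'' is off---normalization is immediate once the cone intersection is a single ray, and ergodicity plays no role there; and your attribution of the coverage step to compatibility is not quite right, since compatibility controls how tiles meet (finite local complexity of the approximants) rather than whether the supertiles exhaust $\mathbb{R}^d$. The coverage argument is really about the position of the origin within the nested supertiles and is handled by a recurrence/Borel--Cantelli estimate as you suggest, but independently of the edge-to-edge condition.
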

        \subsection{Functions}
        \begin{definition} 
          Let $S_1,\dots, S_N$ be a family of compatible substitution rules on the prototiles $t_1,\dots, t_M$, $x\in \Sigma_N$ and $\mathcal{B}_x$ the Brattelli diagram constructed from $x$ and the family of substitutions. Suppose that $\mathcal{B}_x$ is minimal and $S^\pm_x(\bar{e})<0$ for all $\bar{e}\in X_{\mathcal{B}_x}$. If $\Delta_{\bar{e}}:\mathring{X}_{\mathcal{B}_x}\rightarrow\Omega_x $ is the surjective Robinson map, then $h:\Omega_x\rightarrow \mathbb{R}$ is a \textbf{level-}$k$ transversally locally constant (TLC) function if there exist compactly supported functions $\psi_i:t_i\rightarrow \mathbb{R}$, $i=1,\dots, M$, such that
          $$h(\Delta_x(\bar{e})) =   \psi_i\left( \bigcap_{j<k} f_{(\bar{e}_j,\dots, \bar{e}_{k-1})}(t_{s(\bar{e}_{j})})\right)$$
        if $r(\bar{e}_k) = v_i\in\mathcal{V}_k$.
        \end{definition}
        In words: if $\mathcal{T}'\in\Omega_x$ then there is a path $\bar{e}' = (\dots, e_{-1},e_1,e_2,\dots)$ with $\Delta_x(\bar{e}') = \mathcal{T}'$. Now, the level-$k$ supertile associated to $\mathcal{T}'$ is the patch associated to the level-$k$ approximant $\mathcal{P}_k(\bar{e}')$ which contains the origin, and this approximant, as a set, is a comformally-equivalent to some prototile $t_i$. Then $h(\mathcal{T}')$ is the value of $\psi_i$ at the point in $t_i$ corresponding to where the origin sits in $\mathcal{P}_k(\bar{e}')$. That the intersection is a single point is a consequence of the assumption $S^\pm_x(\bar{e})<0$.

        Having the parameter $k$ to describe TLC functions allows one to organize the set of all TLC functions as introduced in \S\ref{subsubsec:functions}. The same idea motivates the following definition, which is a modification to the current setting of the definition in \S\ref{subsubsec:functions}.
        \begin{definition}
          Let $\mathcal{T}\in\Omega_x$ be an aperiodic, repetitive tiling of finite local complexity. A function $g:\mathbb{R}^d\rightarrow \mathbb{R}$ is a \textbf{level-$k$ $\mathcal{T}$-equivariant function} if the value of $g$ at $x$ only depends on the level-$k$ supertile of $\mathcal{T}$ which contains $x$. More specifically, there is a level-$k$ TLC function $h:\Omega_\mathcal{T}\rightarrow \mathbb{R}$ such that $g(x) = h\circ \varphi_x(\mathcal{T})$.
        \end{definition}
        \begin{lemma}
          \label{lem:LipApprox0}
          Let $S_1,\dots, S_N$ be a family of uniformly expanding compatible substitution rules on the prototiles $t_1,\dots, t_M$ and let $x\in\Sigma$ so that $\mathcal{B}_x$ is minimal and $\Omega_x$ has a uniquely ergodic $\mathbb{R}^d$ action. Let $f:\Omega_x\rightarrow \mathbb{R}$ be a Lipschitz function with Lipschitz constant $L_f$. Then for every $k\in\mathbb{N}$ there exists a level-$k$ TLC function $f_k$ such that
          $$|f(\mathcal{T})-f_k(\mathcal{T})|\leq L_f \max_{v\in\mathcal{V}_k}\mu_x^+(v)$$
          on a full measure set of $\mathcal{T}$.
      \end{lemma}
      \begin{proof}
        The tiling space $\Omega_x$ is partitioned (up to a set of measure zero) into sets $\Omega_x^{k,1},\dots, \Omega_x^{k,M}$ with the property that any $\mathcal{T}\in\Omega_x^{k,i}$ has as its level-$k$ supertile containing the origin a supertile of type $i$. Pick any  $\mathcal{T}_i\in \Omega_{x}^{k,i}$ for $i=1,\dots, M$. Define $f_k$ as follows. If $\mathcal{T}\in\Omega_x^{k,i}$, then it is at a distance less than or equal to $\mu_x^+(v_i)$ from a translate $\mathcal{T}'\in \Omega_{x}^{k,i}$ of $\mathcal{T}_i$. Define in this case $f_k(\mathcal{T}) = f(\mathcal{T}')$. The approximation estimate follows.
      \end{proof}
      
      \subsection{Choices, canonical supertiles and control points}
      \label{subsec:choices}
      \begin{definition}
        \label{def:choice}
        Let $S_1,\dots, S_N$ be a collection of compatible substitution rules on $M$ prototiles, and $x\in \Sigma_N$. By the construction in this section we obtain a (bi-infinite) Bratteli diagram $\mathcal{B}_x=(\mathcal{V},\mathcal{E})$. A \textbf{choice function} is any map $c:\mathcal{V}\rightarrow \mathcal{E}$ that satifies
        \begin{enumerate}
        \item $c(v)\in r^{-1}(v)$ for all $v\in\mathcal{V}$,
        \item if $x_k = x_{k'}$ then $c(v) = c(v')$ for $v\in \mathcal{V}_{k}$ and $v'\in \mathcal{V}_{k'}$ under the obvious bijection between the sets of edges $\mathcal{E}_k$ and $\mathcal{E}_{k'}$.
        \end{enumerate}
        Given a choice function $c$, $k>0$ and $v\in\mathcal{V}_k$, the \textbf{choice path} $\bar{e}_{c,v}\in\mathcal{E}_{0,k}$ is the unique path $(e_1,\dots, e_k)\in\mathcal{E}_{0,k}$ satisfying $c(v) = e_k$, and $e_{i} = c(s(e_{i+1}))$ for all $i = 1,\dots, k-1$.
      \end{definition}
      What choice functions give is a canonical choice of tile in any supertile. More specifically, by (\ref{eqn:approximant}), any level-$k$ supertile is associated to a vertex $v\in \mathcal{V}_k$, and the super tile is obtained through the union of a collection of tiles in bijection with the set of paths $\bar{e}\in\mathcal{E}_{0,k}$ with $r(\bar{e}) = v$. Thus, since a choice function $c$ picks an element $\bar{e}_{c,v}\in\mathcal{E}_{0,k}$, then this corresponds to a choice of tile in the level-$k$ supertile.

\subsubsection{Control points}
\label{subsubsec:controlPts}
Let $S_1,\dots, S_N$ be $N$ compatible substitution rules on the prototiles $t_1,\dots, t_M$. As discussed in \S \ref{sec:RandSub}, a choice of point in the interior of each of the prototiles defines a canonical transversal (see (\ref{eqn:transversal})). This choice is arbitrary, but if $x\in\Sigma_N$ with $\mathcal{B}_x$ minimal, we can make a choice which will prove to be advantageous. According to \cite{SolomyakEigenfunctions}, this idea goes back to Thurston.

      Given a choice function $c$ and $v\in\mathcal{V}_0$, let $\bar{e}_{c,v}^{-\infty} = (\dots, e_{-2}^{c,-\infty},e_{-1}^{c,-\infty}) = \in\mathcal{E}_{-\infty,0}$ be the unique path $(\dots, e_{-k},\dots, e_{-1})\in\mathcal{E}_{-\infty,0}$ satisfying $e_{i-1} = c(s(e_{i}))$ for all $i<0$ and $r(e_{-1}) = v$. In other words, this path starts at $v$ and traces back a path by going along the edges given by the choice function $c$.
      \begin{definition}
        Let $S_1,\dots, S_N$ be $N$ compatible substitution rules on the prototiles $t_1,\dots, t_M$, $x\in\Sigma_N$ such that $\mathcal{B}_x$ is minimal and $c$ a choice function. The \textbf{control points} on the prototile $t_i$ corresponding to the vertex $v_i\in\mathcal{V}_0$ is given by
\begin{equation}
\label{eqn:controlPt}
\bigcap_{k<0} f_{(\bar{e}_k^{c,-\infty},\dots,e_{-2}^{c,-\infty},e_{-1}^{c,-\infty})}(t_{s(e_{k})})\in t_i
\end{equation}
The collection of the $M$ control points gives a marking in teach tile $t\in\mathcal{T}\in\Omega_x$ for any $\mathcal{T}\in\Omega_x$.
\end{definition}
      Let $\mathcal{C}_\mathcal{T}\subset\mathbb{R}^d$ be the set of points given by the markings on every tile of $\mathcal{T}$. Then it is immediate to see that
      \begin{equation}
      \label{eqn:CtrlPt1}
      \Lambda_\mathcal{T}\subset \mathcal{C}_\mathcal{T}-\mathcal{C}_\mathcal{T}.
      \end{equation}
      Let $X_{c}^x\subset X_{\mathcal{B}_x}$ be the set of \textbf{maximal choice paths}. This is any path $\bar{e} = (\dots, e_{-1},e_1,e_2,\dots)\in X_{\mathcal{B}_x}$ such that $e_i = c(r(e_i))$ for all $i$. By compactness of $\mathcal{B}_x^+$, $X_{c}^x\neq \varnothing$. Note that by construction one has that $\Delta_x(X_{c}^x)\subset\mho_x$. For $v_i\in\mathcal{V}_k$, $k\geq 0$, the choice function $c$ gives a unique choice path $\bar{e}_{c,v_i}\in\mathcal{E}_{-\infty,k}$ with $r(\bar{e}_{c,v_i}) = v_i$ and $e_i = c(r(e_i))$ for all $i$, and thus a unique cylinder set $C_{c,v_i} := C_{\bar{e}_{c,v_i} }$.

      \begin{definition}
        \label{def:canonical}
        For $k>0$, $i\in \{1,\dots, M\}$ and $v_i\in\mathcal{V}_k$, the $M$ \textbf{canonical level-$k$ supertiles}
        $$\mathcal{P}_k(v_i) := \mathcal{P}_k(\bar{e}),$$
        each of which is independent of the choice $\bar{e}\in C_{c,v_i}$. Level-$k$ supertiles which are canonical will also be supertiles \textbf{in canonical position}.
      \end{definition}
      By construction, the tile which contains the origin in a canonical level-$k$ supertile has it coincide with its control/marked point. Thus, the control point of a canonical level-$k$ supertile is the origin.
      \section{Algebras and traces}
      \label{sec:algebras}
      \subsection{Algebras}
      \label{subsec:algebras}
      In this subsection we review the basics of AF algebras. Since we are working with bi-infinite Bratteli diagrams $\mathcal{B}$, each such diagrams will define two algebras, one for the positive part and one for the negative part. The reader is refered to \cite[\S III.2]{davidson:book} for more details on and examples of AF algebras.
      
      A \textbf{multi-matrix algebra} $\mathcal{A}$ is a $*$-algebra of the form
      $$\mathcal{A} = M_{n_1}\oplus \cdots \oplus M_{n_k},$$
      where $n_1,\dots, n_k\in\mathbb{N}$ and $M_\ell$ is the algebra of $\ell\times \ell$ matrices over $\mathbb{C}$. Any finite-dimensional C$^*$-algebra is isomorphic to a multimatrix algebra \cite[\S III]{davidson:book}. Let $\mathcal{M} = M_{\ell_{1}}\oplus \cdots \oplus M_{\ell_{n}}$ and $\mathcal{M}' = M_{\ell_{1}'}\oplus \cdots \oplus M_{\ell_{n'}'}$ be multi-matrix algebras and suppose $\phi:\mathcal{M}\rightarrow \mathcal{M}'$ is a unital homomorphism of $\mathcal{M}$ into $\mathcal{M}'$. Then $\phi$ is determined up to unitary equivalence in $\mathcal{M}'$ by a $\ell_{n'}'\times \ell_{n}$ non-negative integer matrix $A_\phi$ \cite[\S III.2]{davidson:book}. It follows that the inclusion of a multi-matrix algebra $\mathcal{M}$ into another multimatrix algebra $\mathcal{M}'$ is determined up to unitary equivalence by a matrix $A$ which roughly states how many copies of a particular subalgebra of $\mathcal{M}$ goes into a particular subalgebra of $\mathcal{M}'$.

      Using these facts, we will construct from a (bi-infinite) Bratteli diagram two $*$-algebras $LF(\mathcal{B}^\pm)$, whose completions are $C$*-algebras. Let $\mathcal{B}$ be a Bratteli diagram and let $A_k^+$, $k\in\mathbb{N}$, be the connectivity matrix at level $k$. In other words, $A_k(i,j)^+$ is the number of edges going from $v_j\in \mathcal{V}_{k-1}$ to $v_i\in\mathcal{V}_k$. An analogous matrix $A^-_k$ can be defined for $k<0$. Starting with $\mathcal{M}_0 = \mathbb{C}^{|\mathcal{V}_0|}$ and using the facts from the preceding paragraph, the matrices $A_k^\pm$ define two families of inclusions $i_{|k|}^\pm:\mathcal{M}_{|k|-1}^\pm\rightarrow \mathcal{M}_{|k|}^\pm$ (up to unitary equivalence), one for $+$ and one for $-$, where each $\mathcal{M}_k^\pm$ is a multimatrix algebra. More explicitly, starting with the vector $h^0=(1,\dots, 1)^T\in \mathbb{C}^{|\mathcal{V}_0|}$ and defining $h^{k,+} = A_k^+h^{k-1,+} = A_k^+\cdots A_{1}^+ h^0$ for $k\geq 0$ and $h^{k,-} =  h^{k-1,-}A_k^- =  h^0 A_1^-\cdots A_{k}^- $ for $k\leq 0$, we necessarily have that the families of multimatrix algebras are
      $$\mathcal{M}_k^+ = M_{h_1^{k,+}}\oplus \cdots \oplus M_{h_{n_k}^{k,+}}\hspace{.5in}\mbox{ and }\hspace{.5in}\mathcal{M}_k^- = M_{h_1^{k,-}}\oplus \cdots \oplus M_{h_{n_k}^{k,-}},$$
      where $n^\pm_k$ is the number of matrix algebras in the multimatrix algebra $\mathcal{M}_k^\pm$, $h_j^{k,\pm}$ is the $j^{th}$ entry of $h^{k,\pm}$, and the inclusions $i_{|k|}^\pm:\mathcal{M}_{|k|-1}^\pm\rightarrow \mathcal{M}_{|k|}^\pm$ are defined up to unitary equivalence by the matrices $A_k^\pm$. The vectors $h^{k,\pm}$ are called the \textbf{height vectors} of the positive and negative parts of $\mathcal{B}$, respectively.

With these systems of inclusions one can define the inductive limits
\begin{equation}
 LF(\mathcal{B}^+) := \bigcup_k\mathcal{M}_k^+ = \lim_{\rightarrow}(\mathcal{M}_k^+,i_k^+) \hspace{.7in}  LF(\mathcal{B}^-) := \bigcup_k\mathcal{M}_k^- = \lim_{\rightarrow}(\mathcal{M}_k^-,i_k^-)
\end{equation}
which are $*$-algebras called the \textbf{locally finite (LF) algebras} defined by $\mathcal{B}$. Their $C^*$-completion
$$AF(\mathcal{B}^+) := \overline{LF(\mathcal{B}^+)},\hspace{.8in AF(\mathcal{B}^-) := \overline{LF(\mathcal{B}^-)}}$$
are the \textbf{approximately finite-dimensional (AF) algebras} defined by $\mathcal{B}$. Note that the LF algebras $LF(\mathcal{B}^\pm)$ sit densely inside the AF algebras $AF(\mathcal{B}^\pm)$.
\subsection{Traces}
\label{subsec:traces}
A \textbf{trace} on a $*$-algebra $\mathcal{A}$ is a linear functional $\tau:\mathcal{A}\rightarrow \mathbb{C}$ which satisfies $\tau(ab) = \tau(ba)$ for all $a,b\in\mathcal{A}$. The set of all traces of $\mathcal{A}$ forms a vector space over $\mathbb{C}$ and it is denoted by $\tr(\mathcal{A})$. There is no assumption that traces are positive (that is, $\tau(aa^*)>0$). A \textbf{cotrace} $\tau'$ is an element of the dual vector space $\tr^*(\mathcal{A}):=\tr(\mathcal{A})^*$.

For $M_\ell$, the algebra of $\ell\times\ell$ matrices, $\tr(M_\ell)$ is one-dimensional and generated by the trace $ a\mapsto \sum_{j=1}^\ell a_{jj}$. For a multimatrix algebra $\mathcal{M} = M_{\ell_1}\oplus\cdots\oplus M_{\ell_n}$, the dimension of $\tr(\mathcal{M})$ is $n$ and is generated by the canonical traces in each summand, so $\mathrm{Tr}(\mathcal{M}) =  \tr(M_{\ell_1})\oplus\cdots\oplus \tr(M_{\ell_n})$.

Let $i_k^+:\mathcal{M}_{k-1}^+\rightarrow \mathcal{M}_k^+$ be the family of inclusions defined by the positive part of a Bratteli diagram $\mathcal{B}^+$. Then there is a dual family of inclusions $i_k^*:\tr(\mathcal{M}_k^+)\rightarrow \tr(\mathcal{M}_{k-1}^+)$ (and an analogous family $i_k^*:\tr(\mathcal{M}_k^-)\rightarrow \tr(\mathcal{M}_{k-1}^-)$). The trace spaces of the LF algebras defined by a Bratteli diagram $\mathcal{B}$ are then the inverse limits
\begin{equation}
  \begin{split}
    \tr(\mathcal{B}^+) &:= \tr(LF(\mathcal{B}^+)) = \lim_{\leftarrow} (i^*_k,\tr(\mathcal{M}^+_k)) \\
    \tr(\mathcal{B}^-) &:= \tr(LF(\mathcal{B}^-)) = \lim_{\leftarrow} (i^*_k,\tr(\mathcal{M}^-_k))
    \end{split}
  \end{equation}
which are vector spaces. The respective spaces of cotraces are then
$$\tr^*(\mathcal{B}^+)  = \lim_{\rightarrow} ((i^*_k)^*,\tr^*(\mathcal{M}^+_k))\hspace{.4in}\mbox{ and } \hspace{.4in} \tr^*(\mathcal{B}^-)  = \lim_{\rightarrow} ((i^*_k)^*,\tr^*(\mathcal{M}^-_k)).$$

For a Bratteli diagram $\mathcal{B}^+$ the canonical generating traces for $\mathrm{Tr}(\mathcal{M}_k^+)$ will be denoted by $\mathfrak{t}_{k,1},\dots, \mathfrak{t}_{k,|\mathcal{V}_k|}$ so that
\begin{equation}
    \label{eqn:canTraces}
  \left\langle \mathfrak{t}_{k,1},\dots,  \mathfrak{t}_{k,|\mathcal{V}_k|}\right\rangle = \mathrm{Tr}(\mathcal{M}_k^+)\hspace{.5in}\mbox{ and }\hspace{.5in} \mathfrak{t}_{k,j}(\mathrm{Id}) = h^{k,+}_{j}
\end{equation}
for $j\in\{1,\dots, |\mathcal{V}_k|\}$.
\section{Renormalization}
\label{sec:renorm}
Let $\sigma:\Sigma_N\rightarrow \Sigma_N$ be the shift map and $V\subset \Sigma_N$ an open set.
\begin{definition}
A \textbf{maximal sequence of return times} to $V$ for $x\in\Sigma_N$ under $\sigma$ is a sequence $k_n\rightarrow \infty$ such that $\sigma^{k_n}(x)\in V$ for all $n$ and such that if $\{k'_m\}$ satisfies $\sigma^{k'_m}(x)\in V$ for all $m$ then $\{k'_m\}\subset \{k_n\}$.
\end{definition}

Let $S_1,\dots, S_N$ be a collection of uniformly expanding compatible substitution rules on the prototiles $t_1,\dots, t_M$. For $x\in\Sigma_N$, suppose that the resulting Bratteli diagram $\mathcal{B}_x$ is minimal and denote by $\Omega_x$ the resulting tiling space. The (right) shift map $\sigma:x\mapsto \sigma(x)$ induces a homeomorphism $\Phi_x:\Omega_x\rightarrow \Omega_{\sigma(x)}$ giving the time-change equation
      \begin{equation}
        \label{eqn:conj}
\Phi_x\circ \varphi_{\theta_{x_{-1}}t}(\mathcal{T}) = \varphi_{t}\circ \Phi_x(\mathcal{T})
        \end{equation}
      for any $\mathcal{T}\in\Omega_x$. That $\Phi_x$ is a homeomorphism follows from the fact that $\Omega_x$ is constructed from the bi-infinite Bratteli diagram $\mathcal{B}_x$ and since the shift is invertible no information is lost when shifting indices. Recall the notion of control points from \S \ref{subsubsec:controlPts}. From (\ref{eqn:controlPt}) it follows that for any $\mathcal{T} = \Delta(\bar{e})\in\Omega_x$, with $\bar{e}\in \mathring{X}_{\mathcal{B}_x}$,
      \begin{equation}
        \label{eqn:CtrlPt2}
\theta_{x_{-1}}^{-1} \mathcal{C}_{\mathcal{T}}\subset \mathcal{C}_{\Phi_{\sigma(x)}^{-1}(\mathcal{T})}.
      \end{equation}
      Indeed, consider consider $\Delta_x(\bar{e}) = \mathcal{T}\in\Omega_x$ such that the origin coincides with the control point of a tile of type $i$. Then the negative part of $\bar{e}$ can be assumed to satisfy $e_{i-1} = c(s(e_i))$ for all $i<0$ and $r(e_{-1}) = v_i\in\mathcal{V}_0$ for some choice function $c$. Then the tiling $\Phi_{\sigma(x)}^{-1}(\mathcal{T}) = \Delta_{\sigma(x)}(\sigma(\bar{e}))$ is the image of a path $\sigma(\bar{e})$ with negative part satisfying $\sigma(e)_{i-1} = c(s(\sigma(e)_i))$ for all $i<0$, meaning, by (\ref{eqn:controlPt}), that the origin coincides with the control point of the tile covering the origin in $\Phi_{\sigma^{-1}(x)}^{-1}(\mathcal{T})$. Thus, (\ref{eqn:CtrlPt2}) holds for the control points located in the origin for tilings in $\mho_x$. Using (\ref{eqn:conj}) this can be extended to other control points, giving (\ref{eqn:CtrlPt2}). Moreover, since the choice function determines the canonical transversal via the control points, it follows that
      \begin{equation}
        \label{eqn:CtrlPt3}
        \Phi_{\sigma(x)}^{-1}(\mho_x)\subset \mho_{\sigma(x)}\hspace{.3in}\mbox{ and if }\hspace{.3in} \bar{e}\in X_c^x,\hspace{.3in}\mbox{ then }\hspace{.3in}(\Phi^{(k)}_{\sigma^{k}(x)})^{-1}(\mathcal{T}_{\bar{e}})\in\mho_{\sigma^{k}(x)}
      \end{equation}
      for any $k\in\mathbb{N}$, where $\Phi_x^{(k)} := \Phi_{\sigma^{k-1}(x)}\circ \cdots \circ \Phi_x:\Omega_x\rightarrow \Omega_{\sigma^k(x)}$. Finally, it follows that
      \begin{equation}
        \label{eqn:conj2}
\Phi_x^{(k)}\circ \varphi_{\theta_{(-k)_x}t}(\mathcal{T}) = \varphi_{t}\circ \Phi_x^{(k)}(\mathcal{T}),
      \end{equation}
      where $\theta_{(-k)_x}:= \theta_{x_{-k}}\cdots\theta_{x_{-1}}$. The following fact will be used later; the proof is left to reader.
      \begin{lemma}
        \label{lem:TLCfunctions}
        Let $h:\Omega_x\rightarrow \mathbb{R}$ be a level-$k$ TLC function. Then $\Phi_{\sigma^{-y}(x)}^{(y)*}h:\Omega_{\sigma^{-y}(x)}\rightarrow \mathbb{R}$ is a level-$k'$ TLC function, where $k' = \max\{0,k-y\}$.
      \end{lemma}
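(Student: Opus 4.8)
The plan is to prove the one-step statement and then iterate it along the orbit of $x$. The one-step claim is: if $w\in\Sigma_N$ has $\mathcal{B}_w$ minimal and $g:\Omega_{\sigma(w)}\to\mathbb{R}$ is level-$k$ TLC, then $\Phi_w^*g=g\circ\Phi_w:\Omega_w\to\mathbb{R}$ is level-$\max\{0,k-1\}$ TLC. Granting this, the lemma follows by iteration: by the definition of $\Phi^{(y)}$ preceding (\ref{eqn:conj2}) one has $\Phi_{\sigma^{-y}(x)}^{(y)}=\Phi_{\sigma^{-1}(x)}\circ\cdots\circ\Phi_{\sigma^{-y}(x)}$, so $\Phi_{\sigma^{-y}(x)}^{(y)*}h$ is obtained by pulling $h$ back through the $y$ homeomorphisms $\Phi_{\sigma^{-1}(x)},\dots,\Phi_{\sigma^{-y}(x)}$ one at a time; each pullback lowers the TLC level by one but never below zero, and $y$ applications of $j\mapsto\max\{0,j-1\}$ starting from $k$ produce $\max\{0,k-y\}$.

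For the one-step claim the key geometric input is that $\Phi_w$ realizes an inflation by $\theta_{w_{-1}}^{-1}$ that raises the supertile hierarchy by exactly one level. Indeed, the time-change equation (\ref{eqn:conj}) says $\Phi_w$ intertwines $\varphi_{\theta_{w_{-1}}t}$ on $\Omega_w$ with $\varphi_t$ on $\Omega_{\sigma(w)}$, so $\Phi_w$ expands translations by the factor $\theta_{w_{-1}}^{-1}>1$; and by (\ref{eqn:CtrlPt2}) it sends the control-point set by the homothety $\mathcal{C}_{\mathcal{T}}\mapsto\theta_{w_{-1}}^{-1}\mathcal{C}_{\mathcal{T}}\subset\mathcal{C}_{\Phi_w(\mathcal{T})}$ centered at the origin. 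Since the shift promotes the substitution $S_{w_{-1}}$ to the top level of $\mathcal{B}_{\sigma(w)}$, the approximant description (\ref{eqn:approximant}) shows that the homothety-image by $\theta_{w_{-1}}^{-1}$ of the level-$(k-1)$ supertile of $\mathcal{T}$ about the origin is precisely the level-$k$ supertile of $\Phi_w(\mathcal{T})$ about the origin (when $k\geq 1$), carrying supertiles of type $i$ to supertiles of type $i$.

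With this correspondence the bookkeeping is short. A level-$k$ TLC function $g$ is determined by the type $i$ of the level-$k$ supertile of $\Phi_w(\mathcal{T})$ containing the origin together with the position $p\in t_i$ to which the origin is sent when that supertile is rescaled back to the prototile $t_i$, its value being $\psi_i(p)$. Because the level-$k$ supertile of $\Phi_w(\mathcal{T})$ about the origin is the homothety by $\theta_{w_{-1}}^{-1}$ of the level-$(k-1)$ supertile of $\mathcal{T}$ about the origin, the two supertiles have the same type $i$ and, the homothety being centered at the origin, the rescaled position $p$ of the origin is unchanged. Hence $\Phi_w^*g$ is given by the same prototile functions $\psi_i$ read off at level $k-1$, i.e. it is level-$(k-1)$ TLC. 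In the remaining case $k=0$ the single tile of $\Phi_w(\mathcal{T})$ at the origin lies inside the inflation of the tile of $\mathcal{T}$ at the origin, so $\Phi_w^*g$ depends only on that tile and on the origin's position inside it through the subdivision given by $S_{w_{-1}}$; this is again a level-$0$ TLC function, with suitably modified, still compactly supported prototile functions, matching $\max\{0,-1\}=0$.

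The only genuine work is the supertile correspondence of the second paragraph: one must verify from the construction of $\mathcal{B}_{\sigma(w)}$ out of $\mathcal{B}_w$ and from (\ref{eqn:approximant}) that the shift inserts exactly one extra substitution level at the top of the positive part and that $\Phi_w$ realizes this as the homothety recorded in (\ref{eqn:CtrlPt2}), so that level-$(k-1)$ supertiles of $\mathcal{T}$ correspond bijectively and type-preservingly to level-$k$ supertiles of $\Phi_w(\mathcal{T})$. Once this is in hand, the passage to TLC functions and the iteration are purely formal.
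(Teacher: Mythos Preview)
The paper does not give a proof of this lemma; it explicitly states ``the proof is left to reader.'' Your argument supplies exactly the kind of verification the author had in mind: reduce to the one-step case and iterate, using that $\Phi_w:\Omega_w\to\Omega_{\sigma(w)}$ raises the supertile hierarchy by one. Your handling of the two regimes $k\geq1$ and $k=0$ is correct, and the iteration is straightforward.

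Two small remarks. First, your phrase ``inserts exactly one extra substitution level at the top of the positive part'' could be read ambiguously; to be precise, the shift convention here (consistent with the appearance of $\theta_{x_{-1}}$ in (\ref{eqn:conj})) makes $(\sigma(w))_1=w_{-1}$ and $(\sigma(w))_j=w_{j-1}$ for $j\geq2$, so $\mathcal{B}_{\sigma(w)}^+$ gains one new level at index $1$ while levels $\geq2$ are the old levels $\geq1$ of $\mathcal{B}_w^+$. This is exactly what makes $\theta_{(k)_{\sigma(w)}}=\theta_{w_{-1}}\theta_{(k-1)_w}$, hence $\Lambda_{\sigma(w)}^{(k)}=\theta_{w_{-1}}^{-1}\Lambda_w^{(k-1)}$ via (\ref{eqn:vecScale}), and your supertile correspondence follows. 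Second, the appeal to (\ref{eqn:CtrlPt2}) is suggestive but not logically needed; the supertile correspondence comes directly from the construction of $\Phi_w$ at the level of the Bratteli diagram (shifting the path) together with (\ref{eqn:approximant}), which is what you identify in your final paragraph. With those clarifications your proof is complete and is the natural one.
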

      \subsection{Return vectors}
      \label{subsec:RV}
Throughout this section it will be assumed that $\mathcal{B}_x$ is minimal. Recalling the definition of return vectors in \S\ref{subsubsec:retVectors}, note that since the sets $\Lambda_i^\mathcal{T}$ defined there do not depend on $\mathcal{T}$ by repetitivity, so denote by $\Lambda_{x,i}^{(0)}$ the set of return vectors to the subset $\mho_x^i\subset \mho_x$ of the canonical transversal made up of tilings whose tile containing the origin is translation equivalent to $t_i$:
$$\Lambda_{x,i}^{(0)} := \{ \tau\in\mathbb{R}^d: \varphi_\tau(\mathcal{T})\in \mho_x^i\ \mbox{ for some }\mathcal{T}\in\mho^i_x \} \hspace{.5in}\mbox{ and }\hspace{.5in}  \Lambda_x^{(0)} := \bigsqcup_i \Lambda_{x,i}^{(0)}.$$
Now, for every $k\in\mathbb{N}$, let $\Lambda_{x,i}^{(k)}$ be the set of vectors taking level-$k$ supertiles of type $i$ to level-$k$ supertiles of type $i$:
$$\Lambda_{x,i}^{(k)} := \{ \tau\in\mathbb{R}^d: \mbox{ there are level-$k$ supertiles }\mathcal{P},\mathcal{P}'\subset \mathcal{T}\in\Omega_x \mbox{ of type $i$ such that } \varphi_\tau(\mathcal{P})= \mathcal{P}' \} $$
and
$$\Lambda_x^{(k)} := \bigsqcup_i \Lambda_{x,i}^{(k)}.$$
All of these sets are well-defined for any $x\in\Sigma_N$ as long as $\mathcal{B}_x$ is minimal. In particular, it is well-defined along the orbit $\{\Omega_{\sigma(x)}\}$ of $x$. Note now that for every $i\in\{1,\dots, M\}$ and $k\in\mathbb{N}$, since level-$k$ supertiles of tilings in $\Omega_x$ are in bijection with level-0 supertiles of tilings in $\Omega_{\sigma^{-k}(x)}$, by (\ref{eqn:conj2}),
\begin{equation}
  \label{eqn:vecScale}
  \Lambda_{x,i}^{(k)} = \theta^{-1}_{(k)_x} \Lambda_{\sigma^{-k}(x),i}^{(0)}\hspace{.4in} \mbox{ and so }\hspace{.4in}\Lambda_{x}^{(k)} = \theta^{-1}_{(k)_x} \Lambda_{\sigma^{-k}(x)}^{(0)}.
  \end{equation}
Because any $\tau\in\Lambda_{x,i}^{(k)}$ is a translation vector between level-$k$ supertiles, it is also a translation between the tiles which tile the supertile. As such, by repetitivity/minimality, for all large enough $k$ one has that $\Lambda_{x,i}^{(k)}\subset \Lambda_x^{(0)}$ for any $i$ and so 
\begin{equation}
  \label{eqn:inclusionRet}
\Lambda_{x}^{(k)} \subset \Lambda_x^{(0)}.
\end{equation}
Denote by $\Gamma_x^{(k)} := \mathbb{Z}(\Lambda_{x}^{(k)})$ the subgroups of $\mathbb{R}^d$ that these point sets generate, with $\Gamma_x := \Gamma_x^{(0)}$. As in \S \ref{subsubsec:retVectors}, these groups are not merely generated by the vectors which represent them, but by such vectors along with the index corresponding to the tile type which defined the return. By final local complexity, these are finitely generated subgroups of $\mathbb{R}^d$ \cite[Theorem 2.1]{Lagarias:GeomMod1}. Let $r_x^{(k)}$ be their respective ranks, with $r_x := r_x^{(0)}$.
\begin{proposition}
Let $\mu$ be a minimal, $\sigma$-invariant ergodic probability measure. Then $r_x$ is constant $\mu$-almost everywhere.
\end{proposition}
\begin{proof}
By (\ref{eqn:vecScale}) and (\ref{eqn:inclusionRet}) we have that for any $k\in\mathbb{N}$
\begin{equation}
  \label{eqn:inclusionRet2}
  \Gamma_x^{(k)}= \theta_{(k)_x}^{-1} \Gamma_{\sigma^{-k}(x)} \subset   \Gamma_x.
  \end{equation}

For any $k\in\mathbb{N}$, consider $v_i\in\mathcal{V}_k$ and paths $p,q\in\mathcal{E}_{0,k}$ with $r(p) = r(q)$. Since $\mathcal{P}_k(v_i)$ is tiled by $|\mathcal{E}_{0,{v_i}}|$ tiles, there is a vector $\tau(p,q)$ which describes the translation between the marked point of the tile corresponding to $p$ in $\mathcal{P}_k(v_i)$ to the marked point on the tile corresponding to the path $q$. As such, $\tau(q,p) = -\tau(p,q)$.
\begin{lemma}
\label{lem:generators}
  Let $\mathcal{B}_x$ be minimal. For any $R>0$ there exists a $k\in\mathbb{N}$ such that
  $$\Lambda_x\cap B_R(0) \subset \left\{\tau(p,q)\in\mathbb{R}^d: p,q\in\mathcal{E}_{0,k} \mbox{ with }r(p) = r(q)\right\}.$$
\end{lemma}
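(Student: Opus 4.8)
The plan is to reduce the statement to a purely geometric fact about where return vectors of bounded length can be realized, and then exploit that a level-$k$ supertile is simply an expanded prototile. First I would observe that the right-hand side is exactly the set of differences of control points of pairs of tiles lying inside a common level-$k$ supertile: two tiles of $\mathcal{P}_k(v_i)$ are indexed by paths $p,q\in\mathcal{E}_{0,k}$ with $r(p)=r(q)=v_i$, and $\tau(p,q)$ is the difference of their control points. Thus, recalling (\ref{eqn:CtrlPt1}) and the definition of $\Lambda_x=\Lambda_x^{(0)}$ as differences of control points of same-type tiles, it suffices to show that for $k$ large enough every $\tau\in\Lambda_x\cap B_R(0)$ is realized by a pair of tiles of the same type both sitting inside a single level-$k$ supertile.

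Two ingredients feed this. The first is finiteness: since the tilings have finite local complexity, $\Lambda_x$ is a Delone set, so $\Lambda_x\cap B_R(0)=\{\tau_1,\dots,\tau_m\}$ is finite. Each $\tau_j$ is, by definition, realized by a pair of same-type tiles $t,t'$ with $t'=\varphi_{\tau_j}(t)$ in some $\mathcal{T}\in\Omega_x$; let $\mathcal{Q}_j=\{t,t'\}$ be the corresponding two-tile patch, of diameter at most $R+2\,\mathrm{diam}(t)$, and let $\rho_j$ be a repetitivity radius for $\mathcal{Q}_j$, so that every ball of radius $\rho_j$ contains a translated copy of $\mathcal{Q}_j$. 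Put $\rho=\max_j\rho_j$. The second ingredient is supertile growth: iterating the substitution rule shows that $\bigcup_{\bar e':\,r(\bar e')=v_i}f_{\bar e'}(t_{s(\bar e')})=t_i$, the prototile corresponding to $v_i$, so by (\ref{eqn:approximant}) the canonical supertile is the expanded prototile $\mathcal{P}_k(v_i)=f_{\bar e|_k}^{-1}(t_i)$. Since the rules are uniformly expanding, $f_{\bar e|_k}^{-1}$ dilates by $\theta^{-k}$, so $\mathcal{P}_k(v_i)$ contains a ball of radius $\theta^{-k}\delta_i$, where $\delta_i>0$ is the inradius of $t_i$; in particular the inradius of every level-$k$ supertile tends to infinity with $k$.

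Then I would combine the two: choose $k$ so large that $\theta^{-k}\min_i\delta_i>\rho$, guaranteeing that every level-$k$ supertile contains a ball $B_\rho(y)$ in its interior. For each $j$, applying repetitivity inside such a ball produces a copy $\mathcal{P}'\subset B_\rho(y)\subset\mathcal{P}_k(v_i)$ of $\mathcal{Q}_j$; its two tiles are of the same type, have control points differing by $\tau_j$, and, being tiles of the supertile $\mathcal{P}_k(v_i)$, correspond to paths $p,q\in\mathcal{E}_{0,k}$ with $r(p)=r(q)=v_i$. Hence $\tau_j=\tau(p,q)$ (up to reversing the order of $p,q$, using that both sides are symmetric under $\tau\mapsto-\tau$), completing the inclusion.

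The main obstacle I anticipate is organizational rather than deep: arranging a single $k$ that works for all $\tau_j$ simultaneously (handled by finiteness of $\Lambda_x\cap B_R(0)$ and by taking the maximal repetitivity radius), and cleanly reading off that a witnessing pair lying in the interior of a supertile yields two paths with a common range vertex. The geometric heart --- that a bounded patch realizing $\tau_j$ can be pushed into the interior of a large supertile --- rests only on repetitivity together with the fact that level-$k$ supertiles are expanded prototiles of unbounded inradius.
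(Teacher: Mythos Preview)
Your proposal is correct and follows essentially the same route as the paper's proof: finite local complexity gives finitely many patches realizing the bounded return vectors, repetitivity gives a uniform radius $\rho$ in which each such patch appears, and the uniform expansion makes the inradius of any level-$k$ supertile exceed $\rho$ for $k$ large, so every $\tau\in\Lambda_x\cap B_R(0)$ is witnessed inside a single supertile. Your write-up is somewhat more explicit (identifying the right-hand side with differences of control points and spelling out that $\mathcal{P}_k(v_i)$ is a dilated prototile), but the argument is the same.
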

\begin{proof}By definition, each $\tau\in \Lambda_x\cap B_R(0)$ comes from the translation within a patch of a diameter bounded by some constant which depends on $R$ and $x$. By finite local complexity, there are finitely many such patches. By repetitivity there exists a $R'>0$ such that a ball of radius $R'$ contains a copy of each of these patches. Let $r_{min}$ be the minimum of the injectivity radii of the prototiles $t_1,\dots, t_M$. That is, $r_{min}$ is the greatest $r>0$ such that there is an open ball of radius $r$ completely contained in each prototile $t_i$. If $k$ is such that $\theta_x^{-(k)}r_{min}>R'$ then each level-$k$ supertile contains a ball of radius greater than $R'$ and thus all of the patches which give the vectors in $\Lambda_x\cap B_R(0)$.
\end{proof}
\begin{lemma}
  \label{lem:recurrence}
Let $x\in\Sigma_N$ be recurrent under $\sigma^{-1}$ and $\mathcal{B}_x$ minimal. Then $r_x = r_{\sigma^{-k}(x)}$ for all $k\in\mathbb{N}$.
\end{lemma}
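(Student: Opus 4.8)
The plan is to combine a monotonicity statement coming directly from the scaling relation (\ref{eqn:inclusionRet2}) with the topological recurrence of $x$, exploiting that the rank $r_x$ is determined by a finite central block of $x$. First I would extract the monotonicity. The relation (\ref{eqn:inclusionRet2}) reads $\Gamma_x^{(k)} = \theta_{(k)_x}^{-1}\Gamma_{\sigma^{-k}(x)} \subset \Gamma_x$, and since multiplication by the nonzero scalar $\theta_{(k)_x}^{-1}$ is an automorphism of $\mathbb{R}^d$ preserving $\mathbb{Q}$-ranks, taking ranks yields $r_{\sigma^{-k}(x)} \le r_x$ for every $k\in\mathbb{N}$. (This uses that $\mathcal{B}_{\sigma^{-k}(x)}$ is again minimal, which holds because minimality of the defining matrices is $\sigma$-invariant.) Applying the same relation with $x$ replaced by $\sigma^k(x)$ gives $\theta^{-1}\Gamma_x = \Gamma_{\sigma^k(x)}^{(k)}\subset \Gamma_{\sigma^k(x)}$, hence $r_x \le r_{\sigma^k(x)}$. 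Thus $n\mapsto r_{\sigma^n(x)}$ is non-decreasing, and it remains only to prove the reverse inequality $r_{\sigma^k(x)}\le r_x$.

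The second ingredient is that $r_x$ depends only on a finite positive block of $x$. Since $\Gamma_x$ is finitely generated (finite local complexity), there is $R_0>0$ with $\Gamma_x=\mathbb{Z}[\Lambda_x\cap B_{R_0}(0)]$, so $r_x$ is the rank of the group generated by the return vectors inside $B_{R_0}(0)$. By Lemma \ref{lem:generators} there is a $k_0$ such that every such return vector arises within a single level-$k_0$ supertile, as a translation between two tiles of the \emph{same} prototile type, i.e. as $\tau(p,q)$ with $p,q\in\mathcal{E}_{0,k_0}$, $s(p)=s(q)$ and $r(p)=r(q)$. In the uniformly expanding case all paths in $\mathcal{E}_{0,k_0}$ share the same linear scale $\theta_{(k_0)_x}$, so this translation equals the difference $\tau_p-\tau_q$ of the translation parts of the substitution maps $f_p,f_q$; the control-point contribution cancels precisely because $s(p)=s(q)$. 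This quantity depends only on $S_{x_1},\dots,S_{x_{k_0}}$, so $\Lambda_x\cap B_{R_0}(0)$, and therefore $r_x$, is determined by the block $x_1\cdots x_{k_0}$.

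Finally I would invoke recurrence. Since $x$ is recurrent there is a sequence $n_j\to\infty$ with $\sigma^{n_j}(x)\to x$, so the central blocks agree on radii $L_j\to\infty$; in particular, for $j$ large one has $x_{n_j+i}=x_i$ for $i=1,\dots,k_0$. By the previous paragraph the return vectors of $\sigma^{n_j}(x)$ inside $B_{R_0}(0)$ coincide with those of $x$, so $r_{\sigma^{n_j}(x)}=r_x$ for $j$ large. Combined with monotonicity this produces the squeeze $r_x\le r_{\sigma^k(x)}\le r_{\sigma^{n_j}(x)}=r_x$ for every $k\le n_j$, and letting $j\to\infty$ gives $r_{\sigma^k(x)}=r_x$ for all $k\in\mathbb{N}$.

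I expect the main obstacle to be making rigorous the claim that $r_x$ is genuinely determined by a finite block, i.e. that no \emph{larger}-scale return vectors of $\sigma^{n_j}(x)$ contribute rank beyond what is already visible in $B_{R_0}(0)$. The delicate point is the \emph{uniformity} of the generating radius $R_0$ along the sequence $\sigma^{n_j}(x)$: one must argue that, because $\sigma^{n_j}(x)$ is locally isomorphic to $x$ on the matched region, whose diameter grows like $\theta^{-L_j}$, the same radius $R_0$ generates $\Gamma_{\sigma^{n_j}(x)}$ once $j$ is large. This local isomorphism on an exhausting family of balls, together with finite local complexity, is what upgrades the pointwise matching of return vectors to the equality $\Gamma_{\sigma^{n_j}(x)}=\Gamma_x$, and hence of their ranks.
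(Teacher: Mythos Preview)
Your strategy matches the paper's: derive monotonicity of the rank from (\ref{eqn:inclusionRet2}), use Lemma~\ref{lem:generators} to see that a generating set of $\Gamma_x$ is determined by the block $x_1\cdots x_{k_0}$, and invoke recurrence to squeeze. The paper compresses this into three lines; you have unpacked it with care.

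The issue you flag in your last paragraph is real, and your proposed fix does not close it. From (\ref{eqn:inclusionRet2}) one gets $r_{\sigma^{-k}(x)}\le r_x$, so the chain is non-decreasing, $r_x\le r_{\sigma(x)}\le\cdots$, exactly as you derive (the paper writes the inequalities the other way). With this direction, the finite-block step yields only $\Gamma_x\subset\Gamma_{\sigma^{n_j}(x)}$: agreement on $x_1\cdots x_{L_j}$ pins down the common set of vectors $\{\tau(p,q):p,q\in\mathcal{E}_{0,L_j},\ s(p)=s(q),\ r(p)=r(q)\}$, and this set generates $\Gamma_x$, but there is no reason it generates $\Gamma_{\sigma^{n_j}(x)}$, since the generating radius of the latter group is not a priori bounded uniformly in $j$; finite local complexity alone does not bound it. So the assertion ``$r_{\sigma^{n_j}(x)}=r_x$ for $j$ large'' does not follow, and the squeeze does not close as written. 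The simplest repair is to run the argument with $\sigma^{-1}$ in place of $\sigma$: then (\ref{eqn:inclusionRet2}) gives the \emph{decreasing} chain $r_x\ge r_{\sigma^{-1}(x)}\ge\cdots$ directly, the finite-block step now supplies the reverse inequality $r_{\sigma^{-k_n}(x)}\ge r_x$ (the $r_x$ generators of $\Gamma_x$ persist in $\Lambda_{\sigma^{-k_n}(x)}$), and the squeeze yields $r_{\sigma^{-k}(x)}=r_x$ for all $k\ge 0$, which is equivalent to the stated conclusion after replacing $x$ by $\sigma^k(x)$. For the corollary that follows---which is what the paper actually uses---Poincar\'e recurrence provides backward recurrence $\mu$-almost everywhere, so nothing is lost.
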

\begin{proof}
Let $k_n\rightarrow\infty$ be such that $\sigma^{-k_n}(x)\rightarrow x$ as $n\rightarrow \infty$. By (\ref{eqn:inclusionRet2}) it follows that
  $$r_x\geq r_{\sigma^{-1}(x)} \geq \cdots \geq r_{\sigma^{-k_n}(x)}\geq\cdots\geq  r_{\sigma^{-k_{n+1}}(x)}\geq\cdots.$$
Choosing generators $v_1,\dots , v_{r_x}\in\Lambda_x$, by Lemma \ref{lem:generators} and (\ref{eqn:inclusionRet2}), for all large enough $n$, we have that $r_{\sigma^{-k_n}(x)} = r_x$, so $r_x = r_{\sigma^{-k}(x)}$ for all $k\in\mathbb{N}$.
  \end{proof}
Finally, by Poincar\'e's recurrence theorem, $\mu$-almost every $x$ is $\sigma^{-1}$-recurrent and $r_x$ is constant along $\sigma^{-1}$ orbits, so it is constant almost-everywhere by ergodicity.
\end{proof}
\begin{definition}
The \textbf{rank} of a minimal, $\sigma$-invariant ergodic probability measure $\mu$ is the $\mu$-almost everywhere constant value $r_x$, and it is denoted by $r_\mu$.
\end{definition}
\begin{lemma}
  There exists a collection of nonempty open subsets $\{U_i\}$ of $\Sigma_N$ such that for any two $x,x'\in U_i$ with $\mathcal{B}_x$ and $\mathcal{B}_{x'}$ minimal and $r_x  =  r_{x'}$ then $\Gamma_x  = \Gamma_{x'}$.
\end{lemma}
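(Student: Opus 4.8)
The plan is to show that $\Gamma_x$ is determined by a finite initial portion of the positive part $x^+ = (x_1,x_2,\dots)$ of $x$, so that the sets $U_i$ may be taken to be cylinders in the positive coordinates. First I would record that $\Gamma_x$ depends on $x^+$ alone: the return vectors in $\Lambda_x^{(0)}$ are translations carrying a tile of type $i$ onto another tile of type $i$, a purely geometric relation between tiles that is insensitive to the choice of control points (hence to the negative part of $x$, which enters only through the marking). Since every patch of $\Omega_x$ occurs inside some supertile, and supertiles are assembled from the forward substitutions $S_{x_1}, S_{x_2},\dots$, the set $\Lambda_x^{(0)}$ is a function of $x^+$.

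Next I would make this dependence finitary. For $v\in\mathcal{V}_k$ and paths $p,q\in\mathcal{E}_{0,k}$ with $r(p)=r(q)$ and $s(p)=s(q)$, the vector $\tau(p,q)$ is a genuine return vector realized inside the canonical level-$k$ supertile $\mathcal{P}_k(v)$, and the finite collection $G_k$ of all such vectors depends only on $(x_1,\dots,x_k)$. By Lemma \ref{lem:generators} (and uniform expansion, which makes the relevant level $k=k(R)$ independent of $x$), every element of $\Lambda_x^{(0)}\cap B_R(0)$ is of this form, so $\Lambda_x^{(0)}=\bigcup_k G_k$ as an increasing union. Because $\Gamma_x$ is finitely generated by finite local complexity, the chain $\mathbb{Z}[G_k]$ stabilizes: there is $k_0=k_0(x)$ with $\Gamma_x=\mathbb{Z}[G_{k_0}]$, a group read off from $(x_1,\dots,x_{k_0})$. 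Taking $U_i$ to be the cylinder $[x_1\cdots x_{k_0}]$ (these cover $\Sigma_N$ as $x$ varies) and letting $x'\in U_i$, the equality $G_{k_0}(x')=G_{k_0}(x)$ together with $G_{k_0}(x')\subset\Lambda_{x'}^{(0)}$ yields the inclusion $\Gamma_x=\mathbb{Z}[G_{k_0}]\subseteq\Gamma_{x'}$.

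The main obstacle is upgrading this inclusion to an equality, and this is exactly where the hypothesis $r_x=r_{x'}$ must be used. The difficulty is that the continuation $x'_{k_0+1},x'_{k_0+2},\dots$ can in principle enlarge $\Gamma_{x'}$: return vectors produced at deeper levels have the form $\sum_j \theta^{-j}\delta_j$, with $\delta_j$ ranging over the finite set of differences of the translation parts $\tau_e$, and such a vector could a priori refine the lattice $\mathbb{Z}[G_{k_0}]$ rather than merely extend its $\mathbb{Q}$-span. To exclude this I would argue that the stabilization level is uniform on the set of minimal points of rank $r$. All return vectors of every minimal $x$ lie in the single finitely generated $\mathbb{Z}[\theta^{-1}]$-module $\mathcal{N}$ generated by the finitely many $\theta^{-j}\delta$; once $\mathbb{Z}[G_k]$ attains full rank $r$, any further vector lies in the rank-$r$ saturation of $\mathbb{Z}[G_k]$ inside $\mathcal{N}$, so only finitely many refinements are possible, and a compactness argument on $\Sigma_N$ should bound the number of levels this takes by some $K=K(r)$ independent of $x$. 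Choosing the depth of $U_i$ to exceed $K$, both $x$ and $x'$ have stabilized by that level, whence $\Gamma_x=\mathbb{Z}[G_K]=\Gamma_{x'}$ exactly. I expect this uniform-stabilization step — controlling how deep one must descend before the return-vector group ceases to refine — to be the crux; the reductions preceding it are routine consequences of Lemma \ref{lem:generators} and finite local complexity.
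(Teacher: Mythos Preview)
Your first two paragraphs recover exactly the paper's argument: pick generators $v_1,\dots,v_{r_x}$ of $\Gamma_x$, use Lemma~\ref{lem:generators} to realize them as vectors $\tau(p,q)$ at some finite level $k$, take $U_i$ to be the cylinder on $(x_1,\dots,x_k)$, and observe that for any $x'$ in this cylinder the same vectors lie in $\Lambda_{x'}$, giving $\Gamma_x\subset\Gamma_{x'}$.

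Where you diverge is the final step. The paper does not attempt anything like your uniform-stabilization argument; after obtaining $\Gamma_x\subset\Gamma_{x'}$ it simply writes ``Since $r_x=r_{x'}$ it follows that $\Gamma_x=\Gamma_{x'}$.'' You are right to be uneasy with this: an inclusion of equal-rank free abelian groups need not be an equality (e.g.\ $2\mathbb{Z}\subset\mathbb{Z}$), so the concern you raise in your third paragraph is not an artifact of your route but a genuine gap in the paper's own proof. Your proposed remedy --- bounding refinements inside a fixed ambient $\mathbb{Z}[\theta^{-1}]$-module generated by the finitely many translation offsets, then using compactness of $\Sigma_N$ to get a uniform stabilization depth $K(r)$ --- is a sensible strategy and goes beyond what the paper supplies, but as you acknowledge it is the crux and is only sketched here. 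The paper offers no further hint on this point.
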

\begin{proof}
Let $\mathcal{B}_x$ be minimal and consider a generating set $v_1,\dots, v_{r_x}\in\Lambda_x$ of $\Gamma_x$. Let $R$ be the maximum of the norm of the vectors in this generating set. By Lemma \ref{lem:generators} there is a $k$ such that the vectors in the generating set are all of the form $\tau(p,q)$ with $p,q\in\mathcal{E}_{0,k}$. Let $x'\in\Sigma$ be such that $x_i=x_i'$ for all $0<i\leq k$ and such that $\mathcal{B}_{x'}$ is minimal. Then the vectors $v_1,\dots, v_{r_x}$ are also elements of $\Lambda_{x'}$ and they generate a subgroup $\Gamma_{x'}\subset \Gamma_{x}$. Since $r_x = r_{x'}$ it follows that $\Gamma_x = \Gamma_{x'}$.
  \end{proof}
\begin{corollary}
  \label{cor:2}
Let $\mathcal{R}_k\subset \Sigma_N$ be the set of recurrent $x$ such that $r_x = k$ and $\mathcal{B}_x$ is minimal. For each $k\in\mathbb{N}$ such that $\mathcal{R}_k\neq \varnothing$ there exists a collection of disjoint subsets $\{U_i^k\}_i$ of $\mathcal{R}_k$ such that if $x\in U_i^k$ and $v_1,\dots, v_k \in \Lambda_x$ is a generator for $\Gamma_x$, then $v_1,\dots, v_k$ generates $\Gamma_{x'}$ for any $x'\in U_i^k$. In particular, $\Gamma_x = \Gamma_{x'}$ for any two $x,x'\in U_i^k$.
\end{corollary}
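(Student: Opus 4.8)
The plan is to deduce the statement directly from the preceding lemma, which already isolates the essential point: two points of $\Sigma_N$ whose Bratteli diagrams are minimal, whose positive coordinates agree up to a level controlled by a generating set of return vectors, and whose groups have equal rank, must have identical return vector groups. All that remains for the corollary is to restrict the open sets furnished by that lemma to $\mathcal{R}_k$ and to disjointify them, checking that the defining property survives both operations. The substantive estimates---matching low-norm return vectors at a finite level via Lemma \ref{lem:generators}, and pinning down the rank along recurrent orbits via Lemma \ref{lem:recurrence}---have already been carried out upstream, so the corollary needs no new geometric input.

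First I would invoke the preceding lemma to obtain the collection $\{U_i\}$ of open subsets of $\Sigma_N$ with its matching property. Inspecting that proof, the set produced around a given $x$ is the cylinder fixing finitely many positive coordinates $x_1,\dots,x_{k}$, and the construction applies to \emph{every} $x$ with $\mathcal{B}_x$ minimal; taking $\{U_i\}$ to be the collection of all cylinders so obtained gives an open cover of $\{x:\mathcal{B}_x\text{ minimal}\}$, and since this is a subfamily of the countable set of finite cylinders it is itself countable. Because $\mathcal{R}_k\subset\{x:\mathcal{B}_x\text{ minimal}\}$, setting $V_i:=U_i\cap\mathcal{R}_k$ yields a countable cover of $\mathcal{R}_k$. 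On each $V_i$ the hypotheses of the preceding lemma hold automatically: every $x\in V_i\subset\mathcal{R}_k$ has $\mathcal{B}_x$ minimal and $r_x=k$, so any $x,x'\in V_i$ satisfy $r_x=r_{x'}=k$ with both diagrams minimal, whence $\Gamma_x=\Gamma_{x'}$. Consequently, if $v_1,\dots,v_k\in\Lambda_x$ generate $\Gamma_x$, then since $\Gamma_{x'}=\Gamma_x$ as subgroups of $\mathbb{R}^d$ the same tuple generates $\Gamma_{x'}$.

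The key structural remark is that this matching property is \emph{hereditary}: if it holds on $V_i$ then it holds on every subset of $V_i$, being a statement quantified over pairs of points of the set. I would exploit this to disjointify harmlessly. Enumerating the countable family as $V_1,V_2,\dots$, put
$$U_i^k:=V_i\setminus\bigcup_{j<i}V_j.$$
These are pairwise disjoint, their union equals $\bigcup_i V_i=\left(\bigcup_i U_i\right)\cap\mathcal{R}_k=\mathcal{R}_k$, and each $U_i^k\subset V_i$ inherits the matching property; this is exactly the asserted disjoint collection, with the ``in particular'' clause $\Gamma_x=\Gamma_{x'}$ being the content just verified. The only point requiring any care---and the closest thing to an obstacle---is purely bookkeeping: one must confirm from the proof of the preceding lemma that the $U_i$ may indeed be taken to be a countable family of cylinders covering all minimal $x$ (so that $\mathcal{R}_k$ is covered), and that heredity of the property makes the disjointification cost-free. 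No analytic difficulty arises at this stage.
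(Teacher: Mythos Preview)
Your proposal is correct and is precisely the argument the paper has in mind: the corollary is stated without proof in the paper, as an immediate consequence of the preceding lemma, and you have simply written out the intended deduction (restrict the open cylinders from that lemma to $\mathcal{R}_k$, then disjointify, using that the matching property $\Gamma_x=\Gamma_{x'}$ is hereditary under passing to subsets). Your observation that one must read off from the lemma's proof that the $U_i$ are cylinders covering all minimal $x$ is apt, and is indeed implicit there.
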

Let $y\in\Sigma_N$ be recurrent and $\mathcal{B}_y$ minimal and choose a set of generators $v_1^i,\dots, v_{r_y}^i$ in each $U_i^{r_y}$. For $x\in U_i^{r_y}\subset  \Sigma_N$, let $V_x$ be the $d\times r_y$ matrix with column vectors $v_1^i,\dots, v_{r_y}^i$ so that for each $\tau\in\Lambda_x$ there is a unique $\alpha_x(\tau)\in\mathbb{Z}^{r_y}$ such that $\tau = V_x\alpha_x(\tau)$. Here $\alpha_x:\Gamma_x\rightarrow \mathbb{Z}^{r_y}$ is the localized address map, which is well defined by the previous corollary. Note that both the matrix $V_x$ and address map $\alpha_x$ are locally constant in $\mathcal{R}_{r_y}$.

Given that $\theta_{x_{-1}}^{-1}\tau \in\Lambda_{\sigma(x)}$, there exists a $r_x\times r_x $ matrix $G_x$ such that $\theta_{x_{-1}}^{-1}V_{x} =  V_{\sigma(x)}G_x$ and, in general, 
\begin{equation}
  \label{eqn:vectorCocycle}
  \begin{split}
  &\theta_{(-n)_x}^{-1}V_{x} =  V_{\sigma^n(x)}G_{\sigma^{(n-1)}(x)}\cdots G_{\sigma(x)} G_x, \hspace{.5in}\mbox{ and thus } \\
&\theta_{(-n)_x}^{-1}\tau = \theta_{(-n)_x}^{-1} V_{x}\alpha_x(\tau) =  V_{\sigma^n(x)} G_x^{(n)} \alpha_x(\tau) = V_{\sigma^n(x)} \alpha_{\sigma^n(x)}\left(\theta_{(-n)_x}^{-1}\tau\right) 
  \end{split}
\end{equation}
for any $\tau\in\Gamma_x$, where $G^{(k)}_x :=  G_{\sigma^{k-1}(x)}\cdots G_{\sigma(x)} G_x$. Note that the map $x\mapsto G_x$ is locally constant on $\mathcal{R}_{r_x}$.

For $k$ and $i$ such that $U_i^k\neq\varnothing$, let $\tau_1,\dots, \tau_{k}$ denote a set of canonical generators for $\Gamma_{x}$ for all $x\in U_i^k$, and $v_j:= \alpha_x(\tau_j)\in \mathbb{Z}^{r_x}$ be their image under the address map, where it is assumed that $v_j$ is the $j^{th}$ canonical generator in $\mathbb{Z}^{r_x}$. Consider the real vector spaces generated by $v_1,\dots, v_{r_x}$
$$\mathfrak{R}_x:= \left\{\sum_{j=1}^{r_x} a_j v_j: a_j \in \mathbb{R}\right\}\hspace{.7in}\mbox{ and } \hspace{.7in}\mathfrak{R}_x^*:= \left\{\sum_{j=1}^{r_x} a_j v_j^*: a_j \in \mathbb{R}\right\}$$
with $\langle v_j,v^*_\ell\rangle = \delta_{j\ell}$. Here $\mathfrak{R}_x$ (and its dual) is regarded as an abstract vector space and not as a subspace of $\mathbb{R}^d$. In other words, $\mathfrak{R}_x$ is identified with is the real vector space $\mathbb{R}^{\{v_1,\dots, v_{r_x}\}}$ of dimension $r_x$ (and $\mathfrak{R}_x^*$ is identified with the dual of $\mathbb{R}^{\{v_1,\dots, v_{r_x}\}}$). By Corollary \ref{cor:2}, given $k$, since one can pick a canonical choice of generators in each $U_i^{k}$, one can identify all the vector spaces $\mathfrak{R}_{x'}$ for any $x'\in U_i^k$. In particular, one can chose a norm $\|\cdot\|$ for all the spaces $\mathfrak{R}_{x'}$.

The locally constant maps $G_x$ define dual maps $\bar{G}_x$ in a natural way: since $\theta_{x_1}^{-1}:\Gamma_{\sigma^{-1}(x)} \rightarrow \Gamma_x$ and, by (\ref{eqn:vectorCocycle}), $G_{\sigma^{-1}(x)}:\mathfrak{R}_{\sigma^{-1}(x)}\rightarrow \mathfrak{R}_x$, the dual maps satisfy $\bar{G}_x:= G^*_{\sigma^{-1}(x)}:\mathfrak{R}^*_x\rightarrow \mathfrak{R}^*_{\sigma^{-1}(x)} $.

Let $S_1,\dots, S_N$ be a collection of $N$ compatible substitution rules on prototiles $t_1,\dots, t_M$. Let $\Sigma_N'\subset \Sigma_N$ be the set of $x\in\Sigma$ such that $\Omega_x$ is well defined (if the substitution rules $S_i$ are all primitive, then this is defined for all $x\in\Sigma_N$). 
\begin{definition}
The \textbf{return vector bundle} is the set
$$\mathfrak{R}_N := \left\{(x,v):x\in \Sigma_N'\mbox{ and }v\in\mathfrak{R}^*_x\right\}$$
along with projection map $p_R:\mathfrak{R}_N\rightarrow \Sigma_N'$. The \textbf{return vector cocycle} is the map $\mathfrak{G}:\mathfrak{R}_N\rightarrow \mathfrak{R}_N$ defined by $\mathfrak{G}:(x,v)\mapsto (\sigma^{-1}(x), \bar{G}_x v)$ for all $(x,v)\in\mathfrak{R}_N$. The cocycle will be denoted $\bar{G}^{(n)}_x:= \bar{G}_{\sigma^{n-1}(x)}\circ\cdots\circ \bar{G}_x$, and set $\bar{G}^{(m,n)}_x:= \bar{G}_{\sigma^{m-1}(x)}\cdots\bar{G}_{\sigma^{n}(x)} $.
\end{definition}

It is not clear whether the maps $G_x$ are invertible. In any case, one can invoke the semi-invertible Oseledets theorem \cite[Theorem 4.1]{FLQ:coherent}. 
\begin{theorem}[Semi-invertible Oseledets Theorem]
  \label{thm:oseledetsReturn}
  Let $\mu$ be a minimal $\sigma$-invariant ergodic probability measure. Suppose that $\log^+\|G_x\|\in L^1(\Sigma_N,\mu)$. Then there exists numbers $\omega_1^+ > \omega_2^+ > \cdots >\omega^+_{\sh^+_\mu} > 0 >\omega_1^- > \omega_2^- > \cdots > \omega^-_{\sh^-_\mu}$  and a measurable, $\mathfrak{G}$-invariant family of subspaces $Y_j^\pm(x), Y^\infty(x)\subset \mathfrak{R}_x$ such that for $\mu$-a.e. $x$:
  \begin{enumerate}
  \item The decomposition $\mathfrak{R}_x =Y^\infty_x \oplus Y^+_x\oplus Y^-_x$ holds with
    $$ Y^+_x = \bigoplus_{i=1}^{\sh_\mu^+} Y^+_i(x)\hspace{.8in}\mbox{ and }\hspace{.8in}Y^-_x=\bigoplus_{i=1}^{\sh_\mu^-} Y_i^-(x),$$
  \item \hspace{.3in} $G_x Y_i^\pm(x) = Y_i^\pm(\sigma^{-1}(x))$ \hspace{.3in} and \hspace{.3in} $G_x Y^\infty(x)\subset Y^\infty(\sigma^{-1}(x))$,
  \item For all $\xi\in Y_i^\pm(x)\backslash \{0\}$ and $\xi_\infty\in Y^\infty(x)\backslash \{0\}$, one has
    $$\lim_{n\rightarrow \infty} \frac{1}{n}\log\|G^{(n)}_x\xi\| = \omega_i^\pm\hspace{.7in}\mbox{ and } \hspace{.7in} \lim_{n\rightarrow \infty} \frac{1}{n}\log\|G_x^{(n)}\xi\| = -\infty.$$
\end{enumerate}
\end{theorem}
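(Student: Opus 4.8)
The statement is a direct application of the semi-invertible multiplicative ergodic theorem of Froyland--Lloyd--Quas \cite{FLQ:coherent}, so the plan is to put the return vector cocycle into the form required by that theorem and then quote it. The base system $(\Sigma_N,\sigma,\mu)$ is an invertible, ergodic, measure-preserving system, and $x\mapsto G_x$ is a linear cocycle of finite rank; because it is not known that the $G_x$ are invertible, the \emph{semi-invertible} version is exactly what is needed, and it is responsible for the appearance of the subspace $Y^\infty$ with exponent $-\infty$.

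First I would produce a measurable trivialization of the bundle. On the full-measure set where $\mathcal{B}_x$ is minimal, Lemma \ref{lem:recurrence} and the corollary following it give that the rank is $\mu$-a.e. equal to a constant $r_\mu$, and Corollary \ref{cor:2} supplies a countable measurable partition $\{U_i^{r_\mu}\}$ on each piece of which a fixed set of generators of $\Gamma_{x'}$ may be chosen. Hence the spaces $\mathfrak{R}_{x'}$ are canonically identified with $\mathbb{R}^{r_\mu}$ on each $U_i^{r_\mu}$, the address maps and the matrices $G_x$ of (\ref{eqn:vectorCocycle}) are locally constant there, and $\mathfrak{R}_N$ restricted to this set is measurably isomorphic to the trivial bundle $\Sigma_N\times\mathbb{R}^{r_\mu}$. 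In these coordinates $x\mapsto G_x$ (equivalently, by duality, $x\mapsto\bar G_x=G^*_{\sigma^{-1}(x)}$) is a genuine measurable $\mathbb{R}^{r_\mu}$-valued matrix cocycle.

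Next I would check integrability and invoke the theorem. The hypothesis $\log^+\|G_x\|\in L^1(\Sigma_N,\mu)$ is assumed; by $\sigma$-invariance of $\mu$ the same holds for $\bar G_x$. The semi-invertible Oseledets theorem of \cite{FLQ:coherent} then yields a measurable, cocycle-equivariant splitting of $\mathbb{R}^{r_\mu}\cong\mathfrak{R}_x$ into finitely many Oseledets subspaces carrying distinct finite exponents, together with a fast subspace on which $G_x^{(n)}$ decays faster than any exponential. Collecting the subspaces with strictly positive exponents into $Y^+_x=\bigoplus_i Y_i^+(x)$, those with strictly negative exponents into $Y^-_x=\bigoplus_i Y_i^-(x)$, and the super-exponentially contracted part into $Y^\infty_x$ (a zero exponent, if present, being absorbed by convention), one obtains the decomposition (i), the equivariance (ii), and the exponential rates (iii). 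Finite-dimensionality of $\mathfrak{R}_x$ guarantees that only finitely many exponents occur, as stated.

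The only genuine work, and the main point to get right, is the non-invertibility of the $G_x$: it rules out the classical two-sided Oseledets theorem and requires the semi-invertible formulation, whose output is precisely the extra subspace $Y^\infty$ with exponent $-\infty$. A secondary point is that the trivialization above is legitimate only after restricting to the $\mu$-conull set on which $\mathcal{B}_x$ is minimal and $r_x=r_\mu$, and only because Corollary \ref{cor:2} makes the generators, and hence the cocycle, measurable and locally constant; once this is in place the conclusion is immediate from \cite{FLQ:coherent}.
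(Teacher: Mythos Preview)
Your proposal is correct and matches the paper's approach: the paper does not prove this theorem at all but simply states it as a direct citation of the semi-invertible Oseledets theorem from \cite{FLQ:coherent}, noting just before the statement that ``it is not clear whether the maps $G_x$ are invertible'' and that one therefore invokes the semi-invertible version. Your added details about the measurable trivialization via Corollary~\ref{cor:2} and the verification of integrability are a reasonable elaboration of what the paper leaves implicit.
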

The numbers $\omega_i^\pm$ are the \textbf{Lyapunov exponents} of the return vector cocycle.
\subsection{The trace cocycle}
\label{subsec:traceCocycle}
This section recalls the construction from \cite{T:TTT}. Let $\mathcal{B}_x$ be a bi-infinite Bratteli diagram as constructed in \S \ref{sec:RandSub}. By the definition of $LF(\mathcal{B}^+_x)$, there is a countable family of multimatrix algebras $\mathcal{M}_k^x := \mathcal{M}_k^{\mathcal{B}^+_x}$, and thus a countable family of vector spaces of traces $\tr(\mathcal{M}_k^x) = \left\langle \mathfrak{t}^k_{x,i}\right\rangle_{i=1}^{|\mathcal{V}_k|}$, where $\mathfrak{t}^k_{x,j}$ is the canonical trace in the $j^{th}$ matrix algebra of $\mathcal{M}_k^x$ as defined in (\ref{eqn:canTraces}).

The map $i^*_1:\tr(\mathcal{M}_1^x)\rightarrow \tr(\mathcal{M}_0^x)$ can be explicitly computed as follows. Denote the standard basis of $\mathbb{C}^{M} = \mathcal{M}_0^x$ by $\{\delta_1,\dots, \delta_{M}\}$ and by $\{\mathfrak{t}^1_{x,1},\dots, \mathfrak{t}^1_{x,M}\}$ the canonical basis for $\tr(\mathcal{M}_1^x)$. Then every element in $a\in \mathcal{M}_0^x$ and any element $\mathfrak{t}\in \tr(\mathcal{M}_1^x)$ is of the form
$$a = \sum_{j=1}^{M}\alpha_j \delta_j,\hspace{1in}\mbox{ and } \hspace{1in} \mathfrak{t}  = \sum_{i = 1}^{M} \beta_i \mathfrak{t}^1_{x,i},$$
respectively, where $\alpha_j,\beta_i\in\mathbb{C}$. Then
\begin{equation}
  \label{eqn:tracemap}
  \begin{split}
    \mathfrak{t}(i_1(a)) &= \mathfrak{t}\left(i_1\left(\sum_{j=1}^{M}\alpha_j\delta_j\right)\right) = \sum_{i=1}^{M}\beta_i \mathfrak{t}^x_{1,i}\circ i_1 \left(\sum_{j=1}^{M}\alpha_j\delta_j\right) \\
    &= \sum_{i=1}^{M}\beta_i  \sum_{j=1}^{M}\alpha_j\left(\mathfrak{t}^x_{1,i}\circ i_1\left(\delta_j\right)\right)  = \sum_{i=1}^{M}\beta_i  \sum_{j=1}^{M}\alpha_j (\mathcal{A}^x_1)_{i,j} = \langle (\mathcal{A}^x_1)^* \mathfrak{t},a\rangle = (i_1^*\mathfrak{t})(a). 
  \end{split}
\end{equation}
So the induced map $i^*_1$ is given by the matrix $(\mathcal{A}^x_1)^*$ and the dual map $(i_1^*)^*:\tr^*(\mathcal{M}_0^x)\rightarrow \tr^*(\mathcal{M}_1^x)$ is given by the matrix $\mathcal{A}^x_1$.
\begin{lemma}
  \label{lem:induced}
  The left shift $\sigma^{-1}:\Sigma_N\rightarrow \Sigma_N$ induces a linear map $\sigma_*^{-1}:\tr^*(\mathcal{M}_0^x)\rightarrow \tr^*\left(\mathcal{M}^{\sigma^{-1}(x)}_0\right)$ given by the matrix $\mathcal{A}^x_1$ after the canonical identifications $\tr^*(\mathcal{M}_0^x) = \mathbb{C}^{M}=\tr^*(\mathcal{M}_0^{\sigma^{-1}(x)})$.
\end{lemma}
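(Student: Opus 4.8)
The plan is to reduce $\sigma_*^{-1}$ to the level-raising connecting map of the cotrace inductive system and then read off its matrix from the computation already carried out in (\ref{eqn:tracemap}); the only substantive work is to track how the left shift re-indexes the levels of the bi-infinite diagram.

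First I would record the structural fact that every level $\mathcal{V}_k$ of $\mathcal{B}_x$ carries exactly $M$ vertices, so each $\mathcal{M}_k^x$ has $M$ summands and $\dim\tr(\mathcal{M}_k^x)=\dim\tr^*(\mathcal{M}_k^x)=M$, with canonical bases indexed by $v_1,\dots,v_M$. In particular $\tr^*(\mathcal{M}_0^x)=\mathbb{C}^M=\tr^*(\mathcal{M}_0^{\sigma^{-1}(x)})$ as in the statement, and also $\tr^*(\mathcal{M}_1^x)=\mathbb{C}^M$. It is worth noting here that although $\mathcal{M}_1^x$ and $\mathcal{M}_0^{\sigma^{-1}(x)}$ are genuinely different algebras — their block sizes are the height vector $h^{1,+}$ versus $(1,\dots,1)$ — their cotrace spaces coincide, since the dimension of $\tr^*$ sees only the number of summands and not the block sizes.

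Next I would make the shift bookkeeping explicit. With the indexing convention of \S\ref{sec:RandSub}, the left shift moves the level-$0$ marker of the bi-infinite diagram one step into the positive tower, so that $(\sigma^{-1}x)_k=x_{k+1}$ for $k\geq 1$ and hence $\mathcal{A}^{\sigma^{-1}(x)}_k=\mathcal{A}^x_{k+1}$ for all $k\geq 1$. Thus $\mathcal{B}^+_{\sigma^{-1}(x)}$ is $\mathcal{B}^+_x$ with its bottom level deleted and all indices lowered by one: the level-$k$ slice of $\mathcal{B}^+_{\sigma^{-1}(x)}$ is the level-$(k+1)$ slice of $\mathcal{B}^+_x$, the $M$ vertices being matched by the identity. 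Taking $k=0$ identifies the level-$0$ slice of $\mathcal{B}^+_{\sigma^{-1}(x)}$ with the level-$1$ slice of $\mathcal{B}^+_x$, and this identification of vertices gives the canonical isomorphism $\tr^*(\mathcal{M}_0^{\sigma^{-1}(x)})=\tr^*(\mathcal{M}_1^x)$ compatible with the vertex-indexed bases of the previous step. Under this identification $\sigma_*^{-1}$ is precisely the connecting map $(i_1^*)^*:\tr^*(\mathcal{M}_0^x)\to\tr^*(\mathcal{M}_1^x)$ of the cotrace inductive system, because raising the level in the fixed diagram $\mathcal{B}_x$ by one is the same operation as passing from $x$ to $\sigma^{-1}(x)$.

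Finally I would invoke (\ref{eqn:tracemap}), which shows that $(i_1^*)^*$ is represented in the canonical bases by the matrix $\mathcal{A}^x_1$, giving the claim. I expect the main obstacle to be purely one of conventions rather than of content: one must verify that the left shift descends \emph{into} the positive tower, so that the substitution $S_{x_1}$ lying between levels $0$ and $1$ of $\mathcal{B}_x$ is the one absorbed — producing $\mathcal{A}^x_1$ rather than $\mathcal{A}^x_{-1}$ or $\mathcal{A}^x_2$ — and one must check that the vertex-indexed identification of cotrace spaces is consistent with the differing normalizations $\mathfrak{t}_{k,j}(\mathrm{Id})=h^k_j$ recorded in (\ref{eqn:canTraces}).
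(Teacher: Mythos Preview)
Your proposal is correct and follows essentially the same route as the paper. The paper constructs the map explicitly at the algebra level — defining $\mathfrak{t}^x_1:\mathcal{M}_1^x\to\mathbb{C}^M=\mathcal{M}_0^{\sigma^{-1}(x)}$ by $a\mapsto(\mathfrak{t}^x_{1,1}(a),\dots,\mathfrak{t}^x_{1,M}(a))$, setting $\sigma_*^{-1}:=\mathfrak{t}^x_1\circ i_1$, then observing that $\mathfrak{t}^{x*}_1$ is the identity in canonical bases and dualizing — but your direct identification of $\tr^*(\mathcal{M}_0^{\sigma^{-1}(x)})$ with $\tr^*(\mathcal{M}_1^x)$ via vertex labeling is exactly the cotrace-level shadow of that algebra map, and both arguments terminate in the same invocation of (\ref{eqn:tracemap}).
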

\begin{proof}
  Define $\mathfrak{t}^x_1:\mathcal{M}_1^x\rightarrow \mathbb{C}^{M}= \mathcal{M}_0^{\sigma^{-1}(x)}$ to be the map defined by
  $$a\mapsto \left(\mathfrak{t}^x_{1,1}(a), \mathfrak{t}^x_{1,2}(a), \dots,\mathfrak{t}^x_{1,M}(a)\right)$$
  for all $a\in\mathcal{M}_1^x$. Now, for $a\in\mathcal{M}_0^x$, consider the composition of $\mathfrak{t}^x_1$ with the inclusion map $i_1$, $\sigma_*^{-1}:=\mathfrak{t}^x_1\circ i_1: \mathcal{M}_0^x\rightarrow \mathcal{M}^{\sigma^{-1}(x)}_0$. Note that the dual map $\mathfrak{t}^{x*}_1:\tr(\mathcal{M}_0^{\sigma^{-1}(x)})\rightarrow \tr( \mathcal{M}_1^x )$ is represented by the identity matrix when using the canonical basis in both $\tr(\mathcal{M}_0^{\sigma^{-1}(x)})$ and $\tr( \mathcal{M}_1^x )$. By this fact and (\ref{eqn:tracemap}) we have that the dual map $\sigma^{-1*}: \tr(\mathcal{M}^{\sigma^{-1}(x)}_0) \rightarrow\tr( \mathcal{M}_0^x )$ is represented by the matrix $(\mathcal{A}^x_1)^*$ in canonical coordinates, which defines a map $\sigma_*^{-1}:\tr^*(\mathcal{M}_0^x)\rightarrow \tr^*(\mathcal{M}^{\sigma^{-1}(x)}_0)$ through the matrix $\mathcal{A}^x_1$.
\end{proof}
\begin{definition}
  Let $S_1,\dots, S_N$ be $N$ compatible substitution rules on the prototiles $t_1,\dots, t_M$. The \textbf{trace bundle} $\tr^+(\Sigma_N)$ is the set of pairs $(x,s)$ where $x\in\Sigma_N$ and $s\in \tr^*(\mathcal{M}_0^{\mathcal{B}^+_x})$. The \textbf{trace cocycle} is map $G_+:\tr^+(\Sigma_N)\rightarrow \tr^+(\Sigma_N)$ over the left shift $\sigma^{-1}:\Sigma_N\rightarrow \Sigma_N$ with the linear map $\sigma_*^{-1}$ on the fiber, as obtained from Lemma \ref{lem:induced}.  The composition $\sigma_*^{-1}\circ \cdots \circ \sigma_*^{-1}$ $k$ times will be denoted by $\sigma^{-(k)}_*:\tr^*(\mathcal{M}_0^\mathcal{B})\rightarrow \tr^*(\mathcal{M}_0^{\sigma^{-k}(\mathcal{B})})$ .
\end{definition}
Since the fiberwise linear maps $\sigma_*^{-1}$ agree, under a canonical choice of coordinates, with the linear maps given by the matrix $\mathcal{A}^x_1$, the cocycle can be written in these coordinates, for $m< n$, as
\begin{equation}
  \label{eqn:cocycleCoord}
  \Theta_x^{(m,n)} := \mathcal{A}_n^x \cdots \mathcal{A}_{m+1}^x\hspace{.5in} \mbox{ and }\hspace{.5in}\Theta_x^{(k)}:= \Theta_x^{(0,k)}.
\end{equation}
Oseledets theorem will not be necessary for the trace cocycle, only the existence of the leading Lyapunov exponent $\eta_1$. This is guaranteed by the theorem of Furstenberg and Kesten \cite{FK:matrices} as long as $\log^+\|\sigma_*^{-1}\|\in L^1(\Sigma_N,\mu)$. Any point for which the theorem of Furstenberg and Kesten result holds will be called \textbf{Furstenberg-Kesten-regular}.

\section{LF algebras and return vectors}
\label{sec:LFreturn}
Let $S_1,\dots, S_N$ be $N$ uniformly expanding substitutions on the prototiles $t_1,\dots, t_M\subset \mathbb{R}^d$, $x\in\Sigma_N$ and suppose that $\mathcal{B}_x$ is minimal. Recall the sequence of height vectors $h^k_x$ for the positive part $\mathcal{B}_x^+$ from \S \ref{subsec:algebras}. Let $\{\mathcal{A}^x_k\}$ be the sequence of matrices defined by $\mathcal{B}_x^+$. Starting with $h^0_x=(1,\dots, 1)^T$, the height vectors are defined by $h^k_x = \mathcal{A}^x_k\cdots \mathcal{A}^x_1 h^0_x$. Note that $h_{x,i}^k$ also is the number of paths from $\mathcal{V}_0$ to $v_i\in\mathcal{V}_k$: indeed, by the definition of the matrix $\mathcal{A}_1^x$ defined by the Bratteli diagram, $h^1_{x,i} = \mathcal{A}_1^xh_x^0$ is the number of paths from vertices in $\mathcal{V}_0$ to $v_i\in\mathcal{V}_1$, $h^2_{x,i} = \mathcal{A}_2^xh_x^1 = \mathcal{A}_2^x\mathcal{A}_1^xh_x^0$ is the number of paths from vertices in $\mathcal{V}_0$ to $v_i\in\mathcal{V}_2$, and so on.

Recall also that $\mathcal{B}^+_x$ defines a sequence of inclusions (up to unitary equivalence), which yields the LF-algebra $LF(\mathcal{B}^+_x)$ via the direct limit of the inclusions $i_k : \mathcal{M}^x_{k-1}\hookrightarrow \mathcal{M}^x_k$, $k\in\mathbb{N}$, where
\begin{equation}
  \label{eqn:MMalgebra}
  \mathcal{M}_k^x = M_{h_{x,1}^k}\oplus\cdots\oplus M_{h_{x,M}^k}.
\end{equation}
From \S \ref{sec:RandSub}, the number $h_{x,i}^k$ also denotes the number of tiles inside the patch defined by the canonical level-$k$ supertile $\mathcal{P}_k(v_i)$. Indeed, by (\ref{eqn:approximant}), a level-$k$ supertile is tiled by tiles in bijection with the number of paths starting at $\mathcal{V}_0$ and ending at the corresponding vertex in $\mathcal{V}_k$, and the number of such tiles is an entry in $h^{k}_x$. Thus $\mathcal{B}_x^+$ defines, through its sequence $\{i_k\}$ of inclusions, for any $k\in\mathbb{N}$, a bijection between the set of paths from $\mathcal{V}_0$ to $v_i\in\mathcal{V}_k$ and the set $\{1,\dots, h^k_{x,i}\}$. That is, any $\ell\in\{1,\dots, h_{x,i}^k\}$ determines a unique path $p_\ell\in \mathcal{E}_{v_i}$. Under this correspondence, define the sets of indices
$$I^k_{x,i,j} = \left\{\ell\in \{1,\dots, h^k_{x,j}\}:\substack{\mbox{ the path $p_\ell\in\mathcal{E}_{0,v_j}$ corresponding to this index satisfies} \\\mbox{ $s(p) = v_i\in\mathcal{V}_0,\; r(p) = v_j\in\mathcal{V}_k$}} \right\},$$
and note that $\sum_i|I^k_{x,i,j}| = h^k_{x,j}$. Define for each index $i\in\{1,\dots, M\},k\in\mathbb{N}$ and spectral parameter $\lambda\in\mathbb{R}^d$ the element
$$a^k_{x,i,\lambda} = \left(a^k_{x,i,1,\lambda},\dots,a^k_{x,i,M,\lambda}\right) \in \mathcal{M}^x_k$$
by defining $a^k_{x,i,j,\lambda}\in M_{h^k_{x,j}}$ to be the diagonal matrix which is only non-zero for the diagonal elements with index in $I^k_{x,i,j}$ and, for an indices $\ell\in I^k_{x,i,j}$, the entry is
\begin{equation}
  \label{eqn:diagonal}
  \left(a^{k}_{x,i,j,\lambda}\right)_{\ell\ell} = e^{-2\pi \imath \langle\lambda, \tau_\ell\rangle},
\end{equation}
where $\tau_\ell\in\mathbb{R}^d$ is the vector corresponding to the control point inside the tile of type $t_i$ associated to the index $\ell\in I^k_{x,i,j}$ in the canonical supertile $\mathcal{P}_k(v_j)$ for any $v_j\in\mathcal{V}_k$. Thus the element $a^k_{x,i,\lambda}\in LF(\mathcal{B}_x^+)$ tracks the return vectors to the subtransveral $\mho_x^i$ inside of level-$k$ supertile $\mathcal{P}_k(v_j)$. That is, it records how tiles of type $i$ sit inside canonical level-$k$ $\mathcal{P}_k(v_j)$ supertiles through its interaction with the spectral parameter $\lambda\in\mathbb{R}^d$ defined in (\ref{eqn:diagonal}).

There is a relationship between canonical level-$k$ super tiles and the level-$(k-1)$ supertiles that tile it. But first, consider the relationship between level-$1$ supertiles with the tiles that tile it. The first set of edges $\mathcal{E}_1$ of the diagram $\mathcal{B}_x$ is determined by the element $x_1$. Taking any vertex $v\in\mathcal{V}_1$, its associated level-1 supertile $\mathcal{P}_1(v)$ is tiled by $|r^{-1}(v)|$ tiles, each of which is marked by its own control point and one of which contains the control point of the level-$1$ supertile $\mathcal{P}_1(v)$. As such, there are vectors $\bar{0} = \tau_1,\dots, \tau_{|r^{-1}(v)|}\in\mathbb{R}^d$, one for each edge in $r^{-1}(v)$, which describe the translations between the control points on these $|r^{-1}(v)|$ tiles. Since there are $N$ choices for $x_1$, one for each substotution rule, there is a set of $N$ sets of vectors $\{\mathcal{W}^\ell\}_{\ell=1}^N$, one for each substitution rule $S_\ell$, with
\begin{equation}
  \label{eqn:canVectors}
  \mathcal{W}^\ell = \bigcup_{j=1}^M\mathcal{W}^\ell_{j} = \bigcup_{j=1}^M\bigcup_{i=1}^M \mathcal{W}^\ell_{i,j} \hspace{.6in}\mbox{ and }\hspace{.6in} |\mathcal{W}^\ell_{i,j}| = |r^{-1}(v_i)\cap s^{-1}(v_j)|.
\end{equation}
More precisely $\mathcal{W}^\ell_{i,j}$ is the set of vectors $\tau$ describing the position of the control points of tiles of type $j$ in the level-1 supertile $\mathcal{P}_1(v_i)$, positioned so that the control point of $\mathcal{P}_1(v_i)$ coincides with the origin (that is, in canonical position, $cf.$ \S \ref{subsec:choices}), under the subtitution rule $S_\ell$. It should be emphasized again that the zero vector is an element of $\bigcup_i\mathcal{W}^\ell_{i,j}$ for any $j,\ell$. This corresponds to the trivial translation from the tile which contains the control point in $\mathcal{P}_1(v_j)$ to itself, for any $v_j\in\mathcal{V}_k$.

\begin{figure}[t]
  \centering
  \includegraphics[width = 5.5in]{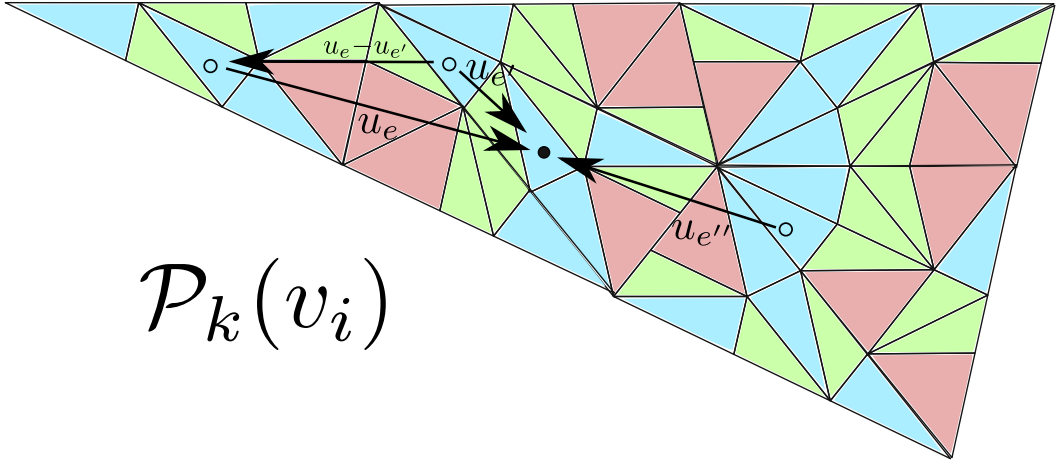}
  \caption{\small{This is a canonical level-$k$ supertile tiled by level-$(k-1)$ supertiles. The solid dot is the control point of the supertiles that contain it. The other circles represent the control points of some level-$(k-1)$ supertiles of the same type. The vectors $u_e, u_{e'}, u_{e''}$ represent the translations for the level-$(k-1)$ supertiles which are not in canonical position to be in canonical position (see Definition \ref{def:canonical}). This system of references allows to express translations between tiles of the same type, such as $u_e-u_{e'}$ as depicted here.}}
  \label{fig:canonical}
\end{figure}
More generally, for $k>0$ and $i,j\in \{1,\dots, M\}$ there is a collection of vectors $\mathcal{U}^x_{i,j,k}$ with $|\mathcal{U}^x_{i,j,k}| = |r^{-1}(v_i)\cap s^{-1}(v_j)| = (\mathcal{A}_k^x)_{i,j}$ whose elements are described as follows. Consider the canonical level-$k$ supertile $\mathcal{P}_k(v_i)$. It is tiled by $|r^{-1}(v_i)|$ level-$(k-1)$ supertiles $\mathcal{P}_{k-1}(\bar{p}_e)$, $e\in r^{-1}(v_i)$, all but one of which are not in canonical position (recall Definition \ref{def:canonical}). Therefore for each edge $e\in r^{-1}(v_i)$ there is a vector $u_e\in\mathbb{R}^d$ such that $\varphi_{u_e}(\mathcal{P}_{k-1}(\bar{p}_e)) = \mathcal{P}_{k-1}(v_j)$ for $e\in r^{-1}(v_i)\cap s^{-1}(v_j)$, that is, so that translating the level-$(k-1)$ supertile $\mathcal{P}_{k-1}(\bar{p}_e)$ is translated into canonical position. Thus $\mathcal{U}^x_{i,j,k}$ is the collection of all such vectors $u_e$ for $e\in r^{-1}(v_i)\cap s^{-1}(v_j)$, and difference vectors of the form $u_e-u_{e'}$ describe translations between level-$(k-1)$ supertiles inside level-$k$ supertiles (see Figure \ref{fig:canonical} for an auxiliary illustration). Thus, by (\ref{eqn:canVectors}), for $k>1$,
\begin{equation}
  \label{eqn:canVectors2}
  \mathcal{U}^x_{i,j,k} = \bar\theta_{x_{k-1}}\cdots\bar\theta_{x_1}\mathcal{W}^{x_k}_{i,j}.
  \end{equation}

Now consider the element $(i_k(a^{k-1}_{x,i,\lambda}))_j\in M_{h^k_{x,j}}$, where $M_{h^k_{x,j}}$ is the $j^{th}$ matrix algebra in $\mathcal{M}^x_k$ (see (\ref{eqn:MMalgebra})) of size $h^k_{x,j}\times h^k_{x,j}$. It is a diagonal matrix with entries expressing how tiles of type $i$ are located in all the level-$(k-1)$ supertiles contained inside $\mathcal{P}_k$, but it does not describe how the tiles of type $i$ are located inside the level-$k$ supertile $\mathcal{P}_k(v_j)$. Thus there is a diagonal matrix $\mathbb{M}_{k,j,\lambda}\in M_{h^k_{x,j}}$ with entries of the form $e^{-2\pi \imath \langle\lambda , u_e\rangle}$, where $u_e\in \mathcal{U}^x_{i,j,k}$, such that
 $$a^k_{x,i,j,\lambda}= \left(i_k(a^{k-1}_{x,i,\lambda})\right)_j  \mathbb{M}_{k,j,\lambda}$$
 and, more generally, there are diagonal elements $\mathbb{M}_{k,\lambda} = (\mathbb{M}_{k,1,\lambda},\dots, \mathbb{M}_{k,M,\lambda})\in\mathcal{M}^x_k$ such that
 $$ a^k_{x,i,\lambda}= i_k\left(a^{k-1}_{x,i,\lambda}\right)  \mathbb{M}_{k,\lambda},$$
 and so recursively applying the same argument there are $k$ diagonal matrices $M_{1,\lambda},\dots, M_{k,\lambda}$ with $M_{\ell,\lambda}$ having entries of the form $e^{2\pi \imath \langle\lambda , u_e\rangle}$, where $u_e\in \mathcal{U}^x_{i,j,\ell}$, such that
 \begin{equation}
   \label{eqn:aDecomp}
   a^k_{x,i,\lambda} = i_k\left(i_{k-1}\left(i_{k-2}\left( \cdots i_1(a^0_{x,i,\lambda})\mathbb{M}_{1,\lambda}\cdots    \right) \mathbb{M}_{k-2,\lambda}\right) \mathbb{M}_{k-1,\lambda}\right) \mathbb{M}_{k,\lambda}.
 \end{equation}
 
 Following (\ref{eqn:canTraces}), denote by $\mathfrak{t}_{k,1}^x,\dots, \mathfrak{t}_{k,M}^x$ the canonical traces for $\mathcal{M}^x_k$. The goal now is to find good bounds for $|\mathfrak{t}_{k,j}^x(a^k_{x,i,\lambda})|$ for any $i,j$. First, note that since all the entries of the diagonal matrices $\mathbb{M}_{\ell,\lambda}$ have unit norm, by (\ref{eqn:canTraces}) and (\ref{eqn:aDecomp}),
 \begin{equation}
   \label{eqn:traceUpBnd}
\sum_{i=1}^M|\mathfrak{t}_{k,j}^x(a^k_{x,i,\lambda})| \leq h^k_j.
\end{equation}

 Now consider
 $$\sum_{i=1}^M a^1_{x,i,\lambda} = \sum_{i=1}^M i_1(a^0_{x,i,\lambda})\mathbb{M}^{-1}_{1,\lambda} =  i_1(\mathrm{Id})\mathbb{M}^{-1}_{1,\lambda}.$$
 Applying the $j^{th}$ canonical trace at level 1:
 \begin{equation}
   \label{eqn:traceLevel1}
   \sum_{i=1}^M \mathfrak{t}_{1,j}^x(a^1_{x,i,\lambda}) =  \mathfrak{t}_{1,j}^x(i_1(\mathrm{Id})\mathbb{M}^{-1}_{1,\lambda}) = \sum_{e\in r^{-1}(v_j)}e^{2\pi \imath \langle \lambda, u_e\rangle} \mathfrak{t}_{0,s(e)}^x(\mathrm{Id})
 \end{equation}
 is the sum of $h^1_j$ exponential terms including, if $|r^{-1}(v_j)|>1$, a term for the form $1+e^{2\pi \imath \langle \lambda, w_e\rangle}$ for some $w_e\in \mathcal{W}^{x_1}_{j}-\mathcal{W}^{x_1}_{j}$. It follows that
 \begin{equation}
   \label{eqn:traceLevel1'}
   |\mathfrak{t}_{1,j}^x(i_1(\mathrm{Id})\mathbb{M}^{-1}_{1,\lambda})|\leq h^1_j - 2 + |1+e^{2\pi \imath \langle \lambda, w_e\rangle}|
 \end{equation}
and, from the inequality $|1+e^{2\pi \imath \omega}| \leq 2 - (1/2)\|\omega\|^2_{\mathbb{R}/\mathbb{Z}}$ for any $\omega\in\mathbb{R}$, that
\begin{equation}
  \label{eqn:traceBnd1}
 |\mathfrak{t}_{1,j}^x(i_1(\mathrm{Id})\mathbb{M}^{-1}_{1,\lambda})|\leq h^1_j - \frac{1}{2} \| \langle \lambda, w_e\rangle\|_{\mathbb{R}/\mathbb{Z}}^2
\end{equation}
for any $w_e\in\mathcal{W}^{x_1}_j$. The goal now is to generalize this type of estimate to traces applied to any of the elements $a^k_{x,i,\lambda}$.

First, note that $\mathfrak{t}_{k,j}^x(a^k_{x,i,\lambda})$ can be written as
\begin{equation}
  \label{eqn:traceExp}
  \begin{split}
    \mathfrak{t}_{k,j}^x(a^k_{x,i,\lambda}) &=  \mathfrak{t}_{k,j}^x(i_k(a^{k-1}_{x,i,\lambda} )\mathbb{M}_{k,\lambda})=\sum_{e\in r^{-1}(v_j)}e^{2\pi \imath \langle \lambda, u_e\rangle} \mathfrak{t}_{k-1,s(e)}^x(a^{k-1}_{x,i,\lambda})\\
    &= \sum_{e\in r^{-1}(v_j)}e^{2\pi \imath \langle \lambda, u_e\rangle} \sum_{\substack{e'\in\mathcal{E}_{k-1}:\\r(e') = s(e)}}e^{2\pi \imath \langle \lambda, u_{e'}\rangle} \mathfrak{t}_{k-2,s(e')}^x(a^{k-2}_{x,i,\lambda}) =\cdots \\
&=\sum_{e_k\in r^{-1}(v_j)} e^{2\pi \imath \langle \lambda, u_{e_k}\rangle}  \sum_{e_{k-1}\in r^{-1}(s(e_k))}e^{2\pi \imath \langle \lambda, u_{e_{k-1}}\rangle}\cdots  \sum_{e_{1}\in r^{-1}(s(e_2))}e^{2\pi \imath \langle \lambda, u_{e_1}\rangle} \mathfrak{t}_{0,s(e_1)}^x(a^{0}_{x,i,\lambda}),
\end{split}
\end{equation}
where $u_{e_\ell}\in \mathcal{U}^x_{i,j,\ell}$ for all $\ell\in\{1,\dots,k\}$ in the last expression.
\begin{remark}
  \label{rem:spectralCocycle}
The expression in (\ref{eqn:traceExp}) can be viewed from a different perspective. Let $\hat{\mathbb{M}}_x^k$ be the matrix-valued function which assigns, for each $\lambda\in\mathbb{R}^d$, the matrix $\hat{\mathbb{M}}_x^k(\lambda)$ with $i,j$ entry
$$\hat{\mathbb{M}}_{x}^k(\lambda)_{i,j} := \sum_{e\in \mathcal{W}_{i,j}^{x_k}}e^{-2\pi \imath \langle \lambda, \tau_e\rangle}.$$
As such, define family of matrix products parametrized by $\lambda\in\mathbb{R}^d$
\begin{equation}
  \label{eqn:SpecCocycle0}
  \hat{\mathbb{M}}_x^{(k)}(\lambda):=\hat{\mathbb{M}}_{x}^{k}\left(\theta_{(k)_x}^{-1}\lambda\right) \cdot \hat{\mathbb{M}}_{x}^{k-1}\left(\theta_{(k-1)_x}^{-1}\lambda\right) \cdots  \hat{\mathbb{M}}_{x}^{1}\left(\theta_{(1)_x}^{-1}\lambda\right)
\end{equation}
for each $k\in\mathbb{N}$ and $\lambda\in\mathbb{R}^d$. As such, it follows that
\begin{equation}
  \label{eqn:SpecCocycle}
  \mathfrak{t}_{k,i}^x(a^k_{x,j,\lambda}) = \hat{\mathbb{M}}_x^{(k)}(\lambda)_{i,j}
\end{equation}
and that $\hat{\mathbb{M}}^{(k)}_x(0) = \Theta_x^{(k)}$ for any $k\in\mathbb{N}$.
The matrices $\hat{\mathbb{M}}^k_{x}(\lambda)$ are called the \textbf{Fourier matrices} in \cite{BGM:correlation}, where the growth of the cocycle $\hat{\mathbb{M}}_x^{(k)}$ in (\ref{eqn:SpecCocycle0}) was used to rule out existence of absolutely continuous parts of diffraction measures. In the work of Bufetov-Solomyak \cite{BS:spectralCocycle}, this is called the \textbf{spectral cocycle}, which has been used to extablish quantitative weak-mixing results. In my take here, (\ref{eqn:SpecCocycle}) aims to extend these types of results.
\end{remark}
\begin{definition}
  \label{def:posSimple}
  Let $S_1,\dots, S_N$ be a family of uniformly expanding compatible substitution rules on the prototiles $t_1,\dots, t_M$. A word $w  = w_1w_2\dots w_n$ is \textbf{simple} if $w_i\cdots w_n\neq w_1\cdots w_{n-i+1}$ for all $1<i\leq n$. A word $w$ is \textbf{positively simple} if it is a simple word and in addition:
  \begin{enumerate}
  \item it breaks up into two subwords $w= w^-w^+ = w_1^-\cdots w_{n^-}^-w_1^+\cdots w_{n^+}^+$, and
  \item each entry in both matrices $Q^\pm = A_{w_{n^\pm}^\pm}\cdots A_{w_1^\pm}$ is strictly greater than 1.
  \end{enumerate}
  A $\sigma$-invariant probability measure $\mu$ for which $\mu([w])>0$ for a positively simple word $w$ is called a \textbf{positively simple measure}.
\end{definition}
Note that a positively simple measure is a minimal measure.
\begin{proposition}
  \label{prop:TracesBound}
  Let $S_1,\dots, S_N$ be a family of uniformly expanding compatible substitution rules on the prototiles $t_1,\dots, t_M$ and let $w = w^-w^+$ be a positively simple word. Suppose  $x\in\Sigma_N$ has infinitely many occurences of the word $w$, both in the future and the past. Denote by $k_n\rightarrow \infty$ the maximal sequence of return times of $x$ to $C([w^-.w^+])$ under $\sigma^{-1}$. Then there exists a $\ell\in\{1,\dots, M\}$, a finite set of vectors $\bar{\Lambda}_x\subset \Lambda_\ell^x$, $c>0$, and $k_x>0$ such that for any $m\in\mathbb{N}$
  \begin{equation}
    \label{eqn:twistBnd1}
    \begin{split}
      &\left|\mathfrak{t}^x_{m,j}(a^{m}_{x,i,\lambda})\right|  = |\hat{\mathbb{M}}_x^{(m)}(\lambda)_{i,j}|\\
      &\hspace{.4in}\leq \left\{
      \begin{array}{ll}
        \displaystyle\left\| \Theta_x^{(m)}\right\|\prod_{n<N}\left( 1 - c\max_{\tau\in \bar\Lambda_x} \|\langle\lambda,\bar\theta_{(k_n+k_x)_x}\tau\rangle \|^2_{\mathbb{R}/\mathbb{Z}}\right) & \mbox{ if }\,k_{N}\leq m < k_N+k_x  \\
        \displaystyle\left\| \Theta_x^{(m)}\right\|\prod_{n \leq N}\left( 1 - c\max_{\tau\in \bar\Lambda_x} \|\langle\lambda,\bar\theta_{(k_n+k_x)_x}\tau\rangle \|^2_{\mathbb{R}/\mathbb{Z}}\right) & \mbox{ if }\, k_N+k_x\leq m \leq k_{N+1} 
      \end{array} \right.
    \end{split}
  \end{equation}
  for any $i,j$.
\end{proposition}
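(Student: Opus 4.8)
The plan is to read $\mathfrak{t}^x_{m,j}(a^m_{x,i,\lambda})$ off the path-sum expansion (\ref{eqn:traceExp}): it is the sum over all Bratteli paths $p$ from $v_i\in\mathcal{V}_0$ to $v_j\in\mathcal{V}_m$ of the unit-modulus phase $\prod_{\ell=1}^m e^{2\pi i\langle\lambda,u_{e_\ell}\rangle}$, so the trivial estimate is the number of such paths, an entry of $\Theta_x^{(m)}$ and hence $\le\|\Theta_x^{(m)}\|$, in accordance with $\mathbb{M}_x^{(m)}(0)=\Theta_x^{(m)}$ from (\ref{eqn:SpecCocycle}). The strategy is to extract one multiplicative contraction factor for each occurrence of the positively simple word $w$ that is \emph{completed} before level $m$, by pairing paths that differ by a genuine return vector localized at that occurrence and applying the elementary inequality $|1+e^{2\pi i\omega}|\le2-\tfrac12\|\omega\|^2_{\mathbb{R}/\mathbb{Z}}$ exactly as in the one-level estimate (\ref{eqn:traceBnd1}). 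Concretely, I would group the factors of $\mathbb{M}_x^{(m)}(\lambda)$ as $\mathbb{M}_x^{(m)}(\lambda)=T\cdot P_N\cdot R_{N-1}\cdot P_{N-1}\cdots P_1\cdot R_0$, where $P_n$ is the twisted block of Fourier matrices reading $w$ at the $n$-th return time $k_n$ and $T,R_n$ collect the twisted matrices over the complementary levels; the whole estimate then reduces to a \textbf{block lemma} asserting that each $P_n$ obeys a uniform \emph{entrywise} bound $|P_n(\lambda)_{ab}|\le(1-g_n)P_n(0)_{ab}$ with the \emph{same} scalar gain $g_n=c\max_{\tau\in\bar\Lambda_x}\|\langle\lambda,\bar\theta_{(k_n+k_x)_x}\tau\rangle\|^2_{\mathbb{R}/\mathbb{Z}}$ on every entry.

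Granting the block lemma, the proposition follows by entrywise monotonicity of products of non-negative matrices: since $T$, $R_n$ and each $P_n(0)$ have non-negative entries and the factors $1-g_n$ are scalars, multiplying the entrywise inequalities gives $|\mathbb{M}_x^{(m)}(\lambda)|\le\big(\prod_n(1-g_n)\big)\Theta_x^{(m)}$ entrywise, the product ranging over the occurrences completed by level $m$. Reading off the $(i,j)$ entry and bounding it by $\|\Theta_x^{(m)}\|$ yields (\ref{eqn:twistBnd1}); the two cases record whether the $N$-th block, which finishes at level $k_N+k_x$, is completed ($m\ge k_N+k_x$, product over $n\le N$) or only started ($k_N\le m<k_N+k_x$, product over $n<N$). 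That $\{k_n\}$ is the \emph{maximal} sequence of return times guarantees the blocks $P_n$ are disjoint and exhaust the occurrences of $w$, so no gain is double counted or omitted.

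The content is in the block lemma, and here both halves of the positively simple hypothesis are used. The block $P_n$ reads the whole word $w=w^-w^+$ and satisfies $P_n(0)=Q^+Q^-$, every entry of which is at least $2$ by strict positivity of $Q^\pm$; thus the associated $w$-supertile is tiled by several sub-supertiles of each type, and in particular contains at least two sub-supertiles of some fixed type $\ell$. Their control-point separation is, by the \emph{simple} (overlap-free) hypothesis, a genuine nonzero return vector, and by the scaling relation (\ref{eqn:canVectors2}) it is realized at the top level $k_n+k_x$ of the block as $\bar\theta_{(k_n+k_x)_x}\tau$ for a canonical $\tau\in\Lambda_\ell^x$; collecting all such canonical vectors produces the finite set $\bar\Lambda_x\subset\Lambda_\ell^x$. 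Because the internal combinatorics of the block depend only on the fixed word $w$, and because $\sigma^{-k_n}(x)\in C([w^-.w^+])$ pins down the two-sided local data at every return time, the set $\bar\Lambda_x$, the geometry of the $w$-supertile, and the resulting constant are identical at every occurrence. The decisive point is then that, for each $\tau\in\bar\Lambda_x$ and each entry $(a,b)$ of $P_n$, a definite fraction of the $Q_{ab}$ paths splits into pairs differing by exactly $\bar\theta_{(k_n+k_x)_x}\tau$: strict positivity of $Q^-$ routes the source $a$ into the branching pair of type-$\ell$ sub-supertiles, strict positivity of $Q^+$ routes that pair up to the target $b$, and since the two sub-supertiles are filled by identical substitutions, paths through one translate to paths through the other with displacement difference exactly $\bar\theta_{(k_n+k_x)_x}\tau$. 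Applying $|1+e^{2\pi i\omega}|\le2-\tfrac12\|\omega\|^2_{\mathbb{R}/\mathbb{Z}}$ to each pair and taking $\tau$ to be the maximizer over $\bar\Lambda_x$ gives $|P_n(\lambda)_{ab}|\le Q_{ab}(1-g_n)$, with $c>0$ uniform because $w$, the matrices $Q^\pm$, the paired fraction, and the canonical vectors $\mathcal{W}^{x_k}$ of (\ref{eqn:canVectors}) are fixed.

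The main obstacle is precisely this \emph{uniformity across entries}: pulling out a single pair improves only one entry and only additively, and a gain confined to a few entries is washed out after multiplication by the non-negative inter-block matrices $R_n$. What saves the multiplicative structure is that a \emph{fixed} return vector $\tau\in\bar\Lambda_x$ is realizable, on a positive fraction of paths, in \emph{every} entry of $P_n$ at once — and this simultaneous realizability is exactly what the strict positivity of both $Q^-$ (routing every source into the branching pair) and $Q^+$ (routing the pair to every target) provides. I would therefore devote the technical core to quantifying the paired fraction uniformly in the entry $(a,b)$ and in $\tau\in\bar\Lambda_x$, as this single estimate yields the uniform constant $c$ and, through the entrywise propagation above, closes the proof.
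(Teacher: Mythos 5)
Your proposal runs on the same engine as the paper's proof: expand the cocycle entries as path sums, pair paths whose displacements differ by a return vector, apply $|1+e^{2\pi i\omega}|\le 2-\tfrac12\|\omega\|^2_{\mathbb{R}/\mathbb{Z}}$ to harvest one multiplicative factor per completed occurrence of $w$, and propagate the resulting entrywise bounds through products of non-negative matrices (the paper performs this propagation as a recursion on traces, see (\ref{eqn:traceExp2})--(\ref{eqn:traceExp3}), which is exactly your block factorization unrolled). The one substantive difference is where the pairing lives. You pair complete paths across the whole block $w^-w^+$, routing each source $a$ through a branching pair of intermediate type-$\ell$ sub-supertiles, and this is why you need strict positivity of $Q^-$. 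The paper pairs only inside the $w^+$ block, among paths with a \emph{common source} $j$ and common target $\ell$: since all such paths carry the same lower-level trace factor $\mathfrak{t}^x_{k_{n-1},j}(a^{k_{n-1}}_{x,i,\lambda})$, that (complex) factor pulls out of the paired sum, the condition $Q^+_{\ell,j}>1$ alone guarantees at least two paths to pair, and $Q^-$ is never used in this proposition. Your argument is therefore correct under the stated hypotheses but consumes more of them; the paper's version is more economical and reveals that the positivity of $Q^-$ is not needed here but is reserved for other parts of the theory.

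One claim you make deserves a caveat: you assert that positivity of $Q^-$ and $Q^+$ provides ``simultaneous realizability'' of a fixed $\tau\in\bar\Lambda_x$ in every entry of $P_n$. Positivity gives \emph{routing} (paths from every source through the branching pair, and from the pair to every target), but it does not make the translation vector between the two type-$\ell$ sub-supertiles independent of the ambient supertile type $b$: inside a type-$b'$ supertile the type-$\ell$ sub-supertiles sit at different relative positions, so the vector realized in entry $(a,b')$ is in general a different one. Hence the honest per-entry bound involves entry-dependent return vectors, and passing to $\max_{\tau\in\bar\Lambda_x}$ uniformly over entries requires defining $\bar\Lambda_x$ to consist of vectors realized in \emph{every} supertile type. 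The paper's own write-up makes the same leap in the last inequality of (\ref{eqn:traceExp3}); it is ultimately harmless because the postal condition used downstream demands exactly this uniform realizability (translation equivalences ``in any $\mathcal{P}_{|w^+|}(v)$''), but your stated justification --- positivity of $Q^\pm$ --- is not by itself sufficient for it, so you should either restrict $\bar\Lambda_x$ to the everywhere-realizable vectors or track entry-dependent gains and take a minimum over entries at the end.
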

\begin{proof}
  Since the positively simple word $w = w^-w^+$ occurs infinitely often, there exists (possibly empty) words $\{y_n\}_n$, none of which contains $w$ as a subword, such that
  $$x^+ = y_0w^-w^+y_1 w^-w^+y_2w^-w^+y_3w^-w^+y_4\cdots .$$
  Let $k_n\rightarrow \infty$ be defined as $k_1 = |y_0| + |w^-|$ and recursively $k_{n+1} = k_n+|w|+|y_n|$. This is a maximal sequence of return times to $C([w^-.w^+])$. Let $k_x = |w^+|$.

Consider the tiling space $\Omega_{\sigma^{-k_1}(x)}$ and $i\in \{1,\dots, M\}$. Consider the canonical level-$k_x$ supertile $\mathcal{P}_{k_x}(v_i)$ for tilings in this tiling space. Since $Q^+_{i,j}>1$, there are at least two tiles of type $t_{j}$ inside this supertile. As such, there is at least one vector $\tau\in\Lambda_{j}^{\sigma^{-k_1}(x)}$ which describes the translation between two tiles of type $t_j$. Let $\bar\Lambda_x$ be the union of all vectors $\Lambda_{j}^{\sigma^{-k_1}(x)}$ describing a translation equivalence between tiles of type $t_{j}$ in the canonical supertile $\mathcal{P}_{k_n}(v_j)$ for $\Omega_{\sigma^{-k_1}(x)}$, for all $j$; this is a finite set of vectors.

First suppose that $k_{n-1}+k_x\leq m\leq k_{n}$ for some $n>1$. Then as in (\ref{eqn:traceExp}),
\begin{equation}
  \label{eqn:traceExp2}
  \begin{split}
    \mathfrak{t}_{k_n,j}^x(a^{k_n}_{x,i,\lambda}) &= \sum_{\substack{p\in\mathcal{E}_{k_{n-1}+k_x,m}:  \\ r(p) = v_j}}e^{2\pi \imath \langle\lambda, u_p\rangle} \mathfrak{t}_{k_{n-1}+k_x,s(p)}^x(a^{k_{n-1}+k_x}_{x,i,\lambda}) \\
    &=\sum_{\substack{p\in\mathcal{E}_{k_{n-1}+k_x,m}: \\  r(p) = v_j}}e^{2\pi \imath \langle\lambda, u_p\rangle} \sum_{\substack{p'\in\mathcal{E}_{k_{n-1},k_{n-1}+k_x}: \\ r(p')= s(p)}} e^{2\pi \imath \langle\lambda, u_{p'}\rangle }    \mathfrak{t}_{k_{n-1},s(p')}^x(a^{k_{n-1}}_{x,i,\lambda}),
\end{split}
\end{equation}
where every vector $u_p$ above describes the position of a level-$k_{n-1}+k_x$ supertile inside a level-$k_n$ supertile relative to their control points, and the vectors $u_{p'}$ similarly describe the position of level $k_{n-1}$ supertiles inside level-$k_{n-1}+k_x$ supertiles. As such, in every term above of the form
\begin{equation}
  \label{eqn:problematic}
  \sum_{\substack{p'\in\mathcal{E}_{k_{n-1},k_{n-1}+k_x}: \\  r(p')= s(p)}} e^{2\pi \imath \langle\lambda, u_{p'}\rangle }\mathfrak{t}_{k_{n-1},s(p')}^x(a^{k_{n-1}}_{x,i,\lambda}),
\end{equation}
which depends only on $Q^+$, the vector $u_{p'}$ is of the form $\bar\theta_{(k_{n-1}+k_x)_x}u$ for some $u\in \bar\Lambda_x$. Using the fact that $Q^+_{i,j}>1$ for all $i,j$, this term can be bounded the in the same way that the bounds (\ref{eqn:traceLevel1})-(\ref{eqn:traceBnd1}) were obtained. Fixing $v_\ell\in \mathcal{V}_{k_{n-1}+k_x}$:
\begin{equation}
  \label{eqn:traceExp3}
  \begin{split}
&\left| \sum_{\substack{p'\in\mathcal{E}_{k_{n-1},k_{n-1}+k_x} \\ r(p')=v_\ell}} e^{2\pi \imath \langle\lambda, u_{p'}\rangle }
\mathfrak{t}_{k_{n-1},s(p')}^x(a^{k_{n-1}}_{x,i,\lambda}) \right| = \left|\sum_{j=1}^M \sum_{\substack{p'\in\mathcal{E}_{k_{n-1},k_{n-1}+k_x}: \\ s(p')=v_j \in\mathcal{V}_{k_{n-1}}\\ r(p')=v_\ell}}  e^{2\pi \imath \langle\lambda, u_{p'}\rangle } \mathfrak{t}_{k_{n-1},j}^x(a^{k_{n-1}}_{x,i,\lambda})\right| \\
&\leq  \sum_{j=1}^M \left|\sum_{\substack{p'\in\mathcal{E}_{k_{n-1},k_{n-1}+k_x}: \\ s(p')=v_j \in\mathcal{V}_{k_{n-1}}\\ r(p')=v_\ell}}  e^{2\pi \imath \langle\lambda, u_{p'}\rangle } \mathfrak{t}_{k_{n-1},j}^x(a^{k_{n-1}}_{x,i,\lambda})\right|\leq  \sum_{j=1}^M \left( Q_{\ell,j}^+ -2 + |1+e^{2\pi \imath \langle\lambda,u_{p'}\rangle}| \right)  |\mathfrak{t}_{k_{n-1},j}^x(a^{k_{n-1}}_{x,i,\lambda})| \\
&\hspace{.25in}\leq  \sum_{j=1}^M \left(  Q^+_{\ell,j} - \frac{1}{2}\| \langle\lambda, u_{p'}\rangle\|^2_{\mathbb{R}/\mathbb{Z}}\right)  \left| \mathfrak{t}_{k_{n-1},j}^x(a^{k_{n-1}}_{x,i,\lambda})\right|\\
&\hspace{.5in}\leq  \sum_{j=1}^M \left( 1 - c_w\| \langle\lambda, u_{p'}\rangle\|^2_{\mathbb{R}/\mathbb{Z}}\right)     Q^+_{\ell,j}   \left| \mathfrak{t}_{k_{n-1},j}^x(a^{k_{n-1}}_{x,i,\lambda})\right| \\
&\hspace{.75in}\leq  \left( 1 - c_w\max_{\tau\in \bar\Lambda_x}\| \langle\lambda, \theta_{(k_{n-1}+k_x)_x}^{-1}\tau\rangle\|^2_{\mathbb{R}/\mathbb{Z}}\right) \sum_{j=1}^M \left| \mathfrak{t}_{k_{n-1},j}^x(a^{k_{n-1}}_{x,i,\lambda})\right|   Q^+_{\ell,j} ,
  \end{split}
\end{equation}
where $c_w = (2M\max_{i,j}Q^+_{i,j})^{-1}$, where the second inequality was obtained by bounding the terms of the form (\ref{eqn:problematic}) in the same way as (\ref{eqn:traceLevel1})-(\ref{eqn:traceBnd1}) were obtained, the third inequality follows from a bound of the type done in (\ref{eqn:traceBnd1}), and where the last one follows from the fact that this estimate works for any return vector of the form $\bar\theta_{(k_{n-1}+k_x)_x}u$ for some $u\in \bar\Lambda_x$. The bound (\ref{eqn:traceExp3}) can now be used in bounding (\ref{eqn:traceExp2}):
\begin{equation*}
  \label{eqn:traceExp4}
  \begin{split}
   & \left| \mathfrak{t}_{m,j}^x(a^{m}_{x,i,\lambda})\right| \\
   & \hspace{.25in}\leq   \sum_{\substack{p\in\mathcal{E}_{k_{n-1}+k_x,m}: \\  r(p) = v_j}}e^{2\pi \imath \langle\lambda, u_p\rangle}   \left( 1 - c_w\max_{\tau\in \bar\Lambda_x}\| \langle\lambda, \bar\theta_{(k_{n-1}+k_x)}\tau\rangle\|^2_{\mathbb{R}/\mathbb{Z}}\right)\sum_{\ell=1}^M Q^+_{s(p),\ell}  \left| \mathfrak{t}_{k_{n-1},m}^x(a^{k_{n-1}}_{x,i,\lambda})\right| \\
    &\hspace{.25in}\leq \left( 1 - c_w\max_{\tau\in \bar\Lambda_x}\| \langle\lambda, \bar\theta_{(k_{n-1}+k_x)}\tau\rangle\|^2_{\mathbb{R}/\mathbb{Z}}\right)\sum_{\ell=1}^M \left|\left\{\substack{p\in \mathcal{E}_{k_{n-1},m}:\\ s(p) = v_\ell,\, r(p) = v_j}\right\}\right| \left| \mathfrak{t}_{k_{n-1},\ell}^x(a^{k_{n-1}}_{x,i,\lambda})\right| \\
    &\hspace{.25in}= \left( 1 - c_w\max_{\tau\in \bar\Lambda_x}\| \langle\lambda, \bar\theta_{(k_{n-1}+k_x)}\tau\rangle\|^2_{\mathbb{R}/\mathbb{Z}}\right) \sum_{\ell=1}^M \left(\Theta_x^{(k_{n-1},m)}\right)_{j,\ell} \left| \mathfrak{t}_{k_{n-1},\ell}^x(a^{k_{n-1}}_{x,i,\lambda})\right| .
  \end{split}
  \end{equation*}
The same type of bound applies to $\left| \mathfrak{t}_{k_{n-1},\ell}^x(a^{k_{n-1}}_{x,i,\lambda})\right|$. Thus, recursively, after finitely many bounds, one arrives at (\ref{eqn:twistBnd1}) in the case that $k_{n-1}+k_x\leq m\leq k_{n}$. In the case that $k_n\leq m <k_n+|w^+|$ the same argument holds, the point being that the number of terms in the product is one less than the number of times the word $w^+$ appears in $x$ up to its $m^{th}$ spot.
\end{proof}
\section{Twisted integrals}
\label{sec:twisted}
      The purpose of this section is to understand the behavior of twisted ergodic integrals along the dense $\mathbb{R}^d$ orbits on a compact tiling space $\Omega$. By Proposition \ref{prop:sum} the action is typically uniquely ergodic, and we denote the unique $\mathbb{R}^d$-invariant measure on $\Omega_x$ by $\mu_x$. More specifically, given $\mathcal{T}\in\Omega_x$ and $f\in L^2(\mu_x)$, recall the twisted ergodic integrals of $f$ by $\lambda$ as the family
      \begin{equation}
        \label{eqn:twisted}
\mathcal{S}_R^{\mathcal{T}}(f,\lambda) = \int_{C_R(0)} e^{-2\pi \imath \langle \lambda, \tau\rangle} f\circ \varphi_\tau(\mathcal{T})\, d\tau,
      \end{equation}
      where $C_R(0) = [-R,R]^d$ is the $L^\infty$-ball of radius $R$ in $\mathbb{R}^d$ centered at the origin, is defined for $\mu_x$-almost every $\mathcal{T}$.  Note that if $f\in L^2$ is an eigenfunction with eigenvalue $\lambda$, then $\mathcal{S}_R^\mathcal{T}(f,\lambda) =  C_d f(\mathcal{T})R^d$ for all $R$ and $\mathcal{T}\in\Omega_x$. Moreover, the spectral measure for an eigenfunction consists of a Dirac measure on $\lambda$.   So in order to rule out eigenfunctions it suffices to show either that the lower local dimension of spectral measures for a dense set of functions in $L^2$ is bounded below by a positive quantity, or that the twisted integrals (\ref{eqn:twisted}) for the same dense family of functions has growth of order $R^{d-\alpha}$ for some positive $\alpha$ (see Lemma \ref{lem:dimension}).

      From the time-change equation (\ref{eqn:conj}) and the change of variables formula it is straight-forward to verify that
      \begin{equation}
        \label{eqn:TwistedChange}
        \mathcal{S}_{R}^\mathcal{T}(f,\lambda) =\theta_{(n)_x}^{-d}\mathcal{S}_{R_{(n)_x}}^{\mathcal{T}_{(n)_x}}\left(f_{(n)_x},\lambda_{(n)_x}\right),
      \end{equation}
      for any $n>0$, where
      \begin{equation}
        \label{TwistedChange2}
        \begin{split}
          f_{(n)_x} = \Phi_{\sigma^{-n}(x)}^{(n)*}f,\hspace{.85in} \lambda_{(n)_x} &= \theta^{-1}_{(n)_x}\lambda, \hspace{.85in} R_{(n)_x} = \theta_{(n)_x}R,\\ \mbox{ and } \hspace{.25in} \mathcal{T}_{(n)_x} &= \Phi^{-1}_{\sigma^{-n}(x)}\circ\cdots\circ \Phi^{-1}_{\sigma^{-1}(x)}(\mathcal{T}).
        \end{split}
      \end{equation}
      This will become useful since it allows to convert twisted integrals of level-$n$ TLC functions on $\Omega_x$ into twisted integrals of level-$0$ TLC functions on $\Omega_{\sigma^{-n}(x)}$.
      
      For $\mathcal{T}\in\Omega_x$, let $g:\mathbb{R}^d\rightarrow \mathbb{R}$ be a (level-0) $\mathcal{T}$-equivariant function which is supported in the union of the interiors of tiles of type $t_\ell$ for some $\ell$. This function can be more concretely described as follows: there is a compactly supported function $\psi^\mathcal{T}_\ell:\mathbb{R}^d\rightarrow \mathbb{R}$ and a Delone set $\Lambda(\mathcal{T},\ell)$ such that
      $$g = \sum_{x\in \Lambda(\mathcal{T},\ell)}\delta_x * \psi_\ell^\mathcal{T},$$
      where $\Lambda(\mathcal{T},\ell)$ describes the position of the tiles of type $\ell$ in $\mathcal{T}$ relative to the origin, i.e., it is the set of control points for tiles of type $\ell$.
      Recall that there is a (level-0) TLC function $h$ such that $g(t) = h\circ \varphi_t(\mathcal{T})$. Let $t_j^{(k)}\subset \mathcal{T}$ be a level-$k$ supertile of type $j$ which is a patch of $\mathcal{T}$. Then there is a finite set $\Lambda_\ell(t_j^{(k)})\subset \Lambda(\mathcal{T},\ell)$ in bijection with the set of finite paths $p\in\mathcal{E}_{0,k}$ with $s(p) = v_\ell$ and $r(p) = v_j\in\mathcal{V}_k$ such that
      $$\int_{t_j^{(k)}} e^{-2\pi \imath \langle \lambda, \tau \rangle} g(\tau)\, d\tau = \sum_{x\in \Lambda_\ell(t_j^{(k)})} \int_{\mathbb{R}^d}e^{-2\pi \imath \langle\lambda ,\tau \rangle}\left( \delta_{x}*\psi^\mathcal{T}_\ell\right)\, d\tau = \sum_{x\in \Lambda_\ell(t_j^{(k)})} e^{-2\pi \imath \langle\lambda ,x \rangle}\widehat{\psi^\mathcal{T}_\ell}(\lambda) $$
      by the properties of the Fourier transform, and so
      \begin{equation}
        \label{eqn:localTwist}
        \begin{split}
          \int_{t_j^{(k)}} e^{-2\pi \imath \langle \lambda, \tau \rangle} h\circ \varphi_\tau (\mathcal{T})\, d\tau &= \int_{t_j^{(k)}} e^{-2\pi \imath \langle \lambda, \tau \rangle} g(\tau)\, d\tau \\
       &= e^{2\pi \imath \langle \lambda , \tau(t^{(k)}_j)\rangle} \mathfrak{t}^x_{k,j}(a^k_{x,\ell,\lambda})\widehat{\psi^\mathcal{T}_\ell}(\lambda),
        \end{split}
      \end{equation}
      where $\tau(t^{(k)}_j)\in\mathbb{R}^d$ is the vector which describes the position of the supertile $t^{(k)}_j$ with respect to the canonical level-$k$ supertile $\mathcal{P}_k(v_j)$, i.e. the translation equivalence. This immediately generalizes to any level-$0$, $\mathcal{T}$-equivariant (equivalently, TLC) functions as follows. 

Let $h = \sum h_i:\Omega_x\rightarrow \mathbb{R}$ be a level-$0$ TLC function, where $h_1,\dots, h_M:\Omega_x\rightarrow\mathbb{R}$ are the associated functions for the $M$ different level-$0$ tiles and, for $\mathcal{T}\in\Omega$, $g_\mathcal{T} = \sum g_i$ the associated $\mathcal{T}$-equivariant functions on $\mathbb{R}^d$. Then there exist compactly supported functions $\psi_1^\mathcal{T},\dots, \psi_M^\mathcal{T}$, where the support of each $\psi^\mathcal{T}_i$ is translation equivalent to a compact subset of the interior of $t_i$, and Delone sets $\Lambda(\mathcal{T},1),\dots,  \Lambda(\mathcal{T},M) $ such that
$$g_\mathcal{T} = \sum_{\ell=1}^M\sum_{\tau\in \Lambda(\mathcal{T},\ell)} \delta_\tau * \psi_\ell^\mathcal{T}.$$
Then as in (\ref{eqn:localTwist}), for a patch $t_j^{(k)}\subset \mathcal{T}$ which is also a level-$k$ supertile:
      \begin{equation}
        \label{eqn:localTwist2}
        \begin{split}
          \int_{t_j^{(k)}} e^{-2\pi \imath \langle \lambda, \tau \rangle} h\circ \varphi_\tau (\mathcal{T})\, d\tau &= \int_{t_j^{(k)}} \sum_{\ell=1}^Me^{-2\pi \imath \langle \lambda, \tau \rangle} g_\ell(\tau)\, d\tau \\
       &= e^{2\pi \imath \langle \lambda , \tau(t^{(k)}_j)\rangle} \sum_{\ell=1}^M\mathfrak{t}^x_{k,j}(a^k_{x,\ell,\lambda})\widehat{\psi^\mathcal{T}_\ell}(\lambda),
        \end{split}
      \end{equation}
      where, again, $g_\mathcal{T}(t) = h\circ \varphi_t(\mathcal{T})$ was used.

      Recall the renormalization maps $\Phi_{\sigma^{-1}(x)}:\Omega_{\sigma^{-1}(x)}\rightarrow\Omega_{x} $ and observe that if $h:\Omega_x\rightarrow \mathbb{R}$ is a level-$1$ TLC function, then $h':=\Phi_{\sigma^{-1}(x)}^*h:\Omega_{\sigma^{-1}(x)}\rightarrow \mathbb{R}$ is a level-$0$ TLC function. In particular, if $h = \sum_{i=1}^M h_i$, where $h_i$ is supported on tilings $\mathcal{T}\in\Omega_x$ whose level-$1$ supertile around the origin is of type $i$, then $h' =\sum_{i=1}^M \Phi_{\sigma^{-1}(x)}^*h_i = \sum_{i=1}^M h'_i$, where $h_i'$ are supported on tilings $\mathcal{T}'\in\Omega_{\sigma^{-1}(x)}$ whose tiles containing the origin is of type $i$. In addition, if $g_\mathcal{T}(t) = h\circ\varphi_t(\mathcal{T})$, $g_{\mathcal{T}'}'(t) = h'\circ\varphi_t(\mathcal{T}')$ and $\Phi_{\sigma^{-1}(x)}(\mathcal{T}') = \mathcal{T}$, using (\ref{eqn:conj}), then
      $$g'_{\mathcal{T}'}(t) = h'\circ \varphi_t(\mathcal{T}') = h\circ\Phi_{\sigma^{-1}(x)}\circ\varphi_t(\mathcal{T}') = h\circ\varphi_{\bar\theta_{x_1}t}\circ\Phi_{\sigma^{-1}(x)}(\mathcal{T}') = h\circ\varphi_{\bar\theta_{x_1}t}(\mathcal{T}) = g_\mathcal{T}(\bar\theta_{x_1}t),$$
      so they can both be written as
$$g_{\mathcal{T}'}'(t)= \sum_{\ell=1}^M\sum_{\tau\in \Lambda_\ell^{\mathcal{T}'}}\delta_\tau * \psi^{\mathcal{T}'}_\ell\hspace{.4in}\mbox{ and }\hspace{.4in} g_{\mathcal{T}}(t)= \sum_{\ell=1}^M\sum_{\tau\in \Lambda_\ell^{\mathcal{T}'}}\delta_{\bar{\theta}_{x_1}\tau} * (\bar{\theta}_{x_1}^*\psi^{\mathcal{T}}_\ell).$$

The following is \cite[Lemma 8.1]{ST:random}.
\begin{lemma}
\label{lem:packing}
  Let $S_1,\dots, S_N$ be a collection of uniformly expanding compatible substitution rules on $m$ prototiles. For $\mu$ a minimal, $\sigma$-invariant ergodic probability measure on $\Sigma_N$, for almost every $x$ and tiling $\mathcal{T}\in\Omega_x$ and $R>0$ there exists an integer $n = n(R)$ and a decomposition
  \begin{equation}
    \label{eqn:decomp3}
    \mathcal{O}_\mathcal{T}^-(C_R(0)) = \bigcup_{i = 0}^n\bigcup_{j=1}^M \bigcup_{k=1}^{\kappa_j^{(i)}}t^{(i)}_{j,k},
  \end{equation}
  where $t^{(i)}_{j,k}$ is a level-$i$ supertile of the tiling $\mathcal{T}$ of type $j$, such that
  \begin{enumerate}
  \item $\kappa_j^{(n)}\neq 0$ for some $j$ and $\mathrm{Vol}(C_R(0)) = (2R)^d\leq \RU \theta_{(n)_x}^{-d}$,
    \item $\sum_{j=1}^M \kappa_j^{(i)} \leq \RN \mathrm{Vol}(\partial C_R(0))  \theta_{(i)_x}^{d-1}$ for $i = 0,\dots, n-1$
  \end{enumerate}
  for some $\RU, \RN$ which depend only on the substitution rules.
\end{lemma}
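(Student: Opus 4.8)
The plan is to build the decomposition by a greedy, top-down filling of $\mathcal{O}^-_\mathcal{T}(C_R(0))$ by supertiles and then to bound the number of supertiles used at each level by a boundary-shell volume estimate. The uniform expansion hypothesis is exactly what makes these estimates clean: a level-$i$ supertile of any type is a prototile dilated by the factor $\theta_{(i)_x}^{-1}$, so there are constants $c_1,\dots,c_4>0$, depending only on the finitely many prototiles, with $c_1\theta_{(i)_x}^{-1}\le \mathrm{diam}\le c_2\theta_{(i)_x}^{-1}$ and $c_3\theta_{(i)_x}^{-d}\le \mathrm{Vol}\le c_4\theta_{(i)_x}^{-d}$ for every level-$i$ supertile. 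I will also write $\theta_{\min}=\min_\ell\theta_\ell$ for the smallest expansion ratio among the $N$ rules, so that $\theta_{(i+1)_x}^{-1}\le \theta_{\min}^{-1}\theta_{(i)_x}^{-1}$. Minimality of $\mathcal{B}_x$ together with the construction of \cite{ST:random} guarantees that the hierarchical (recognizable) decomposition of $\mathcal{T}$ into level-$i$ supertiles is well defined and nested for every $i$, and this is what all the bookkeeping below rests on.

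First I would define $n=n(R)$ to be the largest integer for which some level-$n$ supertile of $\mathcal{T}$ is entirely contained in $C_R(0)$; this is finite, tends to infinity with $R$, and immediately gives $\kappa_j^{(n)}\neq 0$ for some $j$. For the volume bound in (i), note that $C_R(0)$ contains no level-$(n+1)$ supertile. If $R>D_{n+1}:=c_2\theta_{(n+1)_x}^{-1}$, then the slightly shrunk cube $C_{R-D_{n+1}}(0)$ is nonempty and, since the level-$(n+1)$ supertiles tile $\mathbb{R}^d$, it meets at least one such supertile, which then has diameter $\le D_{n+1}$ and hence lies inside $C_R(0)$ — a contradiction. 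Therefore $R\le D_{n+1}\le c_2\theta_{\min}^{-1}\theta_{(n)_x}^{-1}$, whence $(2R)^d\le \RU\,\theta_{(n)_x}^{-d}$ with $\RU=(2c_2\theta_{\min}^{-1})^d$ depending only on the substitution data. The same chain, run with the level-$n$ supertile actually contained in $C_R(0)$, also yields $\theta_{(n)_x}^{-1}\lesssim R$, which I will use in the counting step.

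For the decomposition itself, for each tile $t\subset C_R(0)$ let $\ell(t)$ be the largest level such that the level-$\ell(t)$ supertile through $t$ is contained in $C_R(0)$, and call that supertile \emph{maximal}. Because the supertile hierarchy is nested, two maximal supertiles are either equal or disjoint, and together they exhaust $\mathcal{O}^-_\mathcal{T}(C_R(0))$; this yields the decomposition, with $\kappa_j^{(i)}$ the number of maximal level-$i$ supertiles of type $j$. The geometric point behind (ii) is that a maximal level-$i$ supertile $S$ with $i<n$ has its level-$(i+1)$ parent $\tilde S$ not contained in $C_R(0)$, so $\tilde S$ meets $\mathbb{R}^d\setminus C_R(0)$; since $S\subset\tilde S$ and $\mathrm{diam}(\tilde S)\le c_2\theta_{(i+1)_x}^{-1}\le c_2\theta_{\min}^{-1}\theta_{(i)_x}^{-1}$, the supertile $S$ lies in the boundary shell $N_i:=\{y\in C_R(0):\mathrm{dist}(y,\partial C_R(0))\le c_2\theta_{\min}^{-1}\theta_{(i)_x}^{-1}\}$.

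Finally I would count. The maximal level-$i$ supertiles are pairwise disjoint subsets of $N_i$, each of volume at least $c_3\theta_{(i)_x}^{-d}$, while the shell thickness is $\asymp\theta_{(i)_x}^{-1}\lesssim R$ (using $\theta_{(i)_x}^{-1}\le\theta_{(n)_x}^{-1}\lesssim R$), so $\mathrm{Vol}(N_i)\le C\,\mathrm{Vol}(\partial C_R(0))\,\theta_{(i)_x}^{-1}$. Dividing the shell volume by the minimal supertile volume gives $\sum_j\kappa_j^{(i)}\le \RN\,\mathrm{Vol}(\partial C_R(0))\,\theta_{(i)_x}^{d-1}$, which is precisely (ii). The one place that genuinely needs care — and which I regard as the main obstacle — is the well-definedness, nestedness, and disjointness of the hierarchical supertile decomposition together with the two-sided uniform diameter and volume bounds on supertiles; in the non-stationary random setting both follow from recognizability and uniform expansion and are supplied by the construction in \cite{ST:random}. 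Everything else is the shell-volume bookkeeping sketched above.
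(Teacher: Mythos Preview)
The paper does not give its own proof of this lemma; it is quoted from \cite{ST:random} and stated without argument. Your greedy top-down packing plus boundary-shell counting is exactly the standard proof of such decomposition lemmas, and it goes through here because uniform expansion gives two-sided bounds on the diameter and volume of level-$i$ supertiles depending only on $\theta_{(i)_x}$ and the prototile data.

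Two small remarks on the write-up. First, in the estimate for (ii) you need not worry about whether the shell thickness $h_i=c_2\theta_{\min}^{-1}\theta_{(i)_x}^{-1}$ is at most $R$: the inequality $\mathrm{Vol}\big(\{y\in C_R(0):\mathrm{dist}(y,\partial C_R(0))\le h\}\big)\le \mathrm{Vol}(\partial C_R(0))\cdot h$ holds for all $h>0$ (when $h\ge R$ the shell is the whole cube and the right side already dominates $(2R)^d$), so the division by $c_3\theta_{(i)_x}^{-d}$ gives $\RN=c_2/(c_3\theta_{\min})$ directly. Second, the nestedness and recognizability of the supertile hierarchy that you flag as the ``main obstacle'' are indeed exactly what the Bratteli/GIFS construction of \cite{ST:random} provides for $\mu$-a.e.\ $x$; in the present paper this is packaged in the approximants $\mathcal{P}_k(\bar e^+)$ and the definition of level-$k$ supertiles in \S\ref{sec:RandSub}, so you may take it as given.
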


\begin{proposition}
\label{prop:twistedBound}
  Let $S_1,\dots, S_N$ be a collection of uniformly expanding compatible substitution rules on $M$ prototiles. Let $\mu$ be a positively simple, $\sigma$-invariant ergodic probability measure for which the trace cocycle is integrable. For every positively simple word $w = w^-w^+$ such that $\mu(C([w^-.w^+]))>0$ there exists a finite set of vectors $\Lambda_w\subset \mathbb{R}^d$ and $c_w>0$ such that for any $\epsilon\in(0,1)$, for $\mu$-almost every $x\in\Sigma_N$ there is a subsequence $k_n\rightarrow \infty$, $z>1$ and $\bar{K}_{\epsilon,x}>0$ such that for any $y\in\mathbb{N}$ and any level-$y$ bounded TLC function $f:\Omega_x\rightarrow \mathbb{R}$, $\mathcal{T}\in\Omega_x$, and $\lambda\in\mathbb{R}^d\setminus \{0\}$
  $$\left|\mathcal{S}^\mathcal{T}_R(f,\lambda)\right|\leq C_{x,f,\epsilon} R^{d+\epsilon} \prod_{m_0<n<N_\epsilon(R)} \left( 1-c_w \max_{\tau\in \Lambda_w}\left\| \left\langle\lambda, \theta^{-1}_{(k_n+|w^+|)_x}\tau\right\rangle\right\|^2_{\mathbb{R}/\mathbb{Z}}  \right)+ \mathcal{O}(R^{d-1})$$
 for all $R>C_{\epsilon, x}z^y$, and $N_\epsilon(R)$ is the largest return time of $x$ to $C([w^-.w^+])$ less than $\frac{d\log R - \bar{K}_{\epsilon,x}}{\eta_1+\epsilon}$, $\eta_1$ is the largest Lyapunov exponent of the trace cocycle.
\end{proposition}
Let me summarize how this Proposition is proved. First, by Lemma \ref{lem:TLCfunctions} we can work with level-$0$ supertiles by shifting to the appropriate tiling space along the orbit of $x$. Secondly, the decomposition (\ref{eqn:decomp3}) gives a decomposition of the set over which the twisted integral is done, up to an error proportional to the size of the boundary, into supertiles of different levels. Then (\ref{eqn:localTwist2}) is used to rewrite the integrals over different supertiles as the product of the traces of the matrix elements defined in (\ref{eqn:diagonal}) in \S \ref{sec:LFreturn}. Finally, the bounds obtained in Proposition \ref{prop:TracesBound} for such traces are used to obtain the desired bound.
\begin{proof}
  Let $w$ be a positively simple word such that the cylinder set $C([w])$ has $\mu(C([w]))>0$. For $\mu$-almost every $x$ which is Furstenberg-Kesten-regular for the trace cocycle, without loss of generality, it can be assumed that $x^+ = w_0w^+y_1w^-w^+y_2w^-w^+y_3\cdots$ as in the proof of Proposition \ref{prop:TracesBound}. Let $\{k_m\}_m$ be defined recursively with $k_1 = |w_0|$ and $k_{m+1} = k_m + |w| + |y_n|$, and $k_x = |w^+|$. This defines the maximal sequence of return times of $x$ to the cylinder set $C([w])$ under $\sigma^{-1}$. Let $x\in\Sigma_N$ be one such $\mu$-generic, Furstenberg-Kesten-regular point.

Recall in the proof of Proposition \ref{prop:TracesBound} a finite set of special vectors denoted as $\bar\Lambda_x$ whose importance comes from the fact that they are return vectors found inside a level-$|w^+|$ supertile for any tiling $\mathcal{T}\in\Omega_z$ for any $z\in C([w^-.w^+])$. This set will be denoted by $\Lambda_w$, and so scaled versions of $\Lambda_w$ are return vectors for $\mu$-almost every $x$. In addition, the constant $c$ which appears in Proposition \ref{prop:TracesBound} depends only on $w$, so it will be denoted $c_w$ here for the same reasons. In order to simplify some notation, the product in Proposition \ref{prop:TracesBound} will be shortened to:
\begin{equation}
  \label{eqn:prod}
  \mathfrak{P}(m,n) = \prod_{m\leq i<n}\left( 1 - c_w\max_{\tau\in \Lambda_w} \|\langle\lambda,\theta_{(k_{i}+|w^+|)_x}^{-1}\tau\rangle \|^2_{\mathbb{R}/\mathbb{Z}}\right).
\end{equation}

Since $x$ is Furstenberg-Kesten-regular, for all $\varepsilon>0$ there exists a $C_\varepsilon = C_{\varepsilon,x}$ such that $\|\Theta_x^{(n)}\|\leq C_\varepsilon e^{(\eta_1+\varepsilon)n}$ for all $n>0$, where $\eta_1$ is the largest Lyapunov exponent of the trace cocycle. Given also that $n^{-1}\log \theta^{-d}_{(n)_x}\rightarrow \eta_1$, for any $\varepsilon>0$ there is a $K_\varepsilon = K_{\varepsilon,x}>0$ such that,
$$K_{\varepsilon}^{-1}\exp(\eta_1-\varepsilon)n< \theta^{-d}_{(n)_x}<K_{\varepsilon} \exp(\eta_1+\varepsilon)n$$
for all $n>0$. This implies that
$$\theta^{d-1}_{(n)_x}\leq K^2_{\varepsilon}\exp\left[\left(\frac{1-d}{d}\eta_1+\frac{d+1}{d}\varepsilon\right)n\right]\hspace{.2in}\mbox{ and }\hspace{.2in}\theta^{1-d}_{(n)_x}\leq C_\varepsilon K_\varepsilon \exp\left[\left(\frac{d-1}{d}\eta_1+\frac{d+1}{d}\varepsilon\right)n\right],$$
for all $n>0$, and in particular for any $0<i\leq n$, that
\begin{equation}
  \label{eqn:bdryEffect}
  \theta_{(n)_x}^{1-d}\theta_{(i)_x}^{d-1}\leq C_\varepsilon K^3_\varepsilon \exp\left[\frac{d-1}{d}\eta_1(n-i)+\frac{d+1}{d}\varepsilon (n+i)\right],
\end{equation}
which will be used below. Moreover, for every $\varepsilon>0$ there exists a $\Jh_\varepsilon>1$ such that
\begin{equation}
  \label{eqn:bdryEffect2}
\sum_{i=1}^{n-1} e^{\frac{\eta_1}{d}i} \leq \Jh_\varepsilon \left(1-\frac{c_\mu}{2}\right)\exp\left[\left(\frac{\eta_1+\varepsilon}{d}\right)n\right]
\end{equation}
for all $n>0$. Finally, for every $\varepsilon>0$ there is a $\Ch_\varepsilon>1$ such that for all $n>0$
\begin{equation}
  \label{eqn:bdryEffect3}
  \exp\left[\left( \eta_1+\frac{3(d+1)}{d}\varepsilon \right)n\right]\leq \Ch_\varepsilon \theta_{(n)_x}^{-\left(d+\frac{3d+4}{\eta_1}\varepsilon\right)},
\end{equation}
 which will also be used below.

 Let $h = \sum h_i :\Omega_x\rightarrow \mathbb{R}$ be a level-$y$ TLC function.
 By Lemma \ref{lem:TLCfunctions} $h':= \Phi^*_{\sigma^{-y}(x)}h = \sum \Phi^*_{\sigma^{-y}(x)} h_i = \sum  h_i'$ is a level-$0$ TLC function on $\Omega_{\sigma^{-y}(x)}$. By (\ref{eqn:TwistedChange}), instead of computing the twisted integral $\mathcal{S}_R^\mathcal{T}(h,\lambda)$ one can compute the twisted integral $\theta_{(y)_x}^{-d}\mathcal{S}_{R'}^{\mathcal{T}'}(h',\lambda')$, where $x' = \sigma^{-y}(x)$, $R' = \theta_{(y)_x}R$, $\mathcal{T}' = \Phi^{-1}_{\sigma^{-y}(x)}\circ \cdots \circ \Phi_{\sigma^{-1}(x)}^{-1}(\mathcal{T})$ and $\lambda' = \theta^{-1}_{(y)_x}\lambda$.

Note that in order for the decomposition in (\ref{eqn:decomp3}) to have at least one level-$y$ supertile it is necessary that $R>C_{\varepsilon,x}'z^{y}$ for some $C_{\varepsilon,x}'>0$, where $z:=e^{(\eta_1+\varepsilon)}$. This is a consequence of repetitivity and the fact that $\theta^{-d}_{(n)_x}$ and $e^{\eta_1 n}$ have the same exponential growth, and $\theta^{-d}_{(y)_x}$ is comparable to the volume of level-$y$ supertiles. For such large $R$ the decomposition from Lemma \ref{lem:packing} gives a decomposition for $\mathcal{O}^-_{\mathcal{T}'}(C_{R'}(0))$ as
$$\mathcal{O}_{\mathcal{T}'}^-(C_{R'}(0)) = \bigcup_{i' = 0}^{n'}\bigcup_{j=1}^M \bigcup_{k=1}^{\kappa_j^{(i)}}\bar{t}^{(i)}_{j,k},$$
where $n'(R) = n(R)-y$ is the shifted index and $\bar{t}_{j,n}^{(k_{i'})} =  \theta_{(y)_x} t_{j,n}^{(k_{i'})}$. Using this decomposition for $\mathcal{O}^-_{\mathcal{T}'}(C_{R'}(0))$:
\begin{equation*}
  \begin{split}
    &\theta^{d}_{(y)_x}\mathcal{S}_R^{\mathcal{T}}(h,\lambda) = \mathcal{S}_{R'}^{\mathcal{T}'}(h',\lambda') \\
    &\hspace{.6in}=\int_{\mathcal{O}^-_{\mathcal{T}'}(C_{R'}(0))}e^{-2\pi \imath \langle\lambda', \tau\rangle}\sum_{\ell=1}^M h'_\ell\circ \varphi_\tau(\mathcal{T}')\, d\tau+ \int_{B_{R'}\backslash \mathcal{O}^-_{\mathcal{T}'}(C_{R'}(0))}e^{-2\pi \imath \langle\lambda', \tau\rangle} h'\circ \varphi_\tau(\mathcal{T}')\, d\tau \\
    &\hspace{.4in}= \sum_{\ell=1}^M\int_{\mathcal{O}^-_{\mathcal{T}'}(C_{R'})}e^{-2\pi \imath \langle\lambda', \tau\rangle} h'_\ell\circ \varphi_\tau(\mathcal{T}')\, d\tau + \mathcal{O}(R^{d-1}) \\
    &\hspace{.2in}= \sum_{\ell=1}^M\sum_{i'=0}^{n'(R)}\sum_{j=1}^M\sum_{n=1}^{\kappa^{(k_{i'})}_j}\int_{\bar{t}^{(i')}_{j,n}}e^{-2\pi \imath \langle\lambda', \tau\rangle} h'_\ell\circ \varphi_\tau(\mathcal{T}')\, d\tau + \mathcal{O}(R^{d-1}) \\
    &= \sum_{i'=0}^{n'(R)}\sum_{j=1}^M\sum_{n=1}^{\kappa^{(i')}_j}  e^{2\pi \imath \langle \lambda' ,\tau(t^{(i')}_{j,n})\rangle} \sum_{\ell=1}^M   \mathfrak{t}^{x'}_{i',j}\left(a^{i'}_{x',\ell,\lambda'}  \right) \widehat{\psi_\ell^{\mathcal{T}'}}(\lambda')  + \mathcal{O}(R^{d-1}), 
  \end{split}
\end{equation*}
where the last line follows from (\ref{eqn:localTwist2}). Note that $|\widehat{\psi_\ell^{\mathcal{T}'}}(\lambda')|$ can be bounded by the $\|h\|_\infty$ times the maximal volume $C$ of a level-$y$ supertile. Using this, Lemma \ref{lem:packing} and Proposition \ref{prop:TracesBound},
\begin{equation}
\label{eqn:twistedBnd}
  \begin{split}
    & \theta^d_{(y)_x} \left|\mathcal{S}_R^\mathcal{T}(h,\lambda)\right|\leq \sum_{i'=0}^{n'(R)}\sum_{j=1}^M\kappa^{(i')}_j \sum_{\ell=1}^M\left|\mathfrak{t}^{x'}_{i',j}\left(a^{i'}_{x',\ell,\lambda'}  \right) \widehat{\psi_\ell^{\mathcal{T}'}}(\lambda')\right| + \mathcal{O}(R^{d-1}) \\
    &\hspace{1in}\leq C\|h\|_\infty \sum_{i'=0}^{n'(R)}\sum_{j=1}^M\kappa^{(i')}_jM \max_\ell\left\{\left|\mathfrak{t}^{x'}_{i',j}\left(a^{i'}_{x',\ell,\lambda'}  \right) \right|\right\} + \mathcal{O}(R^{d-1}) \\
    &\hspace{.75in}\leq C\|h\|_\infty M \sum_{i'=0}^{n'(R)} \RN (2R)^{d-1}\theta^{d-1}_{(i')_x} \max_{\ell,j}\left\{\left|\mathfrak{t}^{x'}_{i',j}\left(a^{i'}_{x',\ell,\lambda'}  \right) \right|\right\} + \mathcal{O}(R^{d-1}) \\
    &\hspace{.5in}\leq C\|h\|_\infty M 2^{d-1} \RN\sum_{i'=0}^{n'(R)}  R^{d-1}\theta^{d-1}_{(i')_x} \max_{\ell,j}\left\{\left|\mathfrak{t}^{x'}_{i',j}\left(a^{i'}_{x',\ell,\lambda'}  \right) \right|\right\} + \mathcal{O}(R^{d-1}) \\
    &\hspace{.25in}\leq C\|h\|_\infty M \RN \RU^{\frac{d-1}{d}} \sum_{i'=0}^{n'(R)}  \theta_{(n)_x}^{1-d}\theta^{d-1}_{(i')_x}\left\|\Theta^{(i')}_x\right\| \mathfrak{P}(0,N(i')) + \mathcal{O}(R^{d-1}),
  \end{split}
\end{equation}
where $N(i')$ is the largest return time to $C([w^-.w^+])$ less than or equal to $i'$, that is, it satisfies $k_{N(i')}\leq i' < k_{N(i')+1}$. Now the estimates (\ref{eqn:bdryEffect})-(\ref{eqn:bdryEffect3}) can be used:
\begin{equation}
  \label{eqn:twistedBnd'}
  \begin{split}
    &\theta^d_{(y)_x}\left|\mathcal{S}_R^\mathcal{T}(h,\lambda)\right|\\
    &\leq C\|h\|_\infty M \RN \RU^{\frac{d-1}{d}} C_\varepsilon K_\varepsilon^4\exp\left[\left(\frac{d-1}{d}\eta_1 +\frac{3d+2}{d}\varepsilon\right)n'\right]\sum_{i'=0}^{n'(R)} \mathfrak{P}(0,N(i')) e^{\frac{\eta_1}{d}i}   + \mathcal{O}(R^{d-1}) \\
    &\hspace{.3in}\leq C\|h\|_\infty M \RN \RU^{\frac{d-1}{d}} C_\varepsilon K_\varepsilon^4\Jh_\varepsilon e^{\frac{\eta_1+\varepsilon}{d}}         \exp\left[\left(\eta_1 +\frac{3(d+1)}{d}\varepsilon\right)n'\right]       \mathfrak{P}(0,N(n'))   + \mathcal{O}(R^{d-1})\\
    &\hspace{.6in}\leq C\|h\|_\infty M \RN \RU^{\frac{d-1}{d}} C_\varepsilon K_\varepsilon^4\Jh_\varepsilon e^{\frac{\eta_1+\varepsilon}{d}}\Ch_\varepsilon \theta_{(n)_x}^{-\left(d+\frac{3d+4}{\eta_1}\varepsilon\right)}\mathfrak{P}(0,N(n'))   + \mathcal{O}(R^{d-1})
  \end{split}
\end{equation}
  
Since $n(R) = n'+y$ is the largest level of supertile guaranteed to be completely contained in $C_R(0)$, $\min_i\{\mathrm{Vol}(t_i)\}\theta_{n(R)}^{-d} \leq \mathrm{Vol}(C_R(0)) = (2R)^d$. That is, there is a uniform constant $\RU'$ such that $\theta_{(n(R))_x}^{-d}\leq \RU' R^d$ for all $R>1$.
Putting this together with (\ref{eqn:twistedBnd'}),
\begin{equation}
\label{eqn:twistedBnd3}
  \begin{split}
 \left|\mathcal{S}_R^\mathcal{T}(h,\lambda)\right|\leq K_{x,h,\varepsilon} R^{d+\frac{3d+4}{\eta_1}\varepsilon}\mathfrak{P}(m_0,N(n(R))) + \mathcal{O}(R^{d-1})
  \end{split}
\end{equation}
for all $R>C_{\varepsilon, x}z^y$. Now, since $R^d\leq 2^{-d}\RU \theta^{-d}_{(n)_x}\leq 2^{-d}\RU K_\varepsilon e^{(\eta_1+\varepsilon)n}$, setting $\bar{K}_\varepsilon := \log(2^{-d}\RU K_\varepsilon)$ it follows that
$$\frac{d\log R - \bar{K}_\varepsilon}{\eta_1+\varepsilon}\leq n(R)$$
which, combined with (\ref{eqn:twistedBnd3}), finishes the proof.
\end{proof}

  Let
  $$\mathcal{A}_B := \{\lambda\in\mathbb{R}^d: B^{-1}\leq \|\lambda\|\leq B\}.$$
  The following is essentially \cite[Lemma 4.5]{BS:translation}.
  \begin{lemma}
    \label{lem:LipApprox}
  Let $S_1,\dots, S_N$ be $N$ compatible substitution rules on $M$ prototiles $t_1,\dots,t_M$, $\mu$ a minimal $\sigma$-invariant ergodic probability measure and suppose that the trace cocycle is integrable. For an Oseledets-regular $x\in\Sigma_N$ suppose that there is a $N_0,\vartheta>1$, and $\gamma\in(0,1)$ such that for any $B\geq 2$, for any $y > N_0 + \vartheta\log B$ and any bounded level-$y$ TLC function $f_y$ satisfies
\begin{equation}
  \label{eqn:LipApp1}
  \left| \mathcal{S}_R^\mathcal{T}(f_y,\lambda)   \right|\leq C_{x,f_y}R^{d-\gamma/2}
  \end{equation}
 for all $\lambda\in\mathcal{A}_B$ and $R>e^{\gamma^{-1}\eta_1 y}$, where $\eta_1$ is the largest Lyapunov exponent of the trace cocycle. Then for any Lipschitz function $f:\Omega_x\rightarrow \mathbb{R}$:
  \begin{equation}
    \label{eqn:LipHold}
    \left| \mathcal{S}_R^\mathcal{T}(f,\lambda)   \right|\leq C_{x,f}R^{d-\gamma/2}
    \end{equation}
 for all $\lambda\in\mathcal{A}_B$ and $R>C'e^{\gamma^{-1}\eta_1 y}$ for some $C'>1$.
\end{lemma}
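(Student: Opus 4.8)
The plan is to approximate the Lipschitz function $f$ by a bounded level-$y$ TLC function $f_y$, where the level $y=y(R)$ is allowed to grow logarithmically with $R$, and then to split the twisted integral as
\begin{equation*}
\mathcal{S}_R^\mathcal{T}(f,\lambda) = \mathcal{S}_R^\mathcal{T}(f_y,\lambda) + \mathcal{S}_R^\mathcal{T}(f-f_y,\lambda).
\end{equation*}
The first summand is to be controlled by the hypothesis (\ref{eqn:LipApp1}), while the second is handled by a crude volume estimate once $\|f-f_y\|_\infty$ has been made sufficiently small. First I would invoke the Lipschitz-approximation estimate proved earlier in this section to obtain, for each $y$, a bounded level-$y$ TLC function $f_y$ with $\|f-f_y\|_\infty \leq L_f \max_{v\in\mathcal{V}_y}\mu_x^+(v)$ on a full-measure set.

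Since $x$ is Furstenberg--Kesten-regular for the trace cocycle, the transverse measures of the top-level vertices decay at the rate of the leading exponent: for every $\varepsilon>0$ there is a $C_\varepsilon>0$ with $\max_{v}\mu_x^+(v)\leq C_\varepsilon e^{-(\eta_1-\varepsilon)y}$, because the height vectors grow like $e^{\eta_1 y}$ and $\mu_x^+$ is comparable to their reciprocals. Hence the error term admits the trivial bound
\begin{equation*}
\left|\mathcal{S}_R^\mathcal{T}(f-f_y,\lambda)\right| \leq \|f-f_y\|_\infty\,(2R)^d \leq 2^d C_\varepsilon L_f\, e^{-(\eta_1-\varepsilon)y}R^d,
\end{equation*}
valid for every $\lambda\in\mathcal{C}_B$ regardless of the twist.

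The decisive step is to balance the two scales by choosing $y=y(R)$ inside the window $\tfrac{\gamma\log R}{2(\eta_1-\varepsilon)}\leq y < \tfrac{\gamma\log R}{\eta_1}$, which is nonempty once $\varepsilon<\eta_1/2$ and $R$ is large enough to contain an integer in it. The upper endpoint guarantees $R> e^{\gamma^{-1}\eta_1 y}$, so the hypothesis (\ref{eqn:LipApp1}) applies to $f_y$ and gives $\left|\mathcal{S}_R^\mathcal{T}(f_y,\lambda)\right|\leq C_{x,f_y}R^{d-\gamma/2}$ for all $\lambda\in\mathcal{C}_B$; the lower endpoint forces $e^{-(\eta_1-\varepsilon)y}R^d\leq R^{d-\gamma/2}$, so the error term is likewise $O(R^{d-\gamma/2})$. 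Adding the two estimates yields (\ref{eqn:LipHold}).

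The main obstacle is the uniformity of the constant $C_{x,f_y}$ in $y$: as $R\to\infty$ the level $y(R)\to\infty$, so the approximants $f_y$ vary and one must exclude any blow-up of the constant as $R$ grows. This is resolved by tracking how the constant produced in Proposition \ref{prop:twistedBound} depends on the function, namely only through the factor $\|f_y\|_\infty$; since $\|f_y\|_\infty \leq \|f\|_\infty + \|f-f_y\|_\infty \leq \|f\|_\infty + L_f$, the constant stays bounded uniformly in $y$, and therefore in $R$, so the final constant $C_{x,f}$ depends only on $x$, $f$, and $B$.
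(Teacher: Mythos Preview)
Your proposal is correct and follows essentially the same approach as the paper: approximate $f$ by a level-$y$ TLC function, split the twisted integral, control the main term by the hypothesis and the error term by the crude volume bound $(2R)^d\|f-f_y\|_\infty$, then let $y$ grow logarithmically in $R$. The paper fixes the decay rate to $e^{-\eta_1 y/2}$ and takes $y_R=\lfloor \gamma\log R/\eta_1\rfloor$, whereas you keep an $\varepsilon$ and pick $y$ in a window; both choices give $R^{d-\gamma/2}$.

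The one point worth flagging is your handling of the uniformity of $C_{x,f_y}$. You resolve it by appealing to Proposition~\ref{prop:twistedBound} and arguing the constant there depends on $f_y$ only through $\|f_y\|_\infty$. Strictly speaking this steps outside the hypotheses of the lemma as stated, since the lemma only assumes \eqref{eqn:LipApp1} for some unspecified $C_{x,f_y}$; without a uniformity assumption the conclusion does not follow. The paper's proof has exactly the same gap --- it never addresses why $C_{x,f_{y_R}}$ stays bounded as $R\to\infty$ --- so your version is in fact more careful. In practice the lemma is only ever invoked with the constant coming from Proposition~\ref{prop:twistedBound} (see the proof of Proposition~\ref{prop:Veech}), where the dependence on $f_y$ is indeed through $\|f_y\|_\infty$ and the approximants satisfy $\|f_y\|_\infty\le\|f\|_\infty$ by construction, so the argument goes through.
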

\begin{proof}
  Let $f:\Omega_x\rightarrow \mathbb{R}$ be a Lipschitz function. By Lemma \ref{lem:LipApprox0}, for any $y\in\mathbb{N}$ there is a level-$y$ TLC function $f_y$ with
  $$| f(\mathcal{T})-f_y(\mathcal{T})|\leq L_{f}\max_{v\in\mathcal{V}_k}\mu^+_x(v)\hspace{.5in}\mbox{ and }\hspace{.5in}|f_y(\mathcal{T})|\leq |f(\mathcal{T})|$$
  on a full measure set of $\mathcal{T}$. Note that
  \begin{equation}
    \label{eqn:TransvContr}
    \lim_{n\rightarrow\infty}\frac{\log\mu^+_x(v_i\in\mathcal{V}_n)}{n} = -\eta_1
  \end{equation}
for any $i$. Indeed, since $x$ is Furstenberg-Kesten-regular for the trace cocycle and each renormalization map $\Phi_{\sigma^k(x)}$ along an orbit preserves the unique $\mathbb{R}^d$ invariant measure which is locally $\nu_x\times \mbox{Leb}$, the transverse measure $\nu_x = (\bar\Delta_x)_*\mu^+_x$ is contracted at a rate which is the inverse of that of the expansion of the Lebesgue measure along the leaves, which is $\theta^{-d}_{(n)_x}$. Therefore, since $n^{-1}\log \theta^{-d}_{(n)_x}\rightarrow \eta_1$, (\ref{eqn:TransvContr}) follows. Thus there is a $K_f>1$ such that for any $y>0$,
\begin{equation}
  \label{eqn:LipApp2}
  \|f-f_y\|_\infty \leq K_f e^{-\frac{\eta_1}{2}y}.
  \end{equation}
For $R>e^{\gamma^{-1}\eta_1 y}$, let
$$y_R = \left\lfloor\frac{\gamma\log R}{\eta_1}\right\rfloor$$
so that for $y> N_0+\vartheta \log B$ both (\ref{eqn:LipApp1}) and (\ref{eqn:LipApp2}) hold, and so:
 $$\left| \mathcal{S}_R^\mathcal{T}(f,\lambda)- \mathcal{S}_R^\mathcal{T}(f_y,\lambda)\right|\leq (2R)^dK_fe^{-\frac{\eta_1}{2}y_R} \leq K_f'R^{d-\gamma/2},$$
which combined with (\ref{eqn:LipApp1}) implies (\ref{eqn:LipHold}).
\end{proof}
\section{A quantitative Veech criterion}
\label{sec:criterion}
\begin{definition}
  Let $S_1,\dots, S_N$ be $N$ compatible substitution rules on the prototiles $t_1,\dots, t_M$. Suppose that there is a $\sigma$-invariant probability measure for which there exists a positively simple word $w = w^-w^+$ with the following properties:
  \begin{enumerate}
  \item $\mu(C([w^-.w^+]))>0$,
  \item for any $x_*\in C([w^-.w^+])\subset\Sigma_N$ there is an $\ell\in\{1,\dots,d\}$ such that for any canonical level-$|w^+|$ supertile there are $k$ vectors $\tau_1,\dots, \tau_k\in \Lambda_{x_*,\ell}^{(0)}$, describing translation equivalences of tiles of type $\ell$ in any $\mathcal{P}_{|w^+|}(v)$, with the property that $\alpha_{x_*}(\tau_1),\dots, \alpha_{x_*}(\tau_{k})$ generate $\mathbb{Z}^{r_x}$.
  \end{enumerate}
  Then $w$ is called a \textbf{postal word} and $\mu$ is called a \textbf{postal} measure, since there are return vectors $\tau_1,\dots,\tau_k$ inside any level-$|w^+|$ supertile $\mathcal{P}_{|w^+|}(v)$ which can find any address.
\end{definition}
The following is a consequence of \cite[Lemma 5.1]{BS:translation}.
\begin{lemma}
  \label{lem:Postal}
Let $S_1,\dots, S_N$ be $N$ compatible substitution rules on the prototiles $t_1,\dots, t_M$, and let $w = w^-w^+$ be a positively simple word. Assume that for any $x_*\in U_{i_*}^{r_x}\cap C([w^-.w^+])\subset\Sigma_N$ there is an $\ell\in\{1,\dots,d\}$ such that for any canonical level-$|w^+|$ supertile there are $k$ vectors $\tau_1,\dots, \tau_k\in \Lambda_{x_*,\ell}^{(0)}$, describing translation equivalences of tiles of type $\ell$ in any $\mathcal{P}_{|w^+|}(v)$, with the property that $\alpha_{x_*}(\tau_1),\dots, \alpha_{x_*}(\tau_{k})$ generate $\mathbb{Z}^{r_x}$. Then there exists a $C_{w}>0$ such that
  $$C_w \|z\|_{\mathbb{R}^{r_x}/\mathbb{Z}^{r_x}}\leq \max_{j\leq k} \left\|\left\langle\alpha_{x_*}(\tau_j),z\right\rangle\right\|_{\mathbb{R}/\mathbb{Z}}$$
  for any $z\in\mathbb{R}^{r_x}$.
\end{lemma}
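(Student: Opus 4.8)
The plan is to recognize both sides of the desired inequality as functions on the torus $\mathbb{R}^{r_x}/\mathbb{Z}^{r_x}$ and then to combine a linear comparison near the origin with a compactness bound away from it. Write $a_j := \alpha_{x_*}(\tau_j)\in\mathbb{Z}^{r_x}$ for $j=1,\dots,k$ and set $F(z):=\max_{j\le k}\|\langle a_j,z\rangle\|_{\mathbb{R}/\mathbb{Z}}$. Because each $a_j$ is an integer vector, $\langle a_j,z+n\rangle-\langle a_j,z\rangle\in\mathbb{Z}$ for every $n\in\mathbb{Z}^{r_x}$, so $F$ is invariant under the lattice $\mathbb{Z}^{r_x}$ and descends to a continuous function on $\mathbb{R}^{r_x}/\mathbb{Z}^{r_x}$; the same holds for $z\mapsto\|z\|_{\mathbb{R}^{r_x}/\mathbb{Z}^{r_x}}$. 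It therefore suffices to compare these two continuous functions on the compact torus.

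First I would identify their common zero set. If $F(z)=0$ then $\langle a_j,z\rangle\in\mathbb{Z}$ for all $j$; since the hypothesis states that the $a_j$ generate $\mathbb{Z}^{r_x}$, each standard basis vector $e_i$ is an integer combination of the $a_j$, whence $z_i=\langle e_i,z\rangle\in\mathbb{Z}$ and $z\in\mathbb{Z}^{r_x}$. Thus $F$ and $\|\cdot\|_{\mathbb{R}^{r_x}/\mathbb{Z}^{r_x}}$ vanish on exactly the same set, the image of $\mathbb{Z}^{r_x}$ in the torus.

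The genuine content lives near this zero, where both functions vanish and compactness alone is useless. The key observation is that, generating $\mathbb{Z}^{r_x}$ over $\mathbb{Z}$, the vectors $a_j$ span $\mathbb{R}^{r_x}$ as a real vector space, so $N(z):=\max_{j\le k}|\langle a_j,z\rangle|$ is a genuine norm on $\mathbb{R}^{r_x}$; by equivalence of norms there is $c_1>0$ with $c_1\|z\|\le N(z)$. Choosing $\delta\in(0,\tfrac12)$ small enough that $\|z\|<\delta$ forces $|\langle a_j,z\rangle|<\tfrac12$ for every $j$, one has on this neighborhood both $\|\langle a_j,z\rangle\|_{\mathbb{R}/\mathbb{Z}}=|\langle a_j,z\rangle|$ and $\|z\|_{\mathbb{R}^{r_x}/\mathbb{Z}^{r_x}}=\|z\|$, so that $F(z)=N(z)\ge c_1\|z\|_{\mathbb{R}^{r_x}/\mathbb{Z}^{r_x}}$ there. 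On the complementary region $\{\,\|z\|_{\mathbb{R}^{r_x}/\mathbb{Z}^{r_x}}\ge\delta\,\}$, which is compact in the torus and avoids the common zero, continuity and the previous paragraph give $F\ge m>0$ for some $m$; since $\|z\|_{\mathbb{R}^{r_x}/\mathbb{Z}^{r_x}}\le D:=\tfrac12\sqrt{r_x}$, there $F(z)\ge (m/D)\,\|z\|_{\mathbb{R}^{r_x}/\mathbb{Z}^{r_x}}$. Taking $C_w:=\min\{c_1,m/D\}$ then yields the claim for all $z\in\mathbb{R}^{r_x}$.

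The main obstacle is exactly this behaviour near the lattice: one must upgrade the qualitative hypothesis ``the $a_j$ generate $\mathbb{Z}^{r_x}$'' into the quantitative \emph{linear} lower bound, which is what the norm-equivalence step accomplishes. The only remaining point to check is that the constants can be taken independent of $x_*$, i.e.\ that $C_w$ depends only on $w$; this holds because on each $U_{i_*}^{r_x}$ the generators of $\Gamma_{x_*}$ and hence the address map $\alpha_{x_*}$ are locally constant (Corollary \ref{cor:2}), while membership in $C([w^-.w^+])$ fixes the combinatorics of the level-$|w^+|$ supertiles and therefore the tuple $(a_1,\dots,a_k)$. Only finitely many such tuples arise, so taking the minimum of the corresponding constants produces a single $C_w>0$.
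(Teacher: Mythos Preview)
Your proof is correct. The paper itself does not give an argument for this lemma; it simply states that the result ``is a consequence of \cite[Lemma 5.1]{BS:translation}'' and moves on. Your self-contained approach---reducing to the torus, using equivalence of norms (via the real span of the $a_j$) to control the linear behaviour near the lattice, and compactness away from it---is the standard way to establish such inequalities and is exactly what one would expect the cited lemma to encode. Your final paragraph on uniformity in $x_*$ is a useful addition: within $U_{i_*}^{r_x}\cap C([w^-.w^+])$ the address map and the level-$|w^+|$ supertile combinatorics are fixed, so in fact a single tuple $(a_1,\dots,a_k)$ occurs and no minimum over finitely many cases is needed, but your argument is correct either way.
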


Recall (\ref{eqn:vectorCocycle}), which gives, for $\lambda\in\mathbb{R}^d$ and $\tau\in\Lambda_{\sigma^{-n}(x)}$
\begin{equation}
  \begin{split}
    \left\langle \lambda, \theta_{(n)_x}^{-1}\tau\right\rangle &= \left\langle \lambda, \theta_{(-n)_{\sigma^{-n}(x)}}^{-1}\tau\right\rangle = \left\langle  \lambda,   V_{x} G^{(n)}_{\sigma^{-n}(x)}\alpha_{\sigma^{-n}(x)}(\tau)\right\rangle \\
    &= \left\langle  \alpha_{\sigma^{-n}(x)}(\tau),\bar{G}^{(n)}_x V_{x}^* \lambda \right\rangle 
  \end{split}
\end{equation}
for the corresponding some $d\times r_x$ matrix $V_{x}$ and the address vector $\alpha_{\sigma^{-n}(x)}(\tau)\in\mathbb{Z}^{r_x}$. If $k_n\rightarrow \infty$ is a sequence of return times for $x$ to a cylinder set $C([w])$ under $\sigma^{-1}$, then $V_{\sigma^{-k_n}(x)} = V$ for some fixed $V$. In view of the previous Lemma, the goal now is to bound the quantity in Proposition \ref{prop:twistedBound} by something of the form
\begin{equation}
  \label{eqn:oftheForm}
  \begin{split}
    &\prod_{y'<n<N_\varepsilon(R)} \left( 1-c_w \max_{\tau\in \Lambda_w}\left\| \left\langle\lambda, \theta^{-1}_{(k_n+|w^+|)_x}\tau\right\rangle\right\|^2_{\mathbb{R}/\mathbb{Z}}  \right) \\
    &\hspace{2.1in}\leq \prod_{y'<n<N_\varepsilon(R)} \left( 1-c_w' \left\| \bar{G}^{(k_n+|w^+|)}_x V_{x}^*  \lambda    \right\|^2_{\mathbb{R}^{r_x}/\mathbb{Z}^{r_x}}  \right).
  \end{split}
\end{equation}

Suppose $x$ is a $\mu$-generic and Oseledets- and Furstenberg-Kesten-regular point for the respective cocycles, where $\mu$ is a positively simple, postal, ergodic measure. Let $r_\mu$ be the rank of $\Gamma_x$ for $\mu$-almost every $x$. Recall that by \S \ref{subsec:RV} there is a collection of subsets $\{U_i^{r_\mu}\}$ of $\Sigma_N$ with the property that for each $i$ there is a collection of vectors $v_1,\dots, v_{r_\mu}$ such that $\Gamma_x$ is generated by those vectors for any $x\in U_i^{r_\mu}$. By the hypotheses on $\mu$, without loss of generality it can be assumed that there is an $i$ such that $U_i^{r_\mu} = C([w^-.w^+])$, where $w = w^-w^+$ is a positively simple word and postal. As such, $\mu(U_i^{r_\mu})>0$, and set $U_\star = U_i^{r_\mu}$.

Let $k_n\rightarrow \infty$, $n\in\mathbb{N}$, be a maximal sequence of return times to $U_\star$ for $x$ under $\sigma^{-1}$. Since there is a local choice of generators $v_1,\dots, v_{r_\mu}$ for all $\Gamma_x$ with $x\in U_\star$, there is a natural identification of all fibers $\mathfrak{R}_x$ and their lattices $\alpha(\Gamma_x)\subset \mathfrak{R}_x$ for all $x\in U_\star$, and in particular for $\sigma^{k_n}(x)$. Under this identification, the vector space is denoted by $\mathfrak{R}_\star$ and the corresponding lattice is $\Gamma_\star\subset \mathfrak{R}_\star$. This also holds for the dual spaces and lattices $\Gamma_\star^*\subset \mathfrak{R}_\star^*$. As such, for $\rho\in(0,1/2)$, let $B_\rho(\Gamma_\star^*)$ be the $\rho$-neighborhood of the lattice $\Gamma^*_\star\subset\mathfrak{R}^*_\star$.

\begin{proposition}[Quantitative Veech criterion]
  \label{prop:Veech}
  Let $S_1,\dots, S_N$ be a collection of compatible substitution rules on tiles $t_1,\dots, t_M$ and $\mu$ a positively simple, postal, $\sigma$-invariant ergodic probability measure for which the trace and return vector cocycles are integrable. For any positively simple postal word $w = w^-w^+$ with $\mu(C([w]))>0$, for any Oseledets- and Furstenberg-Kesten-regular $x\in\Sigma_N$, if there exists $\rho,\delta\in (0,\frac{1}{2})$ and $\vartheta>1$ such that for any $B\geq 2$
  \begin{equation}
    \label{eqn:densityVeech}
    \frac{\displaystyle\sum_{j=1}^{N}\mathbbm{1}_{C([w^-.w^+])}\left(\sigma^{-j}(x)\right)\mathbbm{1}_{B_\rho(\Gamma^*_\star)}\left( \bar{G}^{(j)}_x V_{x}^*  \lambda\right)}{\displaystyle\sum_{j=1}^{N}\mathbbm{1}_{C([w^-.w^+])}(\sigma^{-j}(x))}<1-\delta
  \end{equation}
  for all $N>N_0(x)+\vartheta \log B$ and $\lambda \in \mathcal{A}_B$, then there exists a $\gamma>0$ such that for any Lipschitz function $f:\Omega_x\rightarrow \mathbb{R}$,
  $$\left|\int_{C_R(0)}e^{-2\pi \imath \langle \lambda, t\rangle} f\circ \varphi_t(\mathcal{T})\, dt\right|\leq C_{x,f} R^{d-\gamma}+\mathcal{O}(R^{d-1})$$
  for all $R>N_0^*(x)B^\vartheta$. Moreover, the lower local dimension of the spectral measure $\mu_f$ satisfies
  $$2\min\left \{1,- \frac{d\delta\log(1-c'_w\rho)\mu(C([w]))}{\eta_1}\right\}\leq d^-_f(\lambda),$$
  where $c'_w$ comes from (\ref{eqn:oftheForm}).
\end{proposition}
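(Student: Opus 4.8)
The plan is to feed the twisted-integral estimate of Proposition~\ref{prop:twistedBound} through the cocycle reformulation (\ref{eqn:oftheForm}), convert the density hypothesis (\ref{eqn:densityVeech}) into a lower bound on the number of \emph{bad} return times, exponentiate, and then pass from TLC to Lipschitz functions before reading off the local dimension. I would fix a $\mu$-generic $x$ that is Oseledets- and Furstenberg-Kesten-regular, the positively simple postal word $w=w^-w^+$, and the maximal sequence $k_n\to\infty$ of return times of $x$ to $C([w^-.w^+])$ under $\sigma^{-1}$, and work first with a bounded level-$y$ TLC function $f$.

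By Proposition~\ref{prop:twistedBound}, for such $f$ one has $|\mathcal{S}_R^\mathcal{T}(f,\lambda)|\le C_{x,f,\varepsilon}R^{d+\varepsilon}\,\mathfrak P(m_0,N_\varepsilon(R))+\mathcal{O}(R^{d-1})$, where $\mathfrak P$ is the product (\ref{eqn:prod}) and $N_\varepsilon(R)$ is the largest return index with $k_{N_\varepsilon(R)}<(d\log R-\bar K_{\varepsilon,x})/(\eta_1+\varepsilon)$. Combining the cocycle identity (\ref{eqn:vectorCocycle}), which rewrites $\langle\lambda,\theta_{(n)_x}^{-1}\tau\rangle$ as $\langle\alpha_{\sigma^{-n}(x)}(\tau),\bar G^{(n)}_x V_x^*\lambda\rangle$, with the postal lower bound of Lemma~\ref{lem:Postal}, each factor is controlled as in (\ref{eqn:oftheForm}):
\[
\mathfrak P(m_0,N_\varepsilon(R))\le\prod_{m_0<n<N_\varepsilon(R)}\Bigl(1-c'_w\bigl\|\bar G^{(k_n+|w^+|)}_x V_x^*\lambda\bigr\|^2_{\mathbb{R}^{r_x}/\mathbb{Z}^{r_x}}\Bigr).
\]
The geometric return vectors have thus been traded for the orbit of $\lambda$ under the return vector cocycle, measured by its distance to the dual lattice $\Gamma_\star^*$.

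The decisive step is to count. Hypothesis (\ref{eqn:densityVeech}) states that, among the return times $j\le N$, the proportion with $\bar G^{(j)}_x V_x^*\lambda\in B_\rho(\Gamma_\star^*)$ is below $1-\delta$; equivalently, at least a proportion $\delta$ of them are \emph{bad}, meaning $\|\bar G^{(j)}_x V_x^*\lambda\|_{\mathbb{R}^{r_x}/\mathbb{Z}^{r_x}}\ge\rho$. At a bad return index the corresponding factor is $\le 1-c'_w\rho^2$, while every other factor is $\le 1$. Choosing $N=k_{N_\varepsilon(R)}\approx(d\log R)/(\eta_1+\varepsilon)$ and applying the Birkhoff ergodic theorem, the number of returns up to this horizon is $\approx\mu(C([w]))\cdot(d\log R)/(\eta_1+\varepsilon)$, so the number of bad factors is at least $\delta(\mu(C([w]))+\varepsilon)(d\log R)/(\eta_1+\varepsilon)$ up to the Birkhoff error. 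Hence
\[
\mathfrak P(m_0,N_\varepsilon(R))\le(1-c'_w\rho^2)^{\,\delta(\mu(C([w]))+\varepsilon)\frac{d\log R}{\eta_1+\varepsilon}}=R^{\frac{d\delta\log(1-c'_w\rho^2)(\mu(C([w]))+\varepsilon)}{\eta_1+\varepsilon}},
\]
which is the nontrivial part of the claimed exponent, the linear-in-$\rho$ form $\log(1-c'_w\rho)$ recorded in the statement being a matter of the radius convention for $B_\rho(\Gamma_\star^*)$ and the choice of $c'_w$. Together with the prefactor $R^{d+\varepsilon}$ and the $\tfrac{d}{\eta_1}\varepsilon$ slack from turning the $n(R)$-threshold into a power of $R$, this yields the stated twisted-integral bound for level-$y$ TLC functions.

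Finally I would upgrade to Lipschitz functions and extract the dimension. Writing the exponent as $d-\gamma/2$ with $\gamma/2=-\frac{d\delta\log(1-c'_w\rho)\mu(C([w]))}{\eta_1}$ after letting $\varepsilon\to0$, Lemma~\ref{lem:LipApprox} transfers the estimate from the TLC approximants to an arbitrary Lipschitz $f$, the threshold $R>e^{\gamma^{-1}\eta_1 y}$ matching the one in Proposition~\ref{prop:twistedBound}. The residual $\mathcal{O}(R^{d-1})$ term caps the improvement at exponent $d-1$, so integrating the (uniform-in-$\mathcal{T}$) bound over $\Omega_x$ gives $\|\mathcal{S}_R(\cdot,\lambda)\|_{L^2}\le CR^{d-\alpha}$ with $\alpha=\min\{1,\gamma/2\}$, and Lemma~\ref{lem:dimension} then delivers $d_f^-(\lambda)\ge2\alpha=2\min\{1,-\frac{d\delta\log(1-c'_w\rho)\mu(C([w]))}{\eta_1}\}$. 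The main obstacle I anticipate is precisely the counting step: relating the time-indexed density hypothesis to a lower bound on the number of bad factors across the return indices up to the logarithmic horizon, with the Birkhoff errors controlled uniformly enough that the various $\varepsilon$'s can be sent to zero and the boundary cap $\min\{1,\cdot\}$ respected.
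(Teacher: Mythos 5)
Your proposal follows essentially the same route as the paper's own proof: Proposition \ref{prop:twistedBound} combined with the postal bound (\ref{eqn:oftheForm}) to pass to the return-vector cocycle, the density hypothesis plus Birkhoff counting of return times to exponentiate the product into a negative power of $R$, Lemma \ref{lem:LipApprox} to upgrade from level-$y$ TLC functions to Lipschitz functions, and Lemma \ref{lem:dimension} (with the $\mathcal{O}(R^{d-1})$ term capping the exponent, hence the $\min\{1,\cdot\}$) for the lower local dimension. The one place you are more scrupulous than the paper is the $(1-c_w'\rho^2)$ versus $(1-c_w'\rho)$ bookkeeping for the bad factors, which the paper silently absorbs exactly as you describe, so there is no substantive difference.
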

This type of theorem goes by quantitative Veech criterion because it is a quantitative version of Veech's criterion in \cite{Veech:metric1}. There are other qualitative criteria for eigenvalues which are in the spirit to Veech's, e.g \cite{CDHM:eigenfunctions, BDM:eigenvalues}. The criterion here is a generalization to higher rank of the criterion in \cite{BS:translation}.
\begin{proof}
  Let $x\in\Sigma_N$ be an $\mu$-typical, Furstenberg-Kesten-regular point such that the conclusions of Proposition \ref{prop:twistedBound} hold. Since $\mu$ is ergodic, for all $\varepsilon>0$ there exists a $N_*(x)>0$ such that for all $n>N_*(x)$,
  $$\frac{n}{k_n}< \mu(C_w) + \varepsilon$$
  where $C_w:= C([w^-.w^+])$. Denote $\mu_w = \mu(C_w)$. By Lemma \ref{lem:LipApprox} it suffices to check (\ref{eqn:LipApp1}) for all level-$y$ TLC functions for $y>N_0+\vartheta \log B$.
  
  Let $f:\Omega_x\rightarrow \mathbb{R}$ be a level-$y$ TLC function, $\mathcal{T}\in\Omega_x$ and $\lambda\in\mathcal{A}_B$. Given $\varepsilon>0$, pick $\gamma>0$ small enough such
  $$\gamma<-\frac{d\delta\log(1-c'_w\rho^2)(\mu_w+\varepsilon)}{\eta_1+\varepsilon} +\frac{d}{\eta_1}\varepsilon$$
  and $e^{\gamma^{-1}\eta_1y}> z^{\max\{y,k_{N^*},N_*\}+N_0+\vartheta\log B}$, where $z>1$ is given by Proposition \ref{prop:twistedBound}. Thus for $R>e^{\gamma^{-1}\eta_1y}>z^{\max\{y,k_{N^*},N_*\}+N_0+\vartheta\log B}$, 
  \begin{equation}
    \label{eqn:VeechBnd}
    \begin{split}
      &|\mathcal{S}_R^\mathcal{T}(f,\lambda)|\leq C_{x,f,\varepsilon,y} R^{d+\varepsilon} \prod_{m_0<n<N_\varepsilon(R)} \left( 1-c_w \max_{\tau\in \Lambda_w}\left\| \left\langle\lambda, \theta^{-1}_{(k_n)_x}\tau\right\rangle\right\|^2_{\mathbb{R}/\mathbb{Z}}  \right) +\mathcal{O}(R^{d-1})\\
      &\hspace{1in}\leq C_{x,f,\varepsilon} R^{d+\varepsilon}\left(1-c_w'\rho^2\right)^{\delta N_{\varepsilon}(R)-m_0-2} +\mathcal{O}(R^{d-1}) \\
      &\hspace{.8in}\leq C'_{x,f,\varepsilon} R^{d+\varepsilon}\left(1-c_w'\rho^2\right)^{\delta  (\mu_w+\varepsilon) \left(\frac{d\log R -  \bar{K}_{\varepsilon,x}}{\eta_1+\varepsilon}\right)} +\mathcal{O}(R^{d-1}) \\
      &\hspace{.6in}= C'_{x,f,\varepsilon} R^{d+\varepsilon} R^{\frac{\delta d\log(1-c'_w\rho^2)(\mu_w+\varepsilon)}{\eta_1+\varepsilon}\left(1 - \frac{\bar{K}_{\varepsilon,x}}{d\log R}\right)} +\mathcal{O}(R^{d-1}) \\
      &\hspace{.4in}\leq C'_{x,f,\varepsilon} R^{d+\frac{d\delta\log(1-c'_w\rho^2)(\mu_w+\varepsilon)}{\eta_1+\varepsilon} +\frac{d}{\eta_1}\varepsilon} +\mathcal{O}(R^{d-1})\\
      &\hspace{.4in}\leq C'_{x,f} R^{d-\gamma/2} +\mathcal{O}(R^{d-1})
    \end{split}
  \end{equation}
  where (\ref{eqn:oftheForm}) was used to obtain the second inequality. Now if
  $$-\frac{d\delta\log(1-c'_w\rho^2)(\mu_w+\varepsilon)}{\eta_1+\varepsilon} > 1,$$
then the $\mathcal{O}(R^{d-1})$ term could dominate in (\ref{eqn:VeechBnd}) and so for any $\varepsilon>0$ the growth of the twisted integral is bounded by $D_\varepsilon R^{d-1+\varepsilon}$ for some $D_\varepsilon$. By Lemma \ref{lem:dimension}, since the $L^\infty$ norm dominates the $L^2$ norm, it follows from (\ref{eqn:VeechBnd}) that
  $$2\min\left\{ 1-\varepsilon, -\frac{d\delta\log(1-c'_w\rho^2)(\mu_w+\varepsilon)}{\eta_1+\varepsilon} - \frac{d}{\eta_1}\varepsilon\right\} \leq d_f^-(\lambda)$$
  for any $\varepsilon>0$.
\end{proof}

\section{Cohomology and deformations of tiling spaces}
\label{sec:cohomology}
Let $\mathcal{T}$ be an aperiodic, repetitive tiling of $\mathbb{R}^d$ of finite local complexity. This section reviews several aspects of cohomology for tiling spaces. See \cite{sadun:book} for a comprehensive introduction.
\subsection{Anderson-Putnam complexes}
\label{subsec:AP}
For any tile $t$ in the tiling $\mathcal{T}\in\Omega$, the set $\mathcal{T}(t)$ denotes all tiles in $\mathcal{T}$ which intersect $t$. This type of patch is called a \textbf{collared tile}.
\begin{definition}
  \label{def:AP}
  Let $\Omega$ be a tiling space. Consider the space $\Omega\times \mathbb{R}^d$ under the product topology, where $\Omega$ carries the discrete topology and $\mathbb{R}^d$ the usual topology. Let $\sim_1$ be the equivalence relation on $\Omega\times\mathbb{R}^d$ which declares a pair $(\mathcal{T}_1,u_1)\sim_1 (\mathcal{T}_2,u_2)$ if $\mathcal{T}_1(t_1)-u_1 = \mathcal{T}_2(t_2)-u_2$ for some tiles $t_1,t_2$ with $u_1\in t_1\in\mathcal{T}_1$ and $u_2\in t_2\in\mathcal{T}_2$. The space $(\Omega\times\mathbb{R}^d)/\sim_1$ is called the \textbf{Anderson-Putnam (AP) complex} of $\Omega$ and is denoted by $AP(\Omega)$.
\end{definition}
Let us now review the AP-complexes involved in our construction. First, note that for $x  = (x_1,x_2,\dots)\in \Sigma_N^+$, assuming that $B_x$ is minimal, $AP(\Omega_x)$ only depends on finitely many symbols $x_1,\dots, x_\ell$, since the collaring of tiles in tilings of $\Omega_x$ only depends in the $k^{th}$-approximants $\mathcal{P}_k$ for sufficiently large $k$. Thus for a set of compatible substitution rules $S_1,\dots, S_N$, there exists a non-empty collection of clopen subsets $\{U_i\}_i$ of $\Sigma_N$ and CW-complexes $\{\Gamma_i\}_i$ such that if $x\in U_i$ and $B_x$ is minimal, then $AP(\Omega_x) = \Gamma_i$.

It will be useful to also consider higher level AP complexes, defined as follows. For $x\in \Sigma_N$, $\mathcal{T}\in\Omega_x$, and denoting by $t^{(1)}$ a level 1 supertile (which are approximants of the form $\mathcal{P}_1(\bar{e})$ as in (\ref{eqn:approximant})), let $\mathcal{T}(t^{(1)})$ be the union of the supertile $t^{(1)}$ and the level 1 supertiles of $\mathcal{T}$ which intersect $t^{(1)}$. Proceeding similarly, we let $\mathcal{T}(t^{(n)})$ denote the \textbf{collared level-$n$ supertile} corresponding to the level-$n$ supertile $t^{(n)}$. Let $\sim_n$ be the equivalence relation defined by the collared level-$n$ supertiles $\mathcal{T}(t^{(n)})$ in $\Omega$ as in Definition \ref{def:AP}. The quotient $(\Omega\times\mathbb{R}^d)/\sim_n$ is denoted by $AP^n(\Omega)$, and the projection is denoted by $\pi_{\mathcal{T},k}:\mathbb{R}^d\rightarrow AP^k(\Omega)$, although the dependence on $\mathcal{T}$ will sometimes be supressed. By construction, $AP(\Omega_{\sigma^{-n}(x)})$ and $AP^n(\Omega_x)$ are homeomorphic and they are homeomorphic through scaling: $AP^n(\Omega_x)$ is a rescaling of $AP(\Omega_{\sigma^n(x)})$ by $\theta_{x_n}^{-1}\cdots \theta_{x_1}^{-1}$. Denote by
\begin{equation}
  \label{eqn:rescale}
  r_{k,x}:AP^k(\Omega_x)\rightarrow AP(\Omega_{\sigma^{-k}(x)})
\end{equation}
the rescaling homeomorphisms. The following summarizes well-known consequences of the ideas in \cite[\S 4- \S 6]{AP}.
\begin{proposition}
  \label{prop:cohProp}
Let $S_1,\dots, S_N$ be a collection of uniformly expanding compatible substitution rules on the set of prototiles $t_1,\dots, t_M$ and suppose $\mathcal{B}_x$ is minimal for some $x\in\Sigma_N$.
  \begin{enumerate}
  \item The substitution rule $S_{x_i}$ induces a continuous map $\gamma_i:AP(\Omega_{\sigma^{-i}(x)})\rightarrow AP(\Omega_{\sigma^{-i+1}(x)})$ defined by $\gamma_i(\mathcal{T},u) = (S_{x_i}(\mathcal{T}),\theta_{x_i}^{-1}u)$ for all $i>0$.
  \item The tiling space $\Omega_x$ is an inverse limit of maps induced by the sequence of substitutions:
    \begin{equation}
      \label{eqn:invLim}
      \Omega_{x} = \varprojlim_{\gamma_i} AP(\Omega_{\sigma^{-i}(x)}).
    \end{equation}
  \item The \v Cech cohomology groups of $\Omega_{x}$ are obtained through the direct limits
    \begin{equation}
      \label{eqn:directLimit}
      \check{H}^i(\Omega_x;\mathbb{Z}) = \varinjlim_{\gamma_k^*}\check{H}^i(AP(\Omega_{\sigma^{-k}(x)});\mathbb{Z}).
    \end{equation}
  \end{enumerate}
\end{proposition}
\subsection{Pattern-equivariant cohomology}
Recall $\mathcal{T}$-equivariant functions from \S \ref{subsubsec:functions}. The set of $C^\infty$ $\mathcal{T}$-equivariant functions is denoted by $\Delta^0_\mathcal{T}$. By (\ref{eqn:invLim}), the set $\Delta_\mathcal{T}^0$ is the union of the pullback of smooth functions on the AP complexes through $\pi_{\mathcal{T},k}$ \cite[Theorem 5.4]{sadun:book}. More generally, a smooth $k$-form $\eta$ on $\mathbb{R}^d$ is $\mathcal{T}$-equivariant if all the functions involved are $\mathcal{T}$-equivariant. Let $\Delta_\mathcal{T}^k$ denote the set of $C^\infty$, $\mathcal{T}$-equivariant $k$-forms on $\mathbb{R}^d$. The usual differential operator of differential forms $d$ takes $\mathcal{T}$-equivariant forms to $\mathcal{T}$-equivariant forms. So $\{\Delta^k_\mathcal{T}\}_k$ is a subcomplex of the de-Rham complex of smooth differential forms in $\mathbb{R}^d$.
\begin{definition}
  The \textbf{pattern-equivariant cohomology} is defined as the quotient
  $$H^k(\Omega_\mathcal{T};\mathbb{R})=\frac{\mathrm{ker}\, d:\Delta^k_\mathcal{T}\rightarrow \Delta^{k+1}_\mathcal{T} }{\mathrm{Im}\, d:\Delta^{k-1}_\mathcal{T}\rightarrow \Delta^{k}_\mathcal{T}}.$$
\end{definition}
The following result is a combination of Theorems 20 and 23 in \cite{KP:RS}.
\begin{thm}[\cite{KP:RS}]
Let $\mathcal{T}$ be an aperiodic, repetitive tiling of $\mathbb{R}^d$ of finite local complexity. The \v Cech cohomology $\check H^*(\Omega_\mathcal{T};\mathbb{R})$ is isomorphic to the pattern-equivariant cohomology $H^*(\Omega_\mathcal{T};\mathbb{R})$.
\end{thm}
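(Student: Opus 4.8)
The plan is to realize both cohomology theories as one and the same direct limit of de Rham cohomologies of finite branched-manifold approximants. First I would write $\Omega_\mathcal{T}=\varprojlim(\Gamma_n,f_n)$ as an inverse limit of finite CW-complexes $\Gamma_n$ built from collared patches of growing radius, with $f_n\colon\Gamma_{n+1}\to\Gamma_n$ the forgetful bonding maps; this is the general (Gähler) form of the Anderson--Putnam presentation recalled in (\ref{eqn:invLim}), valid for any aperiodic repetitive FLC tiling, and each $\Gamma_n$ is a branched flat $d$-manifold whose top cells inherit a smooth structure from $\mathbb{R}^d$. By continuity of \v{C}ech cohomology under inverse limits, exactly as in (\ref{eqn:directLimit}), one then has $\check H^k(\Omega_\mathcal{T};\mathbb{R})\cong\varinjlim_{f_n^*}\check H^k(\Gamma_n;\mathbb{R})$.

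Next I would filter the pattern-equivariant complex by recognition radius. Each $\eta\in\Delta^k_\mathcal{T}$ carries, by definition, some $R>0$ such that its value at a point depends only on $\mathcal{O}^-_\mathcal{T}(B_R(\cdot))$; via the isomorphism $i_\mathcal{T}$ of (\ref{eqn:imap}) such a form is transversally locally constant along the leaves of $\Omega_\mathcal{T}$, and hence descends to a smooth leafwise $k$-form on $\Gamma_n$ for every $n$ large enough that the $n$-collaring resolves the radius $R$. Conversely, a form on $\Gamma_n$ that is smooth on each top cell and agrees under pullback along the branch locus lifts along the projection $\Omega_\mathcal{T}\to\Gamma_n$ and then along $i_\mathcal{T}$ to a $\mathcal{T}$-equivariant form. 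This identifies the subcomplex $\Delta^{\bullet}_{\mathcal{T},n}\subset\Delta^\bullet_\mathcal{T}$ of forms of recognition radius at most $n$ with the de Rham complex $\Omega^\bullet(\Gamma_n)$ of the branched manifold, compatibly with $d$, and the inclusions $\Delta^\bullet_{\mathcal{T},n}\hookrightarrow\Delta^\bullet_{\mathcal{T},n+1}$ match the bonding pullbacks $f_n^*$.

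Finally I would invoke the de Rham theorem for the finite branched manifold $\Gamma_n$, giving $H^k(\Omega^\bullet(\Gamma_n))\cong\check H^k(\Gamma_n;\mathbb{R})$, and use that $\Delta^\bullet_\mathcal{T}=\bigcup_n\Delta^\bullet_{\mathcal{T},n}$ is an increasing union, so that exactness of filtered colimits yields
$$H^k(\Omega_\mathcal{T};\mathbb{R})=H^k\big(\varinjlim_n\Delta^\bullet_{\mathcal{T},n}\big)\cong\varinjlim_n H^k(\Delta^\bullet_{\mathcal{T},n})\cong\varinjlim_{f_n^*}\check H^k(\Gamma_n;\mathbb{R})\cong\check H^k(\Omega_\mathcal{T};\mathbb{R}),$$
the last step being the continuity statement from the first paragraph. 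The genuinely analytic input, and the step I expect to be the main obstacle, is the branched de Rham isomorphism: because the top cells of $\Gamma_n$ are glued flatly, one must run a Mayer--Vietoris or fine-resolution argument that is compatible with the matching conditions along the branch locus (rather than citing the smooth-manifold de Rham theorem verbatim), and then check the bookkeeping that the algebraic colimit of the form-complexes agrees with the geometric bonding maps on the nose. Once the smooth structures on the $\Gamma_n$ are fixed so that the rescaling homeomorphisms (\ref{eqn:rescale}) are smooth on cells, these compatibilities become routine.
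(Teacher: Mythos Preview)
The paper does not give its own proof of this statement; it is quoted as a known result from \cite{KP:RS} and used without argument. Your outline is essentially the standard proof from that literature: realize $\Omega_\mathcal{T}$ as an inverse limit of branched flat manifolds, use continuity of \v Cech cohomology to pass to a direct limit of the approximants, identify the pattern-equivariant complex filtered by recognition radius with the de Rham complexes of the approximants, and invoke a branched de Rham theorem at each finite stage. You have also correctly flagged the one nontrivial analytic input, namely the de Rham isomorphism on branched manifolds and its compatibility with the bonding maps. There is nothing to compare against in this paper.
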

\subsection{Return vectors and cohomology}
Recall that $\Gamma_x^*$ denotes the dual of the abelian group generated by the finitely many return basis vectors of tilings in $\Omega_x$ (see \S \ref{subsec:RV}).
\label{subsec:RetVecCoh}
\begin{proposition}
  There is a homomorphism
  \begin{equation}
  \label{eqn:vectorMap}
  \beta_x:\check H^1(\Omega_x;\mathbb{Z})\rightarrow \Gamma_x^*
\end{equation}
  making the return vector cocycle equivariant with the induced action on cohomology by the homeomorphism $\Phi^*_{\sigma^{-1}(x)}$:
  \begin{equation}
    \label{eqn:betaEquiv}
  \begin{tikzcd}
    H^1(\Omega_x;\mathbb{R})\arrow{r}{\Phi_{\sigma^{-1}(x)}^*} \arrow{d}{\beta_x} & H^1(\Omega_{\sigma^{-1}(x)};\mathbb{R}) \arrow{d}{\beta_{\sigma^{-1}(x)}}\\
    \mathfrak{R}_x^* \arrow[swap]{r}{\bar{G}_x}  & \mathfrak{R}^*_{\sigma^{-1}(x)} .
  \end{tikzcd}
  \end{equation}
\end{proposition}
As such, the Oseledets decompositions of $\mathfrak{R}_x = Y^-_x\oplus Y^+_x$ and $H^1(\Omega_x;\mathbb{R}) = Z^-_x\oplus Z^+_x$ satisfy $\beta_xZ^\pm_x\subset Y^\pm_x$, where $Y^+_x$ and $Z^+_x$ denote elements with positive Lyapunov exponents.
\begin{proof}
  First, there is a pairing $\Gamma_x\times H^1(AP(\Omega_x);\mathbb{Z})\rightarrow \mathbb{Z}$ described as follows. For every element $v$ generated by return vectors and $c \in H^1(AP(\Omega_x);\mathbb{Z})$ the pairing comes from the evaluation $c (\pi(v))$, where $\pi(v)$ is the cycle defined by $v$ by projection to $AP(\Omega_x)$. If $v$ is large enough, then the evaluation can be done through a class in $H^1(AP^k(\Omega_x);\mathbb{Z})$ for some $k$ large enough. Thus there exists a map $\beta_x:H^1(\Omega_x;\mathbb{Z})\rightarrow \Gamma_x^*$ satisfying $c(\pi(v)) = \langle v,\beta_x(c)\rangle$ for all $v\in \Gamma_x$.  Finally,
  \begin{equation*}
    \begin{split}
      \left\langle\bar{G}_x \beta_x(c),v\right\rangle &= \left\langle \beta_x(c),G_x v\right\rangle =  \left\langle \beta_x(c),\theta^{-1}_{x_{-1}} v\right\rangle = c\left( \Phi_{\sigma^{-1}(x)*}\pi(v)\right) \\
      &=  \left(\Phi_{\sigma^{-1}(x)}^*c\right) \left( \pi(v)\right) =  \left\langle \beta_{\sigma^{-1}(x)}\left( \Phi^*_{\sigma^{-1}(x)} c\right), v \right\rangle,
    \end{split}
  \end{equation*}
  for any $v\in \Gamma_x$, which proves the equivariance of $\beta_x$.
\end{proof}

\subsection{Deformations}
\label{subsec:deform}
Given an polyhedral, repetitive tiling $\mathcal{T}$ of finite local complexity, one can obtain a family of tilings $\{\mathcal{T}_{\Rd}\}_{_{\Rd}}$ which are close to $\mathcal{T}$ in some geometric sense. More precisely, the tiling $\mathcal{T}_{\Rd}$ is obtained from $\mathcal{T}$ by a deformation given by $\Rd$. What needs to be established is what the ranges are for the deformation parameter $\Rd$.

The basic idea to understand deformations of tilings $\mathcal{T}$ is to consider the geometric information contained in the geometric object $AP(\Omega_\mathcal{T})$. Not only does it contain information about the topology of the tiling space, it also contains basic geometric information, namely, the lengths and shapes of the edges of the tiles. More precisely, any edge of a tile in $\mathcal{T}$ projects to a 1-chain in $AP(\Omega_\mathcal{T})$, and the vector defined by such edge in $\mathcal{T}$ can be thought of as the value of a $\mathbb{R}^d$-valued 1-cochain evaluated on this edge. One can similarly assign a $\mathbb{R}^d$-valued 1-cochain to every edge in $AP(\Omega_\mathcal{T})$. To be consistent, one must impose that the values of such cochains sum up to zero around the edge of a 2-cell, i.e., that these $\mathbb{R}^d$-valued cochains are in fact $\mathbb{R}^d$-valued cocycles. As such, the geometry of the tiles in a tiling $\mathcal{T}$ is determined by the representative $g_0$ of an element $[g_0]\in H^1(AP(\Omega_\mathcal{T});\mathbb{R}^d)$. Classes $[\Rd]\in H^1(AP(\Omega_\mathcal{T});\mathbb{R}^d)$ with representatives $\Rd$ which are close enough to $g_0$ gives a family of deformed tilings $\mathcal{T}_{\Rd}$ which are geometrically close to $\mathcal{T}$, all of which have the same combinatorial properties and many of which have tiling spaces $\Omega_{\mathcal{T}_{\Rd}}$ which share similar dynamic properties as those of $\Omega_{\mathcal{T}}$.

One can deform further still by considering $\mathbb{R}^d$-valued 1-cochains in $AP^k(\Omega_\mathcal{T})$. The geometry of level-$k$ supertiles of $\mathcal{T}$ can be deformed by the same method, and so there is a representative $g_k$ an element $[g_k]\in H^1(AP^k(\Omega_\mathcal{T});\mathbb{R}^d)$ which encodes the geometry of level-$k$ supertiles for all tilings in $\Omega_\mathcal{T}$. Thus, a representative $\Rd$ of $[\Rd]\in H^1(AP^k(\Omega_\mathcal{T});\mathbb{R}^d)$ which is close enough to $g_k$ defines a deformation of tilings in $\Omega_{\mathcal{T}}$. The conclusion of all this is that, given (\ref{eqn:rescale}) and (\ref{eqn:directLimit}), the geometry of tilings in $\Omega_\mathcal{T}$ is defined by a representative of an element $[g_\mathcal{T}]\in H^1(\Omega_\mathcal{T};\mathbb{R}^d)=H^1(\Omega_\mathcal{T};\mathbb{Z})\otimes \mathbb{R}^d$ and so any $[\Rd]\in H^1(\Omega_\mathcal{T};\mathbb{R}^d)$ with representative $\Rd$ close enough to $g_\mathcal{T}$ defines a new family of tilings $\Omega^{\Rd}$. Questions about which sort of properties persist under which types of deformations have been studied before in \cite{ClarkSadun:shape, kellendonk:deformation, JulienSadun:deformations}. The goal here will be to deduce under which types of deformations do the systems become weakly mixing.

Going back to the random construction, let $S_1,\dots, S_N$ be a family of uniformly expanding compatible substitution rules on the prototiles $t_1,\dots, t_M$ and suppose $\mathcal{B}_x$ is minimal. Let $[\Rd_x]\in H^1(\Omega_x;\mathbb{R}^d)$ be such that the representative $\Rd_x$ defines the geometry of tilings in $\Omega_x$. Following \cite[\S 8]{JulienSadun:deformations}, for any $[\alpha]$ close enough to $[\Rd_x]$ and $\mathcal{T}\in\Omega_x$, a new deformed tiling $\mathcal{T}'$ is constructed as follows. Let $\alpha$ be a $\mathcal{T}$-equivariant representative of the class $[\alpha]$, that is, a $\mathcal{T}$-equivariant choice of linear transformations $\alpha(t):\mathbb{R}^d\rightarrow \mathbb{R}^d$.
Define $H_\alpha:\mathbb{R}^d\rightarrow \mathbb{R}^d$ by $H_\alpha(s) := \int_{0}^s\alpha$ and assume for the moment that this is a homeomorphism.
This produces a new tiling $\mathcal{T}_{\alpha}$ whose patches are defined from patches in $\mathcal{T}$ by deforming each tile in $\mathcal{T}$ using $H_\alpha$. Taking the orbit closure of this tiling one obtains $\Omega_x^{\alpha}$, the tiling space defined by this deformation. Let $\Lambda_\mathcal{T}$ be the vertex set of $\mathcal{T}$ and consider $\Lambda_\mathcal{T}^{\alpha}$ obtained by applying $H_\alpha$ to each element of $\Lambda_\mathcal{T}$. If $v_1, v_2\in\Lambda_\mathcal{T}$ are two vertices in $\mathcal{T}$ which are joined by an edge, then the vertices $H_\alpha(v_1),H_\alpha(v_2)\in\Lambda^{\alpha}_\mathcal{T}$ are joined by an edge defined by the vector $\int_{v_1}^{v_2}\alpha$.  Furthermore, by construction, if $\mathcal{C}_\mathcal{T}$ is the set of control points of $\mathcal{T}$, then $\mathcal{C}_\mathcal{T}^{\alpha}:= H_{\alpha}(\mathcal{C}_\mathcal{T})$ is the set of control points of $\mathcal{T}_{\alpha}$.

The set of classes $[\alpha]\in H^1(\Omega_x;\mathbb{R}^d)$ for which there is a well-defined deformation as above, that is, for which there is a representative $\alpha$ such that $H_{\alpha}$ is a homeomorphism, are classified in \cite[\S 7-\S 8]{JulienSadun:deformations} as follows. Let $C_\mu:H^1(\Omega_x;\mathbb{R}^d)\rightarrow M_{d\times d}$ be the asymptotic cycle defined by averaging representatives of classes:
$$C_\mu:[\alpha]\mapsto C_\mu([\alpha]):=\lim_{R\rightarrow \infty}\frac{1}{\mathrm{Vol}\, B_R(0)}\int_{B_R(0)}\alpha(t)\, dt,$$
for $[\alpha]\in H^1(\Omega_x;\mathbb{R}^d)$. That $C_\mu$ is independent of representative follows immediately from the fact that the limit in the definition when $\alpha = d\omega$ is zero. This cycle goes also by the name of the Ruelle-Sullivan map, and in general it depends on a choice of invariant measure, but in all cases in this paper there is no other choice but $\mu$. Then $\Omega_x^{\alpha}$ is well-defined through the deformation $H_{\alpha}$ as above for some representative $\alpha$ if $C_\mu([\alpha])$ is invertible \cite[Theorem 8.1]{JulienSadun:deformations}. A representative $\Rd$ of $[\Rd]$ will be called a \textbf{good representative} if $H_{\Rd}$ is a homeomorphism.
Thus the choice of classes of deformation parameters $\Rd$ will be restricted to the open submanifold of $H^1(\Omega_x;\mathbb{R}^d)$ defined by
\begin{equation}
  \label{eqn:moduli}
  \mathcal{M}_x = \left\{ [\Rd]\in H^1(\Omega_x;\mathbb{R}^d):\mathrm{det}\,C_\mu([\Rd])\neq 0 \right\}.
\end{equation}
This set can be seen as an open submanifold as follows. The geometry of the tiles in the tilings of $\Omega_x$ define a canonical class $[\Rd]_*\in\mathcal{M}_x$, and since $[\Rd]\mapsto \mathrm{det}\,C_\mu([\Rd])$ is a composition of continuous maps, there is a neighborhood of $[\Rd]_*$ for which $\mathrm{det}\,C_\mu(\cdot)$ is nonzero. So $\mathcal{M}_x$ carries a natural absolutely continuous measure inherited from the Lebesgue measure on the vector space $H^1(\Omega_x;\mathbb{R}^d)$. Note immediately that $\mathcal{M}_x$ can be seen to contain a copy of $GL(d,\mathbb{R})$, which are the classes admitting constant representatives which are invertible. These are tiling spaces whose elements are tilings of the form $A\cdot \mathcal{T}$, where $A\in GL(d,\mathbb{R})$ and $\mathcal{T}\in \Omega_x$.
As such, $[\Rd]$ can be identified with the element $[\Rd] \in \mathrm{Hom}(\mathbb{R}^d, H^1(\Omega_x;\mathbb{R}))$ as follows. By definition, an element $[\Rd]\in H^1(\Omega_x;\mathbb{R}^d)$ can be written as $\sum_i a_ic_i\otimes v_i$ with $\{c_i\}\subset H^1(\Omega_x;\mathbb{R})$ is a basis, $a_i\in\mathbb{R}$ and $v_i\in\mathbb{R}^d$ for all $i$, and this defines a map $[\Rd]:\lambda\mapsto [\Rd](\lambda):= \sum_i a_i c_i\langle v_i,\lambda\rangle \in H^1(\Omega_x;\mathbb{R})$ for any $\lambda\in\mathbb{R}^d$, which is a homomorphism.
From this point of view the deformation class $[\Rd]$ also parametrizes inclusions\footnote{A dimension count helps: the Grassmanian $G_d(H^1(\Omega_x;\mathbb{R}))$ has dimension $d(\mathrm{dim}\,H^1(\Omega_x;\mathbb{R})- d) = \mathrm{dim}\,H^1(\Omega_x;\mathbb{R}^d)- d^2 $, so an element $H^1(\Omega_x;\mathbb{R}^d)$ defines first a choice of $d$-dimensional subspace of $H^1(\Omega_x;\mathbb{R})$ (parametrized by the Grassmanian), and then a linear transformation inside this subspace, given by a $d\times d$ matrix.} of $\mathbb{R}^d$ into $H^1(\Omega_x;\mathbb{R})$. In \cite{ClarkSadun:shape}, $[\Rd]$ is called the \emph{shape vector}.

It should be noted that the property of unique ergodicity is not altered through any deformations given by good representatives of $[\Rd]\in \mathcal{M}_x$ and also that the renormalization maps $\Phi_x$ satisfy $\Phi_x(\Omega_x^{\Rd}) = \Omega_{\sigma(x)}^{\Rd_x^1}$ where $[\Rd_x^1] = (\Phi_x^{-1})^*[\Rd]$ and, more generally,
\begin{equation}
  \label{eqn:equivDeform}
  \Phi_x^{(n)}(\Omega_x^{\Rd}) = \Omega_{\sigma^n(x)}^{\Rd_x^n}
\end{equation}
where $[\Rd_x^n] = (\Phi_x^{(n)*})^{-1}[\Rd]$.

Let $v_1,\dots, v_{r_x}\in \Lambda_x$ be a set of generators for $\Gamma_x$. Putting the $v_i$ as columns, there is a $d\times r_x$ matrix $V_{x}$ such that any return vector $\tau$ satisfies $\tau = V_{x}\alpha_x(\tau)$ for a unique $\alpha_x(\tau)\in\mathbb{Z}^{r_x}$, which in particular identifies $V_x$ as an element of $\mathbb{Z}^{r_x *}\otimes \mathbb{R}^d$. As such, the deformation $\Rd_x\mapsto \Rd$ induces a deformation of $V_{x}\mapsto V^{\Rd}_{x}\in \mathbb{Z}^{r_x *}\otimes \mathbb{R}^d$ through the map $\beta_x$ in (\ref{eqn:vectorMap}) as follows. If $[\Rd] = \sum_{i=1}^{\beta_1^x} \alpha_ic_i\otimes v_i\in H^1(\Omega_x;\mathbb{R}^d)$ for a basis $\{c_i\}$ of $H^1(\Omega_x;\mathbb{Z})$, $\alpha_i\in\mathbb{R}$ and $v_i\in\mathbb{R}^d$ for each $i$, then $\bar\beta_x([\Rd]) = \sum_{i=1}^{\beta_1^x} \alpha_i \beta_x(c_i)\otimes v_i = V^{\Rd}_x \in \mathbb{Z}^{r_x *}\otimes \mathbb{R}^d$.


It follows that for $[\Rd]\in \mathcal{M}_x$ with good representative $\Rd$ and $\tau\in \Lambda_x^{(0)}$ there is a corresponding vector $\tau^{\Rd}\in\Lambda^{(0)}_{x,\Rd}$ and thus for a generating set $\{\tau_i\}$ of $\Gamma_x$ there is a deformed generating set $\{\tau_i^{\Rd}\}$ of $\Gamma_x^{\Rd}$ and a corresponding matrix $V^{\Rd}_x$ (whose columns are the generating set of return vectors of the deformed tiling) satisfying $\tau^{\Rd} = V^{\Rd}_x \alpha_x(\tau)$ whenever $\tau = V_x \alpha_x(\tau)$. As such, recalling (\ref{eqn:equivDeform}), the deformed version of the return vector cocycle (\ref{eqn:vectorCocycle}) is
\begin{equation}
  \label{eqn:vectorCocycle2}
  \begin{split}
  &\theta_{(-n)_x}^{-1}V_{x}^{\Rd} =  V_{\sigma^n(x)}^{\Rd_x^n}G_{\sigma^{(n-1)}(x)}\cdots G_{\sigma(x)} G_x, \hspace{.5in}\mbox{ and thus } \\
&\theta_{(-n)_x}^{-1}\tau^{\Rd} = \theta_{(-n)_x}^{-1} V_{x}^{\Rd}\alpha_x(\tau) =  V_{\sigma^n(x)}^{\Rd^n_x} G_x^{(n)} \alpha_x(\tau) = V_{\sigma^n(x)}^{\Rd^n_x} \alpha_{\sigma^n(x)}\left(\theta_{(-n)_x}^{-1}\tau^{\Rd}\right).
  \end{split}
\end{equation}

Let $S_1,\dots, S_N$ be a collection of compatible substitution rules on tiles $t_1,\dots, t_M$ and suppose $\mu$ is a $\sigma-$invariant, ergodic probability measure on $\Sigma_N$ such that $\log\|\Phi_x^*\|\in L^1$, where $\Phi_x^*$ is the induced maps on $H^1(\Omega_x;\mathbb{R})$. Oseledets theorem gives a decomposition $H^1(\Omega_x;\mathbb{R}) = Z^+_x\oplus Z^-_x$, where $Z^+_x$ is the Oseledets space corresponding to positive Lyapunov exponents. Consider now the action $\Phi_{\sigma^{-1}(x)}^*:H^1(\Omega_x;\mathbb{R}^d)\rightarrow H^1(\Omega_{\sigma^{-1}(x)};\mathbb{R}^d)$. Since $\Phi_{\sigma^{-1}(x)}^*$ acts on the first component of $[\eta]\otimes v \in H^1(\Omega_x;\mathbb{R}^d) = H^1(\Omega_x;\mathbb{R})\otimes \mathbb{R}^d$, it follows that if
$$\frac{\log\|\Phi_x^*[\eta]\|}{n}\longrightarrow \omega_*\hspace{.6in}\mbox{ then }\hspace{.6in}\frac{\log\|\Phi_x^*([\eta]\otimes e_i)\|}{n}\longrightarrow \omega_*$$
for all basis vectors $e_i\in\mathbb{R}^d$. Thus there is an Oseledets decomposition of $H^1(\Omega_x;\mathbb{R}^d)$ as
$$H^1(\Omega_x;\mathbb{R}^d) = \hat{Z}^-_x\oplus \hat{Z}^+_x = Z^-_x \otimes \mathbb{R}^d\oplus Z^+_x\otimes \mathbb{R}^d $$
obtained from the Oseledets decomposition of $H^1(\Omega_x;\mathbb{R})$. Note that $\mathrm{dim}\, \hat{Z}^+_x \geq d^2$, since $Z^+_x$ is always at least $d$-dimensional given that the constant forms $dx_1,\dots, dx_d$ represent $d$-linearly independent elements of $Z^+_x$. Finally, note that the inclusions $[\Rd]:\mathbb{R}^d\rightarrow H^1(\Omega_x;\mathbb{R})$ defined by $\lambda\mapsto [\Rd](\lambda)$ as above are equivariant with respect to the induced maps on cohomology: if $[\Rd] = \sum a_ic_i\otimes v_i$, then
$$(\Phi_x^*[\Rd])(\lambda) =  \sum_i a_i(\Phi_x^*c_i)\langle v_i,\lambda \rangle .$$
\section{A cohomological quantitative Veech criterion}
\label{sec:CQVC}
Prior to this section, especially in \S \ref{sec:renorm}-\ref{sec:criterion}, results related to twisted ergodic integrals were obtained for undeformed tiling spaces $\Omega_x$ given a tower structure defined by $x\in \Sigma_N$ through the random substitution tiling construction from \S \ref{sec:RandSub}. It would be sensible now to revisit some of those results, especially Propositions \ref{prop:TracesBound}, \ref{prop:twistedBound} and \ref{prop:Veech}, in light of the results on deformations from the previous section.
\subsection{Propositions \ref{prop:TracesBound} and \ref{prop:twistedBound} revisited}
Let $S_1,\dots, S_N$ be a collection of uniformly expanding compatible substitution rules on $M$ prototiles, $x\in\Sigma_N$, $\Omega_x$ be a tiling space (assuming $\mathcal{B}_x$ is minimal), and $[\Rd]\in\mathcal{M}_x$ with good representative $\Rd$. The combinatorics of the tilings in $\Omega_x$ and those in $\Omega_x^{\Rd}$ are the same: for any level-$k$ supertile $\mathcal{P}$ for a tiling in $\Omega_x$ there is a corresponding deformed level-$k$ supertile $\mathcal{P}_{\Rd}$ for tilings in $\Omega_x^{\Rd}$ which is tiled by the same number of tiles which tile $\mathcal{P}$. In short, the combinatorics of $\Omega_x$ and $\Omega_x^{\Rd}$ are the same; what has changed is the geometry of the return vectors. That is, if $\Lambda^{(0)}_x$ is the set of return vectors for tilings in $\Omega_x$ (see \S \ref{subsec:RV}) and $\Lambda^{(0)}_{x,\Rd}$ is the set of return vectors for tilings in $\Omega_x^{\Rd}$, then $\Lambda^{(0)}_{x}\neq \Lambda^{(0)}_{x,\Rd}$\footnote{The reason $\Lambda^{(0)}_{x}\neq \Lambda^{(0)}_{x,\Rd}$ goes as follows: deformations of tilings are done by deforming a collection of edges on the tiling. Since a basis of $\Lambda^{(0)}_{x}$ is given by vectors which are a finite sum of edges along said tiling, then there is an element of this basis such that if the edges are then deformed, then their sum will be a different vector, and this vector will be a basis element of $\Lambda^{(0)}_{x,\Rd}$.}, and so $\Gamma_x \neq \Gamma_x^{\Rd} := \mathbb{Z}[\Lambda^{(0)}_{x,\Rd}]$ and $\Gamma_x^{\Rd}$ is still finitely generated by finite local complexity. 

Recall the family of elements $\{ a^k_{x,i,\lambda}\}$ in (\ref{eqn:diagonal}) of the LF algebra $LF(\mathcal{B}^+_x)$, where $k\geq 0$, $1\leq i \leq M$ and $\lambda\in\mathbb{R}^d$. As seen in the previous section, a deformation of $\Omega_x$ by a good representative $\Rd$ of $[\Rd]\in\mathcal{M}_x$ sends the control points $\mathcal{C}_\mathcal{T}$ of a tiling $\mathcal{T}\in\Omega_x$ to control points $H_{\Rd}(\mathcal{C}_\mathcal{T})$ of the tiling $\mathcal{T}_{\Rd}\in\Omega_x^{\Rd}$. As such, since $\{ a^k_{x,i,\lambda}\}$ depended on control points, this gives a family of elements $\{ a^{k,\Rd}_{x,i,\lambda}\}$ for the deformed tiling space. Since Proposition \ref{prop:TracesBound} concerns bounding traces applied to the elements $\{ a^k_{x,i,\lambda}\}$, the deformed version of this will consider bounding traces applied to the elements $\{ a^{k,\Rd}_{x,i,\lambda}\}$. Note that the traces are independent of deformations since what is affected by deformations are the elements $a^{k,\Rd}_{x,i,\lambda}$, which are all elements of the same LF algebra $LF(\mathcal{B}^+_x)$ and the traces are unaffected by the deformations since they are linear functionals on this algebra.

Recall that $\tau\in \mathcal{W}^{x_k}_{i,j}\subset \Lambda^{(0)}_{\sigma^{-k}(x)}$ if and only if $\theta_{x_{k-1}}^{-1}\cdots \theta_{x_{1}}^{-1}\tau \in \mathcal{U}_{i,j,k}^x\subset \Lambda^{(k)}$. That is, $\tau$ describes how tiles are placed in level-$1$ supertiles of $\Omega_{\sigma^{-k}(x)}$ if and only if $\theta_{x_{k-1}}^{-1}\cdots \theta_{x_{1}}^{-1} \tau$ describes how level-$(k-1)$ supertiles lie in level-$k$ supertiles of $\Omega_x$. In the proof of Proposition \ref{prop:TracesBound}, the sets $\mathcal{U}_{i,j,k}^x$ were used in products $\langle\lambda, u \rangle$ for $u\in \mathcal{U}_{i,j,k}^x$ and $\lambda\in\mathbb{R}^d$. By (\ref{eqn:vectorCocycle2}), if $u = \theta_{(-k)_{\sigma^{-k}(x)}}^{-1}\tau $, in the undeformed space, then in the deformed space the relevant product is
\begin{equation}
  \label{eqn:DeformedProduct}
  \left\langle   \lambda, V_x^{\Rd} G^{(n)}_{\sigma^{-k}(x)} \alpha_{\sigma^{-k}(x)}(\tau) \right\rangle,
  \end{equation}
 where $\alpha_{\sigma^{-k}(x)}(\tau)$ is the address of $\tau$. With all of this in mind, the analogue of Proposition \ref{prop:TracesBound} in the deformed setting is the following proposition; the proof follows the proof of Proposition \ref{prop:TracesBound} line by line except that one needs to use the deformed products (\ref{eqn:DeformedProduct}) in the relevant places, e.g. (\ref{eqn:traceExp2}) - (\ref{eqn:traceExp3}).
\begin{prop*}
  Let $S_1,\dots, S_N$ be a family of uniformly expanding compatible substitution rules on the prototiles $t_1,\dots, t_M$ and a positively simple word $w = w^-w^+$. Suppose  $x\in\Sigma_N$ has infinitely many occurences of the word $w$, both in the future and the past. Denote by $k_n\rightarrow \infty$ the maximal sequence of return times of $x$ to $C([w^-.w^+])$ under $\sigma^{-1}$. Pick $[\Rd]\in\mathcal{M}_x$ with good representative $\Rd$. Then there exists a $\ell\in\{1,\dots, M\}$, a finite set of addresses $\bar{\Lambda}_x\subset   \mathbb{Z}^{r_x}$, $c>0$, and $k_x>0$ such that for any $m\in\mathbb{N}$
  \begin{equation}
    \label{eqn:twistBnd1}
    \begin{split}
      &\left|\mathfrak{t}^x_{m,j}(a^{m,\Rd}_{x,i,\lambda})\right|\\
      &\hspace{.4in}\leq \left\{
      \begin{array}{ll}
        \displaystyle\left\| \Theta_x^{(m)}\right\|\prod_{n<N}\left( 1 - c\max_{\tau\in \bar\Lambda_x} \| \left\langle\tau, \bar{G}^{(k_n+k_x)}V_x^{\Rd *}\lambda\right\rangle \|^2_{\mathbb{R}/\mathbb{Z}}\right) & \mbox{ if }\,k_{N}\leq m < k_N+k_x  \\
        \displaystyle\left\| \Theta_x^{(m)}\right\|\prod_{n \leq N}\left( 1 - c\max_{\tau\in \bar\Lambda_x} \|\left\langle\tau, \bar{G}^{(k_n+k_x)}V_x^{\Rd *}\lambda\right\rangle \|^2_{\mathbb{R}/\mathbb{Z}}\right) & \mbox{ if }\, k_N+k_x\leq m \leq k_{N+1} 
      \end{array} \right.
    \end{split}
  \end{equation}
  for any $i,j$.
\end{prop*}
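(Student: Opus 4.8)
The plan is to follow the proof of Proposition \ref{prop:TracesBound} essentially verbatim, carrying the deformation through only where geometry—as opposed to combinatorics—enters, namely the phases $e^{-2\pi i \langle \lambda, u\rangle}$ appearing in the trace expansion. The first observation, already recorded in the discussion preceding the statement, is that deformation by $\Rd\in\mathcal{M}_x$ leaves the combinatorics of $\Omega_x$ untouched: the word $w = w^-w^+$, its maximal sequence of return times $k_n\to\infty$, the constant $k_x = |w^+|$, the distinguished index $\ell$ forced by $Q^+_{i,j}>1$, and the matrices $\Theta_x^{(m)}$ and $Q^\pm$ are all defined exactly as before. Accordingly I would begin by fixing precisely the same data, and take $\bar\Lambda_x\subset\mathbb{Z}^{r_x}$ to be the (deformation-independent) set of \emph{addresses} $\alpha_{\sigma^{-k_1}(x)}(\tau)$ of the good return vectors $\tau\in\Lambda_j^{\sigma^{-k_1}(x)}$ realizing translation equivalences of type-$j$ tiles inside the canonical supertile $\mathcal{P}_{|w^+|}(v_j)$. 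This is the key conceptual adjustment: the geometric realizations $V^{\Rd}_x$ of these addresses move with $\Rd$, but the integer addresses themselves do not, which is why $\bar\Lambda_x$ is now a set of lattice vectors rather than a subset of $\Lambda_\ell^x$.

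Next I would reproduce the trace expansions (\ref{eqn:traceExp}) and (\ref{eqn:traceExp2}) for the deformed elements $a^{m,\Rd}_{x,i,\lambda}$. The expansions are formally identical—the canonical traces are insensitive to deformation—but each position vector $u_e$ is replaced by its deformed counterpart. Here I would invoke the deformed cocycle relation (\ref{eqn:vectorCocycle2}), which expresses a deformed scaled return vector $\theta^{-1}_{(-k)_{\sigma^{-k}(x)}}\tau^{\Rd}$ in terms of the return vector cocycle applied to the address $\alpha_{\sigma^{-k}(x)}(\tau)$, so that each phase argument entering the crucial inner sum becomes the deformed product (\ref{eqn:DeformedProduct}). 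The single genuine manipulation is then the adjoint move: writing $\langle \lambda, V^{\Rd}_x G^{(n)}_{\sigma^{-k}(x)}\alpha_{\sigma^{-k}(x)}(\tau)\rangle = \langle \alpha_{\sigma^{-k}(x)}(\tau), \bar{G}^{(n)}_x V^{\Rd *}_x\lambda\rangle$, which transfers the cocycle onto $\lambda$ and identifies the phase argument as $\langle \tau, \bar{G}^{(k_n+k_x)}V^{\Rd *}_x\lambda\rangle$ with $\tau\in\bar\Lambda_x$ the fixed integer address. This is exactly the expression demanded by the statement.

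With the phases in this form, the recursive core of the argument, (\ref{eqn:traceExp3})–(\ref{eqn:traceExp4}), goes through unchanged. The elementary inequality $|1+e^{2\pi i \omega}|\leq 2 - \tfrac12\|\omega\|^2_{\mathbb{R}/\mathbb{Z}}$ and the hypothesis $Q^+_{i,j}>1$—which guarantees at least two type-$j$ tiles, hence at least one genuine term $1+e^{2\pi i\langle\cdots\rangle}$, inside each canonical level-$|w^+|$ supertile—are both combinatorial and therefore survive deformation intact. I would then telescope the recursion over the return times exactly as before, producing one factor $\bigl(1-c\max_{\tau\in\bar\Lambda_x}\|\langle\tau, \bar{G}^{(k_n+k_x)}V^{\Rd *}_x\lambda\rangle\|^2_{\mathbb{R}/\mathbb{Z}}\bigr)$ per occurrence of $w$, and distinguishing the two ranges $k_N\leq m<k_N+k_x$ and $k_N+k_x\leq m\leq k_{N+1}$ precisely as in the undeformed proof.

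The hard part is not any single estimate but bookkeeping: verifying that the good return vectors used to extract the decay factors really do sit inside the deformed level-$|w^+|$ supertiles with the same combinatorial incidence, so that the constant $c = c_w$ and the address set $\bar\Lambda_x$ may be chosen uniformly in $\Rd$ and depend only on $w$. This follows because the deformation map $H_\alpha$ carries control points to control points and preserves patch combinatorics, but one must check that the translation-equivalence relations among type-$j$ tiles recorded by $\bar\Lambda_x$ are the combinatorial data preserved by $H_\alpha$, rather than the metric vectors themselves. Once this is granted, every remaining step is a transcription of the proof of Proposition \ref{prop:TracesBound} with (\ref{eqn:DeformedProduct}) substituted for $\langle\lambda, \theta^{-1}_{(n)_x}\tau\rangle$ in the relevant places.
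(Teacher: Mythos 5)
Your proposal is correct and follows essentially the same route as the paper: the paper's own proof simply says to repeat the proof of Proposition \ref{prop:TracesBound} line by line, substituting the deformed products (\ref{eqn:DeformedProduct}) in the relevant places (e.g.\ (\ref{eqn:traceExp2})--(\ref{eqn:traceExp3})), which is precisely what you do, including the replacement of geometric return vectors by their $\Rd$-independent integer addresses and the adjoint move $\langle \lambda, V_x^{\Rd} G^{(n)}_{\sigma^{-k}(x)}\alpha(\tau)\rangle = \langle \alpha(\tau), \bar{G}^{(n)}_x V_x^{\Rd *}\lambda\rangle$. Your additional attention to the uniformity of $c$ and $\bar\Lambda_x$ in $\Rd$ is consistent with (and needed for) the subsequent Proposition \ref{prop:twistedBound2}, and is justified by the paper's observation that deformations preserve patch combinatorics.
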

Given this, the following proposition follows in the same way that Proposition \ref{prop:twistedBound} follows from Proposition \ref{prop:TracesBound}.
\begin{proposition}
\label{prop:twistedBound2}
  Let $S_1,\dots, S_N$ be a collection of uniformly expanding compatible substitution rules on $M$ prototiles. Let $\mu$ be a positively simple, $\sigma$-invariant ergodic probability measure for which the trace cocycle is integrable. For every positively simple word $w = w^-w^+$ such that $\mu(C([w^-.w^+]))>0$ there exists a finite set of addresses $\Lambda_w\subset \mathbb{Z}^{r_x}$ and $c_w>0$ such that for any $\epsilon\in(0,1)$, for $\mu$-almost every $x\in\Sigma_N$ there is a subsequence $k_n\rightarrow \infty$, $z>1$ and $\bar{K}_{\epsilon,x}>0$ such that for any $[\Rd]\in\mathcal{M}_x$ with good representative $\Rd$, any level-$y$ bounded TLC function $f:\Omega_x^{\Rd}\rightarrow \mathbb{R}$, $\mathcal{T}\in\Omega_x^{\Rd}$ and $\lambda\in\mathbb{R}^d$
  $$\left|\mathcal{S}^\mathcal{T}_R(f,\lambda)\right|\leq C_{x,f,\epsilon} R^{d+\epsilon} \prod_{m_0<n<N_\epsilon(R)} \left( 1-c_w \max_{\tau\in \Lambda_w}\left\| \left\langle\tau, \bar{G}^{(k_n+|w^+|)}V_x^{\Rd *}\lambda\right\rangle\right\|^2_{\mathbb{R}/\mathbb{Z}}  \right)+ \mathcal{O}(R^{d-1})$$
 for all $R>C_{\epsilon, x}z^{y}$, and $N_\epsilon(R)$ is the largest return of $x$ to $C([w^-.w^+])$ which is less than $\frac{d\log R - \bar{K}_{\epsilon,x}}{\eta_1+\epsilon}$ and $m_0$ is the smallest integer which satisfies $k_{m_0}\geq y$.
\end{proposition}
Now Lemma \ref{lem:Postal} can be used as in the undeformed version, that is, the deformed analogue of (\ref{eqn:oftheForm}) is
\begin{equation}
  \label{eqn:oftheForm2}
  \begin{split}
    &\prod_{y'<n<N_\varepsilon(R)} \left( 1-c_w \max_{\tau\in \Lambda_w}\left\| \left\langle\tau, \bar{G}^{(k_n+|w^+|)}V_x^{\Rd *}\lambda\right\rangle\right\|^2_{\mathbb{R}/\mathbb{Z}}  \right) \\
    &\hspace{2.1in}\leq \prod_{y'<n<N_\varepsilon(R)} \left( 1-c_w' \left\| \bar{G}^{(k_n+|w^+|)}_x V_{x}^{\Rd *}  \lambda    \right\|^2_{\mathbb{R}^{r_x}/\mathbb{Z}^{r_x}}  \right)
  \end{split}
\end{equation}
provided that $\Lambda_w$ generates $\mathbb{Z}^{r_x}$.
\subsection{The criterion for deformed spaces}
For any positively simple postal word $w = w^-w^+$ with $\mu(C([w]))>0$, any $\rho,\delta\in(0,1/2)$, $[\Rd]\in\mathcal{M}_x$ and $\lambda\in\mathbb{R}^d$, define
\begin{equation}
  \label{eqn:density}
  \mathcal{D}_N^x(w,\rho,[\Rd],\lambda):=  \frac{\displaystyle\sum_{j=1}^{N}\mathbbm{1}_{C([w^-.w^+])}\left(\sigma^{-j}(x)\right)\mathbbm{1}_{B_\rho\left(H^1(\Omega_{\sigma^{-j}(x)};\mathbb{Z})\right)}\left(\Phi_x^{(j)*}[\Rd] (\lambda)  \right)}{\displaystyle\sum_{j=1}^{N}\mathbbm{1}_{C([w^-.w^+])}(\sigma^{-j}(x))}.
\end{equation}
The quantity $\mathcal{D}_N^x(w,\rho,[\Rd],\lambda)$ measures the density of the iterates of $\sigma^{-j}(x)$ up to $N$ which satisfy two simultaneous properties:
\begin{enumerate}
\item that the iterate on the base dynamics lands in the special compact set $C([w^-.w^+])$ for the positively simple word $w = w^-w^+$; and
\item the the fiber dynamics $\Phi^{(n)*}_x[\Rd](\lambda)$ land at most $\rho$ from a point in the lattice $H^1(\Omega_{\sigma^{-j}(x)};\mathbb{Z})$.
\end{enumerate}
Since $x$ is taken to be $\mu$-generic and $\mu(C(w^-.w^+))>0$, the iterates $\sigma^{-j}(x)$ will visit the special compact set with asymptotic frequency $\mu(C(w^-.w^+))$. Thus the density quantity $\mathcal{D}_N^x(w,\rho,[\Rd],\lambda)$ is really focusing on (ii) above, the behavior of the fiber dynamics. Furthermore, if $k_n\rightarrow \infty$ is a maximal sequence of return times to $C(w^-.w^+)$ for $\sigma^{-1}$, then it follows from the definition that
\begin{equation}
  \label{eqn:Accdensity}
  \mathcal{D}^x_{k_n}(w,\rho,[\Rd],\lambda) = \frac{1}{n}\sum_{j=1}^{n} \mathbbm{1}_{B_\rho\left(H^1(\Omega_{\sigma^{-k_j}(x)};\mathbb{Z})\right)}\left(\Phi_x^{(k_j)*}[\Rd] (\lambda)  \right)
\end{equation}
and $\mathcal{D}^x_{N}$ is constant for $N\in \{k_n,\dots, k_{n+1}-1\}$. Thus if $\mathcal{D}_{k_n}(w,\rho,[\Rd],\lambda)<1-\delta$ for all $n$ large enough, then $\mathcal{D}_{N}(w,\rho,[\Rd],\lambda)<1-\delta$ for all $N$ large enough.

These observations will be particularly relevant in the following section on the Erd\H{o}s-Kahane method to exclude bad deformation parameters. That is, instead of considering the cocycle over $\sigma^{-1}$, it will be convenient to study the induced cocycle over the induced system $\sigma^{-r_{C_w}}:C(w^-.w^+)\rightarrow C(w^-.w^+)$ of returns to $C(w^-.w^+)$.
\begin{lemma}
\label{lemma:OFK}
  Let $\mu$ be a $\sigma$-invariant ergodic probability measure such that $\log \|\Phi_x^*\|\in L^1$, where $\Phi_x^*$ is the induced action on the first cohomology. Suppose $x$ is Oseledets regular for the cohomology cocycle. Then it is Furstenberg-Kesten regular for the trace cocycle.
\end{lemma}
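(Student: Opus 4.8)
The plan is to show that at an Oseledets-regular point $x$ the limit $\lim_n n^{-1}\log\|\Theta_x^{(n)}\|$ exists and equals $\eta_1$, which is exactly Furstenberg--Kesten regularity for the trace cocycle. The bridge between the two cocycles is the volume expansion factor $\theta_{(n)_x}^{-d}$: the trace cocycle sees it directly through a common Perron eigenvector, while the cohomology cocycle sees it through the constant $1$-forms. Since both cocycles are driven over $\sigma^{-1}$ by the same data, extracting the same Birkhoff average from each will close the argument.

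First I would reduce the trace cocycle to the scalar $\theta_{(n)_x}^{-d}$. Because the substitutions are uniformly expanding, the strictly positive vector of prototile volumes $\mathrm{vol}=(\mathrm{vol}(t_1),\dots,\mathrm{vol}(t_M))^T$ is a common eigenvector of every substitution matrix, $\mathcal{A}^x_k\,\mathrm{vol}=\theta_{x_k}^{-d}\,\mathrm{vol}$, so that $\Theta_x^{(n)}\mathrm{vol}=\theta_{(n)_x}^{-d}\mathrm{vol}$. As the $\Theta_x^{(n)}$ are nonnegative matrices, reading the entries off $\sum_j(\Theta_x^{(n)})_{ij}\mathrm{vol}_j=\theta_{(n)_x}^{-d}\mathrm{vol}_i$ yields the two-sided bound
\[
\theta_{(n)_x}^{-d}\ \le\ \|\Theta_x^{(n)}\|\ \le\ C\,\theta_{(n)_x}^{-d},\qquad C:=M\,\frac{\max_i\mathrm{vol}_i}{\min_j\mathrm{vol}_j}.
\]
Hence $n^{-1}\log\|\Theta_x^{(n)}\|=n^{-1}\log\theta_{(n)_x}^{-d}+O(1/n)$, so Furstenberg--Kesten regularity of $x$ is equivalent to convergence of the Birkhoff average $\tfrac{d}{n}\sum_{k=1}^n\log\theta_{x_k}^{-1}$ to its $\mu$-mean $\eta_1=d\int\log\theta_{x_1}^{-1}\,d\mu$.

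Next I would extract this same average from the cohomology cocycle. The constant $1$-forms $dx_1,\dots,dx_d$ span a $d$-dimensional $\Phi^*$-invariant subspace of $H^1(\Omega_x;\mathbb{R})$ sitting inside the expanding part $Z^+_x$. Uniform expansion is what makes this work: the renormalization acts on the leaf directions as the \emph{isotropic} rescaling by $\theta^{-1}$ dictated by the time-change equation (\ref{eqn:conj}), so on this subspace $\Phi^*$ is multiplication by the scalar $\theta_{x_\cdot}^{\pm1}$, giving exactly $\|(\Phi^*)^{(n)}dx_i\|=\theta_{(n)_x}^{\mp1}\|dx_i\|$. Since $x$ is Oseledets-regular, $n^{-1}\log\|(\Phi^*)^{(n)}dx_i\|$ converges to the genuine Lyapunov exponent of this subspace, which by the ergodic theorem is $\omega_1=\int\log\theta_{x_1}^{-1}\,d\mu$; this identity then forces $n^{-1}\log\theta_{(n)_x}^{-1}\to\omega_1$ at this very point $x$. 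Combining the two steps gives $n^{-1}\log\|\Theta_x^{(n)}\|\to d\,\omega_1=\eta_1$, which is the claim.

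I expect the only delicate points to be bookkeeping: fixing the eigenvector convention for the $\mathcal{A}^x_k$ to match the Bratteli-diagram orientation, and checking that the constant forms lie in $Z^+_x$ rather than $Z^-_x$ (fixing the sign of the exponent); neither affects the magnitude $\omega_1$, and both follow from (\ref{eqn:conj}). The load-bearing hypothesis is isotropy from uniform expansion: without it $\Phi^*$ would act on the constant forms by a genuine $d\times d$ matrix, and the single scalar rate $\theta_{(n)_x}^{-1}$ would be replaced by a matrix whose norm need not track the volume expansion, breaking the link between the $H^1$ cocycle and the trace cocycle.
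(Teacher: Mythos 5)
Your proof is correct and follows essentially the same route as the paper's: both identify the top exponent of the $H^1$ cocycle via the constant forms $dx_1,\dots,dx_d$ (which exists at an Oseledets-regular point and equals $\lim_n n^{-1}\log\theta_{(n)_x}^{-1}$), and both identify the growth of $\|\Theta_x^{(n)}\|$ with the volume expansion $\theta_{(n)_x}^{-d}$, concluding $\eta_1 = d\lambda_{top}$. The only difference is that you make the second step rigorous via the common eigenvector $\mathcal{A}^x_k\,\mathrm{vol}=\theta_{x_k}^{-d}\,\mathrm{vol}$ and the resulting two-sided bound $\theta_{(n)_x}^{-d}\le\|\Theta_x^{(n)}\|\le C\,\theta_{(n)_x}^{-d}$, where the paper simply asserts in words that the trace cocycle is proportional to supertile volume.
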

\begin{proof}
  The largest exponents for the cocycle on the first cohomology space correspond to the classes represented by the $d$ constant, pattern equivariant 1-forms $dx_1,\dots, dx_d$. The Lyapunov exponents are
  $$\lambda_{top} = \lim_{n\rightarrow\infty} \frac{\log |\theta_{x_{1}}^{-1}\cdots \theta_{x_{n}}^{-1}|}{n}$$
  which exists by assumption. The trace cocycle by definition tracks the number of prototiles in level-$n$ supertiles as $n$ grows, which is proportional to the volume of level-$n$ supertiles. Thus the top exponent of the trace cocycle is given by
  $$\lim_{n\rightarrow\infty} \frac{\log \left\|\Theta_x^{(n)}\right\|}{n} = \lim_{n\rightarrow \infty}\frac{\log |\theta_{x_{1}}^{-d}\cdots \theta_{x_{n}}^{-d}|}{n} = d \lambda_{top},$$
  so $x$ is Furstenberg-Kesten regular, and $\eta_1 = d\lambda_{top}$.
\end{proof}

The following is a quantitative, non-stationary version of \cite[Theorem 4.1]{ClarkSadun:shape}.
\begin{proposition}[Cohomological quantitative Veech criterion]
  \label{prop:CQVC}
  Let $S_1,\dots, S_N$ be a collection of compatible substitution rules on tiles $t_1,\dots, t_M$ and $\mu$ a positively simple, postal, $\sigma$-invariant ergodic probability measure for which the cocycle on $H^1$ is integrable. For any positively simple postal word $w = w^-w^+$ with $\mu(C([w]))>0$, for any Oseledets-regular $x\in\Sigma_N$, if for $[\Rd]\in H^1(\Omega_x;\mathbb{R}^d)$ there exists $\rho,\delta\in (0,\frac{1}{2})$ and $\vartheta, N_0(x)>1$ such that any $B\geq 2$ and $\lambda \in \mathcal{A}_B$,  $\mathcal{D}_N^x(w,\rho,[\Rd],\lambda) <1-\delta$ 
  for all $N>N_0(x)+\vartheta\log B$, then there is a $\gamma'>0$ such that for any good representative $\Rd$ of $[\Rd]$ and any Lipschitz function $f:\Omega_x^{\Rd}\rightarrow \mathbb{R}$ and $\mathcal{T}\in \Omega_x^{\Rd}$,
  $$\left|\int_{C_R(0)}e^{-2\pi \imath \langle \lambda, t\rangle} f\circ \varphi_t(\mathcal{T})\, dt\right|\leq C_{x,f,\varepsilon} R^{d-\gamma'}+\mathcal{O}(R^{d-1})$$
  for all $R>N'(x)\cdot B^\theta$, with the corresponding lower bound on lower local dimension for $d^-_f(\lambda)$ for all $\lambda\in\mathcal{A}_B$.
\end{proposition}
\begin{proof}
If $\| \Phi^{(j)*}_x[\Rd](\lambda)\|\leq \rho$ then by (\ref{eqn:betaEquiv}),
\begin{equation*}
  \begin{split}
    \|\beta_{\sigma^{-j}(x)}\|^{-1}\|\beta_{\sigma^{-j}(x)}\left( \Phi^{(j)*}_x[\Rd](\lambda)\right)\|&=\|\beta_{\sigma^{-j}(x)}\|^{-1}\|\bar G_x  \beta_x([\Rd](\lambda))\| \\
    &=\|\beta_{\sigma^{-j}(x)}\|^{-1}\|\bar G_x  V_x^{\Rd*}\lambda\| \leq \rho,
  \end{split}
\end{equation*}
assuming $\sigma^{-j}(x)\in C([w^-.w^+])$. Thus $\Phi^{(j)*}_x[\Rd](\lambda)\in B_\rho\left( H^1(\Omega_{\sigma^{-j}(x)};\mathbb{Z})\right)$ implies that $G_x  V_x^{\Rd*}\lambda \in B_{\rho'}(\Gamma^*_\star)$ for $\rho' = \|\beta_{\sigma^{-j}(x)}\|\rho.$

It folows that, by (\ref{eqn:density}), if $\mathcal{D}_N^x(w,\rho,[\Rd],\lambda)<1-\delta$ for all $N$ large enough, then the same holds for the analogous quantity defined in the deformed case at the level of return vectors (which is analogous to (\ref{eqn:densityVeech}))
  \begin{equation}
    \label{eqn:densityVeech2}
    \frac{\displaystyle\sum_{j=1}^{N}\mathbbm{1}_{C([w^-.w^+])}\left(\sigma^{-j}(x)\right)\mathbbm{1}_{B_{\rho'}(\Gamma^*_\star)}\left( \bar{G}^{(j)}_x V_{x}^{\Rd *}  \lambda\right)}{\displaystyle\sum_{j=1}^{N}\mathbbm{1}_{C([w^-.w^+])}(\sigma^{-j}(x))}<1-\delta'
  \end{equation}
using $\rho' = \|\beta_{\sigma^{-j}(x)}\|\rho$ which, by compactness is bounded. Given Proposition \ref{prop:twistedBound2} and Lemma \ref{lemma:OFK}, the same calculation as in (\ref{eqn:VeechBnd}) implies that if the density condition (\ref{eqn:densityVeech2}) is satisfied for all $N$ large enough, then the quantitative weak mixing conclusion of Proposition \ref{prop:Veech} holds for the deformed tiling space $\Omega_x^{\Rd}$.
\end{proof}
\section{The Erd\H{o}s-Kahane method \`a la Bufetov-Solomyak}
\label{sec:dimension}
In this section it is shown that the typical class $[\Rd]\in\mathcal{M}_x$ admits a representative which satisfies the conditions in the cohomological quantitative Veech criterion (Proposition \ref{prop:CQVC}). The argument very closely follows that of Bufetov-Solomyak in \cite[\S 6]{BS:translation}, which they call the \emph{vector form of the Erd\H{o}s-Kahane method}, and it is a tool to bound the Hausdorff dimension of a set of bad values of $\Rd\in\mathcal{M}_x$. It should be noted that the case when $d=1$ is already done in \cite[\S 6]{BS:translation}, so what is relevant here is the case $d>1$. I am grateful to Boris Solomyak who showed me how to upgrade the method in \cite[\S 6]{BS:translation} to an effective version, which is crucially needed for the uniform bounds in Theorem \ref{thm:main2}.

In this section it is assumed that $\mu$ is a positively simple, postal, $\sigma$-invariant ergodic probability measure for which the return vector cocycle and the trace cocycle are integrable. Throughout $x$ will denote a $\mu$-generic and Oseledets-regular point (for both cocycles) and the assumption on $d\mu(x|x^-)$ on the main theorem will come up, so it is also assumed in this section. Let $r_\mu$ be the rank of $\Gamma_x$ for $\mu$-almost every $x$. Recall that by \S \ref{subsec:RV} there is a collection of subsets $\{U_i^{r_\mu}\}$ of $\Sigma_N$ with the property that for each $i$ there is a collection of vectors $v_1,\dots, v_{r_\mu}$ such that $\Gamma_x$ is generated by those vectors for any $x\in U_i^{r_\mu}$. By the hypotheses on $\mu$, without loss of generality it can be assumed that there is an $i$ such that $U_i^{r_\mu} = C([w^-.w^+])$, where $w = w^-w^+$ is a positively simple word and postal. As such, $\mu(U_i^{r_\mu})>0$, and set $U_\star = U_i^{r_\mu}$. In this section and the next, classes will be denoted by $\Rd$ and not $[\Rd]$ as in previous sections.

Let $k_n\rightarrow \infty$, $n\in\mathbb{N}$, be a maximal sequence of return times to $U_\star$ for $x$ under $\sigma^{-1}$. Since there is a local choice of generators $v_1,\dots, v_{r_\mu}$ for all $\Gamma_x$ with $x\in U_\star$, there is a natural identification of all fibers $\mathfrak{R}_x$ and their lattices $\alpha(\Gamma_x)\subset \mathfrak{R}_x$ for all $x\in U_\star$, and in particular for $\sigma^{k_n}(x)$. Under this identification, the vector space is denoted by $\mathfrak{R}_\star$ and the corresponding lattice is $\Gamma_\star\subset \mathfrak{R}_\star$. This also holds for the dual spaces and lattices $\Gamma_\star^*\subset \mathfrak{R}_\star^*$. Following the discussion around (\ref{eqn:Accdensity}), the main player in this section is not the cocycle over $\sigma^{-1}$ but the induced cocycle of the return map $\sigma^{-r_{U_\star}}:U_\star\rightarrow U_\star$, which is why a maximal sequence of return times to $U_\star$ has been identified.

Let $V_\star$ be the $ d\times r_\mu$ matrix associated with $U_\star$ as in (\ref{eqn:vectorCocycle}). That is, for any $x\in U_\star$ and $\tau\in \Lambda_x$ there is a unique $\alpha_\star(\tau)\in\Gamma_\star$ such that $\tau = V_\star \alpha_\star(\tau)$. For $\lambda\in\mathbb{R}^d$, let $K_x^n(\lambda)\in H^1(\Omega_x;\mathbb{Z})$ be the closest lattice point to $\Phi_x^{(k_n)*}\Rd(\lambda)$ and set
\begin{equation}
  \label{eqn:Kepsilon}
\Phi_x^{(k_n)*} \Rd( \lambda) = K_x^n(\lambda) + \varepsilon_x^n(\lambda),\;\;\;\; \|\varepsilon_x^n(\lambda)\| := \left\|\Phi_x^{(k_n)*}\Rd( \lambda) \right\|_{H^1(\Omega_x;\mathbb{R})/H^1(\Omega_x;\mathbb{Z})},
\end{equation}
where $\|\cdot\|_{V/G}$ is the shortest distance of an element $v$ in a vector space $V$ to a point in a lattice $G\subset V$. For any $B>1$, let
$$\mathcal{M}_x^B:= \{\Rd  \in\mathcal{M}_x:  B^{-1}\leq \|\Rd\| \leq B\}\;\;\;\;\mbox{ and }\;\;\;\; \mathcal{A}_B:= \{\lambda  \in\mathbb{R}^d:  B^{-1}\leq \|\lambda\| \leq B\}$$
and furthermore, given $\rho,\delta>0, B>1$ and $x\in\Sigma_N$, let
\begin{equation}
  \begin{split}
    E_N^x(\rho,\delta, B) &:= \left\{(\Rd,\lambda) \in \mathcal{M}_x^B\times \mathcal{A}_B: \;\mathcal{D}_{k_N}^x(w,\rho,\delta,\Rd,\lambda) \geq 1-\delta\right\} \\
    \mathcal{E}_N(\rho,\delta, B) &:= \left\{\Rd\in\mathcal{M}_x^B:\, \mbox{ there is }\lambda\in\mathcal{A}_B\mbox{ such that } (\Rd,\lambda) \in E^x_N(\rho,\delta, B)\right\} \\
    &= \pi_x(E_N^x(\rho,\delta, B)),
  \end{split}
\end{equation}
where $\pi_x$ is the projection onto the $\mathcal{M}_x$-coordinate, and set for $\vartheta>0$
  $$\mathfrak{B} = \mathfrak{B}(\rho,\delta,\vartheta):=  \bigcap_{N_0>0}\bigcup_{B >1}\bigcup_{N_0\geq +\vartheta \log B}\mathcal{E}_N(\rho,\delta, B).$$
  
  Denote by $K_n = K_x^n(\lambda)$ and $\varepsilon_n = \varepsilon_x^n(\lambda)$. Let $\Pi_x^+:H^1(\Omega_x;\mathbb{R})\rightarrow Z_x^+$ be the projection onto the unstable space of the fiber over $x$ (see sentence after (\ref{eqn:betaEquiv})). Then
  $$\Phi_x^{(k_n)*}\Pi_{x}^+\Rd(\lambda) = \Pi_{\sigma^{-k_n}(x)}^+\Phi_x^{(k_n)*}\Rd(\lambda)  = \Pi^+_{\sigma^{-k_n}(x)}K_n + \Pi^+_{\sigma^{-k_n}(x)}\varepsilon_n $$
  and thus
  \begin{equation}
    \label{eqn:EK1}
    \Pi_{x}^+\Rd(\lambda) = \Phi_x^{(k_n)*-1}\left(\Pi^+_{\sigma^{-k_n}(x)}K_n\right) + \Phi_x^{(k_n)*-1}\left( \Pi^+_{\sigma^{-k_n}(x)}\varepsilon_n \right)
  \end{equation}
 Since $x$ is Oseledets regular, for any $\epsilon\in(0,\eta_*)$ and all $n$ sufficiently large,
  $$\left\| \Phi_x^{(k_n)*-1} \cdot \Pi^+_{\sigma^{-k_n}(x)}\right\|\leq e^{-(\eta_*-\epsilon)n},$$
  where $\eta_*$ is the smallest positive Lyapunov exponent for the induced cocycle on $H^1(\Omega_x;\mathbb{R})$ by the induced return system to $U_\star$. Therefore, from (\ref{eqn:EK1}), since $\varepsilon_n<1$,
  \begin{equation}
    \label{eqn:EK2}
    \left\| \Pi_{x}^+\Rd(\lambda) - \Phi_x^{(k_n)*-1}\left(\Pi^+_{\sigma^{-k_n}(x)}K_n\right) \right\|< e^{-(\eta_*-\epsilon)n}
  \end{equation}
for all $n$ large enough.
  
Set
\begin{equation}
  \label{eqn:EK3}
  \begin{split}
    &W_n = W_n(x):= \log \left\|  \Phi_{\sigma^{-k_{n+1}+1}(x)}^{*} \cdots \Phi_{\sigma^{-k_n}(x)}^{*} \right\|,\;\;\;\;M_n:= (2+\exp(W_{n+1}))^{\beta_\mu} \\ &\hspace{2in}\mbox{ and }\;\; \rho_n:= \frac{1}{2(1+\exp(W_{n+1}))},
  \end{split}
\end{equation}
where $\beta_\mu = \mathrm{dim}\,H^1(\Omega_x;\mathbb{R})$, which is constant for $\mu$-almost every $x$.
Define
$$ \bar{E}_N^x(\delta,B) := \left\{ (\Rd,\lambda) \in \mathcal{M}_x^B\times \mathcal{A}_B : \mathrm{card}\{n\leq N :\mbox{max}\{\|\varepsilon_n\|,\|\varepsilon_{n+1}\|\}\geq \rho_n\}<\delta N \right\},$$
\begin{lemma}
  \label{lem:badSets}
  For $\mu$-almost every $x$, there is an $N_*(x)$ such that given $\delta>0$ small enough:
  $$E_N^x(\varrho,\delta/4,B)\subset \bar{E}_N^x(\delta,B)$$
  for all $N>N_*(x)$, where $\varrho$ depends on $\delta$.
\end{lemma}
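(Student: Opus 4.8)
The plan is to first strip away the evaluation map $\mathbb{E}$ and reduce the statement to a pointwise estimate on the single cohomology class $c:=\Rd(\lambda)\in H^1(\Omega_x;\mathbb{R})$. The point is that, by the equivariance \eqref{eqn:betaEquiv} and the identity $\Phi_x^{(n)*}\Rd(\lambda)=\Phi_x^{(n)*}\bigl(\Rd(\lambda)\bigr)$ recorded at the end of \S\ref{sec:cohomology}, the lattice points $K_x^n(\lambda)$ and the distances $\varepsilon_x^n(\lambda)$ of \eqref{eqn:Kepsilon}, as well as the density $\mathcal{D}_N^x(w,\varrho,\Rd,\lambda)$, all depend on the pair $(\Rd,\lambda)$ only through $c$. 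Consequently the condition defining $\bar{E}_N^x(\delta,B)$ depends only on $c=\mathbb{E}(\Rd,\lambda)$. Since every element of $E_N^x(\varrho,\delta/4,B)$ is by definition of the form $c=\Rd(\lambda)$ with $\Rd\in\mathcal{M}_x^B$ and $\lambda\in\mathcal{C}_B$, a decomposition witnessing membership in $\mathbb{E}\bigl(\bar{E}_N^x(\delta,B)\bigr)$ is available for free, and the lemma reduces to the implication that the density condition defining $E_N^x(\varrho,\delta/4,B)$ forces the cardinality condition defining $\bar{E}_N^x(\delta,B)$.

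Next I would isolate the two ergodic inputs. Since $\mu$ is ergodic and $\mu(C([w^-.w^+]))>0$, Birkhoff's theorem gives that for $\mu$-a.e.\ $x$ the return times $k_m$ to $C([w^-.w^+])$ under $\sigma^{-1}$ occur with frequency $\mu_w:=\mu(C([w^-.w^+]))$, so that the number of returns $k_m\le N$ is asymptotic to $\mu_w N$. Second, because $\log\|\Phi_x^*\|\in L^1_\mu$, the quantities $W_n=\log\|\Phi_{\sigma^{-n}(x)}^*\|$ form an integrable stationary sequence, so for any $\eta>0$ there is a threshold $T=T(\eta)$ with $\#\{n\le N:W_{n+1}\ge T\}<\eta N$ for all large $N$. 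I would then define $\varrho=\varrho(\delta)$ by $\varrho:=\tfrac{1}{2(1+e^{T})}$, with $\eta$ (hence $T$) chosen in terms of $\delta$; by \eqref{eqn:EK3} this guarantees $\rho_n>\varrho$ whenever $W_{n+1}<T$, i.e.\ on all but at most $\eta N$ indices the fluctuating cutoff $\rho_n$ dominates the fixed cutoff $\varrho$.

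With these in hand the combinatorics follows the vector form of the Erd\H{o}s--Kahane method in \cite[\S6]{BS:translation}. On any index $n$ with $W_{n+1}<T$ one has $\max\{\|\varepsilon_n\|,\|\varepsilon_{n+1}\|\}\ge\rho_n>\varrho$ only if $\|\varepsilon_n\|>\varrho$ or $\|\varepsilon_{n+1}\|>\varrho$. Restricting to the subsequence of return times—which is what feeds the subsequent covering and counting argument—the density condition bounds by $\delta/4$ the proportion of returns $k_m\le N$ at which $\|\varepsilon_{k_m}\|$ exceeds $\varrho$. The shift $m\mapsto\{m,m+1\}$ produced by the consecutive pair $\{\varepsilon_n,\varepsilon_{n+1}\}$ at most doubles this count, so the returns where determinacy fails number at most $2\cdot\tfrac{\delta}{4}\cdot\mu_w N$, to which the at most $\eta N$ indices with $W_{n+1}\ge T$ are adjoined. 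Choosing $\eta$ small relative to $\delta$ makes the total strictly less than $\delta N$, which is exactly the condition defining $\bar{E}_N^x(\delta,B)$; this is the provenance of the $\delta/4$ in the hypothesis against the $\delta$ in the conclusion.

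The hard part will be the mismatch of scales and index sets: the density condition controls $\|\varepsilon_{k_m}\|$ only against the \emph{fixed} cutoff $\varrho$ and only along \emph{return} times, whereas $\bar{E}_N^x(\delta,B)$ demands control against the \emph{fluctuating} cutoffs $\rho_n$ over \emph{all} indices $n\le N$. Reconciling these is exactly where the integrability of $\log\|\Phi_x^*\|$ (to confine the indices with anomalously small $\rho_n$ to a set of small density) and the positivity of the return frequency $\mu_w$ (to convert fractions of returns into fractions of $N$ with room to spare) are decisive; the factor-$2$ loss from the consecutive pair and the small-density loss from the $\{W_{n+1}\ge T\}$ indices are what force the gap between $\delta/4$ and $\delta$. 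I expect the genuinely delicate step to be verifying that the exceptional indices assemble into a set of cardinality $<\delta N$ uniformly over the admissible $\lambda\in\mathcal{C}_B$ and $\Rd\in\mathcal{M}_x^B$, which I would carry out by transcribing the bookkeeping of \cite[\S6]{BS:translation} into the present cohomological setting.
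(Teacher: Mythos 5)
Your setup does track the paper's proof in its essentials: the reduction to a statement about the single class $c=\Rd(\lambda)$ (legitimate, since $\varepsilon_n$ and the density depend on $(\Rd,\lambda)$ only through $c$) is exactly how the paper treats the evaluation map $\mathbb{E}$, and your choice $\varrho=\tfrac{1}{2(1+e^{T})}$, with $T$ a threshold beyond which the indices $\{n\le N: W_{n+1}\ge T\}$ have small density, is the same mechanism as the paper's $\varrho=(2+2e^{K})^{-1}$ with $K=2L\log(1/\delta)$. The differences are that the paper imports this small-density statement from Proposition \ref{prop:BSlargeDev} (taken from \cite{BS:genus2,BS:translation}) rather than from a bare Birkhoff/Markov argument, and that it argues contrapositively: if $c\notin\mathbb{E}(\bar{E}_N^x(\delta,B))$, then after discarding at most $\delta N/2$ large-$W$ indices one finds at least $\delta N/4$ indices with $\|\varepsilon_n\|>\varrho$, contradicting the density condition. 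Those differences are cosmetic.

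The genuine problem is the counting step that you yourself flag as ``the hard part'' and then defer: it is not bookkeeping that can be transcribed from \cite[\S 6]{BS:translation}; it is the entire content of the lemma, and your budget does not add up. The set whose cardinality must be shown $<\delta N$ is $\{n\le N:\max\{\|\varepsilon_n\|,\|\varepsilon_{n+1}\|\}\ge\rho_n\}$, ranging over \emph{all} indices, whereas the density hypothesis constrains $\|\varepsilon_j\|$ only at \emph{return} times $j$ and only against the fixed cutoff $\varrho$. Your three-part accounting (twice the bad returns, plus the indices with $W_{n+1}\ge T$) omits every non-return index $n$ with $W_{n+1}<T$ at which $\|\varepsilon_n\|$ or $\|\varepsilon_{n+1}\|$ happens to exceed $\rho_n$: the hypothesis says nothing whatsoever about $\varepsilon_n$ at such indices, and they can a priori have density up to $1-\mu(C([w^-.w^+]))$ in $\{1,\dots,N\}$, destroying the bound $<\delta N$ no matter how small $\eta$ is chosen. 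Even your count of bad returns is unjustified: a return $n$ lands in the exceptional set whenever $\|\varepsilon_{n+1}\|\ge\rho_n$ with $n+1$ not a return, and the density condition is again blind to $\varepsilon_{n+1}$. (For what it is worth, the paper's own proof is terse at this same interface: its last line passes from ``at least $\delta N/4$ indices $n\le N$ with $\|\varepsilon_n\|>\varrho$'' to failure of the return-time density condition, which strictly requires those indices to be returns or the density to be measured against $N$. So the mismatch of index sets you identified is a real issue and not an invention; but identifying it is not resolving it, and as written your proposal does not prove the lemma.)
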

\begin{proof}
  The following result, which combines Proposition 6.1 in \cite{BS:genus2} and Corollary 4.3 in \cite{BS:translation}, will be used. It is here that the hypothesis involving $\mu(x|x^-)$ of the main theorem is needed.
  \begin{proposition}
    \label{prop:BSlargeDev}
  There exists a positive constant $L$ such that for $\mu$-almost every $x$, there is an $N_*$ (depending only on $x$) such that for any $\delta>0$ small enough:
  $$\max\left\{ \sum_{n\in \Psi}W_{n+1}: \Psi\subset \{1,\dots, N\},\, |\Psi|\leq \delta N \right\}\leq L \log(1/\delta)\cdot \delta N.$$
for all $N>N_*$. Moreover, for any $C>0$
  $$\mbox{card}\{n\leq N:W_{n+1}>CL\cdot\log(1/\delta)\}\leq \frac{\delta N}{C}.$$
  \end{proposition}

  Let $K = 2L\log (1/ \delta)$, where $L$ is obtained from Proposition \ref{prop:BSlargeDev}, define $\varrho = (2+2e^K)^{-1}$, and suppose $(\Rd,\lambda)\not\in  \bar{E}_N^x(\delta, B)$. Then there is a subset $\Psi$ of $\{1,\dots, N\}$ of cardinality greater than or equal to $\delta N$ such that $\max\{\|\varepsilon_n\|,\|\varepsilon_{n+1}\|\}\geq \rho_n$
  for all $n\in\Psi$. Since $\rho_n<\varrho$ is equivalent to $W_{n+1}>K$, by the second bound in Proposition \ref{prop:BSlargeDev}, there are no more than $\delta N /2$ integers $n\leq N$ for which $W_{n+1}>K$ for all $N$ large enough. Thus
  $$\mathrm{card}\{n\leq N: \max\{   \|\varepsilon_n\|,\|\varepsilon_{n+1}\|\}\geq \varrho \}\geq \frac{\delta N}{2},$$
  so there are no less than $\delta N /4$ integers $n\leq N$ with $\|\varepsilon_n\|>\varrho$, and so it follows that $(\Rd,\lambda) \not\in E_N^x(\varrho,\delta/4,B)$.
\end{proof}
\begin{proposition}
  \label{prop:dimension}
  For any $\epsilon_*>0$ there exists a $\delta_0>0$ such that for all $\delta\in (0,\delta_0)$, there is a $\varrho>0$ such that for $\mu$-almost every $x$ and all $\vartheta$ large enough:
  $$\mathrm{dim}(\mathfrak{B}_x(\varrho, \delta, \vartheta)) \leq \mathrm{dim}\,\mathcal{M}_x - \mathrm{dim}\, Z^+_x+d+\epsilon_*.$$
\end{proposition}
\begin{proof}
  In light of Lemma \ref{lem:badSets} it suffices to show for all $\delta>0$ small enough, $\vartheta>0$ large enough, setting $\beta_{\epsilon_*} := \mathrm{dim}\,\mathcal{M}_x-\mathrm{dim}\,Z^+_x+d+\epsilon_* = d\cdot\mathrm{dim}\,Z^-_x + (d-1)\mathrm{dim}\,Z^+_x +d+\epsilon_*$, that
  $$\mathrm{dim}(\mathfrak{B}'_x(\delta, \vartheta)) \leq \beta_{\epsilon_*},\,\,\;\;\;\;\; \mbox{ where }\,\,\;\;\;\;\; \mathfrak{B}'_x(\delta,\vartheta):= \bigcap_{N_0>0}\bigcup_{B>1}\bigcup_{N \geq N_0+\vartheta\log B}\bar{\mathcal{E}}_N^x(\delta,B)$$
  and
  \begin{equation}
    \label{eqn:projN}
    \begin{split}
      \bar{\mathcal{E}}_N^x(\delta,B):&= \left\{\Rd\in\mathcal{M}_x^B: \mbox{ there is a $\lambda\in\mathcal{A}_B$ such that $(\Rd,\lambda)\in\bar{E}_N^x(\delta,B)$} \right\}\\
      &=\pi_x(\bar{E}_N^x(\delta,B)).
    \end{split}
  \end{equation}
  In order to estimate the dimension of $\mathfrak{B}'_x$, it suffices to construct a cover of $\bar{E}^x_N(\delta,B)$ since the projection map is Lipschitz with Lipschitz constant proportional to $B$ \cite[\S 2.2]{falconer:book}. It is important to remark here that in order to be able to bound dimension using covering arguments, it is necessary for the $\lambda$ coordinate in $\bar{E}_N^x(\delta,B)$ be bounded below in norm (in this case by $B^{-1}$); otherwise, we would get that $\bar{\mathcal{E}}_N^x(\delta,B) = \mathcal{M}_x^B$ which would not be very useful.
    By definition,
\begin{equation}
  \label{eqn:localProduct}
  \bar{E}_N^x(\delta,B)\subset  \Pi^-_x(\mathcal{M}_x^B) \times \Pi^+_x(\pi_x(\bar{E}_N^x(\delta,B)))\times \mathcal{A}_B.
\end{equation}
In slight abuse of notation, the set $\Pi^\pm_x(\bar{E}_N^x(\delta,B))$ will be short for $\Pi^\pm_x(\pi_x(E_N^x(\delta,B)))\times \mathcal{A}_B$. To produce the desired dimension estimates, one needs to find a cover of the set $\Pi^+_x(\bar{E}_N^x(\delta,B))$.

To this end, for any $0\neq c\in Z^+_x \subset H^1(\Omega_x;\mathbb{R})$, $B>1, \epsilon>0$ and $A\subset \Pi^+_x \mathcal{M}_x\times\mathcal{A}_B$, define
\begin{equation}
  \label{eqn:locus}
  \begin{split}
    \mathbb{E}^{-1}_B(c) &:= \left\{ (\Rd,\lambda)\in \Pi^+_x\mathcal{M}_x^B\times \mathcal{A}_B: \Rd(\lambda) = c\right\} \hspace{.2in}\mbox{ and } \\ \partial_\epsilon^B A &:= \left\{(\Rd,\lambda)\in \Pi^+_x\mathcal{M}_x^B\times\mathcal{A}_B :\mathrm{dist}\, ((\Rd,\lambda), A)\leq \epsilon \right\}
  \end{split}
\end{equation}
and note that $\mathrm{dim}\, \mathbb{E}^{-1}_B(c)= (d-1)d_\mu^+ +d $, where $d^+_\mu = \mathrm{dim}\, Z^+_x$. Moreover, by compactness, there exists a $F^B_x>0$ such that for any $\eta>0$ 
$$\Pi^+_xB_\eta(c)\subset \mathbb{E}(\partial_{F_x^B\eta}^B(\mathbb{E}^{-1}_B(c))),$$
where $B_\eta(c)\subset H^1(\Omega_x;\mathbb{R})$ is the ball of radius $\eta$ around $c$ and $\mathbb{E}$ is the evaluation map $(\Rd,\lambda)\mapsto \Rd(\lambda)\in H^1(\Omega_x;\mathbb{R})$.

Suppose $(\Rd,\lambda) \in \bar{E}_N^x(\delta,B)$, and let $K_n$ and $\varepsilon_n$ be the corresponding sequences as defined in (\ref{eqn:Kepsilon}). From (\ref{eqn:EK2}) it follows that $\Pi_x^+ \Rd(\lambda) $ is in the ball of radius $e^{-(\eta_*-\epsilon)N}$ centered at $P_N(\Rd,\lambda):= \Phi_x^{(k_N)*-1}\left(\Pi^+_{\sigma^{-k_N}(x)}K_N\right)$, and so
\begin{equation}
  \label{eqn:coverCovers}
  \mathrm{dist}\left(\left(\Pi^+_x\Rd,\lambda\right),\mathbb{E}^{-1}_B\left(\{P_N(\Rd,\lambda)\}\right)\right)\leq F_x^B e^{-(\eta_*-\epsilon)N}.
\end{equation}
Therefore $(\Rd,\lambda)$ is in one of at most $K_{B,x} \left(F_x^B e^{-(\eta_*-\epsilon)N}\right)^{-[(d-1)d_\mu^++d]}$ balls of radius $F_x^B e^{-(\eta_*-\epsilon)N}$ which cover $\mathbb{E}^{-1}_B(\{P_N(\Rd,\lambda)\})\subset \Pi^+_x\mathcal{M}^B_x\times\mathcal{A}_B$. The goal is now to consider all possible sequences of $\{K_n\}$ that can appear for any such $(\Rd,\lambda)\in \bar{E}_N^x(\delta,B)$, which will give all the possible sets of the form $\partial_{\epsilon_N}\mathbb{E}_B^{-1}(\{P_N(\Rd,\lambda)\})$ which will cover $\Pi^+_x(\bar{E}_N^x(\delta,B))$, for $\epsilon_N = e^{-(\eta_*-\epsilon)N}$.

Let $\Psi_N$ be the set of subsequences of $n\in \{1,\dots, N\}$ for which $\max\{\|\varepsilon_n\|,\|\varepsilon_{n+1}\| \}\geq \rho_n$, and note that each such subsequence has at most $\delta N$ elements in it and moreover that there are at most $\displaystyle \sum_{i<\delta N}\left(\begin{array}{c} N \\ i\end{array} \right)$ such sets. By \cite[Lemma 6.1]{BS:translation}, for a fixed $\Psi_N$ the number of subsequences $\{K_n\}$ is at most $\displaystyle \mathbb{M}_N:= \prod_{n\in\Psi_N}M_n$ times the number of possible starting values for $K_0$, which is bounded independently of $N$ by compactness since $(\Rd,\lambda)\in\mathcal{M}_x^B\times\mathcal{A}_B$ (this number is proportional to $B^{d(1+ \beta_\mu)}$). By (\ref{eqn:EK3}) and Proposition \ref{prop:BSlargeDev}, since $M_n\leq 3^{\beta_\mu}e^{\beta_\mu W_{n+1}}$, for all $N>N_*(x)$,
  $$\mathbb{M}_N\leq K  3^{\beta_\mu \delta N}\exp\left( \beta_\mu \sum_{n\in\Psi_N}W_{n+1}\right)\leq K 3^{\beta_\mu\delta N}\exp(\beta_\mu L \log(1/\delta)(\delta N)).$$
  It follows from (\ref{eqn:EK2}) that the number of balls of radius $F_x^B e^{-(\eta_*-\epsilon)N}$ needed to cover $\Pi^+_x(\pi_x\bar{E}_N^x(\delta,B))$ is no more than
  \begin{equation*}
    \begin{split}
      K' B^{d(1+\beta_\mu)}  &\sum_{i<\delta N}\left(\begin{array}{c} N \\ i\end{array} \right)3^{\beta_\mu \delta N} \exp(\beta_\mu L \log(1/\delta)(\delta N)) \\
        &\hspace{.5in}\leq K' B^{d(1+\beta_\mu)}\mathrm{exp}\, \left(\left( H(\delta)\log 2 + \delta\beta_\mu\log 3 + \beta_\mu L \delta \log(1/\delta)\right)N\right),
    \end{split}
  \end{equation*}
 where the right-hand side follows from the bound of the sum of binomial coefficients up to $\delta N$ being bounded by $2^{H(\delta)N}$, where $H(\delta)$ is the binary entropy function \cite[\S 11.1]{CT:book}. Therefore, since $H(\delta)$ and $\delta\log(1/\delta)$ go to zero as $\delta\rightarrow 0^+$, there exists a $\delta_0$ such that $\delta\in(0,\delta_0)$ implies that
    \begin{equation}
      \label{eqn:EK4}
      H(\delta)\log 2 + \delta\beta_\mu\log 3 + \beta_\mu L \delta \log(1/\delta)\leq \frac{\epsilon_*}{2}(\eta_*-\epsilon).
    \end{equation}
    The set $ \Pi^-_x(\mathcal{M}_x^B)$ can be covered with roughly $\exp( (\eta_*-\epsilon)(\beta_{\epsilon_*}-d-\epsilon_* - (d-1)d^+_\mu)N)$ balls of radius $e^{-(\eta_*-\epsilon)N}$, whereas by (\ref{eqn:locus})-(\ref{eqn:EK4}) the set $\Pi^+_x(E_N^x(\delta,B))$ can be covered by roughly $ \exp( (\eta_*-\epsilon)N\beta'_{\epsilon_*})$ balls of radius $e^{-(\eta_*-\epsilon)N}$, where $\beta'_{\epsilon_*}=(d-1)d^+_\mu+d+\epsilon_*/2$. So there exists a $\Delta> d(1+\beta_\mu)$ depending only on the dimensions such that by (\ref{eqn:localProduct}), the $\beta_{\epsilon_*}$-dimensional Hausdorff measure of $\mathfrak{B}'_x$
    \begin{equation*}
      \begin{split}
        \mathcal{H}^{\beta_{\epsilon_*}}\left( \mathfrak{B}_x'(\delta,\vartheta)\right) &\leq K''\cdot \limsup_{N_0\rightarrow \infty} \sum_{B=2}^\infty B^{\Delta}\sum_{N\geq N_0+\vartheta \log B} e^{(\eta_*-\epsilon)\beta'_{\epsilon_*}N}e^{-(\eta_*-\epsilon)\beta_{\epsilon_*}N} \\
        &= K''\cdot \limsup_{N_0\rightarrow \infty} \sum_{B=2}^\infty B^{\Delta}\sum_{N\geq N_0+\vartheta \log B} e^{-(\eta_*-\epsilon)N\epsilon_*/2 } \\
        &\leq K''\cdot \limsup_{N_0\rightarrow \infty} \sum_{B=2}^\infty B^{\Delta}e^{-(\eta_*-\epsilon)(N_0+\vartheta \log B)\epsilon_*/2 } \\
        &= K''\cdot \limsup_{N_0\rightarrow \infty} \sum_{B=2}^\infty B^{\Delta-\vartheta(\eta_*-\epsilon)\epsilon_*/2}e^{-(\eta_*-\epsilon)N_0\epsilon_*/2 } = 0
      \end{split}
    \end{equation*}
    as long as $\vartheta>\frac{2\Delta}{(\eta_*-\epsilon)\epsilon_*}$, and so it follows that $\dim\left( \mathfrak{B}_x'(\delta,\vartheta)\right)\leq \beta_{\epsilon_*}$.
\end{proof}
\section{Some proofs}
\label{sec:proofs}
\begin{proof}[Proof of Theorem \ref{thm:main}]
  Suppose for a Oseledets regular $x$ the deformation parameter $\Rd\not\in \mathfrak{B}_x(\rho,\delta,\vartheta)$ for some $\delta,\rho,\vartheta$. This means that there is a $N_0$ such that for all $B\geq 2$ and $N\geq N_0+\vartheta\log B$, $\Rd\not\in \mathcal{E}_N(\rho,\delta, B)$, which in turn implies that for all $\lambda\in \mathcal{A}_B$, $(\Rd,\lambda)\not\in E_N^x(\rho,\delta,B)$, for all $N> N_0+\vartheta \log B$, i.e. $\mathcal{D}_{k_N}^x(w,\rho, \Rd, \lambda)<1-\delta$, where $k_n\rightarrow \infty$ is a maximal sequence of return times of $\sigma^{-1}$. For any $\varepsilon$ small enough there is a $N_\varepsilon(x)>0$ such that for all $n>N_\varepsilon(x)$, $(\mu(C_w)-\varepsilon)k_n<n<(\mu(C_w)+\varepsilon)k_n$ and so there is a $N'_\varepsilon(x)>0$ (depending on $N_\varepsilon$) such that for all $k>\max\{N'_\varepsilon,\frac{N_0+\vartheta\log B}{\mu(C_w)-\varepsilon}\}$, for all $B\geq 2$ and for all $\lambda\in \mathcal{A}_B$, $\mathcal{D}_{k}^x(w,\rho, \Rd, \lambda)<1-\delta$.

  If $\mathrm{dim}\, Z^+_x>d$ then the dimension of $\mathfrak{B}_x(\rho,\vartheta,B)$ is strictly less than that of $\mathcal{M}_x$, and so for almost every $\Rd\in \mathcal{M}_x$, for any $B\geq 2$, the density condition $\mathcal{D}_N^x(w,\rho, \Rd, \lambda)<1-\delta$ holds for all $\lambda\in\mathcal{A}_B$, for $N\geq N_0'+\vartheta'\log B$. The result then follows from the cohomological quantitative Veech criterion, Proposition \ref{prop:CQVC}.
\end{proof}
\begin{proof}[Proof of Corollary \ref{cor:main}]
  By considering a high power of a single substitution, it can be assumed by primitivity that each tile is subdivided into enough tiles so that there are enough translation vectors between tiles of the same type which generate the group of return vectors, and so postal words are trivial (it can be assumed the corresponding $x$ is a fixed point of $\sigma$). Thus if $x\in\Sigma_N$ is fixed for the shift map and represents this substitution, the measure $\delta_x$ is a positively simple, postal ergodic probability measure. Thus, if the unstable space of $H^1(\Omega_x;\mathbb{R})$ with respect to the induced action by the substitution has dimension greater than $d$, Theorem \ref{thm:main} applies.
  \end{proof}

\begin{proof}[Proof of Lemma \ref{lem:dimension}]
  By definition:
  \begin{equation}
    \label{eqn:L2bound}
    \begin{split}
      \left\| \mathcal{S}_R(f,\lambda) \right\|^2 & = \left\langle\int_{C_R(0)} e^{-2\pi \imath \langle \lambda,\tau \rangle}f\circ\varphi_\tau\, d\tau, \int_{C_R(0)} e^{-2\pi \imath \langle \lambda,s \rangle}f\circ\varphi_s\, ds \right\rangle \\
      &= \int_\Omega  \int_{C_R(0)}\int_{C_R(0)} e^{-2\pi \imath \langle \lambda,\tau-s \rangle}f\circ\varphi_\tau(\mathcal{T})\, d\tau  f\circ\varphi_s(\mathcal{T})\, ds\,    d\mu(\mathcal{T}) \\
      &=  \int_{C_R(0)}  \int_{C_R(0)} e^{-2\pi \imath \langle \lambda,\tau-s \rangle}\langle f\circ\varphi_{\tau-s},f\rangle\, d\tau \, ds \\
      &=  \int_{C_R(0)}  \int_{C_R(0)} e^{-2\pi \imath \langle \lambda,\tau-s \rangle}\left(\int_{\mathbb{R}^d}e^{2\pi \imath \langle\tau-s,z\rangle}\, d\mu_f(z) \right)\, d\tau\, ds \\
      &=  \int_{\mathbb{R}^d} \int_{C_R(0)}  \int_{C_R(0)} e^{2\pi \imath \langle z-\lambda,s-\tau \rangle} \, d\tau\, ds\,  d\mu_f(z)\\
      &= \int_{\mathbb{R}^d}\prod_{i=1}^d\frac{\sin^2(2\pi (z_i-\lambda_i)R)}{\pi^2(z_i-\lambda_i)^2} \,  d\mu_f(z).
    \end{split}
  \end{equation}
  There exists a $C_2$ such that for $r = \frac{1}{2R}$,
  $$C_2R^2\leq \frac{\sin^2(2\pi w R)}{\pi^2w^2}$$
  for $w\in[-r,r]$. Since $B_r(\lambda)\subset C_r(\lambda)$, by (\ref{eqn:L2bound}), the hypothesis implies that
  \begin{equation}
    \label{eqn:HofBnd}
    \begin{split}
      \mu_f(B_r(\lambda)) &\leq \mu_f(C_r(\lambda)) = \int_{C_r(\lambda)}\, d\mu_f(z) \leq\frac{1}{(C_2R)^{d}} \int_{C_r(\lambda)} \prod_{i=1}^d\frac{\sin^2(2\pi (z_i-\lambda_i)R)}{\pi^2(z_i-\lambda_i)^2} \,  d\mu_f(z)\\
      &\leq \frac{1}{(C_2R^2)^{d}} \int_{\mathbb{R}^{d}}\prod_{i=1}^d\frac{\sin^2(2\pi (z_i-\lambda_i)R)}{\pi^2(z_i-\lambda_i)^2} \, d\mu_f(z) = \frac{1}{(C_2R^2)^{d}} \|\mathcal{S}_R(f,\lambda)\|^2 \\
      & \leq \frac{C_1^2}{C_2^dR^{2d}} R^{2(d-\alpha)} \leq C r^{2\alpha}
    \end{split}
  \end{equation}
  for any $r<(2R_0)^{-1}$.
\end{proof}

\section{Uniform rates of weak mixing}
\label{sec:uniform}
The first step to prove uniform rates of weak mixing is to prove effective bounds of the correlation (\ref{eqn:avgCorr}) in Theorem \ref{thm:main2}. This will first be done before handling the uniform rates of weak mixing.

Let $\Omega$ be a tiling space corresponding to repetitive aperiodic tilings of finite local complexity with a uniquely ergodic $\mathbb{R}^d$ action. For $f\in L^2$, H\"older's inequality gives the bound
\begin{equation}
  \label{eqn:Holder}
  \left| \int_{C_R(0)}e^{2\pi \imath \langle\lambda,t\rangle}\langle f\circ \varphi_t, f\rangle \, dt \right|\leq \|f\|_{L^2}\left\|\mathcal{S}_R(f,-\lambda)\right\|_{L^2}.
\end{equation}
By the polarization identity, in order to bound correlations of the form $\langle f\circ \varphi_t,g\rangle$, it suffices to bound self-correlations of the form $\langle f\circ \varphi_t,f\rangle$. For such correllations, using that $\langle f\circ \varphi_t,f\rangle = \hat{\mu}_f(t)$ (Bochner's theorem),
\begin{equation}
  \label{eqn:correllation}
  \begin{split}
    \int_{C_R(0)}|\langle f\circ \varphi_t,f\rangle|^2\, dt &= \int_{\mathbb{R}^d}\mathbbm{1}_{C_R(0)} \langle f\circ \varphi_t,f\rangle \overline{\langle f\circ \varphi_t,f\rangle}\, dt \\
    &= \int_{\mathbb{R}^d}\int_{C_R(0)}e^{2\pi \imath \langle\lambda,t\rangle}\langle f\circ \varphi_t, f\rangle \, dt\, d\bar{\mu}_f(\lambda).
  \end{split}
\end{equation}
It follows that in order to obtain a rate of growth of the correllations (\ref{eqn:correllation}) then by (\ref{eqn:Holder}) one has to bound the twisted integrals of $f$ for all spectral parameters $\lambda\in\mathbb{R}^d$. For some $\delta\in(0,1)$ this problem will be subdivided in three parts:
\begin{enumerate}
\item for $\delta\leq \|\lambda\| \leq \delta^{-1}$, the control of the twisted integrals is obtained from the results of Theorem \ref{thm:main},
\item for $\|\lambda\| >\delta^{-1}$, the twisted integrals will be bounded using integration by parts, assuming that the functions are sufficiently smooth in the leaf direction,
\item for $\|\lambda\|\leq \delta$, the twisted integrals will be bounded using the bounds on the growth of ergodic integrals of $f$, assuming that such behavior is known.
\end{enumerate}
To this end, let
\begin{equation*}
    \mathcal{A}_\delta= \{\lambda \in \mathbb{R}^d: \delta\leq  \|\lambda\|\leq \delta^{-1} \},\hspace{1in}  O_{\delta} = \{\lambda\in\mathbb{R}^d: \|\lambda\|\geq \delta^{-1} \},
\end{equation*}
so by (\ref{eqn:correllation}) and (\ref{eqn:Holder}) it follows that
\begin{equation}
  \label{eqn:breakdown}
  \int_{C_R(0)}|\langle f\circ \varphi_t,f\rangle|^2\, dt \leq \|f\|_{L^2} \sum_{S\in \{B_\delta(0),\mathcal{A}_\delta, O_\delta\} }\int_{S} \left\|\mathcal{S}_R(f,-\lambda)\right\|_{L^2}\, d\bar{\mu}_f(\lambda).
  \end{equation}

Now bounds on the three components of (\ref{eqn:breakdown}) will be obtained; let $f$ be of zero average. The main result of \cite{T:TTT} implies that for any $\varepsilon>0$ there is a $C_\varepsilon>0$ such that
$$\|\mathcal{S}_R(f,0)\| \leq C_\varepsilon\max\left\{R^{\frac{\lambda_2}{\lambda_1}+\varepsilon},R^{(d-1)+\varepsilon}\right\} $$
for all $R>1$, where $\lambda_1>\lambda_2$ are the top two Lyapunov exponents of the trace cocycle (the spectral gap follows\footnote{To see how Horan's result applies, note that the assumptions here about the positively simple measure imply that there is a $k>0$ such that for a positive measure set $Y$ of points in $\Sigma^+_N$, the cocycle at time $k$ has strictly positive entries (the matrices $Q^\pm$ in Definition \ref{def:posSimple}), and thus maps the positive cone into itself, verifying the hypotheses of \cite[Corollary 2.16]{horan:lyapunov}.} from \cite[Corollary 2.16]{horan:lyapunov}). Thus, using (\ref{eqn:HofBnd}) with this estimate it follows that
\begin{equation*}
  \bar{\mu}_f(B_\delta(0))\leq \frac{\delta^{2d}}{C_2} \|\mathcal{S}_{\delta^{-1}}(f,0)\|^2 \leq C_{3,\varepsilon} \delta^{2d} \max\left\{\delta^{-\frac{\lambda_2}{\lambda_1}-\varepsilon},\delta^{1-d-\varepsilon}\right\}
\end{equation*}
for any $\delta\in(0,\frac{1}{2})$. So it follows that for $R>2$:
\begin{equation}
  \label{eqn:originBnd}
  \begin{split}
    \int_{B_\delta(0)}\|\mathcal{S}_R(f,-\lambda)\, \|_{L^2}\, d\bar{\mu}_f(\lambda) & \leq \|f\|_\infty (2R)^d \bar{\mu}_f(B_\delta(0)) \\
    &\leq C_{4,\varepsilon,f}R^d\max\left\{\delta^{2d-\frac{\lambda_2}{\lambda_1}-\varepsilon},\delta^{1+d-\varepsilon}\right\}.
  \end{split}
\end{equation}

The bound for $\mathcal{A}_\delta$ is given by the results of Theorem \ref{thm:main}. More precisely, we have that there exists a $\vartheta>1$ such that for almost every $\Rd\in \mathcal{M}_x$, $\delta\in(0,\frac{1}{2})$,  $\lambda\in \mathcal{A}_\delta$, $f:\Omega_x^{\Rd}\rightarrow \mathbb{R}$ of zero average and $\mathcal{T}\in  \Omega_x^{\Rd}$, $\left|\mathcal{S}_R^\mathcal{T}(f,\lambda)\right|\leq C_{f,x} R^{d-\alpha_\mu}$ for all $R\geq \delta^{-\vartheta}$ (without loss of generality here, by increasing $\vartheta$, it can be assumed that $R_0(x)=1$). This implies that if $\varepsilon\in(0,\alpha_\mu)$ then
\begin{equation}
  \label{eqn:goodTwist}
    \left|\mathcal{S}_R^\mathcal{T}(f,\lambda)\right|\leq C_{f,x}R^{d-\alpha_\mu}\leq C_{f,x} \delta^{\varepsilon \vartheta} R^{d-\alpha_\mu+\varepsilon}
\end{equation}
whenever $R\geq \delta^{-\vartheta}$.

Finally, the case $\lambda\in O_\delta$ will be considered. Let $f:\Omega_x^{\Rd}\rightarrow \mathbb{R}$ be such that $X_\ell f$ is Lipschitz for any index $\ell$. Since $\lambda\in O_\delta$ there is an index $\ell$ such that $|\lambda_\ell|\geq \delta^{-1}d^{-1/2}$. Since $\frac{1}{c}\partial_xe^{cx} = e^{cx}$ and $dt = dt_1\wedge\cdots \wedge dt_d$, the twisted integral can be written as
\begin{equation}
  \label{eqn:IBP}
  \begin{split}
    \mathcal{S}_R^{\mathcal{T}}(f,-\lambda) &= \int_{-R}^R\cdots \int_{-R}^R e^{2\pi \imath \langle\lambda,t\rangle}f\circ\varphi_{t}(\mathcal{T})\, dt \\
    &= \frac{1}{2\pi \imath \lambda_\ell}\int_{-R}^R\cdots \int_{-R}^R\left( \frac{\partial}{\partial t_\ell} e^{2\pi \imath \langle \lambda, t\rangle }\right)f\circ\varphi_{(t_1,\dots, t_d)}(\mathcal{T})\, dt_\ell\wedge \star dt_\ell ,
  \end{split}
\end{equation}
where $\star$ is the Hodge-$\star$ operator taking $1$-forms to $d-1$-forms. Doing integration by parts, it follows that
\begin{equation}
  \label{eqn:1iteration}
  \begin{split}
    \mathcal{S}_R^{\mathcal{T}}(f,-\lambda) & = \frac{1}{2\pi \imath \lambda_\ell}\left[ \int_{\partial_\ell^- C_R(0)} e^{2\pi \imath \langle \lambda, t\rangle}f\circ \varphi_{t}(\mathcal{T})\, \star dt_\ell - \int_{\partial_\ell^+ C_R(0)} e^{2\pi \imath \langle \lambda, t\rangle}f\circ \varphi_{t}(\mathcal{T}) \star dt_\ell \right]\\
  &  \hspace{1in}+ \frac{1}{2\pi \imath \lambda_\ell}\mathcal{S}_R(X_\ell f,-\lambda),
  \end{split}
\end{equation}
where $\partial^\pm_\ell C_R(0)$ are the two $d-1$ dimensional faces of $C_R(0)$ given when $t_\ell = \pm R$. Therefore the twisted integral can be bounded by
\begin{equation*}
  \begin{split}
    \|\mathcal{S}_R(f,-\lambda)\|_{L^2} &\leq \frac{1}{2\pi |\lambda_\ell|}\left( 2(2R)^{d-1}\|f\|_{L^2} + \|\mathcal{S}_R(X_\ell f,-\lambda)\|_{L^2}\right) \\
    &\leq \frac{\delta}{2\pi \sqrt{d}}\left( 2(2R)^{d-1}\|f\|_{L^2} + \|\mathcal{S}_R(X_\ell f,-\lambda)\|_{L^2}\right).
  \end{split}
\end{equation*}
Since $X_\ell f$ is Lipshitz, its twisted integral is bounded for $R\geq \delta^{-\vartheta}$ as in (\ref{eqn:goodTwist}), and so of $\lambda\in O_\delta$,
\begin{equation}
  \label{eqn:Opart}
    \|\mathcal{S}_R(f,-\lambda)\|_{L^2} \leq \frac{\delta}{2\pi \sqrt{d}}\left( 2(2R)^{d-1}\|f\|_{L^2} + C_{Xf,x}\|\lambda\|^{-\varepsilon \vartheta}R^{d-\alpha_\mu+\varepsilon}\right).
\end{equation}

Now everything needs to come together in (\ref{eqn:breakdown}). Using (\ref{eqn:originBnd}), (\ref{eqn:goodTwist}) and (\ref{eqn:Opart}) it follows that
\begin{equation*}
  \begin{split}
    &\|f\|_{L^2}^{-1}\int_{C_R(0)}|\langle f\circ \varphi_t,f\rangle|^2\, dt \leq  C_{4,\varepsilon,f}R^d\max\left\{\delta^{2d-\frac{\lambda_2}{\lambda_1}-\varepsilon},\delta^{1+d-\varepsilon}\right\} +  C_{f,x} \delta^{\varepsilon \vartheta} R^{d-\alpha_\mu+\varepsilon} \\
    &\hspace{2.4in}+\frac{\delta}{2\pi \sqrt{d}}\left( 2(2R)^{d-1}\|f\|_{L^2} + C_{Xf,x}R^{d-\alpha_\mu+\varepsilon}\right)
  \end{split}
\end{equation*}
for $R\geq \delta^{-\vartheta}$. Picking $\delta = R^{-\alpha}$ for $\alpha\in (0,\vartheta^{-1})$, then this is satisfied for any $R>1$. Thus there is a constant $C_{f,Xf,x,\varepsilon}$ such that
\begin{equation}
  \label{eqn:finalCorr}
  \begin{split}
    &\int_{C_R(0)}|\langle f\circ \varphi_t,f\rangle|^2\, dt \\
    &\hspace{.5in}\leq  C_{f,Xf,x,\varepsilon}\max\left\{R^{d-\alpha\left(2d-\frac{\lambda_2}{\lambda_1}-\varepsilon\right)},R^{d-\alpha(1+d-\varepsilon)},R^{d-\alpha_\mu+\varepsilon(1-\alpha\vartheta)},R^{d-1-\alpha},R^{d-\alpha_\mu+\varepsilon-\alpha}\right\} 
  \end{split}
\end{equation}
for all $R>1$.
\begin{proof}[Proof of (\ref{eqn:avgCorr}) in Theorem \ref{thm:main2}]
Let $S_1,\dots, S_N$ be a collection of $N$ uniformly expanding compatible substitution rules on the $M$ prototiles $t_1,\dots, t_M$ of dimension $d$ and $\mu$ a $\sigma$-invariant positively simple, postal ergodic probability measure on $\Sigma_N$ such that $\log \|\Phi_x^*\|\in L^1_\mu$, where $\Phi^*_x$ is the map on the first cohomology of $\Omega_x$ induced by the shift, and assume that $\mathrm{dim}(Z^+_x)>d$. Pick an Oseledets regular $x$ and the $\theta$ afforded by Theorem \ref{thm:main} and $[\Rd]\in \mathcal{M}_x$ with good representative $\Rd$ such that the conclusions of Theorem \ref{thm:main} hold. Let $f,g: \Omega_x^{\Rd}\rightarrow \mathbb{R}$ be such that $X_\ell f, X_\ell g$ are Lipschitz for any index $\ell$. The bound (\ref{eqn:finalCorr}) above implies the existence of an $\alpha_\mu'>0$ such that 
    \begin{equation}
      \label{eqn:avgCorr2}
  \int_{C_R(0)}|\langle f\circ \varphi_t,g\rangle|^2\, dt \leq C_{f,g,x} R^{d-\alpha'_\mu}
    \end{equation}
    for all $R>1$. Finally, by using the Cauchy-Schwartz inequality in (\ref{eqn:avgCorr2}), (\ref{eqn:avgCorr}) follows.
\end{proof}
 \subsection{Proof of uniform bounds for twisted integrals in Theorem \ref{thm:main2}}Let $S_1,\dots, S_N$ be a collection of $N$ uniformly expanding compatible substitution rules on the $M$ prototiles $t_1,\dots, t_M$ of dimension $d$ and $\mu$ a $\sigma$-invariant measure satisfying the hypotheses of Theorem \ref{thm:main}. Pick an Oseledets regular $x$ and $\Rd\in \mathcal{M}_x$ such that the conclusions of Theorem \ref{thm:main} hold, and so by the previous subsection, (\ref{eqn:avgCorr}) also holds. The argument here follows the argument of Venkatesh in \cite[Lemma 3.1]{venkatesh:sparse}.

 First, some estimates need to be worked out. For a Lipschitz function $f$ with $X_if$ Lipshitz for all $i$, $\mathcal{T}\in\Omega_x$, $t\in\mathbb{R}^d$ and $H>0$ define
$$D_H^\mathcal{T}(f,t) := \int_{C_H(0)}e^{-2\pi \imath \langle \lambda,x  \rangle}f\circ\varphi_{x+t}(\mathcal{T})\, dx,$$
and note that
$$D_H^\mathcal{T}(f,t)^2 = \int_{C_H(0)^2}e^{-2\pi \imath \langle \lambda,x+y  \rangle}f\circ\varphi_{x+t}(\mathcal{T}) f\circ\varphi_{y+t}(\mathcal{T})\, dx \, dy,$$
so
$$\left|\int_{C_R(0)} D_H^\mathcal{T}(f,t)^2\, dt\right| \leq \left|\int_{C_H(0)^2} \int_{C_R(0)} (f\circ\varphi_{x} f\circ\varphi_{y})\circ\varphi_t(\mathcal{T})\, dt\, dx\, dy\right|.$$
In addition, by \cite{T:TTT},
\begin{equation}
  \label{eqn:corBnd1}
  \begin{split}
    \int_{C_R(0)} (f\circ\varphi_x\cdot f\circ\varphi_y)\circ\varphi_t(\mathcal{T})\, dt &= (2R)^d \mu(f\circ\varphi_x\cdot f\circ\varphi_y) + \mathcal{O}\left(\max\left\{R^{d\frac{\lambda_2}{\lambda_1}},R^{d-1}\right\}\right) \\
   &= (2R)^d\langle f\circ\varphi_{x-y},f\rangle  + \mathcal{O}\left(\max\left\{R^{d\frac{\lambda_2}{\lambda_1}},R^{d-1}\right\}\right),
  \end{split}
\end{equation}
since $f\circ\varphi_x\cdot f\circ\varphi_y$ is Lipschitz whenever $f$ is, and where $\lambda_1>\lambda_2$ are the top two Lyapunov exponents of the trace cocycle (the inequality is strict by \cite[Corollary 2.16]{horan:lyapunov}). 
Now, the twisted integral of $D_H^{\mathcal{T}}(f,t)$ is
\begin{equation}
  \label{eqn:CS1}
  \begin{split}
    \int_{C_R(0)} e^{-2\pi \imath \langle \lambda,t \rangle} D_H^{\mathcal{T}}(f,t) \, dt &= \int_{C_H(0)}\int_{C_R(0)}e^{-2\pi \imath \langle\lambda, s+t\rangle}f\circ \varphi_{s+t}(\mathcal{T})\, dt\, ds \\
    &= \int_{C_H(0)}\int_{\varphi_{s}(C_R(0))} e^{-2\pi \imath \langle \lambda, z\rangle} f\circ \varphi_z(\mathcal{T})\, dz\, ds
  \end{split}
\end{equation}
after using Fubini and changing variables $z = s+t$.
\begin{lemma}
  \label{lem:VenkBound}
  The difference between the twisted integral in (\ref{eqn:CS1}) and $(2H)^d\mathcal{S}_R^\mathcal{T}(f,\lambda)$ is bounded as
\begin{equation}
  \label{eqn:CS2}
  \begin{split}
    &\left|  (2H)^{d}\int_{C_R(0)} e^{-2\pi \imath \langle\lambda, t\rangle }f\circ \varphi_t(\mathcal{T})\, dt -  \int_{C_R(0)} e^{-2\pi \imath \langle \lambda,t \rangle} D_H^{\mathcal{T}}(f,t) \, dt\right| \\
    & \hspace{.4in}= \left| \int_{C_H(0)}\int_{C_R(0)} e^{-2\pi \imath \langle \lambda, t\rangle} f\circ \varphi_t(\mathcal{T})\, dt - \int_{\varphi_s(C_R(0))}e^{-2\pi \imath \langle \lambda, t\rangle}f\circ \varphi_t(\mathcal{T})\, dt\, ds\right|\\
    &\hspace{.4in}\leq d 2^{2d+1}R^{d-1}H^{d+1}\|f\|_\infty.
  \end{split}
\end{equation}
\end{lemma}
\begin{proof}
  Let $\{e_1,\dots, e_d\}$ be the standard Euclidean basis for $\mathbb{R}^d$. First , for $i\in\{1,\dots, d\}$, $s_1,s_2,\dots, s_i\in \mathbb{R}$ and $\lambda\in\mathbb{R}^d$, define
  \begin{equation}
    \label{eqn:Ii}
    \begin{split}
      I_{i,R,s_1,s_2,\dots, s_i,\lambda}(\mathcal{T}) &:=  \int_{-R}^R \cdots\int_{-R}^Re^{-2\pi \imath \langle \lambda ,t\rangle} f\circ \varphi_{e_1t_1 + \cdots + e_it_i}(\mathcal{T})\, dt_1\cdots \, dt_i\\
      &\hspace{1in}- \int_{-R+s_i}^{R+s_i} \cdots\int_{-R+s_1}^{R+s_1}e^{-2\pi \imath \langle \lambda ,t\rangle} f\circ \varphi_{e_1t_1 + \cdots + e_it_i}(\mathcal{T})\, dt_1\cdots \, dt_i.
    \end{split}
  \end{equation}
  As such, we have that $|I_{1,R,s_1,\lambda}(\mathcal{T})|\leq 2|s_1|\cdot\|f\|_\infty$. Indeed:
  \begin{equation*}
    \begin{split}
      \left(\int_{-R}^R - \int_{-R+s_1}^{R+s_1}\right) e^{-2\pi \imath \langle \lambda ,t\rangle} f\circ \varphi_t\, dt &= \left(\int_{-R}^{-R+s_1} + \int_{-R+s_1}^R - \int_{-R+s_1}^R - \int_{R}^{R+s_1}\right) e^{-2\pi \imath \langle \lambda ,t\rangle} f\circ \varphi_t\, dt \\
      &= \left(\int_{-R}^{-R+s_1}  - \int_{R}^{R+s_1}\right) e^{-2\pi \imath \langle \lambda ,t\rangle} f\circ \varphi_t\, dt 
    \end{split}
  \end{equation*}
  and thus the bound follows. Note that
  \begin{equation}
    \label{eqn:transfer1}
    \begin{split}
      I_{2,R,s_1,s_2,\lambda}(\mathcal{T}) &= \int_{-R}^RI_{1,R,s_1,\lambda}\circ \varphi_{e_2t_2}(\mathcal{T})\, dt_2 
      +   \int_{-R}^{R} \int_{-R+s_1}^{R+s_1}f\circ \varphi_{e_1t_1 + e_2t_2}(\mathcal{T})\, dt_1dt_2  \\
      &\hspace{2.5in}- \int_{-R+s_2}^{R+s_2} \int_{-R+s_1}^{R+s_1}f\circ \varphi_{e_1t_1 + e_2t_2}(\mathcal{T})\, dt_1dt_2
    \end{split}
  \end{equation}
  and so it follows that
  \begin{equation}
    \label{eqn:transferBound1}
    \begin{split}
      |I_{2,R,s_1,s_2,\lambda}(\mathcal{T})|&\leq 2R|I_{1,R,s_1,\lambda}(\mathcal{T})| + 2|s_2|2R\|f\|_\infty \\
      &\leq 2R2|s_1|\|f\|_\infty + 4R|s_2|\|f\|_\infty \\
      &\leq 4R(|s_1|+|s_2|)\|f\|_\infty.
    \end{split}
  \end{equation}
  Now we can proceed recursively: first, writing as in (\ref{eqn:transfer1})
  \begin{equation}
    \label{eqn:transfer2}
    \begin{split}
      I_{i,R,s_1,\dots, s_i,\lambda}(\mathcal{T}) &= \int_{-R}^RI_{1,R,s_1,\dots,s_{i-1},\lambda}\circ \varphi_{e_it_i}(\mathcal{T})\, dt_1\cdots dt_i \\
      &\hspace{.75in}      +   \int_{-R}^{R} \int_{-R+s_{i-1}}^{R+s_{i-1}}\cdots \int_{-R+s_1}^{R+s_1}f\circ \varphi_{e_1t_1+\cdots + e_it_i}(\mathcal{T})\, dt_1\cdots dt_i  \\
      &\hspace{1.5in}- \int_{-R+s_i}^{R+s_i}\cdots \int_{-R+s_1}^{R+s_1}f\circ \varphi_{e_1t_1+\cdots + e_it_i}(\mathcal{T})\, dt_1\cdots dt_i
    \end{split}
  \end{equation}
  and then bounding as in (\ref{eqn:transferBound1}):
    \begin{equation}
    \label{eqn:transferBound2}
    \begin{split}
      |I_{i,R,s_1,\dots , s_i,\lambda}(\mathcal{T})|
      &\leq 2R|I_{i-1,R,s_1,\dots, s_{i-1},\lambda}(\mathcal{T})| + 2|s_i|(2R)^{i-1}\|f\|_\infty \\
      &\leq 2R\cdot 2^{i-1}R^{i-2}(|s_1|+\dots+|s_{i-1}|)\|f\|_\infty + 2^iR^{i-1}|s_i|\|f\|_\infty \\
      &\leq 2^iR^{i-1}(|s_1|+|s_2|+\cdots+|s_i|)\|f\|_\infty.
    \end{split}
  \end{equation}
    Note that the quantity appearing in (\ref{eqn:CS2}) is
    $$I_{d,R,s_1,\dots, s_d,\lambda}(\mathcal{T}) = \int_{C_R(0)} e^{-2\pi \imath \langle \lambda, t\rangle} f\circ \varphi_t(\mathcal{T})\, dt - \int_{\varphi_s(C_R(0))}e^{-2\pi \imath \langle \lambda, t\rangle}f\circ \varphi_t(\mathcal{T})\, dt$$
    and so (\ref{eqn:transferBound2}) gives the bound
    $$|I_{d,R,s_1,\dots, s_d,\lambda}(\mathcal{T})| \leq 2^dR^{d-1}\|s\|_1\|f\|_\infty$$
    which can be used to obtain the desired bound (\ref{eqn:CS2}):
    \begin{equation}
      \begin{split}
        \left|\int_{C_H(0)}I_{d,R,s_1,\dots, s_d,\lambda}(\mathcal{T})\, ds\right| &\leq \int_{C_H(0)}\,  2^dR^{d-1}\|s\|_1\|f\|_\infty ds= 2^dR^{d-1}\|f\|_\infty \int_{C_H(0)} \|s\|_1\,   ds\\
        &= d2^dR^{d-1}\|f\|_\infty 2^{d-1}H^{d+1} = d2^{2d+1}R^{d-1}H^{d+1}\|f\|_\infty
      \end{split}
    \end{equation}
    completing the proof.
\end{proof}
The twisted integral $\mathcal{S}_R^{\mathcal{T}}(f,\lambda)$ can now be bounded:
\begin{equation}
  \label{eqn:final1}
  \begin{split}
    \left|\mathcal{S}_R^{\mathcal{T}}(f,\lambda)\right| &\leq \left|\mathcal{S}_R^{\mathcal{T}}(f,\lambda) - \frac{1}{(2H)^d}\int_{C_R(0)} e^{-2\pi \imath \langle \lambda,t \rangle} D_H^{\mathcal{T}}(f,t) \, dt\right| + \frac{1}{(2H)^d}\left|\int_{C_R(0)} e^{-2\pi \imath \langle \lambda,t \rangle} D_H^{\mathcal{T}}(f,t) \, dt\right| \\
    &\leq d2^{d+1}R^{d-1}H\|f\|_\infty + \frac{1}{(2H)^d}\left|\int_{C_R(0)} e^{-2\pi \imath \langle \lambda,t \rangle} D_H^{\mathcal{T}}(f,t) \, dt\right|,
    \end{split}
\end{equation}
where the first term comes from the bound in Lemma \ref{lem:VenkBound}. Finally, to bound the last term in (\ref{eqn:final1}), let $\lambda^* = \max\left\{d\frac{\lambda_2}{\lambda_1},d-1\right\}$. Then:
\begin{equation}
  \label{eqn:CS3}
  \begin{split}
    &\frac{1}{(2H)^{2d}}\left|\int_{C_R(0)} e^{-2\pi \imath \langle \lambda,t \rangle} D_H^{\mathcal{T}}(f,t) \, dt\right|^2 = \frac{1}{(2H)^{2d}} \left|\left\langle e^{-2\pi \imath \langle\lambda, \cdot\rangle} D_H^{\mathcal{T}}(f,\cdot),1\right\rangle_{L^2(C_R(0))} \right|^2 \\
    &\hspace{2in}\leq \frac{(2R)^d}{(2H)^{2d}}\int_{C_R(0)}D_H^\mathcal{T}(f,t)^2\, dt \\
    &\hspace{1.8in}=  \frac{(2R)^d}{(2H)^{2d}}\int_{C_R(0)}\int_{C_H(0)^2}e^{-2\pi \imath \langle\lambda,x+y\rangle}f\circ\varphi_{t+x}(\mathcal{T})\cdot f\circ\varphi_{t+y}(\mathcal{T}) \, dx\, dy\, dt \\
    &\hspace{1.6in}\leq \frac{(2R)^d}{(2H)^{2d}} \int_{C_H(0)^2} \left| \int_{C_R(0)}f\circ\varphi_{t+x}(\mathcal{T})\cdot f\circ\varphi_{t+y}(\mathcal{T}) \, dt\right| \, dx\, dy \\
    &\hspace{1.4in}\leq \frac{(2R)^d}{(2H)^{2d}} \int_{C_H(0)^2} \left| (2R)^d\langle f\circ\varphi_{x-y},f\rangle  + \mathcal{O}\left( R^{\lambda^*}\right)\right| \, dx\, dy \\
    &\hspace{1.2in}\leq \frac{R^{2d}}{H^{2d}} \int_{C_H(0)^2} \left| \langle f\circ\varphi_{x-y},f\rangle\right|\, dx\, dy  +C_\varepsilon  R^{d+\lambda^*+\varepsilon} \\
    &\hspace{1in}\leq R^{2d}C_{f,\varepsilon}^2 H^{-\alpha_\mu'/2+2\varepsilon} +C_\varepsilon  R^{d+\lambda^*+\varepsilon} \\
    &\hspace{.8in}\leq C'_{f,\varepsilon}\left(R^{2d}H^{-\alpha_\mu'/2+2\varepsilon}+R^{d+\lambda^*+\varepsilon}\right),
  \end{split}
\end{equation}
for $R>1$, where the first inequality follows from the Cauchy-Schwarz inequality, (\ref{eqn:corBnd1}) was used in the third inequality and (\ref{eqn:avgCorr}) in the fifth. Chosing $H = R^{\frac{4}{4+\alpha_\mu'}}$ it follows that $R^{d}H^{-\alpha_\mu'/4}=R^{d-1}H$, so using in (\ref{eqn:final1}) this choice of $H$:
\begin{equation*}
  \label{eqn:final2}
    \left|\mathcal{S}_R^{\mathcal{T}}(f,\lambda)\right| \leq  C''_{f,r,\varepsilon}\max\left\{R^{d-1+\frac{4}{4+\alpha'_\mu}+\varepsilon},R^{\frac{d+\lambda^*}{2}+\varepsilon}\right\} = C''_{f,r,\varepsilon} \max\left\{R^{d-\beta_1+\varepsilon},R^{d-\beta_2+\varepsilon}\right\}
\end{equation*}
for $R>1$, where
$$\beta_1 = 1-\frac{4}{4+\alpha'_\mu} = \frac{\alpha'_\mu}{4+\alpha'_\mu}\hspace{.6in}\mbox{ and }\hspace{.6in}\beta_2 = \frac{d-\lambda^*}{2}.$$
As such, by Lemma \ref{lem:dimension},
$$d^-_f(\lambda)\geq \min\left\{\frac{2\alpha'_\mu}{4+\alpha'_\mu}, d\left(1-\frac{\lambda_1}{\lambda_2}\right)\right\}$$
for any $\lambda\in\mathbb{R}^d$, which finishes the proof.

\appendix
\section{Twisted cohomology for tiling spaces}
\label{sec:plotTwist}
Let $\bar{\Delta}^k_\mathcal{T}$ the set of smooth ($C^\infty$), $\mathbb{C}$-valued $\mathcal{T}$-equivariant $k$-forms. Let $\eta\in\Delta^1_\mathcal{T}$ be a representative of a class $[\eta] \in H^1(\Omega_\mathcal{T};\mathbb{R})$ and define the operator $d_\eta:\bar{\Delta}^k_\mathcal{T}\rightarrow \bar{\Delta}^{k+1}_\mathcal{T}$
$$d_\eta:\alpha\mapsto d_\eta\alpha:= d\alpha - 2\pi \imath \eta\wedge \alpha. $$
It is immediate to check that
$$d^2_\eta\alpha = (d - 2\pi \imath \eta\wedge )(d\alpha - 2\pi \imath \eta\wedge \alpha) = -2\pi \imath d\eta\wedge \alpha = 0,$$
making $(\bar{\Delta}^k, d^2_\eta)$ a cochain complex with well-defined cohomology.
\begin{definition}
  For any real, closed $\eta\in\Delta^1$, the \textbf{twisted cohomology spaces of $\Omega$} are defined as the quotients
  $$H_\eta^k(\Omega;\mathbb{C}) := \frac{\mathrm{ker}\, d_\eta:\bar{\Delta}^k\rightarrow \bar{\Delta}^{k+1}}{\mathrm{Im}\, d_\eta:\bar{\Delta}^{k-1}\rightarrow \bar{\Delta}^k }.$$
\end{definition}
Any $\lambda\in\mathbb{R}^d$ defines a natural constant, closed, $\mathcal{T}$-equivariant $1$-form $\sum \lambda_i\, dx_i$. For any such constant form the twisted cohomology is denoted by $H_\lambda^*(\Omega;\mathbb{C})$.
\begin{proposition}
  \label{prop:Fact1}
  \begin{enumerate}
  \item The first twisted cohomology $H^1_\eta(\Omega;\mathbb{C})$ is defined, up to unitary equivalence, by the cohomology class $[\eta]\in H^1(\Omega;\mathbb{R})$. 
  \item For a constant 1-form $\lambda\in \mathbb{R}^d$, the twisted differential $d_\lambda$ satisfies $d_\lambda = U_\lambda^{-1} d U_\lambda$, where $U_\lambda$ is the twisted multiplication operator $U_\lambda:\omega(x)\mapsto e^{-2\pi \imath \langle\lambda, x\rangle}\omega(x)$ \textbf{which does not preserve pattern-equivariance}.
  \end{enumerate}
\end{proposition}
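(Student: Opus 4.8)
The plan is to obtain both statements from a single Leibniz-rule computation for the twisted differential conjugated by a multiplication operator; the two parts differ only in whether the multiplier remains pattern-equivariant.

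I would prove part (ii) first, as it is the cleaner computation and exhibits the key mechanism. Writing $\lambda$ also for the constant $1$-form $\sum_i \lambda_i\, dx_i$, one has $d(e^{-2\pi i \langle\lambda,x\rangle}) = -2\pi i\, e^{-2\pi i\langle\lambda,x\rangle}\lambda$, so by the Leibniz rule, for any smooth $\mathbb{C}$-valued form $\omega$,
$$d(U_\lambda\omega) = d(e^{-2\pi i\langle\lambda,x\rangle}\omega) = -2\pi i\, e^{-2\pi i\langle\lambda,x\rangle}\lambda\wedge\omega + e^{-2\pi i\langle\lambda,x\rangle}d\omega = U_\lambda(d\omega - 2\pi i\,\lambda\wedge\omega) = U_\lambda(d_\lambda\omega).$$
Hence $dU_\lambda = U_\lambda d_\lambda$, i.e. $d_\lambda = U_\lambda^{-1}dU_\lambda$, which is (ii). The parenthetical assertion is then immediate: the multiplier $e^{-2\pi i\langle\lambda,x\rangle}$ depends on the absolute coordinate $x\in\mathbb{R}^d$ rather than on the local pattern of $\mathcal{T}$ near $x$, so $U_\lambda$ does not carry $\Delta^k_\mathcal{T}$ into itself; the identity is one of operators on the ambient de Rham complex of $\mathbb{R}^d$. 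This failure is exactly what prevents trivializing the twist inside the pattern-equivariant complex, since the primitive $\langle\lambda,x\rangle$ of $\lambda$ is not $\mathcal{T}$-equivariant and $[\lambda]$ may therefore be nonzero in $H^1(\Omega;\mathbb{R})$.

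For part (i), I would show that cohomologous representatives give unitarily isomorphic twisted complexes. If $[\eta]=[\eta']$ in $H^1(\Omega;\mathbb{R})$ then $\eta'-\eta = df$ for some real $f\in\Delta^0_\mathcal{T}$. Consider the multiplication operator $M_f:\omega\mapsto e^{-2\pi i f}\omega$. Because $f$ is $\mathcal{T}$-equivariant and $z\mapsto e^{-2\pi i z}$ is smooth, the function $e^{-2\pi i f}$ is again $\mathcal{T}$-equivariant, so $M_f$ preserves $\bar{\Delta}^\bullet_\mathcal{T}$; and since $|e^{-2\pi i f}|=1$, the operator $M_f$ is unitary for the $L^2$ structure on forms. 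The same Leibniz computation, now using $d(e^{-2\pi i f}) = -2\pi i\, e^{-2\pi i f}\,df$, gives
$$d_\eta(M_f\omega) = e^{-2\pi i f}\big(d\omega - 2\pi i(df + \eta)\wedge\omega\big) = e^{-2\pi i f}\,d_{\eta'}\omega = M_f(d_{\eta'}\omega),$$
so $d_\eta M_f = M_f d_{\eta'}$, i.e. $M_f^{-1}d_\eta M_f = d_{\eta'}$. Thus $M_f$ is a unitary chain isomorphism $(\bar{\Delta}^\bullet_\mathcal{T},d_{\eta'})\to(\bar{\Delta}^\bullet_\mathcal{T},d_\eta)$ and induces a unitary isomorphism $H^\bullet_{\eta'}(\Omega;\mathbb{C})\cong H^\bullet_\eta(\Omega;\mathbb{C})$, in particular in degree $1$.

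The only genuine subtlety, and the point I would treat carefully, is the precise meaning of \emph{unitary equivalence}: it requires fixing the Hilbert-space structure on pattern-equivariant forms (the $L^2$ inner product from the unique invariant measure on $\Omega$) and checking that $M_f$ is bounded with bounded inverse there, which is immediate from $|e^{\pm 2\pi i f}|=1$. Everything else is the Leibniz rule. The conceptual content worth emphasizing is the contrast between the two parts: in (i) the conjugating multiplier $e^{-2\pi i f}$ stays pattern-equivariant precisely because $f$ does, whereas in (ii) the primitive $\langle\lambda,x\rangle$ of a constant form does not, so the twist by a nonzero class in $H^1(\Omega;\mathbb{R})$ cannot be removed.
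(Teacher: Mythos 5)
Your proposal is correct and follows essentially the same route as the paper: part (i) via the intertwining identity $d_\eta \circ M_f = M_f \circ d_{\eta'}$ for the pattern-equivariant multiplier $e^{-2\pi i f}$, and part (ii) via the Leibniz-rule computation $d\,U_\lambda = U_\lambda\, d_\lambda$ together with the observation that $e^{-2\pi i \langle\lambda,x\rangle}$ is not $\mathcal{T}$-equivariant. Your explicit remark that $M_f$ is unitary on $L^2$ forms is a welcome clarification of the phrase \emph{unitary equivalence}, which the paper leaves implicit, but it is the same argument.
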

The first item in the proposition is essentially due to Forni \cite[Lemma 4.2]{forni:twist}.
\begin{proof}
  For the first claim, consider two closed 1-forms $\eta, \eta'\in\Delta^1_\mathcal{T}$ such that $\eta-\eta' = df$ for some $f\in\bar\Delta_\mathcal{T}^0$. Note that $e^{-2\pi \imath f}\in\bar\Delta^1_\mathcal{T}$, and so for any form $\omega\in\bar{\Delta}_\mathcal{T}^k$:
  $$d_\eta(e^{-2\pi \imath f}\omega) = e^{-2\pi \imath f}(-2\pi \imath df \wedge \omega  + d\omega - 2\pi \imath \eta \wedge \omega) = e^{-2\pi \imath f} d_{\eta'}\omega.$$
  This means that if $\omega$ is $d_{\eta'}$-closed, then $e^{-2\pi \imath f}\omega$ is $d_\eta$-closed. Thus multiplication by $e^{-2\pi \imath f}$ sends the $d_{\eta'}$-closed, pattern equivariant forms to the $d_{\eta}$-closed pattern equivariant forms. Similarly, multiplication by $e^{-2\pi \imath f}$ gives a bijection between $d_{\eta'}$-exact pattern equivariant forms and $d_\eta$-exact pattern equivariant forms.

  For the second claim, a straight-forward computation shows that $d(U_\lambda\omega) = U_\lambda d_\lambda \omega$ for any $\omega\in \Delta_\mathcal{T}^k$, from which the claim follows. 
\end{proof}

\begin{proposition}
 Let $\eta\in\Delta^1$ be a real, closed 1-form. Then $H^0_\eta(\Omega;\mathbb{C})$ is non-trivial if and only if $\eta$ is a constant 1-form $\sum_i\lambda_i \,dx_i$ for some $\lambda\in\mathbb{R}^d$ which is an eigenvalue for the $\mathbb{R}^d$ action on $\Omega$, in which case it is one-dimensional.
\end{proposition}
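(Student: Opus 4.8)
The plan is to read off $H^0_\eta(\Omega;\mathbb{C})$ as $\ker\bigl(d_\eta:\bar\Delta^0_\mathcal{T}\to\bar\Delta^1_\mathcal{T}\bigr)$, so that a nonzero class is a smooth $\mathcal{T}$-equivariant function $f$ with $d_\eta f=df-2\pi i\,\eta f=0$, and then to transport this equation, through the isomorphism $i_\mathcal{T}$ of \S\ref{subsubsec:functions}, into a statement about a function $h$ on $\Omega$. For the ``if'' direction, suppose $\eta=\sum_i\lambda_i\,dx_i$ with $\lambda$ an eigenvalue and let $h$ be a continuous eigenfunction, $h\circ\varphi_t=e^{2\pi i\langle\lambda,t\rangle}h$. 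Minimality together with the eigenvalue relation forces $|h|$ to be constant and nonzero; normalizing, $f:=i_\mathcal{T}(h)$ satisfies $f(t)=e^{2\pi i\langle\lambda,t\rangle}h(\mathcal{T})$ and $df=2\pi i\lambda f=2\pi i\eta f$, so $0\neq f\in H^0_\eta$. By Proposition \ref{prop:Fact1}(ii), $d_\lambda=U_\lambda^{-1}dU_\lambda$, hence $d_\lambda f=0$ iff $U_\lambda f$ is constant, i.e. $f\in\mathbb{C}\cdot e^{2\pi i\langle\lambda,x\rangle}$; thus $H^0_\lambda=\mathbb{C}\cdot f$ is one-dimensional.

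For the ``only if'' direction I would take $0\neq f\in H^0_\eta$, so $df=2\pi i\,\eta f$. Since $\mathbb{R}^d$ is simply connected and $\eta$ is closed, write $\eta=dg$ with $g\in C^\infty(\mathbb{R}^d;\mathbb{R})$; then $U:=e^{-2\pi i g}$ gives $d(Uf)=U\,d_\eta f=0$, so $f=c\,e^{2\pi i g}$ for a constant $c\neq0$. In particular $f$ is nowhere zero with $|f|\equiv|c|$. Transporting through $i_\mathcal{T}$, the function $f$ corresponds to a continuous $h:\Omega\to\mathbb{C}$ of constant modulus (by minimality), which after normalization I take to be $S^1$-valued, and the equation becomes the leafwise relation $X_j h=2\pi i\,\tilde\eta_j\,h$, where $\tilde\eta_j=i_\mathcal{T}^{-1}(\eta_j)$ are the real continuous coefficients of $\eta$.

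It then remains to show that $h$ is an eigenfunction and that $\eta$ is the associated constant form. I would set $\lambda_j:=\int_\Omega\tilde\eta_j\,d\mu$, which by unique ergodicity is the $j$-th component of the asymptotic cycle $C_\mu([\eta])$, and study the $S^1$-valued multiplier cocycle $c(t,\omega):=h(\varphi_t\omega)\overline{h(\omega)}$ over the $\mathbb{R}^d$-action. Because $|h|\equiv1$, Bochner's theorem gives $\langle h\circ\varphi_t,h\rangle=\int_\Omega c(t,\cdot)\,d\mu$, and $|\langle h\circ\varphi_t,h\rangle|\le1$ with equality exactly when $c(t,\cdot)$ is $\mu$-a.e. constant, i.e. when $h\circ\varphi_t$ is a scalar multiple of $h$. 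Proving this equality for every $t$ is the main obstacle: it amounts to showing that the existence of the global single-valued solution $h$ on the connected space $\Omega$ forces the mean-zero pattern-equivariant form $\eta-\lambda$ to be a genuine leafwise coboundary, so that $h\,e^{-2\pi i u}$ (with $X_j u=\tilde\eta_j-\lambda_j$) untwists to an honest character.

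Here is where I expect the real work to lie, and I would organize it cohomologically. The map $h:\Omega\to S^1$ defines an integral class in $\check H^1(\Omega;\mathbb{Z})$ whose image in $H^1(\Omega;\mathbb{R})$ is $[\eta]$, so $[\eta]$ is forced to be integral; combining this with the identification of the constant-form classes $[dx_1],\dots,[dx_d]$ with the asymptotic cycle pins $[\eta]$ to the class of the constant form $\lambda=C_\mu([\eta])$, and the unitary equivalence of Proposition \ref{prop:Fact1}(i) then normalizes the representative to $\lambda$ itself. Integrality of this class is precisely the assertion that $\lambda$ is an eigenvalue, exhibited by the untwisted eigenfunction $h\,e^{-2\pi i u}$, and one-dimensionality is inherited from the ``if'' direction through Proposition \ref{prop:Fact1}(i). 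The delicate point throughout is keeping the transfer function $u$ within the (leafwise-smooth, pattern-equivariant) class allowed by the complex $\bar\Delta^\ast_\mathcal{T}$, which is exactly what ties the analytic eigenvalue condition to the constant-form cohomology.
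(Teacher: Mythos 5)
Your ``if'' direction is essentially the paper's: you verify that $f=i_\mathcal{T}(h)$, $f(t)=e^{2\pi i\langle\lambda,t\rangle}h(\mathcal{T})$, is $d_\lambda$-closed and get one-dimensionality from multiplicity one of dynamical eigenvalues, while the paper solves the system $\partial_k f=2\pi i\lambda_k f$ explicitly and invokes the correspondence (\ref{eqn:imap}); your use of $U_\lambda$ from Proposition \ref{prop:Fact1}(ii) is an equivalent packaging of the same computation.

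The ``only if'' direction is where the proposal fails to be a proof. You correctly derive that a nonzero solution has constant modulus and is of the form $f=c\,e^{2\pi i g}$ with $dg=\eta$, and you correctly observe that the unimodular function $h=i_\mathcal{T}^{-1}(f)$ pulls back the generator of $H^1(S^1;\mathbb{Z})$ to an integral class in $\check H^1(\Omega;\mathbb{Z})$ mapping to $[\eta]$. But the step you then need --- that $h$ is an eigenfunction, equivalently that $\eta-\lambda$ with $\lambda:=C_\mu([\eta])$ is a coboundary in the pattern-equivariant complex --- is exactly the content of the proposition, and you explicitly leave it open (``the main obstacle,'' ``where I expect the real work to lie''). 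The cohomological closing move you sketch does not close this gap: integrality of $[\eta]$ does not place it in the $d$-dimensional subspace of $H^1(\Omega;\mathbb{R})$ spanned by the constant forms $[dx_1],\dots,[dx_d]$, and knowing $C_\mu([\eta])=\lambda$ does not give $[\eta]=[\lambda]$ either, because the asymptotic cycle has a large kernel on $H^1(\Omega;\mathbb{R})$ in general. To conclude $[\eta]=[\lambda]$ you would have to show $\eta-\lambda$ is exact in the pattern-equivariant complex, which is the very statement you deferred; the argument is circular as written.

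For comparison, the paper's route is entirely elementary and pointwise: it splits $f=R+iI$ and argues that if some coefficient $\eta_j$ is non-constant, then $f$ and $\bar f$ satisfy incompatible first-order equations (one unimodular, one of real-exponential type), so no nonzero smooth solution can exist --- no cohomology, no asymptotic cycle. Note also a mismatch of conclusions: even if your program were completed, it would show that $\eta$ is \emph{cohomologous} to a constant eigenvalue form, not that $\eta$ \emph{is} constant, and appealing to Proposition \ref{prop:Fact1}(i) to ``normalize the representative'' replaces $\eta$ by a different form, hence proves a different (gauge-invariant) statement. That tension is in fact substantive --- the intertwining $d_\eta(e^{-2\pi i u}\omega)=e^{-2\pi iu}d_{\eta+du}\omega$ used to prove Proposition \ref{prop:Fact1}(i) applies in degree $0$ as well, so the hypothesis $H^0_\eta\neq 0$ depends only on $[\eta]$, and a literal-constancy conclusion cannot be gauge-invariantly forced; this suggests the pointwise formulation (and the paper's real/imaginary-part computation) deserves scrutiny. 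But flagging this tension is not resolving it, and as submitted your argument does not establish the proposition.
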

\begin{proof}
  For $\lambda\in\mathbb{R}^d$, let $\eta$ be the constant form given by $\lambda$. As such for $f\in\bar\Delta^0_\mathcal{T}$,
  $$d_\eta f = \sum_{k=1}^d (\partial_k f - 2\pi \imath f\lambda_k)\, dx_k = 0$$
  implies that $\partial_k f = 2\pi \imath\lambda_k f $ for all $k$. For $k=j$, this differential equation has solution
  $$f(x_1,\dots, x_k) = C_j \exp\left(2\pi \imath \lambda_jx_j  \right),$$
  where $C_j$ does not depend on $x_j$. This implies that
  $$f(x) = C e^{2\pi \imath \langle \lambda,x\rangle},$$
which makes it an eigenfunction of the $\mathbb{R}^d$ action on $\Omega$ by the correspondence (\ref{eqn:imap}). Since the multiplicity of any dynamical eigenvalue is one, $H^0_\eta(\Omega;\mathbb{C})$ is one-dimensional.

  Alternatively, suppose $\eta  = \sum_k\eta_k\,dx_k$ is not constant and suppose $\eta_j$ is not constant. Let $f = R+iI$ be a smooth function. If $f$ is $d_\eta$-closed:
  $$d_\eta f = dR - 2\pi \imath R\eta + \imath(dI-2\pi \imath I \eta) = 0,$$
  then $dR = -2\pi \imath I\eta$ and $dI = 2\pi \imath R\eta$, from which it follows that
  \begin{equation}
    \begin{split}
      d_\eta\bar{f} &= dR - 2\pi \imath R\eta - \imath(dI-2\pi \imath I \eta)\\
      &= -2\pi \imath I\eta - 2\pi \imath R\eta - \imath(2\pi \imath R\eta-2\pi \imath I \eta) \\
      &= -2\pi \imath (I+R)\eta + 2\pi( R \eta -  I \eta) \\
      &= 2\pi (R-\imath I - I - \imath R)\eta = 2\pi (\bar{f}-\imath\bar{f})\eta \\
      &= 2\pi (1-\imath)\eta\bar{f}.
    \end{split}
  \end{equation}
Putting this together with the definition of $d_\eta$, it follows that $d\bar{f} = 2\pi \eta \bar{f} $, and therefore also that $\partial_k \bar{f} = 2\pi \bar{f} \eta_k$ for all $k$. Also, by definition,
  $$d_\eta f = \sum_{k=1}^d (\partial_k f - 2\pi \imath f\eta_k)\, dx_k = 0$$
  implies that $\partial_k f = 2\pi \imath f \eta_k$ for all $k$. For $k=j$, this differential equation has solution
  \begin{equation}
    \label{eqn:difEq}
    f(x_1,\dots, x_k) = C_j \exp\left(2\pi \imath \int_0^{x_j} \eta_j(x_1,\dots, z_j, \dots, x_d)\, dz_j\right),
  \end{equation}
  where $C_j$ does not depend on $x_j$. Similarly, the differential equation $\partial_j \bar{f} = 2\pi \bar{f} \eta_j$ has solution
  $$\bar{f}(x_1,\dots, x_k) = C'_j \exp\left(2\pi \int_0^{x_j} \eta_j(x_1,\dots, z_j, \dots, x_d)\, dz_j\right),$$
  where $C'_j$ does not depend on $x_j$. Taking the complex conjugate in (\ref{eqn:difEq}), it follows that
  $$C'_j \exp\left(2\pi \int_0^{x_j} \eta_j(x_1,\dots, z_j, \dots, x_d)\, dz_j\right)  = C_j \exp\left(-2\pi \imath \int_0^{x_j} \eta_j(x_1,\dots, z_j, \dots, x_d)\, dz_j\right) .$$
  It follows that, as a function of $x_j$,
  $$\mathrm{Const.} = \exp\left(2\pi (1+\imath) \int_0^{x_j} \eta_j(x_1,\dots, z_j, \dots, x_d)\, dz_j\right), $$
which contradicts that $\eta_j$ is not constant. So there is no solution to $d_\eta f = 0$ and $H^0_\eta(\Omega;\mathbb{C})$ is trivial.
\end{proof}
 Given a pattern equivariant $d$-form $\omega\in\bar\Delta^d_\mathcal{T}$, one would like to know when the cohomological equation
\begin{equation}
  \label{eqn:CEsolution}
  d_\lambda u = \omega,
\end{equation}
can be solved, that is, when we can find a $u\in\bar\Delta^{d-1}_\mathcal{T}$ which satisfies $d_\lambda u = \omega$.

Let $\sum_i \lambda_i \,dx_i \in \Delta^1$ be a constant 1-form and, for $R>0$, $\Upsilon^\lambda_R:\bar\Delta^d_\mathcal{T}\rightarrow \mathbb{C}$ the one-parameter family of currents defined by
\begin{equation}
  \label{eqn:current}
  \Upsilon^\lambda_R (\omega) := \mathcal{S}^\mathcal{T}_R(i^{-1}_\mathcal{T}\star \omega,\lambda) = \int_{C_R(0)}U_\lambda \omega.
\end{equation}
\begin{proposition}
\label{prop:CEsolution}
Let $u\in\bar\Delta^{d-1}_\mathcal{T}$ be a pattern-equivariant solution of (\ref{eqn:CEsolution}). There exists $C'>0$ such that the spectral measure $\mu_f$ of $f = i_\mathcal{T}^{-1}\star \omega$ satisfies
  $$\mu_f(B_r(\lambda))\leq C'r^{2}$$
  for all $r<1/2$. In particular, the lower local dimension satisfies $2\leq d^-_{i^{-1}_\mathcal{T}\star \omega}(\lambda)$. Therefore, if (\ref{eqn:CEsolution}) has a solution and $\lambda$ is an eigenvalue for the system, then $f=\star \omega$ lies in the orthogonal complement of $h_\lambda$, where $h_\lambda$ is an eigenfunction with eigenvalue $\lambda$:
  $$\star d_\lambda\Delta^{d-1}_\mathcal{T}\subset \langle h_\lambda\rangle^\perp.$$
\end{proposition}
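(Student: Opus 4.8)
The plan is to exploit the conjugation identity $d_\lambda = U_\lambda^{-1} d U_\lambda$ from Proposition \ref{prop:Fact1}(ii), which converts the twisted cohomological equation $d_\lambda u = \omega$ into an ordinary exactness statement once one multiplies through by $U_\lambda$. Applying $U_\lambda$ to both sides gives $U_\lambda\omega = U_\lambda d_\lambda u = d(U_\lambda u)$, so the twisted integrand $U_\lambda\omega$ appearing in the current $\Upsilon^\lambda_R$ of (\ref{eqn:current}) is exact on all of $\mathbb{R}^d$ — with the crucial caveat that $U_\lambda u$ is no longer pattern-equivariant (precisely the phenomenon flagged in Proposition \ref{prop:Fact1}(ii)), although it remains a genuine smooth $(d-1)$-form on $\mathbb{R}^d$ to which Stokes' theorem applies on the cube $C_R(0)$.

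First I would write
$$\mathcal{S}^\mathcal{T}_R(f,\lambda) = \Upsilon^\lambda_R(\omega) = \int_{C_R(0)} U_\lambda\omega = \int_{C_R(0)} d(U_\lambda u) = \int_{\partial C_R(0)} U_\lambda u$$
by Stokes, where $f = i^{-1}_\mathcal{T}\star\omega$. Since $u$ is pattern-equivariant its coefficients are bounded uniformly over $\mathbb{R}^d$ (equivalently over $\Omega$), and $|U_\lambda|\equiv 1$, so the boundary integral is controlled by $\|u\|_\infty$ times the $(d-1)$-volume of $\partial C_R(0)$, which is of order $R^{d-1}$. This yields a bound $|\mathcal{S}^\mathcal{T}_R(f,\lambda)|\leq C R^{d-1}$ with $C$ independent of $\mathcal{T}\in\Omega$; because $\mu$ is a probability measure the same bound holds in $L^2$. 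Taking $\alpha = 1$ (so $d-\alpha = d-1$) and $R_0 = 1$, Lemma \ref{lem:dimension} then gives $\mu_f(B_r(\lambda))\leq C' r^2$ for all $r<1/2$, and hence $d^-_f(\lambda)\geq 2$.

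For the final assertion I would argue spectrally. If $\lambda$ is an eigenvalue with normalized eigenfunction $h_\lambda$, then by the correspondence between spectral measures and the spectral projection of the Koopman representation $\{f\mapsto f\circ\varphi_t\}$, the atom of $\mu_f$ at $\lambda$ equals $|\langle f, h_\lambda\rangle|^2$, the eigenvalue being simple as recalled in the preceding proposition. But the estimate $\mu_f(B_r(\lambda))\leq C' r^2\to 0$ forces $\mu_f(\{\lambda\}) = 0$, whence $\langle f, h_\lambda\rangle = 0$. Letting $\omega = d_\lambda u$ range over $d_\lambda\Delta^{d-1}_\mathcal{T}$ then produces the stated inclusion $\star d_\lambda\Delta^{d-1}_\mathcal{T}\subset\langle h_\lambda\rangle^\perp$.

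The main obstacle — really the only delicate point — is bookkeeping which objects are pattern-equivariant: the argument hinges on the fact that although $U_\lambda u$ fails pattern-equivariance, it is still a bounded smooth form, so Stokes and the $R^{d-1}$ boundary bound go through, while it is the pattern-equivariance of $u$ itself that makes $\|u\|_\infty$ finite and the constant $C$ uniform over the tiling space. I would also remark that $\alpha = 1$ is exactly the boundary-effect exponent appearing throughout \S\ref{sec:twisted}, so that for $d=1$ one lands on the edge $\alpha = d$ of Lemma \ref{lem:dimension}; there the boundary integral is merely bounded by a constant and one recovers $d^-_f(\lambda)\geq 2\alpha$ for every $\alpha<1$, still giving $d^-_f(\lambda)\geq 2$.
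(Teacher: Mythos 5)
Your proof is correct and follows essentially the same route as the paper: conjugating via $d_\lambda = U_\lambda^{-1} d U_\lambda$, applying Stokes on $C_R(0)$ to get $\Upsilon_R^\lambda(\omega) = \int_{\partial C_R(0)} U_\lambda u$, bounding this by $\|u\|_\infty (2R)^{d-1}$, and invoking Lemma \ref{lem:dimension}. You additionally spell out the orthogonality conclusion (via the atom $\mu_f(\{\lambda\}) = |\langle f, h_\lambda\rangle|^2$) and the $d=1$ edge case $\alpha = d$, both of which the paper leaves implicit.
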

\begin{proof}
  Recall by Proposition \ref{prop:Fact1} that $d_\lambda = U^{-1}_\lambda dU_\lambda$, so supposing that $d_\lambda\eta = \omega$,
  \begin{equation}
    \begin{split}
      \Upsilon_R^\lambda(\omega) &= \Upsilon_R^\lambda(d_\lambda\eta) = \Upsilon_R^\lambda(U^{-1}_\lambda d U_\lambda\eta) = \int_{C_R(0)}U_\lambda U_\lambda^{-1} d U_\lambda \eta \\
      &= \int_{C_R(0)} d U_\lambda \eta = \int_{\partial C_R(0)} U_\lambda \eta.
    \end{split}
  \end{equation}
  It follows that
  $\left| \Upsilon_R^\lambda(\omega) \right|\leq \|\eta\|_\infty (2R)^{d-1}$ for all $R>0$. The pointwise dimension estimates follow from (\ref{eqn:current}) and Lemma \ref{lem:dimension}.
\end{proof}


\bibliographystyle{amsalpha}
\bibliography{biblio}
\end{document}